\newtheorem{theorem}{Theorem}[section]
\newtheorem{proposition}[theorem]{Proposition}
\newtheorem{lemma}[theorem]{Lemma}
\theoremstyle{definition}
\newtheorem{definition}[theorem]{Definition}
\newtheorem{remark}[theorem]{Remark}
\newtheorem{example}[theorem]{Example}
\newcommand{\G}     {\mathbb{G}} 
\newcommand{\R}     {\mathbb{R}} 
\newcommand{\Z}     {\mathbb{Z}} 
\newcommand{\N}     {\mathbb{N}} 
\renewcommand{\P}   {\mathbb{P}} 
\newcommand{\E}     {\mathbb{E}} 
\newcommand{\T}     {\mathbb{T}}
\newcommand{\Acal}   {{\mathcal A }}
\newcommand{\Ccal}   {{\mathcal C }} 
\newcommand{\Ecal}   {{\mathcal E }} 
\newcommand{\Gcal}   {{\mathcal G }} 
\newcommand{\Hcal}   {{\mathcal H }}
\newcommand{\Lcal}   {{\mathcal L }} 
\newcommand{\Mcal}   {{\mathcal M }} 
\newcommand{\Ncal}   {{\mathcal N }} 
\newcommand{\Pcal}   {{\mathcal P }} 
\newcommand{\Rcal}   {{\mathcal R }} 
\newcommand{\Tcal}   {{\mathcal T }} 
\newcommand{\Vcal}   {{\mathcal V }} 
\newcommand{\Wcal}   {{\mathcal W }}
\newcommand{\Zcal}   {{\mathcal Z }}
\title{Macroscopic loops in  the Bose gas,  Spin O(N) and related models }
\author{Alexandra Quitmann\thanks{Weierstrass Institute for Applied Analysis and Stochastics, Berlin, Germany. E-mail: alexandra.quitmann@wias-berlin.de }\and Lorenzo Taggi\thanks{Sapienza Universit\`a di Roma, Dipartimento di Matematica, Roma, Italy. E-mail: lorenzo.taggi@uniroma1.it}}
\date{}   
\numberwithin{equation}{section}
\date{
    \today
}
\begin{document}

\maketitle

\begin{abstract}
We consider a general system of   interacting random loops which includes  several models of interest, such as
 the \textit{Spin O(N) model,} \textit{random lattice permutations},  a version of the
 \textit{interacting Bose gas} in discrete space and of the \textit{loop O(N) model.}  
We consider the system in $\mathbb{Z}^d$,    $d \geq 3$,  and prove  the occurrence of macroscopic loops  whose length is proportional to the volume of the system.
More precisely,  we approximate $\mathbb{Z}^d$ by finite boxes and,  given any two  vertices whose distance is proportional to the diameter of the box,  
we prove that the probability of observing a loop visiting both is uniformly positive.   
Our results hold under general assumptions on the interaction potential,  which may have bounded or unbounded support or introduce hard-core constraints. 
\end{abstract}

\section{Introduction and main results}
We consider a general system of   interacting random loops which includes  several models of interest, such as
 the \textit{Spin O(N) model,} \textit{random lattice permutations},  a version of the
 \textit{interacting Bose gas} in discrete space and of the \textit{loop O(N) model.}  
In our models the loops can be  oriented or unoriented and interact with each other via a potential  which depends on their mutual distance. 
The potential can have bounded or unbounded support and can  allow a bounded  or an arbitrarily large number  of visits  at the vertices.  
We consider the system in $\mathbb{Z}^d$,  with $d>2$, and prove  the occurrence of macroscopic loops, whose length is proportional to the volume of the system. 
In particular,  we approximate $\mathbb{Z}^d$ by finite boxes and,  given any two  vertices whose distance is proportional to the diameter of the box, 
we prove that the probability of observing a loop visiting both is uniformly positive.   
We now discuss some of the models to which our general loop soup reduces or is related to,
after that we provide a formal definition of the model  and state our first main theorem in wide generality.

\paragraph{Spin O(N) model.}
The Spin O(N) model is one of the most important statistical mechanics models. 
In this model the spins take values in a unit sphere of dimension $N-1$,
see Section \ref{sect:spinresults} for the definition.
Special cases of interest are the Ising model (N=1), the XY model (N=2) and the Heisenberg model (N=3).  
The model is interesting for all integer values of $N \geq 1$, in particular its rigorous 
 analysis  is particularly challenging for values of  $N$
 greater than two, in which case important mathematical tools  are missing, 
 for example correlation inequalities.
An important mathematical object in the analysis of such a spin system 
is  the  BFS-random loop model,
which  was introduced and studied by Brydges,  Fr\"ohlich and Spencer in \cite{BFS1, BFS2}
following the work of Symanzik \cite{Symanzik}.
This is a random walk loop soup whose realisations are systems of closed random walk trajectories interacting by local repulsive interactions. 
The general loop soup considered in our paper reduces to the 
BFS-random loop representation for a special choice of the parameters
(as illustrated below).
It is well known from the seminal work of Fr\"ohlich, Simon and Spencer \cite{Frohlich}  that
a phase transition in the Spin O(N) model 
with respect to the variation of an external parameter, the inverse temperature, occurs in dimension $d>2$. 
This phase transition corresponds to the fact that as long as the inverse temperature is a above a certain critical threshold, the  
spin-spin correlations  do not decay to zero with the distance (while they decay exponentially fast with the distance as the inverse temperature is below such threshold, the sharpness of such a phase transition is however known only for $N=1,2$).
When translated into the language of loops through the BFS representation,
the non-decay to zero of the spin-spin correlations implies that the ratio of two partition functions -- with one partition function
corresponding to the weight of configurations with interacting loops and a walk connecting two points, and the other partition corresponding to the weight of configurations with only loops - is uniformly positive with respect to the variation of such two points and to the size of the system.
This fact has no direct consequence for the corresponding  random loop model where only closed   trajectories are present.
In this paper we show that a phase transition occurs also in such a random loop model. 
More precisely,  we prove that,  for any integer $N \geq 2$, for large enough inverse temperatures, the expected length of any loop  is proportional to the volume of the system and,  moreover, 
given any two distant points, the probability of observing a loop connecting both is uniformly positive. 
This also extends a result from \cite{Be-U},
stating that, in the special case   $N=2$,
 a positive fraction of sites are crossed by long loops when
$d \geq 3$ and the inverse temperature is large enough.
Contrary to \cite{Be-U}, our result does not rely on the spin formulation
of the random loop model and is entirely derived from the analysis
of a system of closed random walk trajectories.
Hence, our result holds  for a much larger
class of  random loop models -- which do not necessarily admit a representation as a spin system -- of which Spin O(N) is  just a special case. 
Moreover,  when translated back into the language of spins,  our result about occurrence of macroscopic loops has new implications on the decay of correlations in the Spin O(N) model itself,  see Section \ref{sect:spinresults} for further results.

\paragraph{Interacting Bose gas.}
Providing a rigorous proof of the occurrence of Bose-Einstein condensation  (BEC)
-- a physical phenomenon predicted by Einstein occurring to a certain class of gasses at very low temperatures -- is one of the most important  open problems in rigorous statistical mechanics \cite{LiebBook}.
In 1953  Feynman introduced  what is now referred to as the Feynman-Kac representation \cite{Feynman}.  This representation allows the reformulation of the  Bose gas -- which is defined in the functional analytic framework of rigorous quantum mechanics --  as a probabilistic model of  interacting closed  Brownian trajectories \cite{Ginibre}. 
In this system a phase transition corresponding to the occurrence of macroscopic 
loops as the particle density is above a certain critical threshold and the dimension is greater than two is expected to occur.  This phase transition is considered to be equivalent
to Bose-Einstein condensation  \cite{Suto1, Suto2, UeltschiRelation2}.  Providing a rigorous proof the occurrence of macroscopic loops in such a random loop model is then of great physical interest and an intriguing mathematical problem \textit{per se}.
\begin{figure}
\includegraphics[scale=1.10]{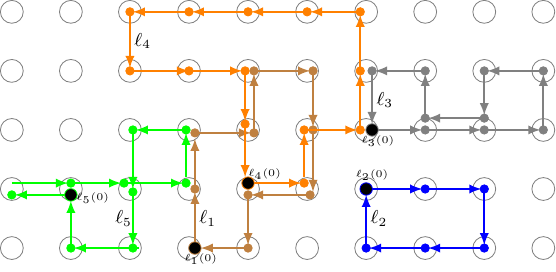}
\centering
\caption{Illustration of a realisation of the random walk loop soup. The larger filled circles represent the starting points of each loop.
In the Bose gas interpretation of the random walk loop soup  each step of each loop hosts a particle and each particle.}
\label{Fig:exampleBEC}
\end{figure}
In recent years increasing effort in the mathematical physics and  probability communities has been made for the solution of this  problem, which nevertheless remains open. 
A first important progress was made in 
\cite{ALSSY, DLS} (see also \cite[Chapter 11]{LiebBook}),
where the occurrence of macroscopic loops 
was proved for the Bose gas under  hard-core local 
interactions under the  so-called \textit{half-filling} condition.
Further important progress has been made in the rigorous analysis of spatial random permutation models  \cite{BU1,Betz0,  BU3, BU4, Bogachev, ElboimPeled}
and in the Bosonic loop soup considered in \cite{D-V}.
In these systems the loops interact through  a potential
 which depends on  the total number of loops of a given length. 
The presence of such  potential makes the model interesting and challenging, 
however the interaction  does not depend on the mutual distance between the loops -- and thus on how they are displaced in space -- and is then
a simplification of the one which is present in the loop representation of  the Bose gas.
A further recent progress has been made in \cite{T},  in this paper the occurrence of a macroscopic (open) loop was proved for the model of lattice permutations,  in which the loops interact at sites via mutual-exclusion.
A further  approach based on large deviations allowed the characterisation of the free energy 
of the Bose gas in $\mathbb{R}^d$  in a certain  region of the phase diagram 
\cite{Adams} and lead to a proof of BEC on the  complete graph \cite{Toth}.

A special case of our general random loop soup corresponds to  the Bose gas  in $\mathbb{Z}^d$ under a minor modification.  The modification consists in the replacement of the continuous time simple random walk connecting two consecutive particles by a single-step simple random walk trajectory (see also Section \ref{sect:extensionsBEC} below for further details and  comments).
Each such step of each loop then  interacts with all the other steps through a potential which depends on their mutual distance.
We consider the system in the grand canonical ensemble,  in which the particle density is controlled by an external parameter, the {chemical potential}.
Our main result is a proof of the occurrence of macroscopic loops as the average particle density is above a certain  (finite) critical threshold. More precisely, we prove that the expected length of any loop is proportional to the volume of the system and that, given any two distant sites, the probability of existence of a loop connecting both is uniformly positive.

\paragraph{Lattice permutations and loop O(N).}
Our general loop soup reduces or is related  to further important statistical mechanics models, for example to the model of lattice permutations and to the loop O(N) model.
Lattice permutations are a special case of our model in which the local time at 
each vertex is allowed to be at most one.
They are relevant for various aspects, for example  they are a generalisation 
of the double dimer model, which attracts interest for many different reasons, see \cite{Dubedat2, Kenyon5, QuitmannTaggi, T}.
The  loop O(N) model is a related model 
in which the local time at each edge (and not at each vertex) is allowed to be at most one
(see \cite{PeledSpinka} for an overview).
Our theorem unfortunately does not cover these models,  since it holds only for models in which the
local time at each vertex is allowed to be a large enough constant. 
However, the qualitative behaviour of our random walk loop soup is not expected to depend on the specific constraint on the local time, hence it is interesting and natural to make a comparison to the above mentioned models.

For lattice permutations,
the only available results about existence of macroscopic loops
involve the special case of fully-packed loops, the so-called double dimer model, 
which corresponds  to the superposition of two independent dimer covers
\cite{Dubedat2, Kenyon5, QuitmannTaggi}.
In the case of non fully-packed loops,  it is known that one single 
 `open loop' forced through the system is macroscopic \cite{QuitmannTaggi}
 when the inverse temperature is large enough in dimensions three and higher. 
For the loop O(N) model,  the occurrence of `macroscopic' loops  has been   proved  on the hexagonal lattice \cite{DuminilCopinParafermionic} along the critical curve
of the phase diagram.
In  this (planar) case the term `macroscopic' refers to the existence of a loop which surrounds a circle of arbitrarily large diameter with uniformly positive probability.
In the higher dimensional case,  instead, 
the notion of `macroscopic' loop
is stronger.
 Indeed, our work shows that  in $\mathbb{Z}^d$ with $d>2$, for 
each random walk loop soup which is covered by our theorem, 
 the expected length of any loop
is proportional to the volume of the box and, moreover, with  uniformly positive probability there exists  a loop connecting any two distant vertices. 
Such a behaviour is not expected to occur in two dimensions.

Let us also stress that random walk loop soups are  intriguing mathematical objects
which  appear  in many other subjects in probability  and mathematical physics,
for example in the framework of quantum spin systems,
and in relation to the theory of scaling limits and SLE curves.
We refer to  \cite{Aizenman3, Aizenman4, UeltschiLoopQuantum}
and \cite{Werner1, Werner2} for some references on the two subjects.

\subsection{Definitions}
Let $\mathbb{T}_L$ be a torus of side length $L$  in $\mathbb{Z}^d$,
whose elements can be identified with the set 
$
\{ x = (x_1, \ldots, x_d) \in \mathbb{Z}^d \,  : \, x_i \in (-\frac{L}{2}, \frac{L}{2}] \mbox{ for each $i=1, \ldots d$ } \}.
$
Let $\mathcal{L}$ be the set of rooted oriented loops, i.e., 
finite ordered sequences of vertices in $\mathbb{T}_L$,
$\ell = \big (\ell(0), \ell(1),   \ldots \ell(k) \big )$,
such that $\ell(i)$ is a nearest-neighbour of $\ell(i-1)$ for each 
$i \in \{1, \ldots, k\}$,   $\ell(k) = \ell(0)$ and $k >1$.
For any such sequence $\ell = \big (\ell(0), \ell(1),   \ldots, \ell(k) \big ) \in \mathcal{L}$,  we denote by $| \ell | : = k$ 
the length of the loop $\ell$. 
We let $\Omega := \cup_{n=0}^ \infty \mathcal{L}^n $ be the configuration space, whose elements are ordered collections of rooted oriented loops.
Given any configuration $\omega \in \Omega$ we denote by $|\omega|$ the \textit{number of loops}, i.e., $|\omega|$ is defined as the  integer $n \in \mathbb{N}_0$ such that $\omega \in \mathcal{L}^n$. 
For any $\omega \in \Omega$, we define by 
$$
n_x(\omega) := \sum\limits_{n=1}^{|\omega|} \sum\limits_{j=0}^{|  \ell_n  | -1} \mathbbm{1}_{
\{ \ell_n(j) = x \} }
$$
the \textit{local time} at $x \in \mathbb{T}_L$.
We now introduce a very general probability measure on $\Omega$,
which depends on several parameters and functions,  after that we will show that
very important models correspond to a special choice of 
such parameters and functions.
We first define a \textit{potential},  $v  :  \mathbb{Z}^d  \rightarrow \mathbb{R}$
satisfying $v(x) = v(-x)$ for each $x \in \mathbb{Z}^d$ and 
 make the interaction periodic under torus translations by introducing the function $v_L : \mathbb{T}_L \times \mathbb{T}_L  \rightarrow \mathbb{R}$, which depends on $v$ and $L$,  and is defined as 
\begin{equation}\label{eq:periodicpotential}
v_L(x,y) :=  \sum_{z \in \mathbb{Z}^d} v(y + L z - x).
\end{equation}
Moreover, we define the  \textit{weight function},
$U : \mathbb{N}_0 \rightarrow \mathbb{R}_0^+$,
which weights the local time at sites and may, for example, 
 suppress configurations with local time above a certain threshold.
We also introduce two parameters $\lambda, N \in \mathbb{R}^+$.
Our measure  assigns to any realisation $\omega \in  \Omega$ with $n \in \mathbb{N}_0$ loops, 
$\omega = \big  ( \ell_1, \ell_2, \ldots \ell_n \big ) \in  \mathcal{L}^n $, 
the weight,
\begin{equation}\label{eq:measureBEC}
\mathcal{P}_{L, U,  v, N, \lambda} (\omega ) : = \frac{1}{\mathcal{Z}_{L,  U, v,  N,  \lambda}}
 \, \, \frac{1}{n!} \, \,   \prod_{ i=1   }^n
\frac{\lambda^{ |\ell_i|}}{|\ell_i|} \, 
 {(\frac{N}{2})}^{n} \prod_{ x \in \mathbb{T}_L} U(n_x(\omega) ) \, 
 \exp \big(-  \mathcal{V}_L( \omega)  \big ),
\end{equation}
 where for any $\omega \in \Omega$,
\begin{equation}\label{eq:definitioninteraction}
\mathcal{V}_L( \omega) : = 
   \sum\limits_{  i = 1 }^{ |\omega|  }
    \sum\limits_{  j = 1 }^{ |\omega|  }
     \sum\limits_{ m=0  }^{ |\ell_i|  -1 }
      \sum\limits_{ n=0  }^{ |\ell_j|  -1  }
      v_L \big (    \ell_i(m),  \ell_j(n)  \big ),
 \end{equation}
and $\mathcal{Z}_{L,  U, v,  N,  \lambda}$ is a normalisation constant,  to which we refer as \textit{partition function}.
We generically refer to the random loop model defined by \eqref{eq:measureBEC} as Random Walk Loop Soup (RWLS).
 According to the previous definition,  any  step of any loop interacts repulsively or attractively (depending on the sign of $v_L(\cdot, \cdot)$) with all the other  steps of all the other loops 
 through the function  $v_L( \cdot , \cdot)$.
 Moreover, the parameter $\lambda \in (0, \infty)$ provides a penalisation or a reward  for the  total length of the loops,   intuitively higher values of $\lambda$    correspond to a greater total loop length.
The parameter $N$ provides a penalisation or a reward  for the total number of loops, intuitively higher values of $N$ correspond to a higher number of loops. 
The weight $1/|\ell_i|$ in \eqref{eq:measureBEC} can be viewed as a normalisation factor for the number of starting points of a given loop, the weight $1/ n!$ can be viewed as a normalisation factor for the number of ways the labels $1$, $2$,  $\ldots$ $n$ can be distributed among the $n$ loops. 
The well-definedness of the measure \eqref{eq:measureBEC} requires some assumptions on $v$ and $U$. 
We say that  the potential $v$ is \textit{tempered} if
\begin{equation}\label{eq:tempered}
 \overline{v} := v(0) + \sum_{x \in \mathbb{Z}^d} v(x) \mathbbm{1}_{   \{ v(x) <0 \}  } \geq 0 \quad \mbox{ and }  \quad   \sum\limits_{x \in \mathbb{Z}^d  }  |v(x) |  < \infty.
\end{equation}
A potential is tempered if it is locally non-attractive ($v(0) \geq 0$),  the  total attractive interactions  (represented by the sum with the indicator in  \eqref{eq:tempered}) are not stronger than such local repulsive interaction,  and if it is summable. 
Such  conditions allow us to prevent that all the loops concentrate in a finite region of the torus with infinite local time.
Given the potential $v$, we say that the weight function $U : \mathbb{N}_0 \rightarrow \mathbb{R}_0^+$  is \textit{good} if there exists $M < \infty$ such that, 
\begin{equation}\label{eq:rapidlydecayingdef}
n \, \overline{U}(n+1) \,  \leq  \, M \,  \overline U(n),
\end{equation}
where $\overline{U}(n) = U(n) e^{  -  \overline{v} \,  n^2 }$.
For example,  $U(n) = \mathbbm{1}_{ \{n \leq 10\}  }$ and $U(n) = \frac{1}{n!}$ are good
for any tempered potential $v$,  while  $U(n) = n^2$ is good only if $v$ is not only tempered but also $ \overline{v} > 0$.  It is easy to ensure the well-definedness of \eqref{eq:measureBEC}  for any $ \lambda, N \in \mathbb{R}^+$ if  $v$ is tempered and $U$ is good (see Lemma \ref{lemma: finitepartitionfunction} below).

 \paragraph{Special cases.}
We now discuss the choices of $v$,  $U$,  $\lambda$ and $N$ of our general loop soup which lead to other models of interest.

\textit{{Interacting Bose gas.}}
When $N=2$,  and $U(n) = 1$ for any $n \in \mathbb{N}_0$,  our loop soup is a version of the discrete Bose gas in the grand-canonical ensemble with chemical potential $\log \lambda$   and unit inverse temperature,
the only difference with the Bose gas in continuous space is that  the particles are located in $\mathbb{Z}^d$ rather than in $\mathbb{R}^d$ and that 
  a single-step random walk trajectory rather than a Brownian bridge of time $\beta$ (the inverse temperature) connects two consecutive particles. 
  We refer to  Section \ref{sect:extensionsBEC} below for further details on such an important connection.


\textit{{Spin O(N) model and BFS representation.}}
When $N \in \mathbb{N}$,  $v(x) = 0$ for any $x \in \mathbb{Z}^d$,
and 
\begin{equation}\label{eq:weightfunctionspin}
U(n) =  \frac{\Gamma(\frac{N}{2})}{ 2^n \Gamma(\frac{N}{2} + n)   },
\end{equation}
our model corresponds to the Brydges, Fr\"ohlich and Spencer 
representation of the Spin O(N) model with inverse temperature $\lambda   \geq 0$ \cite{BFS1}
(note a correction of the original definition in \cite[eq. (6.18)]{BFS2}), see also  \cite{Be-U,  BFS2, L-T-first, L-T-exponential} for further connections between the Spin O(N) model and random loops and for a definition of the spin O(N) model.

  \textit{{Lattice permutations, loop O(N) model and other models}.}
 When  $v(x)= 0$ for any $x \in \mathbb{Z}^d$
 and 
 \begin{equation}\label{eq:weightfunctionR}
 U(n) = \begin{cases}
 1  \quad & \mbox{ if $n \leq R$,}\\
0 \quad & \mbox{ otherwise, }
 \end{cases}
 \end{equation}
 our model is such that the local time at each vertex is upper bounded by $R  \in \mathbb{N}$. When $R= 1$,  our model reduces to random lattice permutations \cite{T},
 which, in turn, reduce to the double dimer model 
 \cite{Kenyon5} when $\lambda = \infty$ \cite{T}.
 The double dimer model  in turn corresponds to the superposition of two independent configurations of the dimer model.
 As we explained above, the  loop O(N) model \cite{PeledSpinka} is not a special case of our general model, but it is closely related to.  
To see the connections between our model (where the loops are oriented,  labelled, and have a starting point) and these models  (where the loops are unoriented,   receive no label and have no starting point), one should observe that 
 the terms $\frac{1}{n!}$ and $\frac{1}{|\ell_i|}$ in   \eqref{eq:measureBEC}
are normalisation  factors for the number of possibilities of assignment of $n$ labels to the $n$ loops  and of shifting the starting point of each loop respectively.

\subsection{Main result about the occurrence of macroscopic loops}
 Our main theorem states that,  if $\lambda$ is large enough, 
 the expected length of any loop is of the order of the volume of the torus. 
In particular,  the probability of existence of a loop connecting any pair of sites  having distance proportional to the diameter of the box is uniformly positive.
Such a general result requires a further  assumption on the potential, to which we refer as \textit{separability}. 
This assumption is introduced in Definition \ref{def: separable} below, here we provide 
two main examples of potentials fulfilling such an assumption,  i.e.,
$$v_1(x) =   \alpha  \mathbbm{1}_{\{  x = 0  \}} -  \beta \,  e^{ - \iota | x |_1   }  \mathbbm{1}_{\{  x \neq 0  \}},$$ 
and
$$v_2(x) =   \alpha   \mathbbm{1}_{\{  x = 0  \}} -  \beta  \, |x|_1^{-s} \mathbbm{1}_{\{  x \neq 0  \}},$$
for parameters $\alpha,  \beta,  \iota,  s  \in [0, \infty)$ such that  
$s > d$, where $| \cdot  |_1$ is the $\ell_1$  distance  on the torus.
These two examples correspond to a local repulsive interaction and to a long-range attractive interaction which decays exponentially or polynomially with the distance.  
Such potentials are tempered if $\alpha$ is large enough with respect to $\beta$. 

We now state our first main theorem. 
Let $\Gamma_x$ be the first loop visiting the vertex $x$, namely 
for any $\omega = (\ell_1, \ldots, \ell_{|\omega|}) \in \Omega$, we let 
$w_x(\omega)  := \inf\{ i \in \{1, \ldots, | \omega |  \} \, : \,  x \in \ell_{i}   \}$ be the smallest index of the loops visiting $x$, where $|\omega|$ is the total number of loops in $\omega$. 
We then define for any $\omega = ( \ell_1, \ldots, \ell_{ |\omega| }) \in \Omega$,
\begin{equation} \label{eq:firstloopx}
\Gamma_x (\omega) := \begin{cases}
\ell_{ w_x(\omega)   } & \mbox{ if  $w_x(\omega) < \infty$  } \\
\emptyset & \mbox{ otherwise.} 
\end{cases} 
\end{equation}
Moreover, for any $i \in \mathbb{N}$ we let $\Gamma^{(i)}$ be the ith loop of $\omega = ( \ell_1, \ldots, \ell_{|\omega|}) \in \Omega$,   
\begin{equation}\label{eq:nthloop}
\Gamma^{(i)}(\omega) :=
 \begin{cases}
\ell_{ i  } & \mbox{ if  $i \leq |\omega|$  } \\
\emptyset & \mbox{ otherwise.} 
\end{cases}
\end{equation}
Furthermore, we let $E_{L, U, v, N, \lambda}$ be the expectation with respect to (\ref{eq:measureBEC}).
Finally, we say that the weight function $U : \mathbb{N}_0 \rightarrow \mathbb{R}_0^+$ has \textit{range}  $R$ if 
\begin{equation}\label{eq:definitionrange}
R := \sup \{ n \in \mathbb{N}_0 \, : \, U(n) > 0   \}.
\end{equation}
For example, the range of the weight function is infinite in the case of the interacting Bose gas, $U(n) = 1$ for every $n \in \mathbb{N}_0$, 
and of the Spin O(N) model, (\ref{eq:weightfunctionspin}),
while the range is finite if the weight   function satisfies (\ref{eq:weightfunctionR}).
We also let $\mathbb{T}_L^o$ be the set of sites $x = (x_1, \ldots, x_d) \in \mathbb{T}_L$ such that $x_i \in 2 \mathbb{N}_0+1$ for every coordinate $i \in [d]$ and denote the origin by $o \in \mathbb{T}_L$.
 \begin{theorem}\label{theo:maintheorem}
 Let $d $, $N  \in \mathbb{N}$, be such that $d \geq 3$ and $N \geq 2$,  let $R$ be a large enough integer depending on $d$ and $N$, 
 suppose that $v :  \mathbb{Z}^d \rightarrow \mathbb{R}$ is tempered and separable,
let $U$ be a good weight function with range at least $R$.  
  There exists $\lambda_0 < \infty$  such that,  for any $\lambda > \lambda_0$,  
 the following two properties hold: 
 \begin{enumerate}[(i)]
 \item There exists $c_1\in (0, \infty)$, which does not depend on $L$, such that,
    \begin{equation}\label{eq:twopointBEC1}
\liminf_{ \substack{  L \rightarrow \infty : \\ L \in 2 \mathbb{N} }} \frac{E_{L,   U, v, N, \lambda}  (  |\Gamma_x|  )}{L^d}  > c_1,
 \end{equation}
 for any vertex $x \in \mathbb{T}_L$.
\item There exist $c_2, c_3 \in (0, 1)$, which do not depend on $L$,  such that, for any $L \in 2 \mathbb{N}$,  any $x \in \mathbb{T}_L^o$  such that $d_L(o,x)  \leq c_2 \, L $,
 \begin{equation}\label{eq:twopointBEC2}
\mathcal{P}_{L,   U, v, N, \lambda} \big ( \exists n \in \{1,  \ldots, |\omega| \}  \, : \, o,x  \in \Gamma^{(n)}  \big   )  > c_3,
 \end{equation}
 where $d_L(x,y)$ is the torus graph distance. 
 \end{enumerate} 
 \end{theorem}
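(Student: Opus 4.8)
The plan is to follow the Fr\"ohlich--Simon--Spencer scheme, but carried out directly on the random walk loop soup rather than through any spin representation; the main tool is reflection positivity of $\mathcal{P}_{L,U,v,N,\lambda}$. The role of the separability assumption (Definition~\ref{def: separable}) is precisely to ensure that, together with temperedness of $v$ and the product structure of $\prod_{x}U(n_x)$, of $\tfrac{1}{n!}$ and of the length weights $\lambda^{|\ell|}/|\ell|$, the measure is reflection positive with respect to reflections of $\mathbb{T}_L$ through hyperplanes passing through sites and through hyperplanes passing through edges. A loop crossing such a hyperplane gets cut into sub-paths that the reflection swaps, and the weights are designed so that this operation is admissible; separability is what makes the interaction $\mathcal{V}_L$ split as a sum of squares across the reflection, as needed for Gaussian domination.

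First I would introduce a two-point function of the loop soup of the type implicit in the discussion of the BFS representation above, namely $\mathbb{G}_L(x,y):=\mathcal{Z}^{x,y}_{L,U,v,N,\lambda}/\mathcal{Z}_{L,U,v,N,\lambda}$, where the numerator is the partition function modified by one extra open path from $x$ to $y$ that interacts with all loops through $v_L$ and contributes to the local times (a closely related object is the truncated local-time correlation $\langle n_o n_x\rangle-\langle n_o\rangle\langle n_x\rangle$). From reflection positivity one obtains a Gaussian-domination estimate and hence an infrared bound $\widehat{\mathbb{G}}_L(p)\le C/\hat p$ for $p\neq 0$, with $\hat p=\sum_{i=1}^{d}(1-\cos p_i)$ and $C$ independent of $L$. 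Summing over $p\in\tfrac{2\pi}{L}\mathbb{Z}^{d}\setminus\{0\}$ and using that $\sum_{p\neq0}\hat p^{-1}=O(L^{d})$ precisely when $d\ge3$, Fourier inversion gives $\tfrac{1}{L^{d}}\sum_{x\in\mathbb{T}_L}\mathbb{G}_L(o,x)\ge\mathbb{G}_L(o,o)-C'$ with $C'$ independent of $L$. A separate, purely local estimate shows that $\mathbb{G}_L(o,o)$---essentially governed by the expected local time at $o$---becomes large for $R$ and $\lambda$ large; choosing $R$ large (depending on $d,N$) and then $\lambda_0$ large one arranges $\mathbb{G}_L(o,o)-C'\ge\delta>0$, i.e.\ a uniformly positive \emph{spatially averaged} two-point function.

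The next step is to upgrade this to a pointwise bound by a chessboard (Fr\"ohlich--Israel--Lieb--Simon) estimate, itself a consequence of reflection positivity: reflecting through a bounded number $K=K(d)$ of hyperplanes lets one compare $\mathbb{G}_L(o,x)$ for $x$ in the all-odd sublattice $\mathbb{T}_L^{o}$ with $d_L(o,x)\le c_2L$ to its spatial average, giving $\mathbb{G}_L(o,x)\ge c>0$ for all such $x$; the appearance of $\mathbb{T}_L^{o}$ and of a constant $c_2<1$ is dictated by this reflection geometry. Finally I would transfer these estimates to the loop picture by combinatorial surgery. For (i), opening the loop through $x$ at $x$ and at a visit to $y$ produces an open path from $x$ to $y$, and tracking the weight change gives $E_{L,U,v,N,\lambda}(|\Gamma_x|)\ge c''\sum_{y}\mathbb{G}_L(x,y)\ge c''\delta L^{d}$; translation invariance of $\mathcal{P}_{L,U,v,N,\lambda}$ then yields the bound for every $x$. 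For (ii), one must pass from ``$o$ and $x$ are joined by an open path in the BFS model'' (that is, $\mathbb{G}_L(o,x)\ge c$) to ``$o$ and $x$ lie on a common loop with positive probability''; the idea is to isolate the same-loop contribution to the correlation---using the a priori bound that each loop, and the number of loops through a given site, is controlled by the local-time budget $|\ell|\le RL^{d}$---and to discard the different-loop contribution, which at large separations is small by the decay of $v$, while small separations are handled by an elementary argument. This gives $\mathcal{P}_{L,U,v,N,\lambda}(\exists\,n:\,o,x\in\Gamma^{(n)})\ge c_3>0$.

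The main obstacle I anticipate is the first step together with the chessboard estimate: verifying reflection positivity at this level of generality---arbitrary tempered and separable $v$, of bounded or unbounded range, and hard-core or soft local constraints encoded by $U$, with loops genuinely cut and rematched across each hyperplane---and then making the chessboard argument land exactly on the all-odd sublattice within a constant fraction of the diameter. The second delicate point is part (ii): extracting a macroscopic loop through two \emph{prescribed} distant vertices, rather than merely on average, which is the genuinely new statement compared with \cite{Be-U} and does not follow formally from the two-point function bound.
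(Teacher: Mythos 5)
Your overall scaffolding (reflection positivity $\Rightarrow$ infrared-type bound $\Rightarrow$ uniform positivity of a spatially averaged two-point function $\Rightarrow$ pointwise bound $\Rightarrow$ loop statement) does match the paper's strategy, but several of the specific steps you propose would not go through, and they are precisely the steps where the paper does something different.

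First, you claim a Gaussian-domination estimate. The paper explicitly notes that \emph{no} Gaussian domination holds here: the polynomial expansion of the generating quantity $\mathscr{Z}(\varphi\boldsymbol{h})$ (Proposition~\ref{prop: central quantity expansion}) produces, besides the term $\sum_{x,y}\mathbb{Z}(x,y)(\triangle^*h)_x(\triangle^*h)_y$, additional contributions proportional to $\mu^\ell(m^{(1)}_{\{o,\boldsymbol{e}_1\}})$ coming from configurations in $\tilde{\Acal}^i(\{x,y\})$. These cannot be absorbed, so the Key Inequality \eqref{eq: Key Inequality} has the prefactor $1+2\,\E(m^{(1)}_{\{o,\boldsymbol{e}_1\}})$ on the right-hand side rather than $1$. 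Consequently the infrared bound alone is not enough: one must additionally show (Proposition~\ref{proposition: uniformpositivity}) that $\G(o,\boldsymbol{e}_1)=\E(m^{(1)}m^{(2)})$ beats $\E(m^{(1)})$ by an arbitrarily large margin when $R,\lambda$ are large, which requires the combinatorial Lemma~\ref{lemma: sameloop} controlling how often all links across an edge lie on a common cycle. Your plan skips this entirely.

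Second, your two-point function---``one extra open path from $x$ to $y$''---is not the right object. The paper's $\G(x,y)$ is built from configurations with \emph{two} open paths of \emph{different} colours between $x$ and $y$ (set $\Wcal_{x,y}$, \eqref{eq:Wcalxy}). This is essential: two unpaired links of the \emph{same} colour at $x$ and $y$ need not be connected, whereas two different-coloured open paths can be recoloured and merged into a single closed path through both vertices (the map $F_1$, Lemma~\ref{lemma:relationtwopointfct}, Proposition~\ref{lemma:expectednumberloopsrelation}). This surgery is exactly what gives $\E(\tilde{\Ncal}_{x,y})\ge c\,\G(x,y)^4$ and hence part (ii) via Cauchy--Schwarz, $\P(\tilde{\Ncal}_{x,y}>0)\ge \E(\tilde{\Ncal}_{x,y})^2/\E(\tilde{\Ncal}_{x,y}^2)$, with the denominator bounded by the chessboard estimate. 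Your alternative route for (ii)---``discard the different-loop contribution, which at large separations is small by the decay of $v$''---does not work: in the regime $\lambda>\lambda_0$ the model has macroscopic correlations, different-loop contributions are not small at large separations, and the decay of the interaction $v$ does not control the decay of two-point correlations. The ``same-loop contribution'' cannot be isolated by such a soft argument; the paper obtains it constructively from the two-colour merge.

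Third, the paper proves reflection positivity and the Key Inequality not for the RWLS measure \eqref{eq:measureBEC} itself (whose loops are labelled, rooted and oriented, which does not interact well with reflections) but for the Random Path Model with colours, pairings and ghost pairings, on an extended torus $(\Tcal_L,\Ecal_L)$ with a duplicate vertex layer. The equivalence Theorem~\ref{theo: equivalence} is what transfers the final statement back to the RWLS via the dictionary $\Ncal_{x,y}\sim\tilde{\Ncal}_{x,y}$. Your plan bypasses this reformulation, and it is not clear that reflection positivity can be set up directly at the level of \eqref{eq:measureBEC}; in particular reflections through sites (which you invoke) are not used in the paper, only reflections through edges. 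Finally, the upgrade from the averaged bound to the pointwise bound on $\mathbb{T}_L^o$ within radius $c_2L$ is not done by a chessboard estimate but by the site-monotonicity result of \cite{L-T-quantum} (Proposition~\ref{prop:monotonicity} and \cite[Theorem 2.2]{L-T-quantum}), which again requires the ghost-pairing reformulation of $\G$ as a product of single-site observables.
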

On the contrary, if $\lambda$ is sufficiently small 
the model exhibits a quite different behaviour, indeed
  $E_{L, U, v, N, \lambda}  (  |\Gamma_x| )  = O(1)$ in the limit as $L \rightarrow \infty$
and 
the quantity in the left-hand side of 
 (\ref{eq:twopointBEC2}) decays exponentially  with the distance between $x$ and $y$ (with exponential moments uniformly bounded in $L$).
This can be proved  using the cluster expansion  method
(see for example \cite[Chapter 5]{FriedliVelenik}) or  the methods of \cite{Betz2, TaggiShifted}.
   Hence,  the combination of these facts and of our theorem imply the occurrence of a phase transition 
   with respect to the variation of the parameter $\lambda$.
   Under further assumptions on the weight function $U$ the point-wise positivity result, (\ref{eq:twopointBEC2}), can be extended to any vertex  $x \in \mathbb{T}_L$.
   
   Moreover, our result complements a result from \cite{Chayes},
   where a random loop model analogous to ours was considered,  in which any edge is allowed to be crossed by at most one loop and no long-range interactions are present.
   In this paper it was proved that, if $d \geq 2$ and $N$ is a large enough integer, then the loops are `small' for any $\lambda \in [0, \infty)$,
   hence no phase transition with respect to $\lambda$ occurs.
    On the contrary, our result states that, if $d \geq 3$ and 
   $N$ is any integer greater than two,  then a phase transition with respect to $\lambda$ \textit{does} occur as long as the range of the weight function is large enough (depending on $d$ and $N$).

\paragraph{Paper organisation and proof  structure.}
We end this introduction by presenting the organisation of the paper.
In Section \ref{sect:randompathmodel} we introduce the  Random Path Model (RPM),
a random loop model which differs from the RWLS,  \eqref{eq:measureBEC},
and plays an important role in our proofs. 
In Section \ref{sect:equivalence} we prove that these random loop models are \textit{equivalent} if one considers
  averages of functions which do not depend on certain features
 of the configuration, like for example the loop orientation
 or the starting point of the loop. The loop length is an example of such functions.
 From this point of the paper until the last page we work with the RPM
 and exploit the nice properties of such a model in all the proofs, 
 for example reflection positivity, colours,
 the conditional pairing independence.  
In Section \ref{sect:reflectionpositivityandchessboardestimate}
we introduce the first important technique for the analysis
of the random path model, reflection positivity and the chessboard estimate.
In Section \ref{sect:two-pointfunctionanditsproperties}
we relate the two point function to some important observable of
random loop model -- for example we relate the two-point function
defined at two neighbour vertices to the expected number of links
of a certain colours crossing the corresponding edge -- 
and use some probabilistic and combinatorial tools
to provide bounds on the two-point function
of the random path model.
In Section \ref{sect:derivationofthekeyinequality}
we derive the so-called Key Inequality.
In Section \ref{sect:upperboundonthefouriersum}
we use elementary Fourier analysis 
notions for the derivation of the so-called Infrared-bound
from the Key Inequality. 
In Section \ref{sect:proofoftheorem1.1} we 
present the proof of our main theorem. 
Here we use  the Infrared-bound
and our estimates on the two-point function to prove the occurrence of macroscopic loops
in the RPM.
By our equivalence theorem this result 
is then extended to the RWLS, thus concluding the proof of our main theorem,
Theorem \ref{theo:maintheorem}.

\section*{Notation}

\begin{center}
	\begin{tabular}{ l l }
	
	$\R^+$, $ \R^+_0$  & strictly positive  and non-negative real numbers  respectively \\

$\N$,  $\N_0$ &  strictly positive and non-negative integers respectively \\

$[n]$ & set of integers $\{1,2,\dots,n\}$ \\


$\mathcal{G} = ( \mathcal{V}, \mathcal{E})$ &  an undirected, simple, finite  graph \\

$e \in \mathcal{E}$ or $\{x,y\} \in \mathcal{E}$ & undirected edges \\

$(x,y) \in \mathcal{E}$ &  edge directed from $x$ to $y$ \\



$m = (m_e)_{e \in \mathcal{E}}$ & link cardinalities, with  $m_e$ corresponding  number of links \textit{on} the edge $e$  \\

$c = (c_e)_{e \in \mathcal{E}}$ & link colourings, with  $c_e : \{1, \ldots, m_e\} \rightarrow \{1, \ldots, N\} $ \\ 

$\pi = (\pi_x)_{x \in \mathcal{V}}$ & pairings, with $\pi_x$ pairing the links touching the vertex $x$ or leaving them unpaired \\

$g=(g_x)_{x \in \Vcal}$ & configuration of ghost pairings \\



$\mathcal{W}_{\mathcal{G}}$ &  the set of  configurations of the RPM in $\mathcal{G}$, with $w= (m,c,\pi,g) \in \mathcal{W}_\Gcal$ \\

$\tilde{\Wcal}_\Gcal$ & set of configurations of the RPM with no unpaired links and no ghost pairings \\

$n_x$ & number of pairings at $x$ \\

$u_x$ & number of unpaired endpoints of links at $x$ \\

$\Omega_\Gcal$ & set of configurations of the RWLS in $\mathcal{G}$ \\

$\Lcal_\Gcal$ & set of rooted oriented loops in $\Gcal$ \\

$\Sigma(\Wcal_\Gcal)$,  $\Sigma(\Omega_\Gcal)$, $ \Sigma(\Lcal_\Gcal)$ & set of equivalence classes  \\

 $\delta(l)$ & stretch-factor of $l \in \Lcal_\Gcal$ \\
 
 $J(l)$ & multiplicity of $l \in \Lcal_\Gcal$ \\

$\mathbb{Z}^\ell_{L, U, v,  N, \lambda}$ 
 & partition function of the random path model\\
 
 $\mathbb{Z}_{L, U, v,  N, \lambda}(x,y)$ 
 & directed partition function \\
 
 $\mathbb{G}_{L, U, v,  N, \lambda}(x,y)$ 
 & two-point function \\




  $\mathbb{G}_{L, U, v,  N, \lambda}(x)$ 
 & equivalent to $\mathbb{G}_{L, U, v,  N, \lambda}(o,x)$   \\
 
  $\hat{\mathbb{G}}_{L, U, v,  N, \lambda}(k)$ 
 & Fourier transform of  $\mathbb{G}_{L, U, v,  N, \lambda}(x)$  \\
 
 $\Zcal_{L, U, v,  N, \lambda}$  & partition function of the random walk loop soup \\

 $ ( \T_L, \E_L)$ & graph corresponding to the torus $\mathbb{Z}^d / L \mathbb{Z}^d$   \\
 
  $ ( \mathcal{T}_L, \mathcal{E}_L)$ & extended torus, with original and virtual vertices \\
  
      $ \T_L^{ (2)} \subset  \mathcal{T}_L$ & set of virtual vertices  \\
      
         $ \T_L^{*}$  & Fourier dual torus   \\

 $o \in \mathbb{T}_L$,  $o \in \mathcal{T}_L$, $o \in \T_L^*$ & origin \\

    $x \sim y$ & pair of vertices in $\T_L$ which are connected by an edge in $ \E_L$    \\


$\boldsymbol{v} = (v_x)_{x \in \T_L}$ &  real-valued vector, with  coordinates associated to $\T_L$ \\

$\boldsymbol{h} = (h_x)_{x \in \mathcal{T}_L}$ & vector of real numbers, with  coordinates associated to  $\mathcal{T}_L$ \\

  $\mathcal{Z}_{L, U, v,  N, \lambda}(\boldsymbol{h})$ 
 & partition function with colour changes at $x$ receiving a multiplicative weight $h_x$    \\
 
  $\mathcal{Z}^{(2)}_{L, U, v,  N, \lambda}(\boldsymbol{h})$ 
 & second term of the polynomial expansion   \\

 
\end{tabular}

\end{center}

Throughout the paper, we denote any positive constant that depends on the model parameters $L, U, v, N, \lambda$ by $c$. The constants $c$ may differ from line to line. 
Any further dependence will be denoted explicitly.

\section{Random path model}
\label{sect:randompathmodel}
In this section we define the Random Path Model (RPM).
Let $\Gcal=(\Vcal,\Ecal)$ be an undirected, simple, finite graph, and let $N \in \N$. We refer to $N$ as the \textit{number of colours}. A realisation of the random path model can be viewed as a collection of 
unoriented paths which might be closed or open.
To define a realisation we need to introduce {links}, {colourings} and 
pairings. A glance at Figure \ref{fig:RPM} might be helpful. 
We represent a \textit{link configuration} by
 $m \in  \mathcal{M}_{\mathcal{G}} := \{ m \in \mathbb{N}_0^{\mathcal{E}}: \forall x \in \Vcal, \, \sum_{y \sim x} m_{\{x,y\}} \in 2\N_0 \}$. More specifically
$$m = \big ( m_e \big )_{e \in \mathcal{E}},$$
where $m_{e}\in\N_0$ represents the number of links on the edge $e$.
Intuitively, a link represents a `visit' at the edge from a path.
The links are ordered and receive a label between $1$ and $m_e$. We denote by $(e,p)$ the $p$-th link at $e \in \Ecal$ with $p \in [m_e]$. If a link is on the edge $e = \{x,y\}$, then we  say that \textit{it touches} $x$ and $y$. 

Given a link configuration $m \in \mathcal{M}_{\mathcal{G}}$, a \textit{colouring} $c = (c_e)_{e \in \mathcal{E}}$, with $c_e: [m_e] \to [N]$ is a function which assigns an integer in $[N]$ to each link, which will be called its \textit{colour}.
More precisely, $c_e(p) \in [N]$ is the colour of the $p$-th link on the edge $e \in \mathcal{E}$, with $p \in [m_e]$. A link with colour $i \in [N]$ is called an $i$-link. We let $\Ccal_\Gcal(m)$ be the set of possible colourings $c=(c_e)_{e \in \Ecal}$ for $m$.

Given a link configuration $m \in \mathcal{M}_{\mathcal{G}}$, and a colouring $c \in \mathcal{C}_{\mathcal{G}}(m)$, a pairing $ \pi = (\pi_x)_{x \in \mathcal{V}}$ for $(m,c)$ pairs links touching $x$ in such a way that, if two links are paired, then they have the same colour.  A link touching $x$ can be paired to at most one other link 
 touching $x$, and it is not
necessarily the case that all links touching $x$ are paired to another link at $x$. 
More formally, $\pi_x$ is a partition of the links touching $x$ into sets of at most two links. If a link touching $x$ is paired at $x$ to no other link touching $x$, then we say that the link is \textit{unpaired at $x$}. Given two links, if there exists a vertex $x$ such that such links are \textit{paired at $x$}, then we say that such links are \textit{paired}.  We remark that, by definition, a link cannot be paired to itself. 
We denote by $\mathcal{P}_{\mathcal{G}}(m, c)$ the set of all such pairings for $m\in\mathcal{M}_\Gcal$, $c \in \mathcal{C}_{\mathcal{G}}(m)$. 

We can view a triplet $(m,c,\pi)$ with $m \in \Mcal_\Gcal, c \in \Ccal_\Gcal(m)$ and $\pi \in \Pcal_\Gcal(m,c)$ as a collection of (open or closed) paths which are unrooted and unoriented, see also Figure \ref{fig:RPM}.

Additionally, we also introduce \textit{ghost pairings}. A configuration of ghost pairings is an element $g=(g_x)_{x \in \Vcal} \in \Hcal_\Gcal :=\{0,1,2\}^\Vcal$.
We introduce ghost pairings since 
at some point we will  need to `replace' some removed pairings by  `ghost' pairings  in order to preserve the weight  of the configuration
(later we will introduce a measure which weights the configurations depending
on the sum of the number of pairings and ghost pairings at the vertices),
and they allow the derivation of some useful monotonicity properties, as we explain in  Remark \ref{rem:ghosts} below.

A \textit{configuration}  of the random path model is an element $w = ( m, c, \pi, g)$
such that $m \in \mathcal{M}_{\mathcal{G}}$,
$c \in \mathcal{C}_{\mathcal{G}}(m)$,  
$\pi \in \mathcal{P}_{\mathcal{G}}(m, c)$ and $g \in \Hcal_\Gcal$.
We let $\mathcal{W}_{\mathcal{G}}$ be the set of  such configurations. 

With slight abuse of notation, we will also view, $m,c,\pi,g: \Wcal_\Gcal \to \N_0$ as functions such that for any $w^\prime=(m^\prime,c^\prime, \pi^\prime,g^\prime) \in \Wcal_\Gcal$, $m(w^\prime)=m^\prime$, $c(w^\prime)=c^\prime$, $\pi(w^\prime)=\pi^\prime$ and $g(w^\prime)=g^\prime$. 
For any $x \in \mathcal{V}$ and $i \in [N]$, let $u_x^i: \mathcal{W}_{\mathcal{G}} \rightarrow \mathbb{N}_0$ be the function corresponding to the number of $i$-links touching $x$ which are not paired to any other link touching $x$ and let $u_x: \mathcal{W}_{\mathcal{G}} \rightarrow \mathbb{N}_0$ defined by 
$$
u_x(w):=\sum_{i=1}^N u_x^i(w)
$$ 
be the total number of unpaired links touching $x$.
Let further $n_x^i : \mathcal{W}_{\mathcal{G}} \rightarrow \mathbb{N}_0$ be the function corresponding to the number of pairings of $i$-links at $x$. We then define $n_x: \mathcal{W}_{\mathcal{G}} \rightarrow \mathbb{N}_0$ by 
$$
n_x(w) := \sum_{i=1}^N n_x^i(w)+g_x(w).
$$
as the total number of pairings at $x$, which are ghost or not ghost. Let $m_e^{(i)}: \Wcal_\Gcal \to \N_0$ denote the number of $i$-links on $e$ for any $i \in [N]$ and $e \in \Ecal$, namely for any $w \in \Wcal_\Gcal$, 
$
m_e^{(i)}(w):= \sum_{j=1}^{m_e} \mathbbm{1}_{\{c_e(j)=i\}}(w).
$

\begin{figure}
  \centering
    \includegraphics[width=0.3\textwidth]{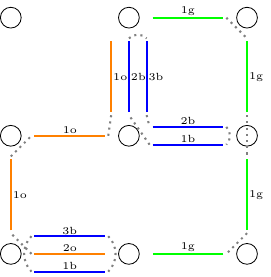}
      \caption{ Representation of a triplet $(m,c,\pi)$ with $m \in \Mcal_\Gcal, c \in \Ccal_\Gcal(m)$ and $\pi \in \Pcal_\Gcal(m,c)$, where $\Gcal$ corresponds to the graph $\{1,2,3\} \times \{1,2,3\}$ with edges connecting nearest neighbours. The vertices are represented by black circles and the edges are not drawn in the figure. The lowest leftmost vertex corresponds to $(1,1)$. On every edge $e$ the links are ordered and receive a label between $1$ and $m_e$.  We assume that $N=3$ and rather than by using numbers we represent the colours using the letters $b,o,g$.  Paired links are connected by a dotted gray line. For example, at $(1,1)$, all links are paired and the pairing is given by $\pi_{(1,1)}=\{\{(e_1,1),(e_1,3)\}, \{(e_1,2),(e_2,1)\}\}$, where $e_1$ and $e_2$ are defined as the edges connecting $(1,1)$ and $(2,1)$ and $(1,1)$ and $(1,2)$, respectively. At $(2,1)$, there exist precisely two unpaired links and the pairing is given by $\pi_{(2,1)}=\{\{(e_1,1),(e_1,3)\}, \{(e_1,2)\},\{(e_3,1)\}\}$, where $e_3$ is the edge connecting $(2,1)$ and $(3,1)$.}
\label{fig:RPM}
\end{figure}

We let 
\begin{equation} \label{eq:randompathtilde}
\tilde{\Wcal}_\Gcal:=\{w \in \Wcal_{\Gcal}: u_x=0=g_x \, \forall x \in \Vcal\}
\end{equation}
be the set of configurations in which there exist no unpaired links and no ghost pairings. Elements of $\tilde{\Wcal}_{\Gcal}$ are simply denoted by triples $w=(m,c,\pi)$.

We now introduce a (non-normalised) measure  on $\mathcal{W}_{\mathcal{G}}$ and a probability measure on $\tilde{\Wcal}_{\Gcal}$.
\begin{definition}\label{def:measure}
Let $N \in \mathbb{N}$, $\lambda \in \R^+$, $U:\N_0 \to \R_0^+$ and $v_\Gcal: \mathcal{V} \times \mathcal{V} \to \mathbb{R}$ be given. We refer to $U$ as \textit{weight function} and to $v_\Gcal$ as \textit{potential}. We define $V:\mathcal{W}_{\mathcal{G}} \to \mathbb{R}$ by
\begin{equation} \label{eq:defV}
\forall w \in \mathcal{W}_{\mathcal{G}}
\quad \quad V(w) := \sum_{x,y \in \mathcal{V}} v_\Gcal(x,y)n_x(w)n_y(w)
\end{equation}
and introduce the (non-normalised) non-negative measure $\mu_{\Gcal,U,v_\Gcal,N,\lambda}$ on $\mathcal{W}_{\mathcal{G}}$,
\begin{equation}\label{eq:RPMmeasure}
\forall w = (m, c,  \pi,g ) \in \mathcal{W}_{\mathcal{G}}
\quad \quad 
\mu_{\Gcal,U,v_\Gcal,N,\lambda}(w) := 
  \bigg(\prod_{e \in \mathcal{E}} \frac{ \lambda^{m_e(w)}}{m_e(w)!} \bigg ) \bigg( \prod_{x \in \mathcal{V}} U(n_x(w))
  \bigg) e^{-V(w)}.
\end{equation}


Given a function $f : \mathcal{W}_{\mathcal{G}} \rightarrow \mathbb{C}$, we represent its average by 
$
\mu_{\Gcal,U,v_\Gcal,N,\lambda} \big ( f \big ) :=
\sum\limits_{w \in \mathcal{W}_\Gcal}\mu_{\Gcal,U,v_\Gcal,N,\lambda} (w) f(w).
$

We define the measure $\mu^\ell_{\Gcal,U,v_\Gcal,N,\lambda}$ as the restriction of the measure $\mu_{\Gcal,U,v_\Gcal,N,\lambda}$ to the set of configurations $\tilde{\Wcal}_{\Gcal}$ and define a probability measure on $\tilde{\Wcal}_{\Gcal}$ by
\begin{equation}
\label{eq:RPMprobabilitymeasure}
\forall w =(m,c,\pi) \in \tilde{\Wcal}_{\Gcal}
\quad \quad 
\P_{\Gcal,U,v_\Gcal,N,\lambda}(w) := \frac{\mu^\ell_{\Gcal,U,v_\Gcal,N,\lambda}(w)}{\Z_{\Gcal,U,v_\Gcal,N,\lambda}^\ell},
\end{equation}
where $\Z_{\Gcal,U,v_\Gcal,N,\lambda}^\ell := \mu_{\Gcal,U,v_\Gcal,N,\lambda}(\tilde{\Wcal}_{\Gcal})$  is the partition function.
We denote by $\E_{\Gcal,U,v_\Gcal,N,\lambda}$ the expectation under the measure $\P_{\Gcal,U,v_\Gcal,N,\lambda}$. Sometimes, for a lighter notation, we will omit the sub-scripts. 
\end{definition}

The next lemma provides a sufficient condition for the well-definedness of the measure \eqref{eq:RPMprobabilitymeasure}. 
\begin{lemma} \label{lemma: finitepartitionfunction}
Let $N \in \mathbb{N}$, $\lambda \in \R^+$ and suppose that $v_\Gcal: \Vcal \times \Vcal \to \R$ satisfies $v_\Gcal(x,y)=v_\Gcal(y,x)$ for any $x,y \in \Vcal$ and that
\begin{equation} \label{eq:conditionpotential}
\forall x \in \Vcal \quad \quad \bar{v}_\Gcal(x):=v_\Gcal(x,x) + \sum_{y \in \Vcal} v_\Gcal(x,y) \mathbbm{1}_{\{v_\Gcal(x,y) <0\}} \geq 0.
\end{equation}
Suppose that for $U: \N_0 \to \R_0^+$, there exists $M < \infty$ such that
\begin{equation} \label{eq:rapidlydecayinggeneral}
\forall x \in \Vcal \quad \quad U(n) \, (2n-1)!! \, e^{-\bar{v}_\Gcal(x)n^2} \leq M^n.
\end{equation}
Then, for any $a \in \R_0^+$, it holds that  
\begin{equation} \label{eq:finite}
\mu^\ell\bigg( \prod_{x \in \Vcal} e^{a n_x}\bigg) \leq e^{\lambda e^a M N |\Ecal|}.
\end{equation}
In particular, for the choice $a=0$, \eqref{eq:finite} provides an explicit upper bound on the partition function $\Z^\ell$.
\end{lemma}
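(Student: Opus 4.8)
The plan is to expand the measure over all its degrees of freedom — the link cardinalities $m_e$, the colourings $c_e$, the pairings $\pi_x$ and the ghost pairings $g_x$ — and to estimate the sums one vertex and one edge at a time, absorbing everything into the quantity $\overline U(n) = U(n)(2n-1)!!\,e^{-\bar v_\Gcal(x)n^2}$ that hypothesis \eqref{eq:rapidlydecayinggeneral} controls. First I would bound $V(w)$ from below: writing $V(w) = \sum_{x,y} v_\Gcal(x,y) n_x n_y$ and using $v_\Gcal(x,y) = v_\Gcal(y,x)$, the diagonal terms give $\sum_x v_\Gcal(x,x) n_x^2$, while the off-diagonal negative contributions at a fixed $x$ are bounded below, crudely, by $n_x^2 \sum_{y} v_\Gcal(x,y)\mathbbm{1}_{\{v_\Gcal(x,y)<0\}}$ after using $n_x n_y \le \tfrac12(n_x^2+n_y^2)$ and symmetry; positive off-diagonal terms are simply discarded. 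This yields $e^{-V(w)} \le \prod_{x\in\Vcal} e^{\bar v_\Gcal(x) n_x^2}$, which is exactly the factor needed to turn $U(n_x)$ into $\overline U(n_x)$ (up to the $(2n-1)!!$ that will come from the pairing count).

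Next I would carry out the combinatorial sum. For a fixed $m\in\Mcal_\Gcal$, the number of colourings is $\prod_e N^{m_e}$. For a fixed $(m,c)$, the pairings at a vertex $x$ are partitions of the $\deg_m(x) := \sum_{y\sim x} m_{\{x,y\}}$ links touching $x$ into blocks of size at most two, respecting colours; the total number of such partitions is at most the number of \emph{all} partitions into blocks of size $\le 2$ of a set of size $\deg_m(x)$, which is the involution number and is bounded by $(2\lceil \deg_m(x)/2\rceil - 1)!!$-type quantities. The cleanest route on $\tilde\Wcal_\Gcal$ is: since all links are paired there, $\deg_m(x) = 2n_x$, and the number of perfect matchings of $2n_x$ objects is $(2n_x-1)!!$; dropping the colour constraint only increases the count, so the pairing sum at $x$ contributes at most $(2n_x-1)!!$. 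Summing $e^{an_x}$ against these weights, and noting $n_x$ is determined by $m$ once we are in $\tilde\Wcal_\Gcal$, I get
\begin{equation}
\mu^\ell\Big(\prod_x e^{an_x}\Big) \le \sum_{m\in\Mcal_\Gcal} \prod_{e\in\Ecal}\frac{(\lambda N)^{m_e}}{m_e!}\;\prod_{x\in\Vcal} e^{an_x}(2n_x-1)!!\,e^{\bar v_\Gcal(x)n_x^2}\,U(n_x).
\end{equation}
The factor $(2n_x-1)!! \le \prod_{\text{contributions}}$ must be distributed over the edges incident to $x$: using $(2n_x-1)!! \le (2n_x)!! = \prod 2k \le$ a product over the links, or more simply bounding the vertex factor $e^{an_x}\overline U(n_x)(2n_x-1)!!\,e^{\bar v n_x^2}/U(n_x)\cdot\ldots$ — here the hypothesis $\overline U(n)\le M^n = \prod_{\text{links at }x} M^{1/2}$ (one factor $M^{1/2}$ per link-endpoint at $x$, two endpoints per link so $M$ per link overall) lets me redistribute the whole vertex weight as a product of per-link factors.

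Then the sum factorizes over edges: each edge $e=\{x,y\}$ collects $\lambda^{m_e}/m_e!$ times $N^{m_e}$ (colours) times $e^{a m_e/\ldots}$ times the redistributed $M$-factors, giving a sum of the form $\sum_{m_e\ge 0} (\lambda e^a M N)^{m_e}/m_e! = e^{\lambda e^a M N}$ per edge, hence $e^{\lambda e^a M N |\Ecal|}$ after multiplying over all $|\Ecal|$ edges. Setting $a=0$ gives the partition-function bound. The main obstacle I anticipate is the bookkeeping in the redistribution step: one has to check that $(2n_x-1)!!$ and the Gaussian factor $e^{\bar v_\Gcal(x)n_x^2}$ and $U(n_x)$, packaged as $\overline U(n_x)(2n_x-1)!!/\big((2n_x-1)!! \, \cdot\big)$… i.e. that the per-vertex weight is genuinely $\le M^{n_x}$ by hypothesis \eqref{eq:rapidlydecayinggeneral}, and then that $M^{n_x} = M^{\deg_m(x)/2}$ splits as $\prod_{e\ni x} M^{m_e/2}$ so that each edge receives exactly $M^{m_e/2}\cdot M^{m_e/2} = M^{m_e}$ from its two endpoints — the parity constraint $\deg_m(x)\in 2\N_0$ and the restriction to $\tilde\Wcal_\Gcal$ (where $u_x=0$) are what make this exact rather than merely approximate. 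Everything else is a geometric/exponential series summation.
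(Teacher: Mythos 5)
Your approach matches the paper's essentially step for step: bound $e^{-V(w)}$ above by $\prod_{x}e^{-\bar v_\Gcal(x)n_x^2}$ (you use $n_xn_y\leq\tfrac12(n_x^2+n_y^2)$ where the paper uses Cauchy--Schwarz, but after exploiting the symmetry $v_\Gcal(x,y)=v_\Gcal(y,x)$ the two routes give the identical bound), count pairings at $x$ by $(2n_x-1)!!$, invoke \eqref{eq:rapidlydecayinggeneral} to control the per-vertex weight by $M^{n_x}$, split $M^{n_x}e^{an_x}$ over incident edges via $n_x=\tfrac12\sum_{y\sim x}m_{\{x,y\}}$, and sum an exponential series per edge. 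One sign slip worth fixing: your intermediate bound should read $e^{-V(w)}\leq\prod_{x\in\Vcal}e^{-\bar v_\Gcal(x)n_x^2}$ --- with the minus sign, since $\bar v_\Gcal(x)\geq 0$ --- which is precisely the factor converting $U(n_x)$ into $\overline U(n_x)=U(n_x)e^{-\bar v_\Gcal(x)n_x^2}$; as written, $e^{+\bar v_\Gcal(x)n_x^2}\geq 1$ would go the wrong way.
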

Condition \eqref{eq:conditionpotential} implies that the local repulsive interactions prevail on the possibly attractive long-range interactions.
\begin{proof} Let $a \in \R_0^+$. 
To begin, we apply the Cauchy-Schwarz inequality and obtain that for any $w \in \tilde{\Wcal}_{\Gcal}$, 
\begin{equation} \label{eq:upperboundpotential}
\begin{aligned}
-V(w) & \leq -\sum_{x,y \in \Vcal}n_x n_y v_\Gcal(x,y) \mathbbm{1}_{\{v_\Gcal(x,y) <0\}} - \sum_{x \in \Vcal} v_\Gcal(x,x)n_x^2 \\
& \leq  \bigg(-\sum_{x,y \in \Vcal} n_x^2 v_\Gcal(x,y)\mathbbm{1}_{\{v_\Gcal(x,y) <0\}} \bigg)^{\frac{1}{2}}\bigg(-\sum_{x,y \in \Vcal} n_y^2 v_\Gcal(x,y)\mathbbm{1}_{\{v_\Gcal(x,y) <0\}} \bigg)^{\frac{1}{2}} - \sum_{x \in \Vcal} v_\Gcal(x,x)n_x^2\\
& \leq -\sum_{x \in \Vcal} n_x^2 \, \bar{v}_\Gcal(x).
\end{aligned}
\end{equation}
By assumption \eqref{eq:conditionpotential}, $\bar{v}_\Gcal(x) \geq 0$ for any $x \in \Vcal$. 
In the next calculation, we neglect the constraint that the number of links 
touching any vertex is even and obtain that
\begin{equation} \label{eq:finitestep1}
\begin{aligned}
\mu^\ell\bigg( \prod_{x \in \Vcal} e^{a n_x} \bigg) 
& \leq \sum_{m=(m_e)_{e \in \Ecal} \in \N_0^{\Ecal}} \prod_{e \in \Ecal} \frac{(\lambda N)^{m_e}}{m_e!} \prod_{x \in \Vcal} e^{a n_x} \, U\Big(\frac{1}{2} \sum_{y \sim x} m_{\{x,y\}}\Big) \Big(\sum_{y \sim x} m_{\{x,y\}}-1\Big)!! \, e^{-\bar{v}_\Gcal(x) \, \big(\frac{1}{2} \sum_{y \sim x} m_{\{x,y\}}\big)^2} \\
& \leq \sum_{m=(m_e)_{e \in \Ecal} \in \N_0^{\Ecal}} \prod_{e \in \Ecal} \frac{(MN \lambda e^a)^{m_e}}{m_e!} 
= \prod_{e \in \Ecal} \sum_{m_e \in \N_0^{\Ecal}} \frac{(MN\lambda e^a)^{m_e}}{m_e!} 
= e^{\lambda e^a M N |\Ecal|},
\end{aligned}
\end{equation}
where $M$ appears in \eqref{eq:rapidlydecayinggeneral}. Here, in the first step, we used the fact $|\Pcal_\Gcal(m,c)| \leq \prod_{x \in \Vcal} \big(\sum_{y \sim x} m_{\{x,y\}}-1\big)!!$ for any $m \in \Mcal_\Gcal$ and any $c \in \Ccal_\Gcal(m)$. In the second step, we used condition \eqref{eq:rapidlydecayinggeneral} and the fact that $\sum_{x \in \Vcal} n_x = \sum_{e \in \Ecal} m_e$. This concludes the proof.
\end{proof}


From now on, we will always assume that the potential $v_\Gcal:\Vcal \times \Vcal \to \R$ satisfies \eqref{eq:conditionpotential} and that the weight function $U:\N_0 \to \R_0^+$ satisfies \eqref{eq:rapidlydecayinggeneral}. 

\section{Equivalence}
\label{sect:equivalence}
The goal of this section is to state an equivalence property between the random path model, which was defined in the previous section, and the random walk loop soup, which was defined in the introduction.  Most of the paper -- except for the proof of Theorem \ref{theo:maintheorem} -- can be read independently from this section.
To state the main result of this section, Theorem \ref{theo: equivalence} below, we first introduce an equivalence relation on the set of configurations in both models. Throughout the section, we fix an arbitrary undirected, simple, finite graph $\Gcal=(\Vcal, \Ecal)$, parameters $N \in \N$, $\lambda \in \R^+$, and functions $U: \N_0 \to \R_0^+$ and $v_\Gcal: \Vcal \times \Vcal \to \R$.

\paragraph{Rooted oriented loops.} Let $\Lcal_\Gcal$ be the set of rooted oriented loops (in short: r-o-loops) in $\Gcal$, i.e., finite ordered sequences of vertices in $\Vcal$, $\ell=\big(\ell(0),\dots,\ell(k)\big)$ such that $\ell(i)$ is a nearest-neighbour of $\ell(i-1)$ for each $i \in [k]$, $\ell(k)=\ell(0)$ and $k \in 2\N$. For any such  $\ell=\big(\ell(0),\dots,\ell(k)\big) \in \Lcal_\Gcal$, we denote by $|\ell|:=k$ the length of $\ell$. 
We call two r-o-loops \textit{equivalent} if one sequence can be obtained as a time-reversion and/or a cyclic permutation of the other sequence. More precisely, for any $\ell=\big(\ell(0),\dots,\ell(k)\big) \in \Lcal_\Gcal$ with $k \in 2\N$, we denote by $c_m(\ell):=\big(\ell(m),\ell(m+1),\dots,\ell(k),\ell(1),\dots,\ell(m)\big)$ the r-o-loop that is obtained from $\ell$ through a cyclic permutation of length $m \in \{0,\dots,|\ell|-1\}$ and by $r(\ell):=\big(\ell(k),\ell(k-1),\dots,\ell(0)\big)$ we denote the \textit{time-reversal} of $\ell$. We then say that $\ell,\ell^\prime \in \Lcal_\Gcal$ are equivalent if $|\ell|=|\ell^\prime|$ and if there exists $m \in \{0,\dots,|\ell|-1\}$ such that $\ell^\prime =c_m(\ell)$ or $\ell^\prime=c_m(r(\ell))$. We denote the equivalence class of $\ell\in \Lcal_\Gcal$ by $\gamma(\ell)$ and by $\Sigma(\Lcal_\Gcal)$ we denote the set of equivalence classes of r-o-loops.

Given two r-o-loops $\ell,\ell^\prime \in \Lcal_\Gcal$ such that $\ell(0)=\ell^\prime(0)$, we define their concatenation  as
$\ell \oplus \ell^\prime:=\big(\ell(0),\dots,\ell(|\ell|),$ $\ell^\prime(1),\dots, \ell^\prime(|\ell^\prime|)\big).$ 
We define the \textit{multiplicity}  of $\ell$, $J(\ell)$, as the maximal integer $n \in \N$ such that $\ell$ can be written as the $n$-fold concatenation of some r-o-loop, $\tilde{\ell}$,  with itself.  Such a  loop $\tilde{\ell} = \tilde{\ell}(\ell)$ has multiplicity one and will be referred to as the \textit{elementary loop} of $\ell$. 
We call an r-o-loop $\ell$ \textit{stretched} if there exists a cyclic permutation of $\ell$ that is identical to $r(\ell)$, i.e., if there exists $m \in \{0,\dots,|\ell|-1\}$ such that $c_m(\ell)=r(\ell)$. Otherwise the r-o-loop is called \textit{unstretched}, see Figure \ref{fig:rootedorientedloops}. 
For any $\ell \in \Lcal_\Gcal$, we define the \textit{stretch-factor} $\delta(\ell)$ by
$$
\delta(\ell):=\begin{cases} 1  & \text{if } \ell \text{ is stretched,} \\
2 & \text{if } \ell \text{ is unstretched.} \end{cases}
$$
Note that, for any pair of equivalent r-o-loops $\ell,\ell^\prime \in \Lcal_\Gcal$, it holds that $J(\ell)=J(\ell^\prime)$ and $\delta(\ell)=\delta(\ell^\prime)$.  Thus, by slight abuse of notation, we also use the notations $\delta(\gamma)$ and  $J(\gamma)$ for the equivalence classes $\gamma \in \Sigma(\Lcal_\Gcal)$. Further, we denote by $\alpha(\gamma)$ the length of any r-o-loop in $\gamma$ and by $|\gamma|$ we denote the cardinality of $\gamma$. 

\begin{figure}
\centering
\begin{subfigure}{.25\textwidth}
  \centering
  \includegraphics[width=\textwidth]{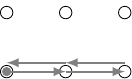}
  \caption{}
\end{subfigure}
\hspace{3em}
\begin{subfigure}{.25\textwidth}
  \centering
  \includegraphics[width=\textwidth]{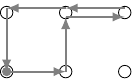}
  \caption{}
\end{subfigure}
\hspace{3em}
\begin{subfigure}{.25\textwidth}
  \centering
  \includegraphics[width=\textwidth]{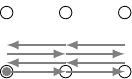}
  \caption{}
\end{subfigure}
\caption{Three rooted oriented loops. The roots are denoted by filled dots. (a) The loop has multiplicity one and is stretched. (b) The loop has multiplicity one and is unstretched. (c) The loop is stretched and has multiplicity two. It is the $2$-fold concatenation of the loop in (a).} 
    \label{fig:rootedorientedloops}
\end{figure}


The next lemma provides an exact formula for the cardinality of any equivalence class $\gamma \in \Sigma(\Lcal_\Gcal)$. This is an auxiliary lemma for the equivalence relation that is treated later in this section.
\begin{lemma}
\label{lemma: rootedorientedloopcardinality} 
For any $\gamma \in \Sigma(\Lcal_\Gcal)$, we have that
\begin{equation} \label{eq:rootedorientedloopcardinality}
\big|\gamma \big| = \frac{\delta(\gamma) \, \alpha(\gamma)}{J(\gamma)}.
\end{equation}
\end{lemma}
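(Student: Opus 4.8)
The plan is to count the elements of $\gamma$ explicitly, organising them according to orientation. Fix any $\ell\in\gamma$, and set $k:=|\ell|=\alpha(\gamma)$, $J:=J(\gamma)$; let $\tilde\ell=\tilde\ell(\ell)$ be the elementary loop, so $|\tilde\ell|=k/J$ and $\ell=\tilde\ell^{\oplus J}$. By the definition of the equivalence relation, every element of $\gamma$ has the form $c_m(\ell)$ or $c_m(r(\ell))$ for some $m\in\{0,\dots,k-1\}$, so $\gamma=A\cup B$ with $A:=\{c_m(\ell):0\le m<k\}$ and $B:=\{c_m(r(\ell)):0\le m<k\}$, and it suffices to compute $|A|$, $|B|$ and $|A\cap B|$.

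First I would determine $|A|$. Using the additive composition law $c_a(c_b(\ell))=c_{a+b\bmod k}(\ell)$ (read off from the definition of $c_m$), the set $S:=\{m\in\Z/k\Z:c_m(\ell)=\ell\}$ is a subgroup of $\Z/k\Z$, hence $S=q\,\Z/k\Z$ where $q$ is the least positive element of $S$, a divisor of $k$; since the fibres of $m\mapsto c_m(\ell)$ are the cosets of $S$, one gets $|A|=[\Z/k\Z:S]=q$. It remains to identify $q$ with $k/J$. On one hand $\ell=\tilde\ell^{\oplus J}$ gives $c_{k/J}(\ell)=\ell$, so $q\mid k/J$. On the other hand, $c_q(\ell)=\ell$ says the cyclic word $\ell(0)\ell(1)\cdots\ell(k-1)$ has period $q$, whence $\ell=\sigma^{\oplus k/q}$ with $\sigma:=(\ell(0),\dots,\ell(q))$ a valid loop (since $\ell(q)=\ell(0)$ by periodicity); minimality of $q$ forces $\sigma$ to have multiplicity one, so $J(\ell)=k/q$, i.e.\ $q=k/J$. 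Thus $|A|=k/J=\alpha(\gamma)/J(\gamma)$.

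Next I would split on the definition of $\delta$. If $\gamma$ is stretched, there is $m$ with $c_m(\ell)=r(\ell)$, hence $B=\{c_{m'}(r(\ell)):m'\}=\{c_{m'+m}(\ell):m'\}=A$, so $|\gamma|=|A|=\alpha(\gamma)/J(\gamma)=\delta(\gamma)\alpha(\gamma)/J(\gamma)$ because $\delta(\gamma)=1$. If $\gamma$ is unstretched, I would show $A\cap B=\emptyset$ and $|B|=|A|$, which gives $|\gamma|=2\,\alpha(\gamma)/J(\gamma)=\delta(\gamma)\alpha(\gamma)/J(\gamma)$ since $\delta(\gamma)=2$. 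Disjointness is immediate: if $c_m(\ell)=c_{m'}(r(\ell))$ then $r(\ell)=c_{m-m'}(\ell)$, contradicting unstretchedness. For $|B|=|A|$ it suffices, by the computation of the second paragraph applied to $r(\ell)$ in place of $\ell$, to check $J(r(\ell))=J(\ell)$: one verifies directly from the definitions that $r(\ell_1\oplus\ell_2)=r(\ell_2)\oplus r(\ell_1)$, so $r(\ell)=r(\tilde\ell)^{\oplus J}$; moreover $r$ is a length-preserving involution, hence it sends multiplicity-one loops to multiplicity-one loops (otherwise applying $r$ again would contradict $J(\tilde\ell)=1$), so $r(\tilde\ell)$ is the elementary loop of $r(\ell)$ and $J(r(\ell))=J$.

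The routine parts are the index bookkeeping: the composition law for the $c_m$, the identity $r(\ell_1\oplus\ell_2)=r(\ell_2)\oplus r(\ell_1)$, and the fact that a cyclic word whose period $q$ divides $k$ is literally a $k/q$-fold repetition. The only genuinely \emph{delicate} point, used in the second paragraph, is the equivalence between ``$\sigma$ has multiplicity one'' in the concatenation sense of the paper and ``the associated cyclic word admits no proper period'' — i.e.\ matching the combinatorially defined multiplicity with the period of the underlying periodic sequence. I would isolate this as a one-line preliminary observation (together with the resulting uniqueness of the elementary loop) before running the counting argument, so that the two case distinctions reduce to the bookkeeping above.
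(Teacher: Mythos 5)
Your proof is correct and follows essentially the same route as the paper: the decomposition $\gamma=A\cup B$ with $A$ the cyclic shifts of $\ell$ and $B$ those of $r(\ell)$, the count $|A|=\alpha/J$, and the stretched/unstretched dichotomy deciding whether $A=B$ or $A\cap B=\emptyset$. The one place where you go beyond the paper is that you actually prove the two facts the paper asserts without argument — that the shifts $c_0(\ell),\dots,c_{k/J-1}(\ell)$ are pairwise distinct (via the subgroup $S\leq\Z/k\Z$ of periods and the identification of its generator $q$ with $k/J$), and that $J(r(\ell))=J(\ell)$ (via $r(\ell_1\oplus\ell_2)=r(\ell_2)\oplus r(\ell_1)$). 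These are exactly the ``routine'' verifications one should isolate, and your framing of the period-versus-multiplicity dictionary as the single delicate point is accurate.
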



\begin{proof}
To begin, we fix an arbitrary r-o-loop $\ell \in \gamma$ and write $\gamma=\gamma(\ell)=A \cup B$,
where 
$$
A:= \big\{c_m(\ell): \, m \in \{0,\dots,|\ell|-1\} \big\} \text{ and } B:= \big\{c_m(r(\ell)): \, m \in \{0,\dots,|\ell|-1\} \big\}.
$$
We consider the set $A$ and we define $k:=|\ell|/ J(\ell)$. By definition of the multiplicity, $\ell$ is the $J(\ell)$-fold concatenation of its elementary loop $\tilde{\ell}=\tilde{\ell}(\ell)$ of length $k$ and of multiplicity one with itself. This implies that for any $m>k$, $c_m(\ell)=c_{m-k}(\ell)$ and that for any $m, m^\prime \in \{0,\dots,k-1\}$ such that $m \neq m^\prime$, $c_m(\ell) \neq c_{m^\prime}(\ell)$. It follows that $|A|=k$. 
Since length and multiplicity of $\ell$ and $r(\ell)$ each are identical, the same considerations as above apply to the set $B$ implying that $|B|=|A|=k$.

We will now see that the sets $A$ and $B$ are either identical or disjoint depending on whether $l$ is stretched or unstretched. We first consider the case that $\ell$ is stretched. This means that there exists $\tilde{m} \in \{0,\dots,|\ell|-1\}$ such that $c_{\tilde{m}}(\ell)=r(\ell)$ and thus $r(\ell) \in A$. In particular, any cyclic permutation of $r(\ell)$ is a cyclic permutation of $\ell$ since for any $m \in \{0,\dots,|\ell|-1\}$, we have that $c_m(r(\ell))=c_m(c_{\tilde{m}}(\ell))=c_{(m + \tilde{m}) \bmod{|\ell|}}(\ell)$, implying that $A=B$. We thus obtain \eqref{eq:rootedorientedloopcardinality} with $\delta=1$ for $\ell$ stretched.

Suppose now that $\ell$ is unstretched and assume that $A \cap B \neq \emptyset$. Then, there exist $m, m^\prime \in \{0,\dots,|\ell|-1\}$ such that $c_m(\ell)=c_{m^\prime}(r(\ell))$ implying that $c_{(m-m^\prime) \bmod{|\ell|}}(\ell)=r(\ell)$. This contradicts the fact that $\ell$ is unstretched and thus it must hold that $A \cap B = \emptyset$. We obtain \eqref{eq:rootedorientedloopcardinality} with $\delta=2$ for $\ell$ unstretched.
This concludes the proof.
\end{proof}

\subsection{Equivalence classes in the random walk loop soup}
In this section we introduce an equivalence relation on the set of configurations of the random walk loop soup that was defined in the introduction. We first generalize the definition of the model on a general undirected, simple, finite graph $\Gcal=(\Vcal,\Ecal)$ and then define the equivalence relation.  We denote by  $\Lcal_\Gcal$  the set of rooted oriented loops in $\Gcal$. We let $\Omega_\Gcal := \cup_{n=0}^ \infty \mathcal{L}_\Gcal^n $ be the configuration space. For a potential $v_\Gcal: \Vcal \times \Vcal \to \R$, a weight function $U:\N_0 \to \R_0^+$ and parameters $\lambda, N \in \R^+$, we define the non-normalized measure
\begin{equation} \label{eq:loopsoupgeneralmeasure}
\nu_{\Gcal, U,v_\Gcal, N, \lambda}(\omega) :=\frac{1}{n!} \, \,   \prod_{ i=1   }^n
\frac{\lambda^{ |\ell_i|}}{|\ell_i|} \, 
 {(\frac{N}{2})}^{n} \prod_{ x \in \Vcal} U(n_x(\omega) ) \, 
 \exp \big(-  \mathcal{V}( \omega)  \big ),
\end{equation}
where 
\begin{equation} \label{eq:definitioninteractiongeneral}
\mathcal{V}( \omega) : = 
   \sum\limits_{  i, j  = 1  }^{n}
     \sum\limits_{ m=0  }^{ |\ell_i|  -1 }
      \sum\limits_{ n=0  }^{ |\ell_j|  -1  }
      v_\Gcal \big (    \ell_i(m),  \ell_j(n)  \big ),
\end{equation}
for any $\omega \in  \Omega_\Gcal$ such that 
$\omega = \big  ( \ell_1, \ell_2, \ldots \ell_n \big ) \in  \mathcal{L}_\Gcal^n $ for some $n \in \mathbb{N}_0$. We refer to the constant $\Zcal_{\Gcal, U,v_\Gcal, N, \lambda}:=\nu_{\Gcal, U,v_\Gcal, N, \lambda}(\Omega_\Gcal)$ as partition function. We define a probability measure on $\Omega_\Gcal$ by 
\begin{equation} \label{eq:generalloopsoupprob}
\forall \omega \in \Omega_\Gcal \quad \quad \Pcal_{\Gcal, U,v_\Gcal, N, \lambda}(\omega):=\frac{\nu_{\Gcal, U,v_\Gcal, N, \lambda}(\omega)}{\Zcal_{\Gcal, U,v_\Gcal, N, \lambda}}.
\end{equation}
We always assume that the potential $v_\Gcal$ satisfies \eqref{eq:conditionpotential} and that the weight function $U$ satisfies \eqref{eq:rapidlydecayinggeneral} such that the measure in \eqref{eq:loopsoupgeneralmeasure} has finite mass. We denote by $E_{\Gcal, U,v_\Gcal, N, \lambda}$ the expectation with respect to the measure \eqref{eq:generalloopsoupprob}. Sometimes, for a lighter notation, we will omit the sub-scripts. 

We now introduce an equivalence relation on $\Omega_\Gcal$. We call two configurations in $\Omega_\Gcal$ equivalent if one configuration can be obtained from the other one by changing the orientation and/or the root of the r-o-loops and/or by changing the labels of the r-o-loops. This is formalized in the next definition. We denote by $S_n$ the group of permutations of $n$ elements for $n \in \N$. 
\begin{definition} 
\label{def:equivalenceBEC}
We call two configurations $\omega, \omega^\prime \in \Omega_\Gcal$ \textit{equivalent} if there exists $n \in \N$ such that $\omega=(\ell_1,\dots,\ell_n)$ and $\omega^\prime=(\ell_1^\prime,\dots,\ell_n^\prime)$ and if there exists a permutation $\pi \in S_n$ such that $\ell_{\pi(i)} \in \gamma(\ell_i^\prime)$ for all $i \in [n]$. 
We denote by $\rho(\omega)$ the equivalence class of $\omega \in \Omega_\Gcal$ and by $\Sigma(\Omega_\Gcal)$ we denote the set of equivalence classes of $\Omega_\Gcal$.
\end{definition}
Note that for any two equivalent configurations $\omega,\omega^\prime \in \Omega_\Gcal$, it holds that $\nu(\omega)=\nu(\omega^\prime)$.

\begin{definition} \label{def:rolindependentfct}
We call a function $f:\Omega_\Gcal \to \R$ that is constant on each equivalence class \textit{root-orientation-label-independent (in short: r-o-l-independent)}. With slight abuse of notation, we then let $f(\rho)$ be the evaluation of $f$ at any configuration in $\rho \in \Sigma(\Omega_\Gcal)$.
\end{definition}

For any $\gamma \in \Sigma(\Lcal_\Gcal)$, we introduce the function $k_\gamma^1: \Omega_\Gcal \to \N_0$, which is defined as follows: For any $\omega \in \Omega_\Gcal$ such that $\omega=(\ell_1,\dots,\ell_n) \in \Lcal_\Gcal^n$ for some $n \in \N_0$, we set
$
k_\gamma^1(\omega)= |\{i \in [n]: \ell_i \in \gamma\}|.
$
Hence, $k_\gamma^1$ counts the number of r-o-loops in a configuration $\omega \in \Omega_\Gcal$ that are an element of the equivalence class $\gamma$. 
For any $\omega \in \Omega_\Gcal$, we define the sets $\Xi_1(\omega):=\{(\gamma,k_\gamma^1(\omega)): \gamma \in \Sigma(\Lcal_\Gcal)\}$ 
and 
\begin{equation}\label{eq:defxi1}
\Xi_1^*(\omega):=\{(\gamma,k_\gamma^1(\omega)) \in \Xi_1(\omega): \, k_\gamma^1(\omega)>0\}.
\end{equation}
Note that $\Xi_1^*(\omega)$ is always finite. The next lemma follows immediately from our definitions. 

\begin{lemma} \label{lemma:equivalenceRWLS}
Two configurations $\omega, \omega^\prime \in \Omega_\Gcal$ are equivalent if and only if $\Xi_1^*(\omega)=\Xi_1^*(\omega^\prime)$.
\end{lemma}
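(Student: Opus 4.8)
The plan is to show that the equivalence relation of Definition~\ref{def:equivalenceBEC} is completely captured by the multiset-valued statistic $\Xi_1^*$, which encodes, for each equivalence class $\gamma \in \Sigma(\Lcal_\Gcal)$ of r-o-loops, the number of loops in the configuration belonging to $\gamma$. Since this is an \emph{if and only if}, I would prove the two implications separately, and in each direction simply unwind the definitions.

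For the forward direction, suppose $\omega=(\ell_1,\dots,\ell_n)$ and $\omega^\prime=(\ell_1^\prime,\dots,\ell_n^\prime)$ are equivalent, so there is a permutation $\pi \in S_n$ with $\ell_{\pi(i)} \in \gamma(\ell_i^\prime)$ for all $i \in [n]$. Fix any $\gamma \in \Sigma(\Lcal_\Gcal)$. The map $i \mapsto \pi(i)$ is a bijection of $[n]$, and since $\ell_{\pi(i)}$ and $\ell_i^\prime$ lie in the same equivalence class of r-o-loops, we have $\ell_i^\prime \in \gamma$ if and only if $\ell_{\pi(i)} \in \gamma$. Hence $\pi$ restricts to a bijection between $\{i : \ell_i^\prime \in \gamma\}$ and $\{j : \ell_j \in \gamma\}$, so these two sets have the same cardinality, i.e.\ $k_\gamma^1(\omega^\prime)=k_\gamma^1(\omega)$. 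As $\gamma$ was arbitrary, $\Xi_1(\omega)=\Xi_1(\omega^\prime)$ and therefore also $\Xi_1^*(\omega)=\Xi_1^*(\omega^\prime)$.

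For the converse, suppose $\Xi_1^*(\omega)=\Xi_1^*(\omega^\prime)$; this forces in particular $k_\gamma^1(\omega)=k_\gamma^1(\omega^\prime)$ for every $\gamma$ (the classes with count zero contribute nothing to either side, so equality of the starred sets is equivalent to equality of all counts), and summing over $\gamma$ gives that $\omega$ and $\omega^\prime$ have the same number of loops $n$. Now, for each $\gamma$ appearing with positive multiplicity, list the indices $i$ with $\ell_i \in \gamma$ and the indices $i$ with $\ell_i^\prime \in \gamma$; both lists have length $k_\gamma^1(\omega)$, so we may fix an arbitrary bijection between them. Taking the union of these bijections over all relevant $\gamma$ (the index sets partition $[n]$ in each configuration, since every loop belongs to exactly one equivalence class) yields a single permutation $\pi \in S_n$ with $\ell_{\pi(i)}$ and $\ell_i^\prime$ in the same class, hence $\ell_{\pi(i)} \in \gamma(\ell_i^\prime)$, which is exactly the condition for $\omega$ and $\omega^\prime$ to be equivalent in the sense of Definition~\ref{def:equivalenceBEC}.

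There is no real obstacle here: the statement is essentially the observation that two finite labelled tuples are the same up to permutation of labels and up to the r-o-loop equivalence precisely when the induced multisets of r-o-loop equivalence classes coincide, together with the (already recorded) fact that $\Xi_1^*$ is finite. The only minor point to state cleanly is that each loop lies in exactly one equivalence class $\gamma \in \Sigma(\Lcal_\Gcal)$, so the index sets $\{i : \ell_i \in \gamma\}$ genuinely partition $[n]$, which is what lets the per-class bijections be glued into a global permutation. I would keep the write-up to a few lines, as the lemma statement already advertises that it "follows immediately from our definitions."
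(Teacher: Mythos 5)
Your proof is correct and is exactly the argument the paper leaves implicit when it says the lemma "follows immediately from our definitions." Both directions are handled cleanly: the forward direction by observing that the permutation $\pi$ restricts to a bijection between per-class index sets, and the converse by extracting, from equality of the starred sets, equality of all counts $k_\gamma^1$ (including the zero ones), deducing that the loop numbers agree, and gluing arbitrary per-class bijections into a global permutation using the fact that the classes $\{i : \ell_i \in \gamma\}$ partition $[n]$. No gap; this is the intended reasoning.
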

In particular, it follows that the function $\Xi_1^*$ is r-o-l-independent. The next proposition provides a formula for the cardinality of any equivalence class $\rho \in \Sigma(\Omega_\Gcal)$. Recall from the beginning of this section that $\delta(\gamma)$, $J(\gamma)$ and $\alpha(\gamma)$ denote the stretch-factor, the multiplicity and the length of $\gamma \in \Sigma(\Lcal_\Gcal)$.
\begin{proposition}
Suppose that $\omega \in \Omega_\Gcal$ is such that $\omega \in \Lcal_\Gcal^k$ for some $k \in \N$ and suppose that $\Xi_1^*(\omega)=\{(\gamma_1,k_1),\dots,(\gamma_p,k_p)\}$ for some $p \in [k]$, $\gamma_1,\dots,\gamma_p \in \Sigma(\Lcal_\Gcal)$ and $k_1,\dots,k_p \in \N$. Then,
\begin{equation} \label{eq: cardinalityequivalenceBEC}
\big| \rho(\omega) \big| = \binom{k}{k_1,\dots,k_p} \prod_{j=1}^p \bigg(\frac{\delta(\gamma_j) \,\alpha(\gamma_j)}{J(\gamma_j)}\bigg)^{k_j}.
\end{equation}
\end{proposition}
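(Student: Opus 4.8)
The plan is to count the number of r-o-loop configurations $\omega' = (\ell'_1,\dots,\ell'_k) \in \Omega_\Gcal$ that are equivalent to $\omega$ by decomposing the choices into two independent parts: how the equivalence-class membership is distributed among the $k$ labelled slots, and for each slot, which representative of the relevant equivalence class sits there. Recall that $\omega = (\ell_1,\dots,\ell_k)$ with $\Xi_1^*(\omega) = \{(\gamma_1,k_1),\dots,(\gamma_p,k_p)\}$, so exactly $k_j$ of the loops $\ell_1,\dots,\ell_k$ belong to $\gamma_j$ and $\sum_{j=1}^p k_j = k$. By Lemma \ref{lemma:equivalenceRWLS}, $\omega'$ is equivalent to $\omega$ if and only if $\Xi_1^*(\omega') = \Xi_1^*(\omega)$, i.e.\ if and only if the multiset of equivalence classes appearing in $\omega'$ (with multiplicities) is exactly $\{\gamma_1^{k_1},\dots,\gamma_p^{k_p}\}$.

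First I would fix the bookkeeping: a configuration $\omega' \in \rho(\omega)$ is determined by (a) a function $\sigma: [k] \to \{1,\dots,p\}$ assigning to each slot $i$ the index of the equivalence class $\gamma_{\sigma(i)}$ that $\ell'_i$ must lie in, subject to $|\sigma^{-1}(j)| = k_j$ for every $j$, and (b) for each slot $i$, a concrete choice of r-o-loop $\ell'_i \in \gamma_{\sigma(i)}$. These choices are independent of one another, and distinct pairs $((\sigma),(\ell'_i)_i)$ give distinct configurations $\omega'$ (since $\omega'$ literally records the ordered tuple of r-o-loops). The number of valid assignments $\sigma$ in (a) is the multinomial coefficient $\binom{k}{k_1,\dots,k_p}$, and given $\sigma$, the number of choices in (b) is $\prod_{i=1}^k |\gamma_{\sigma(i)}| = \prod_{j=1}^p |\gamma_j|^{k_j}$, which does not depend on $\sigma$. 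Hence $|\rho(\omega)| = \binom{k}{k_1,\dots,k_p} \prod_{j=1}^p |\gamma_j|^{k_j}$.

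Then I would invoke Lemma \ref{lemma: rootedorientedloopcardinality}, which gives $|\gamma_j| = \delta(\gamma_j)\,\alpha(\gamma_j)/J(\gamma_j)$, and substitute to obtain exactly \eqref{eq: cardinalityequivalenceBEC}. The only genuinely careful point — the "main obstacle", though it is minor — is verifying that the map $((\sigma),(\ell'_i)_i) \mapsto \omega'$ is a bijection onto $\rho(\omega)$: surjectivity is Lemma \ref{lemma:equivalenceRWLS} (any $\omega' \in \rho(\omega)$ has the right class-multiplicities, so reading off $\sigma(i)$ as the class index of $\ell'_i$ and setting $\ell'_i$ itself recovers a valid pair), and injectivity is immediate since $\omega'$ determines the ordered tuple $(\ell'_1,\dots,\ell'_k)$ and hence both $\sigma$ and the individual loops. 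One should also note that the definition of equivalence (Definition \ref{def:equivalenceBEC}) asks for a permutation $\pi \in S_n$ with $\ell_{\pi(i)} \in \gamma(\ell'_i)$; this matches the class-multiplicity condition because such a $\pi$ exists precisely when the multisets of equivalence classes coincide. I would spell this compatibility out in a sentence so the reader sees that Definition \ref{def:equivalenceBEC} and Lemma \ref{lemma:equivalenceRWLS} line up with the counting above, and then the proof is complete.
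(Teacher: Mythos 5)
Your proof is correct and follows essentially the same route as the paper: both reduce via Lemma \ref{lemma:equivalenceRWLS} to counting ordered $k$-tuples with the prescribed multiset of equivalence classes, factor that count into a multinomial coefficient times $\prod_j |\gamma_j|^{k_j}$, and then substitute $|\gamma_j|$ from Lemma \ref{lemma: rootedorientedloopcardinality}.
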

\begin{proof} Let $p \in \N$, $\gamma_1,\dots,\gamma_p \in \Sigma(\Lcal_\Gcal)$,  $k_1,\dots,k_p \in \N$ and $\omega \in \Omega_\Gcal$ be fixed such that $\Xi_1^*(\omega)=\{(\gamma_1,k_1),\dots,(\gamma_p,k_p)\}$. Using Lemma \ref{lemma:equivalenceRWLS}, we have that
$$
\begin{aligned}
\big|\rho(\omega)\big|  &= \big| \{\omega^\prime \in \Omega_\Gcal: \Xi_1^*(\omega^\prime)=\Xi_1^*(\omega)\}\big| \\
& = \big|\{(\ell_1^\prime,\dots,\ell_k^\prime) \in \Lcal_\Gcal^k: \# \{i \in [k]: \ell_i^\prime \in \gamma_j\}=k_j \, \forall j \in [p] \}\big| \\
& = \big|\{(\gamma_1^\prime,\dots,\gamma_k^\prime) \in \Sigma(\Lcal_\Gcal)^k: \# \{i \in [k]: \gamma_i^\prime=\gamma_j\}=k_j \, \forall j \in [p] \}\big| \, \prod_{j=1}^p |\gamma_j|^{k_j} \\
& = \binom{k}{k_1,\dots,k_p} \prod_{j=1}^p \bigg(\frac{\delta(\gamma_j) \, \alpha(\gamma_j)}{J(\gamma_j)}\bigg)^{k_j},
\end{aligned}
$$
where we used \eqref{eq:rootedorientedloopcardinality} in the last step. 
This concludes the proof.
\end{proof}

\subsection{Equivalence classes in the random path model} \label{subsec:ERPM}
We now introduce an equivalence relation on the set $\tilde{\Wcal}_{\Gcal}$, which is defined in \eqref{eq:randompathtilde}. 
Roughly speaking, we call two configurations in $\tilde{\Wcal}_{\Gcal}$ equivalent if one configuration can be obtained from the other one by changing the positions of the links on the edges while keeping their colours and pairings fixed, see Figure \ref{fig:loopreduction} for an example. 

Given $m \in \Mcal_\Gcal$, we call a sequence of permutations $\kappa=(\kappa_e)_{e \in \Ecal}$ a \textit{permutation for} $m$ if $\kappa_e \in S_{m_e}$ for each $e \in \Ecal$, where $S_n$ is the group of permutations of $n$ integers.

Consider now $m \in \Mcal_\Gcal$, $c \in \Ccal_\Gcal(m)$ and $\pi=(\pi_x)_{x \in \Vcal} \in \Pcal_\Gcal(m,c)$ such that $(m,c,\pi) \in \tilde{\Wcal}_\Gcal$ and let $\kappa=(\kappa_e)_{e \in \Ecal}$ be a permutation for $m$. For $\pi_x=\big\{\{(e_1,p_{e_1}),(e_2,p_{e_2})\},\dots, \{(e_{l-1},p_{e_{l-1}}), (e_l,p_{e_l})\}\big\}$, we define 
$$
\pi_x^\kappa:= \big\{\{(e_1,\kappa_{e_1}(p_{e_1})),(e_2,\kappa_{e_2}(p_{e_2}))\},\dots, \{(e_{l-1},\kappa_{e_{l-1}}(p_{e_{l-1}})), (e_l,\kappa_{e_l}(p_{e_l}))\}\big\}.
$$
Furthermore,  for any $w = (m,c,\pi) \in \tilde{\Wcal}_{\Gcal}$ and any permutation $\kappa$ for $m$ we let $w^{\kappa} = (m^\prime,c^\prime, \pi^\prime)$ 
be the configuration such that $m=m^\prime$, 
$c_e^\prime(p)=c_e(\kappa_e^{-1}(p))$ for all $e \in \Ecal$,
and $\pi_x^\prime=\pi_x^\kappa$ for all $x \in \Vcal$.
See also Figure \ref{fig:loopreduction}.
\begin{figure}
\centering
\begin{subfigure}{.45\textwidth}
  \centering
  \includegraphics[width=\textwidth]{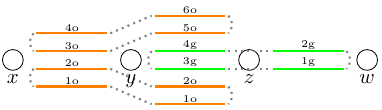}
\end{subfigure}
\hspace{2em}
\begin{subfigure}{.45\textwidth}
  \centering
  \includegraphics[width=\textwidth]{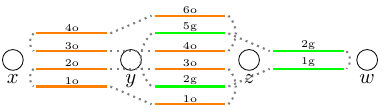}
\end{subfigure}
\caption{Two equivalent configurations $w, w^\prime \in \tilde{\Wcal}_\Gcal$, where $\Gcal$ corresponds to the graph $\{x,y,z,w\}$ with edges connecting nearest neighbour vertices. The number of colours is $N=2$ ($1:=$ g, $2:=$ o). The configuration $w^\prime$ on the right can be obtained from the configuration $w$ on the left through the permutation $\kappa$ for $m$ defined  by $\kappa_{\{x,y\}}(n)=n$ for $n \in [4]$, $\kappa_{\{y,z\}}(1)=1$, $\kappa_{\{y,z\}}(2)=3$, $\kappa_{\{y,z\}}(3)=2$, $\kappa_{\{y,z\}}(4)=5$, $\kappa_{\{y,z\}}(5)=4$, $\kappa_{\{y,z\}}(6)=6$ and $\kappa_{\{z,w\}}(n)=n$ for $n \in [2]$. 
} 
    \label{fig:loopreduction}
\end{figure}
\begin{definition}
\label{def:equivalenceRPM}
We call two configurations $w=(m,c,\pi), w^\prime=(m^\prime,c^\prime, \pi^\prime) \in \tilde{\Wcal}_{\Gcal}$ \textit{equivalent} if there exists a permutation $\kappa=(\kappa_e)_{e \in \Ecal}$ for $m$, such that $w^{\prime} = w^{\kappa}$.
We denote by $\sigma(w)$ the equivalence class of $w \in \tilde{\Wcal}_\Gcal$ and we denote by $\Sigma(\tilde{\Wcal}_\Gcal)$ the set of equivalence classes of $\tilde{\Wcal}_\Gcal$.
\end{definition}
Note that for any two equivalent configurations $w,w^\prime \in \tilde{\Wcal}_{\Gcal}$, it holds that $\mu(w)=\mu(w^\prime)$. 
With slight abuse of notation, if a function $f: \tilde{\Wcal}_\Gcal \to \R$ is constant on each equivalence class,  
then we let $f(\sigma)$ be the evaluation of $f$ at any configuration in $\sigma \in \Sigma(\tilde{\Wcal}_\Gcal)$.

Our goal is now to compute the cardinality of an equivalence class $\sigma \in \Sigma(\tilde{\Wcal}_\Gcal)$ and our result is presented in Proposition \ref{prop:RPM} below.
 
Given a configuration $w=(m,c,\pi) \in \tilde{\Wcal}_\Gcal$ and a permutation for $m$, $\kappa$, the configuration $w^{\kappa}$ is by definition equivalent to $w$. Clearly,  there exist $\prod_{e \in \mathcal{E}} m_e!$ distinct permutations for $m$. However
distinct permutations for $m$,  $\kappa$ and $\kappa^\prime$, may lead to the same configuration,  namely it may be the case that $w^{\kappa} = w^{\kappa^\prime}$. Hence,  in order to determine the number of configurations which are equivalent to $w$, 
we need to control the overcounting.
For this, we now introduce the notion of \textit{cycles}.

\paragraph{Cycles.}
Given $m \in \Mcal_\Gcal$, a \textit{rooted oriented linked loop} (in short: \textit{r-o-l-loop}) for $m$ is an ordered sequence of nearest-neighbour vertices and pairwise distinct links,
$$L=(x_0,(\{x_0,x_1\},p_1),x_1,(\{x_1,x_2\},p_2),\dots,(\{x_{k-1},x_{k}\},p_{k}),x_{k}\big),
$$ where $p_j \in \{1,\dots,m_{\{x_{j-1},x_j\}}\}$ for each $j \in [k]$ and such that $x_0=x_{k}$ for some $k \in 2\N$. The vertex $x_0$ is called the \textit{root} of $L$, the link $(\{x_0,x_1\},p_1)$ is called the \textit{starting link} of $L$ 
and the length of $L$ is defined by $|L|:=k$. 
We let $\Rcal_m$ be the set of all r-o-l-loops for $m$. Two r-o-l-loops are said to be \textit{equivalent} if one sequence can be obtained as a time-inversion and/or a cyclic permutation of the other sequence. More precisely, we denote by $c_n(L):=(x_n,(\{x_n,x_{n+1}\},p_{n+1}), \dots, x_k, (\{x_0,x_1\},p_1),x_1, \dots,(\{x_{n-1},x_{n}\},p_{n}),x_n)$ the r-o-l-loop that is obtained from $L$ through a cyclic permutation of length $n \in \{0,\dots,|L|-1\}$ and by $r(L):= (x_k,(\{x_{k-1},x_{k}\},p_{k}),x_{k-1},(\{x_{k-1},x_{k-2}\},p_{k-1}),\dots,(\{x_{0},x_{1}\},p_{1}),x_{0}\big)$ we denote the time-reversal of $L$. We then say that $L,L^\prime \in \Rcal_m$ are equivalent if $|L|=|L^\prime|$ and if there exists $n \in \{0,\dots,|L|-1\}$ such that $L^\prime=c_n(L)$ or $L^\prime=c_n(r(L))$. We denote by $\chi(L) \subset \Rcal_m$ the equivalence class of $L \in \Rcal_m$ and we denote by $\Sigma(\Rcal_m)$ the set of 
equivalence classes of r-o-l-loops. The equivalence class of an r-o-l-loop will simply be referred to as \textit{cycle}, which can be thought of as an unoriented loop with no root. We say that a link $(e,p)$ with $p \in [m_e]$ and $e \in \Ecal$ or a vertex $x \in \Vcal$ is  \textit{contained} in $\chi \in \Sigma(\Rcal_m)$ if $(e,p)$ or $x$ occurs in the sequence $L$ for every $L \in \chi$. We then write $(e,p)\in \chi$ and $x \in \chi$.
A set of cycles for $m$, $\{\chi_1,\dots,\chi_k\} \subset \Sigma(\Rcal_m)$ with $k \in \N$, is called an \textit{ensemble of cycles} for $m$ if every link is contained in precisely one cycle of the set, i.e. if for every $(e,p)$ with $p \in [m_e], e \in \Ecal$, there exists precisely one $i \in [k]$ such that $(e,p) \in \chi_i$. We denote by $\Ecal_m$ the set of ensembles of cycles for $m$.

For any $w=(m,c,\pi) \in \tilde{\Wcal}_\Gcal$, we can uniquely construct an ensemble of cycles 
\begin{equation} \label{eq:ensemblecycles}
\zeta(w):=\{\chi_1(w),\dots,\chi_{k(w)}(w)\} \in \Ecal_m
\end{equation}
as follows: We take any link $(\{x,y\},p_{\{x,y\}})$ and choose a point $z_0 \in \{x,y\}$. Step-by-step, we construct a r-o-l loop $L_1$ for $m$ with root $z_0$ and with starting link $(\{x,y\},p_{\{x,y\}})$ by choosing $z_1 \in \{x,y\} \setminus \{z_0\}$ as the next vertex and by choosing the link which is paired at $z_1$ to $(\{x,y\},p_{\{x,y\}})$ as the next link in the sequence. We continue until we are back at the link $(\{x,y\},p_{\{x,y\}})$. We define the cycle $\chi_1(w) \in \Sigma(\Rcal_m)$ as the equivalence class of $L_1$. For the next cycle, we choose a link that has not been selected yet and proceed as before. We continue until all links have been selected. We call a cycle $\chi \in \zeta(w)$ an $i$-\textit{cycle} with $i \in [N]$ if every link that is contained in $\chi$ has colour $i$. 



Recall that $\Lcal_\Gcal$ denotes the set of r-o-loops, as defined at the beginning of this section. For $m \in \Mcal_\Gcal$, we introduce the map 
$$\vartheta: \Rcal_m \to \Lcal_\Gcal,$$ which acts by projecting an r-o-l-loop $L=(x_0,(\{x_0,x_1\},p_1),x_1,\dots,(\{x_{k-1},x_{k}\},p_{k}),x_{k}\big) \in \Rcal_m$ onto the corresponding r-o-loop $\vartheta(L):=(x_0,x_1,\dots,x_k) \in \Lcal_\Gcal$ by \lq forgetting\rq{} about the links in the sequence. It is important to note that for any two equivalent r-o-l-loops $L,L^\prime \in \Rcal_m$ also the r-o-loops $\vartheta(L), \vartheta(L^\prime) \in \Lcal_\Gcal$ are equivalent. By slight abuse of notation, we thus also use the function $\vartheta$ to map an equivalence class $\chi \in \Sigma(\Rcal_m)$ to its corresponding equivalence class $\vartheta(\chi) \in \Sigma(\Lcal_\Gcal)$.

For any $\gamma \in \Sigma(\Lcal_\Gcal)$ and any $i \in [N]$, we introduce the function $k_{\gamma,i}^2: \tilde{\Wcal}_\Gcal \to \N_0$, where $k_{\gamma,i}^2(w)$ is defined as the number of $i$-cycles $\chi \in \zeta(w)$ that satisfy  $\vartheta(\chi) = \gamma$ for any $w \in \tilde{\Wcal}_\Gcal$. We further define $k_\gamma^2(w):= \sum_{i=1}^N k_{\gamma,i}^2(w)$. 
For any $w \in \tilde{\Wcal}_\Gcal$ we define the set $\Xi_2(w):= \big\{(\gamma,k_{\gamma}^2(w)) \, : \, \gamma \in \Sigma(\Lcal_\Gcal)\big\}$. We further define 
\begin{equation}\label{eq:defxi2}
\Xi_2^*(w):= \big\{(\gamma,k_{\gamma}^2(w)) \in \Xi_2(w) \, : \, k_{\gamma}^2(w) >0 \big\}.
\end{equation}
Note that $\Xi_2^*(w)$ is always finite and that $\Xi_2^*$ is many-to-one.
It follows immediately from our definitions that the following lemma holds true.


\begin{lemma} \label{lemma:equivalenceequivalenceRPM}
Two configurations $w,w^\prime \in \tilde{\Wcal}_\Gcal$ are equivalent if and only if $\Xi_2^*(w)=\Xi_2^*(w^\prime)$ and $k_{\gamma,i}^2(w)=k_{\gamma,i}^2(w^\prime)$ for any $\gamma \in \Sigma(\Lcal_\Gcal)$ and $i \in [N]$.
\end{lemma}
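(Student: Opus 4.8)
The plan is to argue the two directions separately, in both cases using the canonical ensemble of cycles $\zeta(w)$ as the dictionary between the statistics $\big(\Xi_2^*(w),(k_{\gamma,i}^2(w))_{\gamma,i}\big)$ and the equivalence relation of Definition~\ref{def:equivalenceRPM}. I would note at the outset two preliminary observations: every cycle $\chi\in\zeta(w)$ is monochromatic, since the cycle-construction only ever follows pairings and pairings join links of equal colour, so each $\chi$ is an $i$-cycle for exactly one $i\in[N]$; and the condition $\Xi_2^*(w)=\Xi_2^*(w')$ is implied by $k_{\gamma,i}^2(w)=k_{\gamma,i}^2(w')$ for all $\gamma,i$, so for the ``only if'' direction it suffices to work with the latter.

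For the ``if'' direction, suppose $w'=w^\kappa$ for a permutation $\kappa$ for $m$. I would first observe that relabelling link indices by $\kappa$ commutes with the cyclic-permutation and time-reversal operations on r-o-l-loops, hence descends to a bijection $\chi\mapsto\chi^\kappa$ of $\Sigma(\Rcal_m)$; since the pairings of $w^\kappa$ are by definition those of $w$ transported by $\kappa$, this bijection carries the ensemble $\zeta(w)$ onto $\zeta(w')$. Moreover the colouring of $w^\kappa$ gives the link $(e,\kappa_e(p))$ the colour $c_e(p)$, so $\chi$ and $\chi^\kappa$ share the same colour, and $\vartheta(\chi^\kappa)=\vartheta(\chi)$ because $\vartheta$ forgets link indices. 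Therefore $k_{\gamma,i}^2(w)=k_{\gamma,i}^2(w')$ for all $\gamma,i$, whence also $k_\gamma^2$ and $\Xi_2^*$ agree.

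For the converse, assume $k_{\gamma,i}^2(w)=k_{\gamma,i}^2(w')$ for all $\gamma\in\Sigma(\Lcal_\Gcal)$, $i\in[N]$. The first step is to recover $m$: writing $N_e(\gamma)$ for the number of traversals of the edge $e$ by a representative of $\gamma$ (an equivalence-class invariant), a cycle $\chi$ with $\vartheta(\chi)=\gamma$ contains exactly $N_e(\gamma)$ links on $e$, so $m_e^{(i)}(w)=\sum_{\gamma}N_e(\gamma)\,k_{\gamma,i}^2(w)$ and likewise for $w'$, giving $m=m'$ and matching per-edge, per-colour link counts. The second step constructs $\kappa$: for each pair $(\gamma,i)$ fix a bijection between the $i$-cycles of $\zeta(w)$ projecting to $\gamma$ and those of $\zeta(w')$ projecting to $\gamma$ (possible since these sets have the common cardinality $k_{\gamma,i}^2(w)$); for a matched pair $(\chi,\chi')$, choose representatives $L\in\chi$ and $L'\in\chi'$ with $\vartheta(L)=\vartheta(L')$ — possible because $\vartheta$ intertwines cyclic permutation and reversal, so a representative of $\chi'$ with the prescribed projection can be produced — and, reading along the common vertex sequence, let $\kappa$ send the link used by $L$ at each step to the link used by $L'$ at the same step, both lying on the same edge. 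The third step checks that $\kappa=(\kappa_e)_e$ is a permutation for $m$: within an r-o-l-loop the links are pairwise distinct, matched cycles have the same length $\alpha(\gamma)$, and $\zeta(w),\zeta(w')$ are ensembles, so over all matched pairs the partial assignments have pairwise disjoint domains exhausting $[m_e]$ and pairwise disjoint ranges exhausting $[m_e]$, hence $\kappa_e\in S_{m_e}$. Finally I would verify $w^\kappa=w'$: matched cycles share the colour $i$, giving $c'_e(\kappa_e(p))=c_e(p)$; and $\pi^\kappa=\pi'$ because in $\tilde{\Wcal}_\Gcal$ there are no unpaired links, so every pairing of $w$ (resp.\ $w'$) occurs exactly once as a consecutive transition inside $\zeta(w)$ (resp.\ $\zeta(w')$), and $\kappa$ was built precisely to carry the transitions of $\zeta(w)$ onto those of $\zeta(w')$.

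The forward implication and the recovery of $m$ are routine; I expect the one genuinely fiddly point to be the middle step of the converse — checking that the cycle-by-cycle relabellings assemble into a single permutation of $[m_e]$ and that this one permutation matches colours and pairings simultaneously. The load-bearing facts there are exactly that an r-o-l-loop repeats no link, that an ensemble of cycles partitions the full link set, and that matched cycles projecting to the same $\gamma$ have the same length, so the per-cycle domains and ranges fit together without overlap and without gap.
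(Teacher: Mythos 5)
Your argument is correct. The paper itself offers no proof of this lemma, stating only that it ``follows immediately from our definitions,'' so there is no paper argument to compare against; you have supplied the missing details, and they are sound. Both directions are handled properly: the forward direction (equivalence implies matching statistics) correctly observes that relabelling by $\kappa$ commutes with cyclic permutation and time-reversal, carries $\zeta(w)$ onto $\zeta(w')$, preserves colours, and commutes with $\vartheta$. The converse direction assembles $\kappa$ cycle-by-cycle after matching monochromatic cycles with common $\vartheta$-image, and the verification that the partial assignments glue to a genuine element of $\prod_e S_{m_e}$ and that $\pi^\kappa=\pi'$ is exactly where the work lies; your reliance on the facts that an r-o-l-loop repeats no link, that an ensemble of cycles partitions the full set of links, and that in $\tilde{\Wcal}_\Gcal$ every pairing appears exactly once as a consecutive transition in some cycle, is the right skeleton. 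Two cosmetic remarks: you have swapped the labels ``if'' and ``only if'' relative to the biconditional as stated (what you call the ``if'' direction is the implication from equivalence to matching statistics, which is the ``only if'' half), and your opening observation that the $\Xi_2^*$ condition is implied by the $k_{\gamma,i}^2$ condition is correct and worth keeping since it shows the lemma's hypothesis is not independent.
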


The next lemma is a preparatory lemma for Proposition \ref{prop:RPM} below. 

\begin{lemma} \label{lemma:bijection}
 Suppose that $w=(m,c,\pi) \in \tilde{\Wcal}_\Gcal$ is such that $\Xi_2^*(w)=\big\{(\gamma_1,k_1),\dots, (\gamma_p,k_p)\big\}$ for some $p \in \N$, $\gamma_1,\dots, \gamma_p \in \Sigma(\Lcal_\Gcal)$ and $k_1,\dots,k_p \in \N$ and suppose that $k_{\gamma_j,i}^2(w)=m_{j,i}$ for some $m_{j,i} \in \N_0$ with $i \in [N]$ such that $\sum_{i=1}^Nm_{j,i}=k_j$ for any $j \in [p]$. 
It holds that
\begin{equation} \label{eq:preparationRPM}
|\sigma(w)| = \prod_{j=1}^p \binom{k_j}{m_{j,1},\dots,m_{j,N}} \Big|\big\{\{\chi_1,\dots\chi_k\} \in \Ecal_m: \, |\{l \in [k]: \, \vartheta(\chi_l)=\gamma_j\}|=k_j \, \forall j \in [k]\big\}\Big|,
\end{equation}
where $k:= \sum_{j=1}^p k_j$. 
\end{lemma}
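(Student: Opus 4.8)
The plan is to count the configurations in $\sigma(w)$ by tracking exactly which choices are "free" when we apply a permutation $\kappa$ for $m$ to $w$, and then quotient out by the choices that produce the same configuration. The key observation is that a configuration $w' \in \tilde{\Wcal}_\Gcal$ is equivalent to $w$ precisely when, by Lemma \ref{lemma:equivalenceequivalenceRPM}, it has the same ensemble of cycles \emph{up to the labelling of links on each edge}: that is, $w'$ must have $k_{\gamma_j,i}^2(w')=m_{j,i}$ for every $j,i$. So I would first argue that the set $\sigma(w)$ is in bijection with the set of pairs consisting of (a) an ensemble of cycles $\{\chi_1,\dots,\chi_k\} \in \Ecal_m$ realising the \emph{uncoloured} cycle-type prescribed by the $\gamma_j$'s (i.e.\ with exactly $k_j$ cycles projecting to $\gamma_j$), together with (b) a choice of how to distribute, for each $j$, the $N$ colours among the $k_j$ cycles of type $\gamma_j$ so that colour $i$ is used $m_{j,i}$ times. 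Granting this bijection, the cardinality factorises as the product over $j$ of $\binom{k_j}{m_{j,1},\dots,m_{j,N}}$ (the colour distribution) times the number of ensembles of cycles with the prescribed projection type, which is exactly \eqref{eq:preparationRPM}.

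The technical content is establishing that bijection. First I would note that every configuration $w'=(m',c',\pi') \in \tilde{\Wcal}_\Gcal$ equivalent to $w$ has $m'=m$, since a permutation $\kappa$ for $m$ does not change link cardinalities; so all of $\sigma(w)$ lives over the same $m$, and $\Ecal_m$, $\Sigma(\Rcal_m)$ etc.\ are the right ambient objects. Next, given $w' \in \sigma(w)$, I map it to the pair $\big(\zeta(w'), (\text{colour assignment of } \zeta(w'))\big)$, where $\zeta(w')$ is the canonically-constructed ensemble of cycles from \eqref{eq:ensemblecycles} and the colour assignment records, for each cycle $\chi \in \zeta(w')$, its colour $i(\chi)$ (well-defined since $\chi$ is monochromatic because pairings respect colours). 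By Lemma \ref{lemma:equivalenceequivalenceRPM}, membership in $\sigma(w)$ is equivalent to $\zeta(w')$ having the prescribed \emph{coloured} projection statistics, i.e.\ $|\{l: \vartheta(\chi_l)=\gamma_j, \ i(\chi_l)=i\}| = m_{j,i}$; forgetting the colours, this forces $|\{l: \vartheta(\chi_l)=\gamma_j\}|=k_j$, so the image lands in the claimed set. Conversely, given an ensemble $\{\chi_1,\dots,\chi_k\}\in\Ecal_m$ with the right uncoloured projection type and a valid colour distribution, one reconstructs $(m,c',\pi')$: $\pi'$ is read off from the consecutive-link adjacencies within each $\chi_l$, and $c'$ assigns to every link in $\chi_l$ the chosen colour of $\chi_l$. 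One checks this lands in $\tilde{\Wcal}_\Gcal$ (no unpaired links, no ghosts — automatic because every link lies in some cycle and cycles pair all their links) and that $\zeta$ of the result recovers the ensemble and colouring, giving that the two maps are mutually inverse.

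For the counting step: once the bijection is in place, for a fixed ensemble $\{\chi_1,\dots,\chi_k\}$ with $k_j$ cycles of uncoloured type $\gamma_j$, the number of ways to choose a colour for each cycle so that, within the block of $k_j$ cycles of type $\gamma_j$, colour $i$ appears $m_{j,i}$ times is $\prod_{j=1}^p \binom{k_j}{m_{j,1},\dots,m_{j,N}}$, independently of the ensemble. Summing over all admissible ensembles multiplies this by $\big|\{\{\chi_1,\dots,\chi_k\}\in\Ecal_m: |\{l\in[k]:\vartheta(\chi_l)=\gamma_j\}|=k_j\ \forall j\}\big|$, yielding \eqref{eq:preparationRPM}. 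I would also remark that the colour-distribution factor is well-defined because cycles of different uncoloured types $\gamma_j$ are distinguishable, so the colour choices across different blocks are independent.

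The main obstacle I anticipate is the careful bookkeeping in the bijection — specifically verifying that the map $w' \mapsto (\zeta(w'), \text{colouring})$ is injective on $\sigma(w)$. Two equivalent configurations $w'$ and $w''$ in $\tilde{\Wcal}_\Gcal$ that are \emph{not} related by a permutation-for-$m$ could a priori still produce the same ensemble of cycles; but in fact the reconstruction shows $(m,c,\pi)$ is fully determined by $\zeta(w')$ and the cycle-colouring, so injectivity holds — the delicate point is that the pairing $\pi'$ at a vertex $x$ is exactly the set of link-pairs that are adjacent along the r-o-l-loops passing through $x$, and that this is unambiguous even when several cycles pass through $x$ (one uses that each cycle is an equivalence class, so the unordered adjacency data is intrinsic). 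I would treat this by an explicit "reconstruction from cycles" lemma-style argument inside the proof, checking at a generic vertex that incoming and outgoing links along each cycle traversal reproduce precisely the original pairing, and that distinct links are never conflated because the links in an ensemble of cycles are, by definition, pairwise distinct and partitioned among the cycles.
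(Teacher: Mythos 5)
Your proof is correct and takes essentially the same approach as the paper: both restrict $\zeta$ to $\sigma(w)$ (invoking Lemma~\ref{lemma:equivalenceequivalenceRPM} to show the image lands in the prescribed subset of $\Ecal_m$) and then count each fibre via the multinomial $\prod_{j=1}^p \binom{k_j}{m_{j,1},\dots,m_{j,N}}$ over colourings of cycles, with the pairing $\pi'$ uniquely recovered from the ensemble. Your extra ``reconstruction from cycles'' argument for injectivity merely spells out what the paper asserts without comment when it says $\pi'$ is uniquely determined by $\{\chi_1,\dots,\chi_k\}$.
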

\begin{proof}
Suppose that $\Xi_2^*(w)=\big\{(\gamma_1,k_1),\dots, (\gamma_p,k_p)\big\}$ and $k_{\gamma_j,i}^2(w)=m_{j,i}$ for all $i \in [N]$, $j \in [p]$, as in the statement of the lemma. 
We set 
$$
\tilde{\Ecal}_m=\tilde{\Ecal}_m(\gamma_1,\dots,\gamma_p) := \big\{\{\chi_1,\dots\chi_k\} \in \Ecal_m: \, |\{l \in [k]: \, \vartheta(\chi_l)=\gamma_j\}|=k_j \, \forall j \in [k]\big\} \subset \Ecal_m.
$$ 
We denote by $\tilde{\zeta}: \sigma(w) \to \tilde{\Ecal}_m$ the restriction of the map $\zeta$, which was defined in \eqref{eq:ensemblecycles}, onto $\sigma(w)$, i.e.,  $\tilde{\zeta}(w^\prime):=\zeta(w^\prime) \in \tilde{\Ecal}_m$ for all $w^\prime \in \sigma(w)$. By Lemma \ref{lemma:equivalenceequivalenceRPM} the map $\tilde{\zeta}$ is well-defined and, for any $\{\chi_1,\dots\chi_k\} \in \tilde{\Ecal}_m$, $\tilde{\zeta}^{-1}(\{\chi_1,\dots\chi_k\})$ corresponds to the set of configurations $w^\prime=(m, c^\prime, \pi^\prime) \in \sigma(w)$, where $c^\prime \in \Ccal_\Gcal(m)$ is a colouring function that has the following two properties: 
\textbf{(1):} $c^\prime$ assigns the same colour to any two links that are contained in the same cycle;
\textbf{(2):} $ |\{l \in [k]: \, \vartheta(\chi_l)=\gamma_j \text{ and } c^\prime_e(p)=i \, \forall \, (e,p) \in \chi_l\}|=m_{j,i}$ for any $j \in [p]$ and $i \in [N]$. The pairing configuration $\pi^\prime \in \Pcal_\Gcal(m,c^\prime)$ is then uniquely defined by $\{\chi_1,\dots\chi_k\}$.   
From these considerations, we deduce that,
\begin{equation} \label{eq:preimagecycles}
|\tilde{\zeta}^{-1}(\{\chi_1,\dots\chi_k\})|= \prod_{j=1}^p \binom{k_j}{m_{j,1},\dots,m_{j,N}},
\end{equation}
where the binomial coefficients correspond to the number of ways of choosing which of the $k_j$ cycles $\chi \in \{\chi_1,\dots,\chi_k\}$ that satisfy $\vartheta(\chi)=\gamma_j$ are assigned colour $i$ for any $i \in [N]$. Since the right hand-side of \eqref{eq:preimagecycles} does not depend on the choice of $\{\chi_1,\dots,\chi_k\} \in \tilde{\Ecal}_m$, we obtain \eqref{eq:preparationRPM} and the proof is concluded.
\end{proof}

The next proposition provides a formula for the  cardinality of any equivalence class $\sigma \in \Sigma(\tilde{\Wcal}_{\Gcal})$. Recall from the beginning of this section that $\delta(\gamma)$ and $J(\gamma)$ denote the stretch-factor and the multiplicity of $\gamma \in \Sigma(\Lcal_\Gcal)$.

\begin{proposition} \label{prop:RPM}
For $w \in \tilde{\Wcal}_{\Gcal}$ satisfying the same assumptions as in  Lemma \ref{lemma:bijection} we have that,
\begin{equation}\label{eq:cardinalityequivalenceclassRPM}
|\sigma(w)|=\prod_{e \in \Ecal} m_e! \, \prod_{j=1}^p \frac{1}{m_{j,1}! \cdots m_{j,N}!} \bigg(\frac{\delta(\gamma_j)}{2 J(\gamma_j)}\bigg)^{k_j}.
\end{equation}
\end{proposition}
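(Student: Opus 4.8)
The plan is to combine Lemma~\ref{lemma:bijection} with a count of the relevant ensembles of cycles, so the real work is to evaluate the cardinality
$$
\Big|\tilde{\Ecal}_m\Big| = \Big|\big\{\{\chi_1,\dots,\chi_k\} \in \Ecal_m: \, |\{l \in [k]: \, \vartheta(\chi_l)=\gamma_j\}|=k_j \, \forall j \in [p]\big\}\Big|,
$$
and then substitute into \eqref{eq:preparationRPM}. Comparing \eqref{eq:preparationRPM} with the target \eqref{eq:cardinalityequivalenceclassRPM}, I expect to prove
$$
\Big|\tilde{\Ecal}_m\Big| = \prod_{e \in \Ecal} m_e! \,\prod_{j=1}^p \frac{1}{k_j!}\bigg(\frac{\delta(\gamma_j)}{2 J(\gamma_j)}\bigg)^{k_j},
$$
since $\binom{k_j}{m_{j,1},\dots,m_{j,N}} = k_j!/(m_{j,1}!\cdots m_{j,N}!)$ and the factors $k_j!$ will cancel, leaving exactly the product in \eqref{eq:cardinalityequivalenceclassRPM}.

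To establish this count I would set up a bijective/weighted-counting argument between ensembles of cycles in $\tilde{\Ecal}_m$ and labelled data. First I would pick, for each $j$, an arbitrary reference r-o-loop $\ell_j \in \gamma_j$; building a cycle $\chi$ with $\vartheta(\chi)=\gamma_j$ from the links available for $m$ amounts to choosing which links occupy each edge-slot visited by (a representative of) $\gamma_j$. Concretely, an ensemble in $\tilde{\Ecal}_m$ is obtained by first ordering the $k$ cycles and deciding which of them projects to which $\gamma_j$ (a multinomial choice $\binom{k}{k_1,\dots,k_p}$, which I will divide out at the end because cycles in an ensemble are unordered), then for each cycle selecting a sequence of links consistent with its prescribed r-o-loop. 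The number of link-sequences realising a fixed r-o-loop $\ell$ of length $\alpha(\gamma)$, \emph{using prescribed link-slots}, accounts for a factor $\alpha(\gamma)\cdot\delta(\gamma)/J(\gamma)$ worth of choices relative to the naive product over edges of (falling factorials in $m_e$) — this is exactly the cardinality formula \eqref{eq:rootedorientedloopcardinality} of Lemma~\ref{lemma: rootedorientedloopcardinality}, reflecting the over/under-counting due to cyclic rotations, time-reversal and multiplicity. Carefully collecting: the total number of ways to distribute the $m_e$ labelled links on each edge among the cycle-occurrences at that edge gives $\prod_e m_e!$; the passage from r-o-l-loops to unoriented rootless cycles divides each cycle's contribution by $\delta(\gamma_j)\alpha(\gamma_j)/J(\gamma_j) \cdot (\text{symmetry})$ — tracking these factors against Lemma~\ref{lemma: rootedorientedloopcardinality} produces the claimed $\big(\delta(\gamma_j)/(2J(\gamma_j))\big)^{k_j}$ together with a $1/k_j!$ for the unordering of the $k_j$ like cycles.

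The main obstacle will be the bookkeeping of these symmetry factors: precisely reconciling (i) the $\prod_e m_e!$ from permuting labelled links on edges, (ii) the $1/(\alpha(\gamma_j))^{k_j}$-type and $1/2^{k_j}$-type corrections coming from forgetting roots and orientations of each cycle, (iii) the multiplicity factor $J(\gamma_j)$, and (iv) the stretch-factor $\delta(\gamma_j)$, so that they assemble to exactly $\prod_e m_e! \prod_j \frac{1}{k_j!}\big(\delta(\gamma_j)/(2J(\gamma_j))\big)^{k_j}$ and not merely up to an undetermined constant. I would handle this by defining an explicit intermediate set — ordered tuples of r-o-l-loops $(L_1,\dots,L_k)$ with $\vartheta(L_l)=\gamma_{j(l)}$ forming an ensemble — counting it in two ways (once grouping by the underlying ensemble of cycles, using $|\chi_l| = $ number of r-o-l-loops in the class $\chi_l$, which by the same reasoning as Lemma~\ref{lemma: rootedorientedloopcardinality} equals $\delta(\gamma_{j(l)})\alpha(\gamma_{j(l)})/J(\gamma_{j(l)})$; once directly as a product over edges of ways to linearly order and assign link-labels), and then solving for $|\tilde\Ecal_m|$. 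Plugging the resulting expression into \eqref{eq:preparationRPM} and simplifying yields \eqref{eq:cardinalityequivalenceclassRPM}.
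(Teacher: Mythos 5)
Your plan has the right skeleton and it is, modulo the issue below, the same argument as the paper: feed Lemma~\ref{lemma:bijection} with the value of $\big|\tilde\Ecal_m\big|$, and obtain that value by double-counting ordered tuples $(L_1,\dots,L_k)$ of r-o-l-loops — once by grouping over the underlying ensemble of cycles, once by first choosing a tuple of r-o-loops with the prescribed $\gamma$-structure and then a bijection of the labelled links on each edge to the edge-visits (giving $\prod_e m_e!$). Your anticipated identity $\big|\tilde\Ecal_m\big| = \prod_e m_e!\prod_j\frac{1}{k_j!}\big(\delta(\gamma_j)/(2J(\gamma_j))\big)^{k_j}$ is also correct.

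However, there is a concrete error in the ingredient you plan to plug into the double count, and it would derail the computation if carried out as written. You assert that the number of r-o-l-loops in a cycle $\chi$ with $\vartheta(\chi)=\gamma$ equals $\delta(\gamma)\alpha(\gamma)/J(\gamma)$, ``by the same reasoning as Lemma~\ref{lemma: rootedorientedloopcardinality}.'' That lemma counts r-o-\emph{loop} classes, where coincidences under cyclic shift or reversal can occur (this is exactly what $\delta$ and $J$ quantify). For r-o-\emph{l}-loops the situation is different: the links along the walk are pairwise distinct labels, so no nontrivial cyclic shift can fix the link sequence, and no cyclic shift of the time-reversal can equal the original either. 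Hence every cycle $\chi$ with $\vartheta(\chi)=\gamma$ consists of \emph{exactly} $2\alpha(\gamma)$ r-o-l-loops, with no $\delta$ or $J$ correction — this is precisely the fact the paper states just before its chain of equalities. If you instead use $\delta\alpha/J$ as you propose, the $\delta$ and $J$ factors cancel against those inside $|\gamma_j|=\delta(\gamma_j)\alpha(\gamma_j)/J(\gamma_j)$ coming from the direct count, and you end up with $\big|\tilde\Ecal_m\big|=\prod_e m_e!\prod_j 1/k_j!$, losing the $\big(\delta/(2J)\big)^{k_j}$ factors entirely and contradicting the target formula you wrote. Replacing $\delta\alpha/J$ by $2\alpha$ is the one missing observation; with it, your double-count gives exactly $\big|\tilde\Ecal_m\big|=\binom{k}{k_1,\dots,k_p}\prod_j|\gamma_j|^{k_j}\prod_e m_e!\big/\big(k!\prod_j(2\alpha(\gamma_j))^{k_j}\big)$, which simplifies to the claimed expression.
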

\begin{proof}
Suppose that $w \in \tilde{\Wcal}_{\Gcal}$ satisfies the assumptions of the proposition. 
In the following calculation, we use that any cycle $\chi \in \Sigma(\Rcal_m)$ such that $\vartheta(\chi)=\gamma \in \Lcal_\Gcal$ is the equivalence class of precisely $2\alpha(\gamma)$ r-o-l-loops. For $j \in [p]$ we set $\upsilon_j:= \binom{k_j}{m_{j,1},\dots,m_{j,N}}$ and we obtain from Lemma \ref{lemma:bijection} that 
$$
\begin{aligned}
|\sigma(w)| &= \prod_{j=1}^p \upsilon_j \, \Big| \big\{\{\chi_1,\dots\chi_k\} \in \Ecal_m: \, |\{n \in [k]: \, \vartheta(\chi_n)=\gamma_j\}|=k_j \, \forall j \in [k]\big\} \Big| \\ 
&= \prod_{j=1}^p \upsilon_j \bigg(\frac{1}{2\alpha(\gamma_j)}\bigg)^{k_j} \\
& \qquad \times \Big| \big\{\{L_1,\dots,L_k\} \subset \Rcal_m : \{\chi(L_1),\dots,\chi(L_k)\} \in \Ecal_m: \, |\{n \in [k]: \, \vartheta(L_n)=\gamma_j\}|=k_j \, \forall j \in [k]\big\} \Big| \\ 
& = \prod_{j=1}^p \upsilon_j \bigg(\frac{1}{2\alpha(\gamma_j)}\bigg)^{k_j} \, \frac{1}{k!} \\
& \qquad \times  \Big| \big\{(L_1,\dots,L_k) \in \Rcal_m^k: \, \{\chi(L_1),\dots,\chi(L_k)\} \in \Ecal_m: \, |\{n \in [k]: \, \vartheta(L_n)=\gamma_j\}|=k_j \, \forall j \in [k]\big\} \Big| \\ 
& = \prod_{j=1}^p \upsilon_j \bigg(\frac{1}{2\alpha(\gamma_j)}\bigg)^{k_j} \, \frac{1}{k!} \,  \prod_{e \in \Ecal} m_e! \, \Big| \big\{(\ell_1,\dots,\ell_k) \in \Sigma(\Lcal_\Gcal)^k: \, |\{n \in [k]: \, \ell_n \in \gamma_j\}|=k_j \, \forall j \in [k]\big\} \Big| \\
& = \prod_{j=1}^p \binom{k_j}{m_{j,1},\dots,m_{j,N}} \bigg(\frac{1}{2\alpha(\gamma_j)}\bigg)^{k_j} |\gamma_j|^{k_j} \, \frac{1}{k!} \binom{k}{k_1,\dots,k_p} \, \prod_{e \in \Ecal} m_e!\\ 
& =\prod_{e \in \Ecal} m_e! \, \prod_{j=1}^p \frac{1}{m_{j,1}! \cdots m_{j,N}!} \bigg(\frac{\delta(\gamma_j)}{2 J(\gamma_j)}\bigg)^{k_j},
\end{aligned}
$$
where in the last step we used \eqref{eq:rootedorientedloopcardinality} for the cardinality of $\gamma_j$ for each $j \in [p]$. This concludes the proof of the proposition.
\end{proof}

\subsection{Relation between RPM and RWLS} \label{sec:relationbetweenmodels}
We now state an equivalence relation between the RPM and the RWLS.
We first associate through a map,  $\Phi$, to each equivalence class in  $\Sigma(\Omega_\Gcal)$ a set of equivalence classes in $\Sigma(\tilde{\Wcal}_\Gcal)$, see also Figure \ref{fig:equivalenceboth}. 
After that,  we use this map to associate to any function
taking values in 
$\Omega_\Gcal$
which does `not depend' on `certain features' of the configurations in $\Omega_\Gcal$
a function taking values in $\tilde{\Wcal}_\Gcal$, which also does not depend on certain features of the configurations in $\tilde{\Wcal}_\Gcal$.
Our equivalence theorem, Theorem \ref{theo: equivalence} below,  states that the average with respect to the RWLS measure of the function taking values in $\Omega_\Gcal$ equals the average with respect to the RPM
of the corresponding function taking values in $\tilde{\Wcal}_\Gcal$.

To begin, recall the definitions of the maps $\Xi_1^*$ and $\Xi_2^*$ in (\ref{eq:defxi1}) and  (\ref{eq:defxi2}). For any $\rho \in \Sigma(\Omega_\Gcal)$, we define the set,
$$
\Phi(\rho):=(\Xi_2^*)^{-1}\big(\Xi_1^*(\rho)\big) \subset \Sigma(\tilde{\Wcal}_\Gcal).
$$
The map $\Phi$ is well-defined since $\Xi_1^*(\Omega_\Gcal)=\Xi_2^*(\tilde{\Wcal}_\Gcal)$. Let $\rho \in \Sigma(\Omega_\Gcal)$ be such that $\Xi_1^*(\rho)=\big\{(\gamma_1,k_1),\dots,(\gamma_p,k_p)\big\}$ for some $p \in \N$, $\gamma_1,\dots,\gamma_p \in \Sigma(\Lcal_\Gcal)$ and $k_1,\dots,k_p \in \N$. It follows from the definition of $\Phi$ that any $\sigma \in \Phi(\rho) \subset \Sigma(\tilde{\Wcal}_\Gcal)$ satisfies the following two properties:
\begin{enumerate}[(i)]
  \setlength{\itemsep}{0pt}
  \setlength{\parskip}{1pt}
    \setlength{\topsep}{0pt}
\item $\Xi_2^*(\sigma)=\Xi_1^*(\rho)$,
\item There exist $m_{j,i} \in \N_0$ such that $\sum_{i \in [N]} m_{j,i}=k_j$ for all $j \in [p]$ and such that $k_{\gamma_j,i}^2(\sigma)=m_{j,i}$ for any $j \in [p]$ and $i \in [N]$.
\end{enumerate}

\begin{definition}
We call a function $f: \tilde{\Wcal}_{\Gcal} \to \R$ \textit{colour-label-independent} (in short: \textit{c-l-independent}) if $f(w)=f(w^\prime)$ for any $w,w^\prime \in \tilde{\Wcal_\Gcal}$ such that $\Xi_2^*(w)=\Xi_2^*(w^\prime)$.
\end{definition}
Stated differently, a function which is c-l-independent does not depend on the colour of the cycles nor on the label of the links of the cycle. 
In particular, any c-l-independent function $f: \tilde{\Wcal}_{\Gcal} \to \R$ is constant on $\Phi(\rho)$ for each $\rho \in \Omega_\Gcal$. 
We will now give examples of functions that are c-l-independent and of functions that are not c-l-independent. Recall from Section \ref{sect:randompathmodel} that $n_x^i(w)$ and $n_x(w)$ denote the number of pairings of $i$-links and the total number of pairings at $x$ for any $i \in [N]$, $x \in \Vcal$ and $w \in \tilde{\Wcal}_\Gcal$.

\begin{example} \label{ex:functions}
Consider the functions $f_i : \tilde{\Wcal}_\Gcal \to \R$, $i=1, \ldots ,6$ below,
which are defined for any $w \in \tilde{\Wcal}_\Gcal$ and $A \subset \mathcal{V}$ as,
\vspace{-0.3cm}
\begin{itemize}
\itemsep0em 
\item $f_1(w) := \mathbbm{1}_{\{\text{There exist two links on } \{o,\boldsymbol{e}_1\} \text{ that are paired together at both its endpoints}\}}(w)$ 
\item $ f_2(w) := n_o(w) $
\item $ f_3(w) := \mathbbm{1}_{\{\text{There exists a cycle that touches every vertex } x \in A\}}(w)$
\item $f_4(w) := \mathbbm{1}_{\{\text{There exist two links on } \{o,\boldsymbol{e}_1\} \text{ that have colour } 1 \}}(w)$,
\item $f_5(w) := n_o^1(w)$,
\item $f_6(w):= \mathbbm{1}_{\{\text{The link } (\{o,\boldsymbol{e}_1\},1) \text{ and the link } (\{o,\boldsymbol{e}_1\},2) \text{ are paired together at both its endpoints}\}}(w)$.
\end{itemize}
The first three functions are c-l-independent, while the last three functions are not.
\end{example}

Recall from Definition \ref{def:rolindependentfct} that we call a function $f: \Omega_\Gcal \to \R$ r-o-l-independent if it is constant on each equivalence class $\rho \in \Sigma(\Omega_\Gcal)$.
With the next definition we associate to any r-o-l independent function 
a c-l independent function.

\begin{definition}
Suppose that $f: \Omega_\Gcal \to \R$ is r-o-l-independent and that $\tilde{f}: \tilde{\Wcal}_{\Gcal} \to \R$ is c-l-independent. We write $f \sim \tilde{f}$ if $f(\rho)=\tilde{f}(\sigma)$ for any $\rho \in \Sigma(\Omega_{\Gcal})$ and any $\sigma \in \Phi(\rho)$.
\end{definition}
For instance, the r-o-l independent function $g: \Omega_\Gcal \to \R$ defined by $g(w):=\mathbbm{1}_{\{(o, \boldsymbol{e}_1,o) \in \omega \text{ or } (\boldsymbol{e}_1,o, \boldsymbol{e}_1) \in \omega\}}(\omega)$ for $\omega \in \Omega_\Gcal$ satisfies $g \sim f_1$, where $f_1: \tilde{\Wcal}_\Gcal \to \R$ is defined in Example \ref{ex:functions} above. Another important example is provided in the next lemma,
to which we will refer in the proof our main theorem, Theorem \ref{theo:maintheorem}.
For $x,y \in \Vcal$ we introduce the functions $\Ncal_{x,y}: \Omega_\Gcal \to \R$ and $\tilde{\Ncal}_{x,y}: \tilde{\Wcal}_\Gcal \to \R$, which count the number of loops in a configuration that visit $x$ and $y$. More precisely, for any $\omega \in \Omega_\Gcal$ such that $\omega=(\ell_1,\dots,\ell_n) \in \Lcal_\Gcal^n$ for some $n \in \N_0$, we set 
\begin{equation} \label{eq:Ncal}
\Ncal_{x,y}(\omega):=\sum_{j=1}^{n} \mathbbm{1}_{\{x,y \in \ell_j\}}(\omega)
\end{equation}
and for any $w \in \tilde{\Wcal}_\Gcal$, we set 
\begin{equation} \label{eq:tildeNcal}
\tilde{\Ncal}_{x,y}(w):= \sum_{\chi \in \zeta(w)} \mathbbm{1}_{\{x,y \in \chi\}}(w).
\end{equation}
It is easy to see that $\Ncal_{x,y}$ is r-o-l-independent, that $\tilde{\Ncal}_{x,y}$ is c-l-independent.  
\begin{lemma} \label{lemma:example}
For any $x,y \in \Vcal$, it holds that
\begin{equation} \label{eq:example}
\Ncal_{x,y} \sim \tilde{\Ncal}_{x,y}.
\end{equation}
\end{lemma}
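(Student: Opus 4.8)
The plan is to verify the definition $f \sim \tilde f$ directly for the pair $\Ncal_{x,y}$ and $\tilde\Ncal_{x,y}$: I must show that $\Ncal_{x,y}(\rho) = \tilde\Ncal_{x,y}(\sigma)$ for any $\rho \in \Sigma(\Omega_\Gcal)$ and any $\sigma \in \Phi(\rho)$. Since both functions are already observed to be r-o-l-independent resp.\ c-l-independent (so they descend to equivalence classes), it suffices to pick one representative $\omega \in \rho$ and one representative $w \in \sigma$ and check $\Ncal_{x,y}(\omega) = \tilde\Ncal_{x,y}(w)$. The key observation is that both quantities are naturally expressed through the data $\Xi_1^*$ and $\Xi_2^*$: the number of loops visiting both $x$ and $y$ depends only on the multiset of equivalence classes $\gamma \in \Sigma(\Lcal_\Gcal)$ present in the configuration, together with their multiplicities.

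First I would rewrite $\Ncal_{x,y}(\omega)$ in terms of $\Xi_1^*(\omega)$. For $\omega = (\ell_1,\dots,\ell_n)$, whether $x,y \in \ell_j$ depends only on the equivalence class $\gamma(\ell_j)$, because cyclic permutation and time-reversal do not change the set of vertices visited by an r-o-loop. Hence, introducing the indicator $\mathbbm{1}_{\{x,y \in \gamma\}}$ (well-defined on $\Sigma(\Lcal_\Gcal)$), we get
\begin{equation*}
\Ncal_{x,y}(\omega) = \sum_{(\gamma,k) \in \Xi_1^*(\omega)} k \, \mathbbm{1}_{\{x,y \in \gamma\}}.
\end{equation*}
Next I would do the analogous rewriting for $\tilde\Ncal_{x,y}(w)$. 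Here I use the map $\vartheta: \Sigma(\Rcal_m) \to \Sigma(\Lcal_\Gcal)$ projecting a cycle onto the corresponding class of r-o-loops by forgetting links: a cycle $\chi \in \zeta(w)$ touches a vertex $x$ if and only if $x$ occurs in the underlying r-o-loop, i.e.\ if and only if $x \in \vartheta(\chi)$. Therefore, recalling that $k_\gamma^2(w) = \sum_{i=1}^N k_{\gamma,i}^2(w)$ counts the cycles $\chi \in \zeta(w)$ with $\vartheta(\chi) = \gamma$,
\begin{equation*}
\tilde\Ncal_{x,y}(w) = \sum_{\chi \in \zeta(w)} \mathbbm{1}_{\{x,y \in \vartheta(\chi)\}} = \sum_{(\gamma,k) \in \Xi_2^*(w)} k \, \mathbbm{1}_{\{x,y \in \gamma\}}.
\end{equation*}
Finally, the definition of $\Phi$ forces $\sigma \in \Phi(\rho)$ to satisfy $\Xi_2^*(\sigma) = \Xi_1^*(\rho)$, so the two sums above are literally the same sum, and the identity $\Ncal_{x,y} \sim \tilde\Ncal_{x,y}$ follows.

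The only genuinely delicate point — and the step I would phrase most carefully — is the claim that $x \in \chi$ (in the sense of the excerpt, that $x$ occurs in the r-o-l-loop sequence of $\chi$) is equivalent to $x \in \vartheta(\chi)$; this is essentially immediate from the definition of $\vartheta$ as \lq\lq forgetting the links,\rq\rq{} but one should note it explicitly because the set of vertices visited is exactly what is preserved. Everything else is bookkeeping: that $\mathbbm{1}_{\{x,y \in \gamma\}}$ is well-defined on equivalence classes of r-o-loops (both cyclic shifts and reversal preserve the visited-vertex set), and that regrouping a sum over loops/cycles according to their equivalence class gives the stated expressions in terms of $\Xi_1^*$ and $\Xi_2^*$. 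No reflection positivity, infrared bound, or cardinality formula is needed here — only the combinatorial definitions from this section — which is why the lemma is stated as following \lq\lq immediately\rq\rq{} from the setup.
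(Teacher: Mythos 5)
Your proof is correct and takes essentially the same route as the paper: both rewrite $\Ncal_{x,y}(\rho)$ and $\tilde\Ncal_{x,y}(\sigma)$ as $\sum_{(\gamma,k)} k\,\mathbbm{1}_{\{x,y \in \gamma\}}$ over $\Xi_1^*(\rho)$ resp.\ $\Xi_2^*(\sigma)$ and then invoke $\Xi_1^*(\rho)=\Xi_2^*(\sigma)$. You spell out two intermediate observations (invariance of the visited-vertex set under cyclic shift/reversal, and under $\vartheta$) that the paper's one-line proof leaves implicit, and you correctly include $y$ in the indicator where the paper's display has a small typo writing only $x$.
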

\begin{proof} Consider $\rho \in \Sigma(\Omega_\Gcal)$ and $\sigma \in \Phi(\rho)$. It holds that 
$$
\Ncal_{x,y}(\rho)=\sum_{(\gamma,k) \in \Xi_1^*(\rho)} k \, \mathbbm{1}_{\{x \in \ell \, \forall \, \ell \in \gamma\}}= \sum_{(\gamma,k) \in \Xi_2^*(\sigma)} k \, \mathbbm{1}_{\{x \in \ell \, \forall \, \ell \in \gamma\}}=\tilde{\Ncal}_{x,y}(\sigma),
$$
where in the second step we used that $\Xi_1^*(\rho)=\Xi_2^*(\sigma)$. It thus holds that $\Ncal_{x,y} \sim \tilde{\Ncal}_{x,y}$. 
\end{proof}

We now have all ingredients to state and prove the equivalence theorem. In \eqref{eq:equivalencethm} below the expectation in the left hand-side was defined in \eqref{eq:generalloopsoupprob} and refers to the  random walk loop soup. The expectation in the right hand-side was defined in \eqref{eq:RPMprobabilitymeasure} and refers to the  random path model. The identity \eqref{eq:equivalencethm} states that the two models are equivalent if one looks at functions which do not depend on certain features of the configurations; the colour and the label of the links for the random path model, and the root, orientation and label of the 
rooted oriented loops for the random walk loop soup. 

\begin{theorem} \label{theo: equivalence}
For any $\rho \in \Sigma(\Omega_\Gcal)$ it holds that 
\begin{equation} \label{eq:comparisonweights}
\nu(\rho)=\mu^\ell(\Phi(\rho)).
\end{equation}
In particular, if $f: \Omega_\Gcal \to \R$ is r-o-l-independent, $\tilde{f}: \tilde{\Wcal}_\Gcal \to \R$ is c-l-independent and   $f \sim \tilde{f}$, then
\begin{equation} \label{eq:equivalencethm}
E_{\Gcal,U,v_\Gcal,N,\lambda} (f)=\E_{\Gcal,U,v_\Gcal,N,\lambda}(\tilde{f}).
\end{equation}
\end{theorem}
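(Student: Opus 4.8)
The plan is to first establish the weight identity \eqref{eq:comparisonweights} by an explicit computation built on the two cardinality formulas already proved, and then to read off \eqref{eq:equivalencethm} by reorganizing the sums defining the two expectations over equivalence classes. Since $\nu$ is constant on equivalence classes of $\Omega_\Gcal$ and $\mu^\ell$ is constant on equivalence classes of $\tilde{\Wcal}_\Gcal$, I would fix a representative $\omega\in\rho$, write $\Xi_1^*(\rho)=\{(\gamma_1,k_1),\dots,(\gamma_p,k_p)\}$ and $k=\sum_jk_j$, and reduce \eqref{eq:comparisonweights} to checking $|\rho|\,\nu(\omega)=\sum_{\sigma\in\Phi(\rho)}|\sigma|\,\mu^\ell(w_\sigma)$ for representatives $w_\sigma\in\sigma$. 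The observation to isolate is that for any $\sigma\in\Phi(\rho)$ and any $w\in\sigma$ one has $\Xi_2^*(w)=\Xi_1^*(\omega)$, i.e.\ the $\vartheta$-projection of the cycle ensemble $\zeta(w)$ reproduces exactly the multiset of equivalence classes of r-o-loops of $\omega$. Because each visit of a cycle to a vertex $x$ consumes precisely one pairing at $x$, this gives $n_x(w)=n_x(\omega)$ for every $x$, hence $\prod_x U(n_x(w))=\prod_x U(n_x(\omega))$ and $V(w)=\mathcal{V}(\omega)$; moreover the total link number $\sum_{e\in\Ecal}m_e(w)$ equals the total loop length $\sum_j\alpha(\gamma_j)k_j$, so the powers of $\lambda$ in \eqref{eq:RPMmeasure} and \eqref{eq:loopsoupgeneralmeasure} agree, and the link cardinalities $m(w)$ (which do not see the colouring) are the same for all $w$ occurring in $\Phi(\rho)$. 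Comparing \eqref{eq:loopsoupgeneralmeasure} with \eqref{eq:RPMmeasure} term by term then yields, with $\mu_0:=\mu^\ell(w_\sigma)$ a common value,
\[
\nu(\omega)=\mu_0\,\frac{\prod_{e\in\Ecal}m_e!}{k!}\Big(\tfrac N2\Big)^{k}\prod_{j=1}^p\alpha(\gamma_j)^{-k_j}.
\]

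Next I would substitute the cardinality formulas. Multiplying the previous display by $|\rho|$ as given in \eqref{eq: cardinalityequivalenceBEC}, the factorials and the factors $\alpha(\gamma_j)^{k_j}$ cancel and one obtains $\nu(\rho)=\mu_0\big(\prod_{e}m_e!\big)\prod_{j}\frac1{k_j!}\big(\frac{N\delta(\gamma_j)}{2J(\gamma_j)}\big)^{k_j}$. On the RPM side, by the two properties recorded after the definition of $\Phi$ together with Lemma \ref{lemma:equivalenceequivalenceRPM}, the elements of $\Phi(\rho)$ are indexed by the tuples $(m_{j,i})_{j\in[p],\,i\in[N]}$ of nonnegative integers with $\sum_i m_{j,i}=k_j$ for each $j$, and for the class indexed by such a tuple Proposition \ref{prop:RPM} gives $|\sigma|=\prod_e m_e!\,\prod_j\frac1{m_{j,1}!\cdots m_{j,N}!}\big(\frac{\delta(\gamma_j)}{2J(\gamma_j)}\big)^{k_j}$. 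Summing over all such tuples, factoring the sum and invoking the multinomial identity $\sum_{m_{j,1}+\dots+m_{j,N}=k_j}\frac1{m_{j,1}!\cdots m_{j,N}!}=\frac{N^{k_j}}{k_j!}$ for each $j$ produces exactly the same expression as for $\nu(\rho)$; the trivial case $k=0$ (empty configuration) is handled separately. This establishes \eqref{eq:comparisonweights}.

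Finally, to get \eqref{eq:equivalencethm}, I would observe that $\{\Phi(\rho)\}_{\rho\in\Sigma(\Omega_\Gcal)}$ is a partition of $\Sigma(\tilde{\Wcal}_\Gcal)$: $\Xi_2^*$ descends to $\Sigma(\tilde{\Wcal}_\Gcal)$ by Lemma \ref{lemma:equivalenceequivalenceRPM}, and $\rho\mapsto\Xi_1^*(\rho)$ is a bijection of $\Sigma(\Omega_\Gcal)$ onto $\Xi_1^*(\Omega_\Gcal)=\Xi_2^*(\tilde{\Wcal}_\Gcal)$ by Lemma \ref{lemma:equivalenceRWLS}, so $\Phi(\rho)=(\Xi_2^*)^{-1}(\Xi_1^*(\rho))$ is precisely the fibre of $\Xi_2^*$ over $\Xi_1^*(\rho)$. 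Then, using r-o-l-independence of $f$, c-l-independence of $\tilde f$ (so $\tilde f$ is constant on $\Phi(\rho)$ with value $f(\rho)$ by $f\sim\tilde f$), and \eqref{eq:comparisonweights},
\[
\Zcal\,E(f)=\sum_{\rho\in\Sigma(\Omega_\Gcal)}f(\rho)\,\nu(\rho)=\sum_{\rho\in\Sigma(\Omega_\Gcal)}f(\rho)\,\mu^\ell(\Phi(\rho))=\sum_{\sigma\in\Sigma(\tilde{\Wcal}_\Gcal)}\tilde f(\sigma)\,\mu^\ell(\sigma)=\Z^\ell\,\E(\tilde f),
\]
and taking $f\equiv\tilde f\equiv1$ gives $\Zcal=\Z^\ell$, whence $E(f)=\E(\tilde f)$. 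The hard part is the first two steps: keeping the combinatorial bookkeeping honest while matching the RWLS weights $\frac1{n!}$, $(N/2)^n$ and $\frac1{|\ell_i|}$ against the RPM weight $\prod_e\frac1{m_e!}$ through the two cardinality formulas, the colour-degeneracy multinomial sum being the mechanism that makes everything cancel; a secondary point worth spelling out carefully is the claim that $\vartheta\circ\zeta$ recovers the loop multiset, which is what makes $n_x$, the interaction $\mathcal{V}$, and the powers of $\lambda$ invariant under the correspondence.
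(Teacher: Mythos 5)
Your proposal is correct and follows essentially the same route as the paper's proof: fix a representative in each class, express the class weights via the two cardinality formulas \eqref{eq: cardinalityequivalenceBEC} and \eqref{eq:cardinalityequivalenceclassRPM}, and let the multinomial sum $\sum_{m_{j,1}+\cdots+m_{j,N}=k_j} (m_{j,1}!\cdots m_{j,N}!)^{-1} = N^{k_j}/k_j!$ over the colourings in $\Phi(\rho)$ produce the factor $(N/2)^k/k!$ that reconciles the two measures. Your version organizes the bookkeeping slightly differently (computing the ratio $\nu(\omega)/\mu^\ell(w)$ before inserting the cardinalities rather than writing out $\nu(\rho)$ and $\mu^\ell(\sigma)$ separately, and spelling out that $n_x(w)=n_x(\omega)$, $V(w)=\mathcal V(\omega)$, and that $\{\Phi(\rho)\}$ partitions $\Sigma(\tilde{\Wcal}_\Gcal)$), but the key lemmas, the cancellation mechanism, and the final reorganization of the sum over classes are the same.
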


\begin{proof}
To begin, we fix $p \in \N$, $k_1,\dots,k_p \in \N$ and $\gamma_1,\dots,\gamma_p \in \Sigma(\Lcal_\Gcal)$ and we consider $\rho \in \Sigma(\Omega_\Gcal)$ such that $\Xi_1^*(\rho)=\big\{(\gamma_1,k_1),\dots,(\gamma_p,k_p)\big\}$. We will now calculate $\nu(\rho)$. Note that for any $\omega, \omega^\prime \in \rho$, it holds that $\nu(\omega)=\nu(\omega^\prime)$. We can thus fix a configuration $\omega=(\ell_1,\dots,\ell_k)  \in \rho$, where $k=\sum_{j=1}^pk_j$, and write
\begin{equation} \label{eq:measurerho}
\nu(\rho) = |\rho| \, \nu(\omega).
\end{equation}
Given the set $\Xi_1^*(\omega)=\Xi_1^*(\rho)$, we rewrite the sum of the interaction terms \eqref{eq:definitioninteractiongeneral} of $\nu(\omega)$ as 
\begin{equation} \label{eq:rewriteinteractionterm}
\begin{aligned}
\mathcal{V}( \omega) 
& = \sum_{x,y \in \Vcal} v_\Gcal(x,y) n_x(\boldsymbol{k},\boldsymbol{\gamma}) n_y(\boldsymbol{k},\boldsymbol{\gamma}),
\end{aligned}
\end{equation}
where $n_x(\boldsymbol{k},\boldsymbol{\gamma}):=\sum_{j=1}^p k_j n_x(\gamma_j)$ for $x \in \Vcal$. Plugging \eqref{eq: cardinalityequivalenceBEC} and \eqref{eq:rewriteinteractionterm} in \eqref{eq:measurerho} we obtain that
\begin{equation} \label{eq:weightequivalenceclassBEC}
\nu(\rho)= \prod_{j=1}^p \frac{1}{k_j!} \bigg(\frac{N\lambda^{\alpha(\gamma_j)}\delta(\gamma_j)}{2J(\gamma_j)}\bigg)^{k_j}  \prod_{x \in \Vcal}\bigg( U(n_x(\boldsymbol{k},\boldsymbol{\gamma}))\prod_{y \in \Vcal} e^{-v_\Gcal(x,y) n_x(\boldsymbol{k},\boldsymbol{\gamma}) n_y(\boldsymbol{k},\boldsymbol{\gamma})}\bigg).
\end{equation}

\begin{figure}
\centering
\begin{subfigure}{.3\textwidth}
  \centering
  \includegraphics[width=\textwidth]{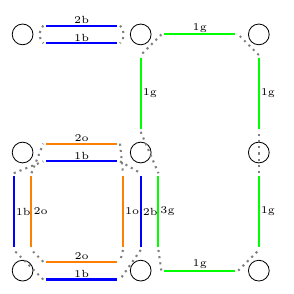}
  \caption{}
  \label{fig:sub1}
\end{subfigure}
\hspace{6em}
\begin{subfigure}{.3\textwidth}
  \centering
  \includegraphics[width=\textwidth]{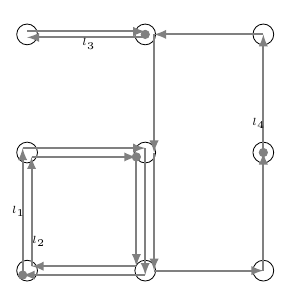}
  \caption{}
  \label{fig:sub2}
\end{subfigure}
\caption{(a) A configuration $w=(m,c,\pi) \in \tilde{\Wcal}_{\Gcal}$. (b) A configuration $\omega=(\ell_1,\ell_2,\ell_3,\ell_4) \in \Omega_\Gcal$. The roots of the rooted oriented loops in $\omega$ are represented by small gray filled circles. It holds that $\sigma(w) \in \Phi(\rho(\omega))$ and it is easy to see that $\Xi_1^*(\omega)=\Xi_2^*(w)$.}
\label{fig:equivalenceboth}
\end{figure}

Consider now $\sigma \in \Phi(\rho)$ and let $m_{j,i} \in \N_0$ be such that $\sum_{i \in [N]} m_{j,i}=k_j$ for any $j \in [p]$ and such that $k_{\gamma_j,i}^2(\sigma)=m_{j,i}$ for any $j \in [p]$ and $i \in [N]$. We note that for any $w,w^\prime \in \sigma$, it holds that $\mu(\sigma)=\mu(\sigma^\prime)$. We can thus fix a configuration $w \in \sigma$ and write
\begin{equation} \label{eq:measuresigma}
\mu^\ell(\sigma)=|\sigma| \, \mu^\ell(w).
\end{equation}
Using \eqref{eq:measuresigma} and  \eqref{eq:cardinalityequivalenceclassRPM} we obtain that
\begin{equation}
\label{eq:weightequivalenceclassRPM}
\mu^\ell(\sigma)= \prod_{j=1}^p \prod_{i=1}^N \frac{1}{m_{j,i}!} \bigg(\frac{\lambda^{\alpha(\gamma_j)} \delta(\gamma_j)}{2J(\gamma_j)}\bigg)^{k_j} \prod_{x \in \Vcal} \bigg(U(n_x(\boldsymbol{k},\boldsymbol{\gamma})) \prod_{y \in \Vcal} e^{-v_\Gcal(x,y)n_x(\boldsymbol{k},\boldsymbol{\gamma})n_y(\boldsymbol{k},\boldsymbol{\gamma})}\bigg).
\end{equation}
Applying \eqref{eq:weightequivalenceclassRPM}, we then obtain that 
$$ 
\begin{aligned}
& \mu^\ell(\Phi(\rho))= \sum_{\sigma^\prime \in \Phi(\rho)}\mu^\ell(\sigma^\prime) \\
& = \prod_{j=1}^p \bigg(\frac{\lambda^{\alpha(\gamma_j)} \delta(\gamma_j)}{2J(\gamma_j)}\bigg)^{k_j} \prod_{x \in \Vcal} \bigg(U(n_x(\boldsymbol{k},\boldsymbol{\gamma})) \prod_{y \in \Vcal} e^{-v_\Gcal(x,y)n_x(\boldsymbol{k},\boldsymbol{\gamma})n_y(\boldsymbol{k},\boldsymbol{\gamma})}\bigg) \bigg(\prod_{j=1}^p \sum_{\substack{m_{j,1},\dots,m_{j,N} \in \N_0: \\ m_{j,1}+\dots+m_{j,N}=k_j}} \frac{1}{m_{j,1}! \cdots m_{j,N}!}\bigg) \\
& = \prod_{j=1}^p \frac{1}{k_j!}\bigg(\frac{\lambda^{\alpha(\gamma_j)} N\delta(\gamma_j)}{2J(\gamma_j)}\bigg)^{k_j} \prod_{x \in \Vcal} \bigg(U(n_x(\boldsymbol{k},\boldsymbol{\gamma})) \prod_{y \in \Vcal} e^{-v_\Gcal(x,y)n_x(\boldsymbol{k},\boldsymbol{\gamma})n_y(\boldsymbol{k},\boldsymbol{\gamma})}\bigg) = \nu(\rho),
\end{aligned}
$$
where we used \eqref{eq:weightequivalenceclassBEC} in the last step. This proves \eqref{eq:comparisonweights}. Let now $f: \Omega_{\Gcal} \to \R$ be r-o-l-independent and $\tilde{f}: \tilde{\Wcal}_\Gcal \to \R$ be c-l-independent such that $f \sim \tilde{f}$. Applying \eqref{eq:comparisonweights}, we have that
\begin{equation} \label{eq:identicalmeasure}
\mu^\ell\big(\tilde{f}\big)=\sum_{\rho \in \Sigma(\Omega_\Gcal)} \mu^\ell\big(\tilde{f}\mathbbm{1}_{\{\sigma \in \Phi(\rho)\}}\big)=\sum_{\rho \in \Sigma(\Omega_\Gcal)}f(\rho) \nu(\rho)=\nu\big(f\big).
\end{equation} 
In particular, the partition functions of both models, corresponding respectively to $\nu(\Omega_\Gcal)$ and $\mu^\ell(\tilde{\Wcal}_\Gcal)$, are identical. 
This implies that \eqref{eq:identicalmeasure} also holds true for the normalized measures giving \eqref{eq:equivalencethm} and thus concludes the proof of the theorem.
\end{proof}
In the proof of our main theorem, Theorem \ref{theo:maintheorem}, such an equivalence theorem together with Lemma \ref{lemma:example} will be used to extend to the RWLS
our result about occurrence of macroscopic loops, which will be first be proved for the RPM. From now on the paper will deal with the RPM.

\section{Reflection positivity and Chessboard estimate}
\label{sect:reflectionpositivityandchessboardestimate}
We now introduce reflection positivity,  and its immediate consequence, the chessboard estimate. 
The novelty of this section with respect to \cite{T} is that  the technique of reflection positivity is extended to the measure \eqref{eq:RPMmeasure}, in which long-range interactions  are present. 
Consider the $d$-dimensional torus $(\T_L, \E_L)$ with $d,L \in \N$, i.e., the graph with vertex set $\T_L=\{ x = (x_1, \ldots, x_d) \in \mathbb{Z}^d \,  : \, x_i \in (-\frac{L}{2}, \frac{L}{2}] \text{ for each } i \in [d] \}$ and with edges connecting nearest neighbour vertices. Similar to  \cite{T},  the derivation of the Key Inequality in Section \ref{sect:derivationofthekeyinequality} below requires the introduction of a new graph.
\paragraph{Extended torus, virtual  and original vertices.}
We  now view $(\T_L, \E_L)$ as the sub-graph of a larger graph embedded in $\mathbb{R}^{d+1}$, which is denoted by 
$(\mathcal{T}_L, \mathcal{E}_L)$ and  is  referred to as \textit{extended torus}. The extended torus is obtained from the $d$-dimensional torus  by duplicating the vertex-set and by adding an edge between every vertex in $\T_L$ and its copy.
More precisely,   we define the vertex set of the extended torus as,
$$
\mathcal{T}_L : =  \big
\{ \,  (x_1, \ldots, x_{d+1} )  \in \mathbb{Z}^{d+1} \, : \,  x_i \in (- \frac{L}{2} , \frac{L}{2}  ] \,  \mbox{ for every } i \in [d], \,  \mbox{ and }  \,  x_{d+1} \in \{1,2\} \,   \big   \},
$$
where
$\T_L  = \{   (x_1,  \ldots, x_{d+1} )\in \mathcal{T}_L \, \, : \, \, x_{d+1} = 1    \}
 \subset \mathcal{T}_L$,  and $\T_L^{(2)} : = \mathcal{T}_L \setminus \T_L$.  
We define the edge-set,
$$
\mathcal{E}_L := \, \, \, \E_L \cup   \big \{ \{x,y\} \, 
\subset \mathbb{Z}^{d+1} : \, \,  x \in \T_L,  \, y = x + (0, \ldots,0, 1) \,        \big \}.
$$
This defines the extended torus $(\mathcal{T}_L, \mathcal{E}_L)$.
We will refer to  the vertices in $\T_L \subset \mathcal{T}_L$ as \textit{original} and to the vertices in  $\T_L^{(2)} \subset \mathcal{T}_L$ as \textit{virtual}. 
From now on we will replace the sub-script $\Gcal$ in all the quantities which were defined above by $\Tcal_L$ or $\T_L$, when considering the random path model in the extended or in the original torus. 
We  keep referring to $o$, corresponding to the vertex $(0, \ldots, 0) \in  \T_L \subset \mathcal{T}_L \subset \mathbb{Z}^{d+1}$, as the origin.
\subsection{Reflection positivity}
\label{sect:RP}
We now introduce reflection positivity, partially following  \cite[Section 4.2]{T}. 

\paragraph{Domains and restrictions.}
We introduce the notion of domain and restriction. 
A function $f : \mathcal{W}_{\Tcal_L} \rightarrow \mathbb{C}$ has \textit{domain} $D \subset \Tcal_L$ if, for any
pair of configurations $w = (m, c, \pi, g), w^{\prime} = (m^{\prime}, c^{\prime}, \pi^{\prime}, g^{\prime}) \in \mathcal{W}_{\Tcal_L}$ such that 
$$
\quad \forall e \in \mathcal{E}_L :  \,e\cap D\neq \emptyset, \quad \forall z \in D, \quad m_e = m_e^{\prime} \quad c_e = c_e^{\prime}, \quad \pi_z = \pi_z^{\prime}, \quad g_z=g_z^\prime
$$
one has that 
$f(w) = f(w^{\prime})$.
Moreover, for any  $w=(m, c, \pi,g) \in \mathcal{W}_{\Tcal_L}$ we define the \emph{restriction} of $w$ to $D \subset \Tcal_L$, $w_D=(m_D, c_D, \pi_D, g_D) \in \Wcal_{\Tcal_L}$,
 by 
  \vspace{-0.1cm}
 \begin{enumerate}[i)]
   \setlength{\itemsep}{0pt}
    \setlength{\topsep}{0pt}
  \vspace{-0.1cm}
 \item $(m_D)_e = m_e$ for any edge $e \in \mathcal{E}_L$ which has at least one end-point in $D$ and $(m_D)_e =0$ otherwise,
  \vspace{-0.1cm}
  \item $(c_D)_e = c_e$ for any edge $e$ which has at least one end-point in $D$ and $(c_D)_e= \emptyset$ otherwise,
 \vspace{-0.1cm}
  \item $(\pi_D)_x = \pi_x$ for any $x \in D$, and for $x\in  \Tcal_L \setminus D$ we set $(\pi_D)_x$ as the pairing which leaves all links touching $x$ unpaired (if any)
   \vspace{-0.1cm}
 \item $(g_D)_x = g_x$ for any $x \in D$, and for $x\in  \Tcal_L \setminus D$ we set $(g_D)_x=0$.
   \vspace{-0.1cm}
\end{enumerate}

\paragraph{Reflection through edges.}
Recall that the graph $(\mathcal{T}_L, \mathcal{E}_L)$ is embedded in $\mathbb{R}^{d+1}$.
We say that the plane $R$ is  through the  edges of $(\mathcal{T}_L, \mathcal{E}_L)$ if it is orthogonal to one of the cartesian vectors $\boldsymbol{e_i}$ for $i \in [d]$ 
and if it  intersects the midpoint of
$L^{d-1}$ edges of the graph $(\mathcal{T}_L,\Ecal_L)$,  i.e.
$R = \{z \in \mathbb{R}^{d+1} \, \, : \, \, z \cdot \boldsymbol{e_i} = u      \}$, for some $u$ such that  $u - 1/2 \in \mathbb{Z} \cap (-\frac{L}{2},\frac{L}{2}]$
and $i \in [d]$. 
Given such 
a plane $R$, we denote by 
$\Theta : \mathcal{T}_L \rightarrow \mathcal{T}_L$  the reflection operator which reflects the vertices of  $\,\mathcal{T}_L$ with respect to $R$, i.e.
for any $x = (x_1, x_2, \ldots, x_{d+1}) \in \mathcal{T}_L$, 
\begin{equation}
\Theta(x)_k : = 
\begin{cases}
x_k & \mbox{ if } k \neq i, \\ 
2u - x_k   \mod  L & \mbox{ if } k = i.
\end{cases}
\end{equation}
Let $\mathcal{T}_L^+, \mathcal{T}_L^- \subset \mathcal{T}_L$ be the corresponding partition of the extended torus into two disjoint halves
such that $\Theta(\mathcal{T}_L^\pm) = \mathcal{T}_L^{\mp}$. 
Let $\Ecal^{+}_L, \Ecal^{-}_L \subset \Ecal_L$, be the set of edges $\{x,y\}$ with at least one of $x,y$ in $\mathcal{T}_L^+$ respectively $\mathcal{T}_L^-$. Moreover, let 
$\Ecal_L^R  := \Ecal^{+}_L \cap \Ecal^{-}_L$. Note that this set contains $2 L^{d-1}$ edges, half of them intersecting the plane $R$, and all of them  belonging to $\E_L$.
Further, let $\Theta  : \mathcal{W}_{\Tcal_L} \rightarrow \mathcal{W}_{\Tcal_L}$ denote the reflection operator reflecting the configuration $w=(m, c, \pi, g)$ with respect to $R$ (we commit an abuse of notation by using the same letter). More precisely we define $\Theta w=(\Theta m, \Theta c, \Theta \pi, \Theta g)$, where $(\Theta m)_{\{x,y\}}=m_{\{\Theta x,\Theta y\}}$, 
$(\Theta c)_{\{x,y\}}=c_{\{\Theta x,\Theta y\}}$, 
 $(\Theta \pi)_x=\pi_{\Theta x}$, and  $(\Theta g)_x= g_{\Theta x}$.
Given a function $f :\mathcal{W}_{\Tcal_L} \to \mathbb{C}$, we also use the letter $\Theta$ to denote the reflection operator $\Theta$ which acts on $f$ as $\Theta f(w) : = \overline{f(\Theta w)}$.
We denote by $\Acal^\pm$ the set of functions with domain $\mathcal{T}_L^\pm$ and denote by $\mathcal{W}_{\Tcal_L}^\pm$ the set of configurations $w\in\mathcal{W}_{\Tcal_L}$ that are obtained as a restriction of some $w^\prime \in \mathcal{W}_{\Tcal_L}$ to $\mathcal{T}_L^\pm$.

We remark  that, although the graph $(\mathcal{T}_L, \mathcal{E}_L)$ is embedded in $\mathbb{R}^{d+1}$, we will only consider reflections with respect to reflection planes which are orthogonal to one of the cartesian vectors $\boldsymbol{e}_i$ for $i \in [d]$ (and not $i=d+1$).

Recall from the introduction that for a potential $v:\Z^d \to \R$, the function $v_L: \T_L \times \T_L \to \R$ is defined by $v_L(x,y):= \sum_{z \in \mathbb{Z}^d} v(y+Lz-x)$ for any $x,y \in \T_L$. From now on, we will view $v_L$ as a function on $\Tcal_L \times \Tcal_L$ by assuming that $v_L(x+\boldsymbol{e}_{d+1},y +\boldsymbol{e}_{d+1})=v_L(x+\boldsymbol{e}_{d+1},y)=v_L(x,y)$ for any $x,y \in \T_L$.

\begin{definition} \label{def: separable}
The function $v_L: \Tcal_L \times \Tcal_L \to \mathbb{R}$ is called $i$-\emph{separable}, $i \in [d]$, if there exists an integer $k \in \mathbb{N}$, a measure space $(T, \mathcal{T}, \nu)$, where $\nu$ is a finite, non-negative measure, and two bounded measurable functions 
$$
\alpha, \tilde{\alpha}: T \times \mathbb{T}_L \to \mathbb{C}^k,
$$
such that for any, $x,y \in \mathbb{T}_L$ with $x_i > 0 > y_i$, we can write
\begin{equation} \label{eq: interaction}
-v_L(x,y)= \int_{T} d \nu(t) \, \alpha(t,x) \cdot \tilde{\alpha}(t,y),
\end{equation}
where $\alpha(t,x) \cdot \tilde{\alpha}(t,y)$ denotes the inner product of two vectors in $\mathbb{C}^k$ and such that it holds
\begin{equation} \label{eq: conjugate}
\forall x=(x_1,\dots,x_d) \in \mathbb{T}_L \quad \quad \tilde{\alpha}(t, x) = \overline{\alpha(t, x_1, \dots, x_{i-1}, -x_i, x_{i+1}, \dots, x_d)} \quad \nu- \text{almost surely.}
\end{equation}
We say that $v$ is \emph{separable} if $v_L$ is $i$-separable for any $i \in [d]$.
\end{definition}

Recall also the definition in \eqref{eq:tempered} of a tempered potential $v: \Z^d \to \R$. Two examples of potentials $v$ that are tempered and separable are given in Lemma \ref{lemma:examplesseparable} in the appendix.

The next  theorem introduces an important tool. The theorem states that the random path model is reflection positive. 

\begin{theorem}[\textbf{Reflection positivity}]\label{theo:RP1}
Consider the torus $(\mathcal{T}_{L},\Ecal_{L})$ for $L\in2\N$. Let $R$ be a reflection plane through edges, which is orthogonal to one of the cartesian vectors $\boldsymbol{e}_i$, $i \in \{1, \ldots, d\}$, let $\Theta$ be the corresponding reflection operator. Consider the random path model with  $N \in \mathbb{N}$,  $\lambda \in \mathbb{R}^+$, and weight function $U:\N_0 \to \R^+$. Let $v: \Z^d \to \mathbb{R}$ be tempered and separable. 
For any pair of functions
$f, g \in \mathcal{A}^+$, we have that,
\begin{enumerate}
\item[(1)] $\mu_{\Tcal_L,U,v,N,\lambda} (f\Theta g)=\overline{\mu_{\Tcal_L,U,v,N,\lambda} (g \Theta f)}$,
\item[(2)] $\mu_{\Tcal_L,U,v,N,\lambda} (f\Theta f)\geq 0$.
\end{enumerate}
From this we obtain that,
\begin{equation}\label{eq:RPCS}
Re\big(\mu_{\Tcal_L,U,v,N,\lambda} \big (   f \,  \Theta g  \big )\big)
\leq 
\mu_{\Tcal_L,U,v,N,\lambda} \big (   f  \,  \Theta f  \big )^{\frac{1}{2}}
\, \, 
\mu_{\Tcal_L,U,v,N,\lambda} \big (   g \,  \Theta g  \big )^{\frac{1}{2}}.
\end{equation}
\end{theorem}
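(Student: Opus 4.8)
The plan is to reduce the statement to the classical abstract reflection positivity machinery (as in \cite[Section 4.2]{T} and the Fröhlich--Israel--Lieb--Simon framework), the only genuinely new ingredient being the treatment of the long-range interaction $v_L$ via the separability hypothesis. First I would recall the structure of the measure: writing $w=(m,c,\pi,g)$, the weight $\mu_{\Tcal_L,U,v,N,\lambda}(w)$ factorises into (a) an edge part $\prod_e \lambda^{m_e}/m_e!$, (b) a vertex part $\prod_x U(n_x(w))$, and (c) the interaction factor $e^{-V(w)} = \exp\big(-\sum_{x,y} v_L(x,y)\,n_x(w)\,n_y(w)\big)$. The edge and vertex parts are \emph{local}: the edge factor over $e\in\Ecal_L^R$ splits as a product of terms each depending only on the variables on that single edge (which is shared between the two halves), and after the standard observation that a configuration is determined by its restrictions to $\mathcal{T}_L^+$ and $\mathcal{T}_L^-$ together with the link data on $\Ecal_L^R$, one gets that $\mu$ restricted to the "local" part is a sum over the shared edge-data of a product $F\cdot \Theta F'$. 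The combinatorial/local part of the argument is then verbatim as in \cite{T}; I would state it and cite that reference, emphasising only the places where the extra field $g$ and the colour structure enter (they are purely local at vertices, hence cause no difficulty).

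The new step is handling $e^{-V(w)}$. Split the double sum defining $V(w)$ according to whether $x,y$ lie on the same side of the reflection plane $R$ or on opposite sides. The same-side terms $\sum_{x,y\in\mathcal{T}_L^+}$ and $\sum_{x,y\in\mathcal{T}_L^-}$ give factors lying respectively in $\mathcal{A}^+$ and in $\Theta\mathcal{A}^+$ (using that the local time $n_x$ has domain $\{x\}$ and $n_{\Theta x}(w) = n_x(\Theta w)$, together with $v_L(\Theta x,\Theta y)=v_L(x,y)$, which holds because $v_L$ is a function of torus distance and $R$ is a reflection of the torus). For the cross terms $\sum_{x_i>u>y_i}$ I would invoke Definition~\ref{def: separable}: writing $-v_L(x,y)=\int_T d\nu(t)\,\alpha(t,x)\cdot\tilde\alpha(t,y)$ with the conjugation relation \eqref{eq: conjugate}, and using the elementary identity $e^{A} = \lim \sum \frac{1}{n!}A^n$ applied to $A = \sum_{x_i>u>y_i}(-v_L(x,y))\,n_x n_y$, the cross-interaction factor becomes
\begin{equation*}
\exp\Big(\int_T d\nu(t)\,\Big(\sum_{x_i>u}\alpha(t,x)n_x(w)\Big)\cdot\Big(\sum_{y_i<u}\tilde\alpha(t,y)n_y(w)\Big)\Big),
\end{equation*}
and by \eqref{eq: conjugate} the second bracket is $\Theta$ applied to (the complex conjugate of) the first. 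Expanding the exponential and the $k$ inner-product components as sums, this exhibits the cross factor as a $\nu$- and sum-limit of products of the form $\prod_j G_j \,\overline{\Theta G_j}$ with $G_j\in\mathcal{A}^+$; each such elementary product preserves reflection positivity, and positivity is preserved under the (positive) integrals $d\nu$, sums, and limits. This is exactly the device used to prove RP for Gaussian-type / long-range interactions, and the hypotheses \eqref{eq: interaction}--\eqref{eq: conjugate} have been tailored to make it go through.

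Once both (1) and (2) are established — (1) being a direct consequence of $\overline{\mu(f\Theta g)}=\mu(\overline f\,\overline{\Theta g})=\mu(\overline f\,\Theta\bar g)$ plus the symmetry of the construction under swapping the two halves, and (2) being the content of the above — the Cauchy--Schwarz-type inequality \eqref{eq:RPCS} follows by the standard abstract argument: the sesquilinear form $(f,g)\mapsto \mu(f\,\Theta g)$ on $\mathcal{A}^+$ is, by (1) and (2), a non-negative Hermitian form, so the Cauchy--Schwarz inequality applies and gives $|\mu(f\Theta g)|\le \mu(f\Theta f)^{1/2}\mu(g\Theta g)^{1/2}$, whence the stated bound on the real part. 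The main obstacle is purely the cross-term analysis: one must be careful that the exponential-expansion/Fubini interchange is justified (finiteness of $V$ on the relevant configurations, which follows from temperedness of $v$ via Lemma~\ref{lemma: finitepartitionfunction}, and boundedness of $\alpha,\tilde\alpha$ and finiteness of $\nu$ from Definition~\ref{def: separable}), and that the reflection $\Theta$ genuinely sends the "$x_i>u$" sum to its "$y_i<u$" counterpart after applying \eqref{eq: conjugate} — i.e. that the split of $V$ by sides is compatible with $\Theta$. Everything else is bookkeeping inherited from \cite{T}.
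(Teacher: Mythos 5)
Your proposal matches the paper's proof essentially step for step: it reduces the $v=0$ case to \cite{T}, decomposes $V$ into same-side terms (in $\mathcal{A}^+$ and $\Theta\mathcal{A}^+$) and a cross term, expands the exponential of the cross term and invokes separability \eqref{eq: interaction}--\eqref{eq: conjugate} to express each power as a $\nu$-integral of sums of products $F\,\Theta F$, and then applies the $v=0$ reflection positivity termwise before deducing the Cauchy--Schwarz bound. The only cosmetic difference is that you describe the cross factor as an exponential of the integral and then expand both, while the paper expands the exponential first and then applies \eqref{eq: interaction} to each power $(-V_R)^l$, but the underlying mechanism is identical.
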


\begin{proof}
The proof of the case where $v$ equals the zero function, denoted $v=0$, is analogous to the proof of Theorem 4.3 in \cite{T} and will not be stated here. The only difference is that here we deal with complex functions and that, in addition to links, colourings and pairings, also ghost pairings are present. 

We now present the proof in case   $v: \Z^d \to \mathbb{R}$ is a non-zero,  tempered and separable potential. It holds that
\begin{equation} \label{eq:measurezerononzero}
\mu_{\Tcal_L,U,v,N,\lambda}(\omega) = \mu(\omega) e^{-V(\omega)},
\end{equation}
where we use the notation $\mu=\mu_{\Tcal_L,U,0,N,\lambda}$ with $v=0$ denoting the zero function. 
By translation invariance of the random path measure we can assume without loss of generality that the reflection plane $R$ passes through the origin. 
We can decompose the terms of the function $V$, which is defined in \eqref{eq:defV}, into three groups, those involving sites lying entirely in $\Tcal_L^+$ or $\Tcal_L^-$ and those involving sites that lie in both halves of the torus:
\begin{equation} \label{eq:decomposition}
\begin{aligned}
V(\omega) 
& = \underbrace{\sum_{x,y \in \T_L^+} v_L(x,y) \tilde{n}_x(\omega)\tilde{n}_y(\omega)}_{=:V^+} + \underbrace{\sum_{x,y \in \T_L^-}  v_L(x,y)\tilde{n}_x(\omega)\tilde{n}_y(\omega)}_{= \Theta (V^+)} + 2 \underbrace{ \sum_{x \in \T_L^+, y \in \T_L^-}  v_L(x,y)\tilde{n}_x(\omega)\tilde{n}_y(\omega)}_{=:V_R},
\end{aligned}
\end{equation}
where $\tilde{n}_z(w):=n_z(w)+n_{z + \boldsymbol{e}_{d+1}}(w)$ for any $z \in \T_L$ and $w \in \Wcal$. 
We note that $V(\theta w)=V(w)$ and $\mu_{\Tcal_L,U,v,N,\lambda}(\theta w)=\mu_{\Tcal_L,U,v,N,\lambda}(w)$ for any $w \in \Wcal_{\Tcal_L}$. The proof of \textit{(1)} is thus the same as in \cite{T}. We now prove \textit{(2)}. Using \eqref{eq: interaction} we obtain for each $l \in \mathbb{N}$ that,
\begin{equation} \label{eq: factorcalculation}
\begin{aligned} 
(-V_R)^l & = \sum_{\substack{x^1, \dots, x^l \in \T_L^+ \\ y^1, \dots, y^l \in \T_L^-}} (-v_L(x^1,y^1))\tilde{n}_{x^1}\tilde{n}_{y^1} \ldots (-v_L(x^l,y^l)) \tilde{n}_{x^l} \tilde{n}_{y^l} \\
& = \int_T d\nu(t^1) \ldots \int_T d\nu(t^l)  \sum_{j_1, \dots, j_l \in \{1, \dots, k \}} \bigg( \sum_{x^1, \dots, x^l \in \T_L^+} \alpha_{j_1}(t^1,x^1) \cdots \alpha_{j_l}(t^l,x^l) \tilde{n}_{x^1} \cdots \tilde{n}_{x^l} \bigg) \\
& \qquad \qquad \qquad \qquad \qquad \qquad \qquad \qquad \qquad \qquad \qquad \times \bigg( \sum_{y^1,\dots, y^l \in \T_L^-} \tilde{\alpha}_{j_1}(t^1,y^1) \cdots \tilde{\alpha}_{j_l}(t^l,y^l) \tilde{n}_{y^1} \cdots \tilde{n}_{y^l} \bigg)\\
& = \int_T d\nu(t^1) \cdots \int_T d\nu(t^l)  \sum_{j_1, \dots, j_l \in \{1, \dots, k \}} F_{t^1, \dots, t^l, j_1, \dots, j_l} \Theta F_{t^1, \dots, t^l, j_1, \dots, j_l},
\end{aligned}
\end{equation}
where $F_{t^1, \dots, t^l,j_1, \dots, j_l}: \mathcal{W} \to \mathbb{C}$ is defined by
$$
F_{t^1, \dots, t^l,j_1, \dots, j_l}(w):=\sum_{x^1,\dots, x^l \in \T_L^+} \alpha_{j_1}(t^1,x^1) \cdots \alpha_{j_l}(t^l,x^l) \tilde{n}_{x^1}(w) \cdots \tilde{n}_{x^l}(w).
$$
Using \eqref{eq:measurezerononzero}, \eqref{eq:decomposition}, \eqref{eq: factorcalculation}  and  a Taylor expansion, we obtain that
$$
\begin{aligned}
& \mu_{\Tcal_L,U,v,N,\lambda}\big(f \Theta f\big)  
= \mu(f e^{-V^+} \Theta (fe^{-V^+}) e^{-2 V_R}) 
= \sum_{l \geq 0} \frac{2^l}{l!} \, \mu(f e^{-V^+} \Theta (fe^{-V^+}) (-V_R)^l) \\ 
& \quad = \sum_{l \geq 0} \frac{2^l}{l!} \int_T d\nu(t^1) \cdots \int_T d\nu(t^l)  \sum_{j_1, \dots, j_l \in \{1, \dots, k \}} \underbrace{\mu(f e^{-V^+} F_{t^1, \dots t^l, j_1, \dots, j_l} \Theta (f e^{-V^+} F_{t^1, \dots t^l, j_1, \dots, j_l}))}_{\geq 0} \geq 0,
\end{aligned}
$$
where in the last step we used that the measure $\mu$ with $v=0$ is reflection positive. 
Thus, the theorem is proven.
\end{proof}

\subsection{Chessboard estimate}
\label{sect:Chessboardscheme}
We now introduce the notion of support.
Contrary to the notion of domain, 
which was introduced in Section \ref{sect:RP},
the notion of support is defined only for subsets of the original torus.
We say that the function
 $f : \mathcal{W}_{\Tcal_L} \rightarrow \mathbb{R}$ has \textbf{\textit{support}} in 
 $D \subset \T_L$ if it has domain in $D \cup   D^{(2)}$,
 where $D^{(2)}$ is defined as the set of sites which are `on the top' of those in $D$, 
$$
  D^{(2)} : = 
  \{ z \in \mathbb{T}_L^{(2)} \, \, : z-\boldsymbol{e}_{d+1} \in D \}.
 $$
  Fix an arbitrary site $t \in \T_L$ and let  $t_0=o$, $t_1$, $\ldots$, $t_k = t$
be a self-avoiding nearest-neighbour path from $o$ to $t$,
and for any $i \in \{1, \ldots, k\}$, let $\Theta_i$ be the reflection with respect to the plane going through the edge $\{  t_{i-1}, t_{i}  \}$.
Let $f$ be a function having support in $\{o\}$ and define
$$
f^{ [t]} : = \Theta_k  \circ \Theta_{k-1} \, \ldots \, \circ \Theta_1 \, ( f ).
$$
Observe that the function $f^{ [t]}$ does not depend on the chosen path (see also 
Figure $6$ in \cite{T}). 
The next proposition provides an important tool that can be  applied to upper bound the measure of a product of local functions.
\begin{proposition}[Chessboard estimate]\label{prop:chessboardabstract}
Let $f = (f_t)_{t \in \T_L}$ be real-valued functions
with support $\{o\}$ which are either all bounded or all non-negative.
Under the same assumptions as in Theorem \ref{theo:RP1}, 
we have that
\begin{equation} \label{eq:chessboardabstract}
\mu_{\Tcal_L,U,v,N,\lambda} \Big(   \prod_{t \in \T_L} f^{[t]}_t  \Big   ) \, \leq \, 
\bigg (
\,  \, \prod_{t \in \T_L} \, \, 
   \mu_{\Tcal_L,U,v,N,\lambda}  \Big (   \prod_{s \in \T_L} f_t^{[s]}  \Big )   \, \, \, \,  \bigg )^{\frac{1}{|\T_L|}}.
\end{equation}
\end{proposition}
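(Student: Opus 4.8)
The plan is to reduce the chessboard estimate \eqref{eq:chessboardabstract} to the reflection positivity statement \eqref{eq:RPCS} by a standard induction over the torus, exactly as in the classical Fröhlich--Simon--Spencer / Fröhlich--Israel--Lieb--Simon scheme (and in particular along the lines of \cite{T}). Since $L\in 2\N$, every plane through an edge of the torus in a coordinate direction $\boldsymbol e_i$, $i\in[d]$, is a reflection plane, and successive reflections generate the group acting on $\T_L$ by which the reflected functions $f_t^{[s]}$ are defined; the key structural fact (noted just before the statement) is that $f_t^{[s]}$ is independent of the chosen nearest-neighbour path from $o$ to $s$, so the composite reflections are consistent.

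First I would record the elementary but crucial observation that if $f$ has support in $\{o\}\subset\T_L$ and $R$ is a reflection plane through edges, then $f$ has domain $\mathcal T_L^+$ or $\mathcal T_L^-$ according to which side the origin lies on (this uses that support in $\{o\}$ means domain in $\{o\}\cup\{o\}^{(2)}$, and that the reflection planes we use are orthogonal to $\boldsymbol e_i$ with $i\in[d]$, hence do not separate $o$ from $o+\boldsymbol e_{d+1}$). More generally, the product $\prod_{t\in S}f_t^{[t]}$ over any set $S$ contained in one closed half $\mathcal T_L^\pm$ is a function in $\Acal^\pm$. This is what lets us feed products of local functions into \eqref{eq:RPCS}.

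The core of the argument is the inductive ``unfolding'': pick a coordinate direction and a reflection plane $R$ (orthogonal to that direction, through edges) that splits $\T_L$ into two halves $\T_L^+,\T_L^-$ of equal size; write $F_\pm:=\prod_{t\in\T_L^\pm}f_t^{[t]}$, so that $\prod_{t\in\T_L}f_t^{[t]} = F_+\cdot F_-$ and, because the family $(f_t^{[t]})$ is built by reflections, $F_- = \Theta G_+$ for the function $G_+\in\Acal^+$ obtained by reflecting each $f_t^{[t]}$ with $t\in\T_L^-$ back across $R$. Applying \eqref{eq:RPCS} with $f=F_+$, $g=G_+$ gives
\[
\mu\Big(\prod_{t\in\T_L}f_t^{[t]}\Big)\ \le\ \mu\big(F_+\,\Theta F_+\big)^{1/2}\,\mu\big(G_+\,\Theta G_+\big)^{1/2},
\]
and each of the two factors on the right is a $\mu$-average of a product of $f^{[\cdot]}$'s over the full torus in which the two halves have been made mirror-images of each other. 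Iterating this, reflecting successively through all $d$ coordinate directions and through a complete set of parallel planes in each, after finitely many steps every factor appearing is of the form $\mu\big(\prod_{s\in\T_L}f_t^{[s]}\big)$ for a single $t$, each raised to the power $1/|\T_L|$; collecting the exponents (each bisection halves the ``weight'' of a site, and there are $|\T_L|$ sites) yields precisely \eqref{eq:chessboardabstract}. The hypothesis that the $f_t$ are either all bounded or all non-negative is needed to make sense of, and to preserve the direction of, the intermediate inequalities (when some $\mu(F_+\Theta F_+)$ could vanish or when real parts must be controlled); in the non-negative case all quantities are manifestly $\ge 0$, in the bounded case one argues as usual that either the relevant diagonal average is strictly positive and one divides, or it is zero and both sides vanish.

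The main obstacle — and the only place the present setting differs from \cite{T} — is ensuring that \eqref{eq:RPCS} is actually available at every stage of the induction, i.e.\ that reflection positivity holds with respect to \emph{every} plane through edges orthogonal to $\boldsymbol e_1,\dots,\boldsymbol e_d$, in the presence of the long-range interaction $v_L$. This is exactly what Theorem \ref{theo:RP1} provides, using that $v$ is tempered and separable: separability is defined precisely so that $-v_L(x,y)$ factorizes across any such plane as an integral of $\alpha(t,x)\cdot\tilde\alpha(t,y)$ with $\tilde\alpha=\overline{\alpha\circ(\text{reflection})}$, which is what the Taylor-expansion argument in the proof of Theorem \ref{theo:RP1} turns into a sum of nonnegative terms $\mu(H\,\Theta H)$. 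So the plan is: invoke Theorem \ref{theo:RP1} for the plane used at each step, and otherwise run the classical FSS/FILS bisection bookkeeping verbatim; the bounded-versus-nonnegative case distinction and the path-independence of $f_t^{[s]}$ are the only routine points to check with a little care.
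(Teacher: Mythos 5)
The paper does not write out a proof of Proposition \ref{prop:chessboardabstract}; it simply cites \cite{FrohlichLieb}, \cite[Theorem 5.8]{Biskup} and \cite[Theorem 10.11]{FriedliVelenik}. Your proposal reconstructs the standard bisection argument contained in those references, and you correctly identify the two points specific to the present setting — that support in $\{o\}$ places each $f_t^{[t]}$ in $\Acal^{\pm}$ (because the reflection planes are orthogonal to $\boldsymbol e_i$, $i\in[d]$, and so never separate $o$ from $o+\boldsymbol{e}_{d+1}$), and that Theorem \ref{theo:RP1} supplies the Cauchy--Schwarz inequality \eqref{eq:RPCS} across every such plane even with the long-range potential $v_L$, via separability — so your approach is essentially the one the paper intends by its citations.
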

For the proof of Proposition \ref{prop:chessboardabstract} we refer to the original paper \cite{FrohlichLieb} or to the overviews  \cite[Theorem 5.8]{Biskup} or \cite[Theorem 10.11]{FriedliVelenik}.

The following remark states that the chessboard estimate can also be applied to the expectation $\E_{\T_L}$ for the RPM in the original torus with only closed paths being present (recall the definition of the expectation $\E$ provided right after equation 
\eqref{eq:RPMprobabilitymeasure}). 

\begin{remark} \label{remark:chessboard}
For any $F: \Wcal_{\T_L} \to \R$, we let $f_F : \Wcal_{\Tcal_L} \to \R$ be a function 
which extends $F$ to $\mathcal{W}_{\mathcal{T}_L}$ in such a way that 
 $f_F(w):= F(w_{\T_L})$ for any $w \in \mathcal{W}_{\Tcal_L}$,
where we recall from Section \ref{sect:RP} that 
$w_{{\T_L}}$ denotes the restriction of 
 $w \in \Wcal_{\Tcal_L}$ to $\mathbb{T}_L$ and can be viewed as a configuration in $\Wcal_{\T_L}$.  
We then have from our construction of the extended torus that, 
$$
\E_{\T_L, U, v, N, \lambda}[F]= \frac{\mu_{\Tcal_L, U, v, N, \lambda}\big(f_F \, \mathbbm{1}_{\tilde{\Wcal}_{\Tcal_L}^o} \big)
}{ \mathbb{Z}^{\ell}_{\mathbb{T}_L} },
$$
where
\begin{equation} \label{eq:extendedtorusclosedpaths}
\tilde{\Wcal}_{\Tcal_L}^o:=\{w \in \tilde{\Wcal}_{\Tcal_L}: \, n_z =0 \, \forall z \in \T_L^{(2)}\},
\end{equation}
corresponds to the set of configurations $w \in \tilde{\Wcal}_{\Tcal_L}$ in which any closed path lies entirely in the original torus (recall also that $\tilde{\Wcal}_{\Tcal_L}$, defined in \eqref{eq:randompathtilde}, is the set of configurations with no unpaired link and no ghost pairing).
From this relation, from the fact that $$\mathbbm{1}_{\tilde{\Wcal}^o_{\mathcal{T}_L}} = \prod_{x \in \mathbb{T}_L} \mathbbm{1}_{\{n_{x + \boldsymbol{e}_{d+1}} = 0 \}} \mathbbm{1}_{\{u_x=u_{x + \boldsymbol{e}_{d+1}} = 0 \}} \mathbbm{1}_{\{g_x=g_{x + \boldsymbol{e}_{d+1}} = 0 \}}$$
and from Proposition \ref{prop:chessboardabstract} we deduce that  \eqref{eq:chessboardabstract} also holds true with $\mu_{\Tcal_L}$ replaced by $\E_{\T_L}$. 
\end{remark}

The next lemma provides an upper bound on the local time
for the RPM with only closed paths being present.
\begin{lemma} \label{lemma:Chessboardapplicationno}
Under the same assumptions as in Theorem \ref{theo:RP1}, there exists $c \in (0,\infty)$ such that for any $A \subset \T_L$ and any vector $(a_x)_{x \in A} \in \N_0^{A}$, it holds for any $L \in 2\N$ that  
\begin{equation} \label{eq:upperboundnumberpairings}
\E_{\T_L, U, v, N, \lambda}\bigg(\prod_{x \in A} e^{a_x n_x}\bigg) \leq e^{c \lambda N d \sum_{x \in A} e^{a_x}}.
\end{equation}
\end{lemma}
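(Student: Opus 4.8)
The plan is to bound $\E_{\T_L}\big(\prod_{x \in A} e^{a_x n_x}\big)$ by writing the expectation as a ratio of $\mu_{\Tcal_L}$-measures over the extended torus (via Remark \ref{remark:chessboard}), applying the chessboard estimate of Proposition \ref{prop:chessboardabstract}, and then reducing each resulting single-site partition-function-type factor to the kind of expression already controlled by Lemma \ref{lemma: finitepartitionfunction}. The key point is that the chessboard estimate disentangles the function $\prod_{x \in A} e^{a_x n_x}$ into a product (over $x$) of expectations, each of which sees only one ``decorated'' value of $a$ at a time.

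First I would reduce to the case $A = \T_L$: set $a_x := 0$ for $x \in \T_L \setminus A$, so that $\prod_{x \in A} e^{a_x n_x} = \prod_{x \in \T_L} e^{a_x n_x}$, and note that for the extra sites the bound contributes a harmless factor. Next, for each $x \in \T_L$ introduce the local function $f_x : \Wcal_{\Tcal_L} \to \R$ with support $\{o\}$ given by $f_x(w) := e^{a_x n_o(w)}$; then $f_x^{[t]}(w) = e^{a_x n_t(w)}$ by definition of the reflected functions $f^{[t]}$ (reflections act on the configuration, and $n_o$ composed with the reflection $\Theta_k \circ \dots \circ \Theta_1$ is exactly $n_t$). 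Hence $\prod_{t \in \T_L} f_t^{[t]} = \prod_{t \in \T_L} e^{a_t n_t}$, which is the quantity we want to estimate. The functions $(f_t)_{t\in\T_L}$ are non-negative, so Proposition \ref{prop:chessboardabstract} (in the form valid for $\E_{\T_L}$, as stated in Remark \ref{remark:chessboard}) applies and gives
\begin{equation*}
\E_{\T_L, U, v, N, \lambda}\Big( \prod_{t \in \T_L} e^{a_t n_t} \Big) \leq \Big( \prod_{t \in \T_L} \E_{\T_L, U, v, N, \lambda}\Big( \prod_{s \in \T_L} e^{a_t n_s} \Big) \Big)^{1/|\T_L|}.
\end{equation*}
For each fixed $t$ the inner expectation is $\E_{\T_L}\big( e^{a_t \sum_{s} n_s} \big) = \E_{\T_L}\big( e^{a_t \sum_{e \in \E_L} m_e} \big)$, using the identity $\sum_{s \in \T_L} n_s = \sum_{e \in \E_L} m_e$ already invoked in the proof of Lemma \ref{lemma: finitepartitionfunction}.

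The next step is to bound $\E_{\T_L}\big( e^{a \sum_e m_e}\big)$ for a single constant $a \geq 0$. Writing this expectation as $\mu^\ell\big(\prod_{x} e^{a' n_x}\big)/\Z^\ell$ is not quite the right move, since the chessboard-generated factor $e^{a\sum_e m_e}$ lives on edges, not on the vertex variables $n_x$; however one has $\sum_e m_e = \tfrac12 \sum_x \big(\sum_{y \sim x} m_{\{x,y\}}\big)$, and in the combinatorial bound of Lemma \ref{lemma: finitepartitionfunction} each $n_x$ equals $\tfrac12\sum_{y\sim x} m_{\{x,y\}}$ anyway when there are no unpaired links. So I would simply rerun the computation \eqref{eq:finitestep1} verbatim with the extra weight $e^{a\sum_e m_e} = \prod_e e^{a m_e}$ inserted: each edge factor becomes $(MN\lambda e^{a})^{m_e}/m_e!$, the sum over $m_e$ gives $e^{\lambda e^{a} MN}$ per edge, and since $\Z^\ell \geq 1$ (the empty configuration has weight one, assuming $U(0) > 0$; if $U(0)=0$ one uses instead the lower bound from a single short loop), one obtains $\E_{\T_L}\big(e^{a\sum_e m_e}\big) \leq e^{\lambda e^{a} MN |\E_L|}$. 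With $|\E_L| = d L^d = d |\T_L|$ this yields, per $t$, the bound $e^{c\lambda N d\, e^{a_t} |\T_L|}$ for a constant $c$ absorbing $M$; taking the product over $t$ and the $|\T_L|$-th root exactly produces the claimed $e^{c\lambda N d \sum_{x} e^{a_x}}$.

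The main obstacle I anticipate is the bookkeeping around the extended torus and the partition function in the denominator: the chessboard estimate is stated for $\mu_{\Tcal_L}$, and Remark \ref{remark:chessboard} tells us it transfers to $\E_{\T_L}$, but one must be careful that the indicator $\mathbbm{1}_{\tilde\Wcal^o_{\Tcal_L}}$ factorizes as a product of single-site functions (which it does, as displayed in the Remark) so that it can be folded into the $f_t$'s, and that the normalization $\Z^\ell_{\T_L}$ is bounded below independently of $L$ — here $\Z^\ell_{\T_L} \geq 1$ suffices. A secondary technical point is confirming that $f_x^{[t]}$ really equals $e^{a_x n_t}$, i.e. that the composed reflections map the local-time function at $o$ to the local-time function at $t$; this is immediate from the definition $\Theta f(w) = \overline{f(\Theta w)}$ together with $n_o(\Theta w) = n_{\Theta o}(w)$ and the fact that $\Theta_k\circ\dots\circ\Theta_1$ sends $o$ to $t$. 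Beyond that, everything is a routine rerun of Lemma \ref{lemma: finitepartitionfunction}'s computation with an extra exponential weight, so I do not expect further difficulty.
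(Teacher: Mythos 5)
Your overall strategy is the same as the paper's: apply the chessboard estimate (Proposition~\ref{prop:chessboardabstract} via Remark~\ref{remark:chessboard}) and then bound each single-site factor by rerunning the computation behind \eqref{eq:finite} in Lemma~\ref{lemma: finitepartitionfunction}. The rederivation of that bound is an unnecessary detour — the paper simply cites \eqref{eq:finite} with $a=a_t$ — but it is correct, and your remark about needing $\Z^\ell\geq 1$ (or a comparable lower bound) when dividing by the partition function is a fair point that the paper glosses over.

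There is, however, one genuine arithmetic gap in the reduction to $A=\T_L$. After you set $a_x:=0$ for $x\in\T_L\setminus A$, you then estimate each inner chessboard factor by the uniform per-$t$ bound $e^{c\lambda N d\,e^{a_t}|\T_L|}$; for $t\notin A$ this gives $e^{c\lambda N d|\T_L|}$, and after taking the $|\T_L|$-th root of the product you are left with a spurious factor $e^{c\lambda N d\,|\T_L\setminus A|}$. This is not a ``harmless factor'': it diverges as $L\to\infty$ whenever $A$ stays fixed, and the lemma is used in exactly that regime (e.g.\ $A=\{x,y\}$ in Proposition~\ref{lemma:expectednumberloopsrelation} and in the proof of Theorem~\ref{theo:maintheorem}). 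The correct observation is that for $t\notin A$ the inner factor is $\E_{\T_L}\big(\prod_s e^{0\cdot n_s}\big)=\E_{\T_L}(1)=1$ exactly — i.e.\ the right-hand side of the chessboard estimate effectively factors only over $t\in A$ while keeping the outer exponent $1/|\T_L|$, which is how the paper writes the first inequality. With that one correction your argument matches the paper's.
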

\begin{proof} 
Consider $A \subset \T_L$ and $(a_x)_{x \in \T_L} \in \N_0^{\T_L}$. 
From \eqref{eq:chessboardabstract} and from Remark \ref{remark:chessboard} we  obtain  that
$$
\E_{\T_L, U, v, N, \lambda}\bigg(\prod_{x \in A} e^{a_x n_x}\bigg)  
\leq \bigg(\prod_{x \in A} \E_{\T_L, U, v, N, \lambda}\big(\prod_{y \in \T_L} e^{a_x n_y}\big)\bigg)^{\frac{1}{|\T_L|}} \leq e^{c \lambda N d \sum_{x \in A} e^{a_x}},
$$
where in the last step we applied \eqref{eq:finite}. This concludes the proof. 
\end{proof}

\section{Two-point function and its properties} \label{sect:two-pointfunctionanditsproperties}
In this section we define the two-point function and study its properties. This function is an important object in our analysis and corresponds to a fraction between partition functions. We will later, in Proposition \ref{lemma:expectednumberloopsrelation} below, relate the two-point function to the expected number of closed paths touching two vertices
in the random path model with only closed paths.  
In the whole section, we consider the RPM on the (original) torus $(\T_L,\E_L)$. 
Recall from Section \ref{sect:randompathmodel} that $u_x^i(w)$ denotes the number of $i$-links touching $x$ that are unpaired at $x$ and that $g_x(w)$ denotes the number of ghost pairings at $x$ for any $x \in \T_L$, $i \in [N]$ and $w \in \Wcal_{\T_L}$. 
Recall from \eqref{eq:definitionrange} that we denote by $R \in \N_0 \cup \{\infty\}$ the range of $U:\N_0 \to \R_0^+$.

\begin{definition}
For $x,y \in \T_L$ with $x \neq y$, we define the sets
\begin{equation} \label{eq:Wcalxy}
\begin{aligned}
\Wcal_{x,y}:= \Big\{w \in \Wcal_{\T_L} : \quad  & u_z=g_z=0 \, \forall z \in \T_L \setminus \{x,y\}, \\ & u_z^1=1=u_z^2, \, u_z^i=0 \, \forall i \in \{3,\dots,N\}, \, g_z=2 \, \forall z \in \{x,y\} \Big\}
\end{aligned}
\end{equation}
and
\begin{equation} \label{eq:Wcalxx}
\begin{aligned}
\Wcal_{x,x}:= \Big\{w \in \Wcal_{\T_L} : \quad & u_z=g_z=0 \, \forall z \in \T_L \setminus \{x\}, \\ & u_x^1=2 \text{ or } u_x^2=2 \text{ or } u_x=0 \text{ and } u_x^i=0 \, \forall i \in \{3,\dots,N\}, \, g_x=4 \Big\}.
\end{aligned}
\end{equation} 
Moreover, we define the \textit{directed partition function}, 
\begin{equation}
\label{eq: definition_partition_function}
\Z_{\T_L,U,v,N,\lambda}(x,y) 
:= \begin{cases} \mu_{\T_L,U,v,N,\lambda}(\Wcal_{x,y}) & \text{ if } x \neq y, \\
\mu_{\T_L,U,v,N,\lambda}(2^{\mathbbm{1}_{\{u_x^1=2\}}+\mathbbm{1}_{\{u_x^2=2\}}} \, \mathbbm{1}_{\Wcal_{x,x}}) & \text{ if } x = y,
\end{cases}
\end{equation}
and  the \textit{two-point function},
\begin{equation}
\label{eq: twopointfunction}
\G_{\T_L,U,v,N,\lambda}(x,y):= \lambda^2 \, \frac{\Z_{\T_L,U,v,N,\lambda}(x,y)}{\Z_{\T_L,U,v,N,\lambda}^\ell }.
\end{equation}
\end{definition}
Stated differently, any configuration $w \in \Wcal_{x,y}$ with $x \neq y$ is such that there exist precisely two open paths connecting $x$ and $y$, one of them having colour $1$ and one of them having colour $2$, see also Figure \ref{fig:twoopenpaths}. In Section \ref{sec:relationtwopointfct} we will define a map which changes the colour of one of the open paths and merges them afterwards. 
This will allow us to relate the two-point function to the expected number of closed paths connecting $x$ and $y$. It is important to have two open paths of \textit{different} colour. Indeed, imposing two unpaired links of same colour at $x$ and $y$ does not necessarily imply a connection between $x$ and $y$. 

From now on we fix the dimension $d \in \N$, the number of colours $N \in \N_{\geq 2}$, the size of the system $L \in  2\N$, the parameter $\lambda \in \R^+$,
a tempered and separable potential $v: \Z^d \to \R$ and a good weight function $U: \N_0 \to \R_0^+$. 
For a lighter notation we will omit the dependence from  such quantities  when appropriate.

A first important property of the two-point function is monotonicity.
\begin{proposition}\label{prop:monotonicity}
Let $x \in \mathbb{T}_L$ be an arbitrary vertex such that $n = x \cdot \boldsymbol{e}_i$ is odd and $n \geq 3$ for some $i \in [d]$. Then,
\begin{equation}\label{eq:monotonicity}
\mathbb{G}(o,x)  \leq\mathbb{G}(o, n \boldsymbol{e}_i) \leq \mathbb{G}(o,(n-2)  \boldsymbol{e}_i).
\end{equation}
\end{proposition}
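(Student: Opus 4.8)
The plan is to prove both inequalities in \eqref{eq:monotonicity} by applying reflection positivity through a suitable reflection plane, exactly in the spirit of the classical monotonicity arguments for two-point functions in reflection-positive systems (the same strategy underlying Lemma~\ref{lemma:Chessboardapplicationno}, but now applied to the directed partition function $\Z(x,y)$ rather than to local observables). The two-point function $\G(o,x)$ is, up to the factor $\lambda^2/\Z^\ell$, the measure $\mu$ of the event $\Wcal_{o,x}$ that there are two open paths of distinct colours joining $o$ and $x$; since $\Z^\ell$ and the $\lambda^2$ do not depend on $x$, it suffices to prove the two inequalities for $\mu(\Wcal_{o,\cdot})$, i.e.\ for the directed partition functions $\Z(o,x)$.

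For the \textbf{second inequality} $\G(o,n\boldsymbol{e}_i)\le \G(o,(n-2)\boldsymbol{e}_i)$, I would reflect through the plane $R$ orthogonal to $\boldsymbol e_i$ that passes through the edges at coordinate $n-1$ (i.e.\ $u=n-1$, which is an integer shifted by $1/2$ as required, since $n$ is odd means $n-1$ is even—one has to double-check the parity bookkeeping: the reflection plane must pass through edge midpoints, so $u-1/2\in\Z$, and one places it between coordinates $n-2$ and $n$ appropriately). Under this reflection $\Theta$, the point $o$ stays in $\Tcal_L^+$ and the point $n\boldsymbol e_i$ reflects to $(n-2)\boldsymbol e_i$ (or the plane is chosen so that $\Theta(n\boldsymbol e_i)$ and $o$ lie on the correct sides). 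Writing the indicator of $\Wcal_{o,n\boldsymbol e_i}$—which forces two open paths from $o$ to $n\boldsymbol e_i$—one localizes the "source at $o$" data in a function $f\in\Acal^+$ and the "source at $n\boldsymbol e_i$" data in $\Theta g$ with $g\in\Acal^+$; then $\Z(o,n\boldsymbol e_i)$ is (a sum of terms of) the form $\mu(f\,\Theta g)$, while $\Z(o,(n-2)\boldsymbol e_i)=\mu(f\,\Theta f)$ and $\Z(n\boldsymbol e_i, n\boldsymbol e_i)$-type quantities equal $\mu(g\,\Theta g)$. Applying \eqref{eq:RPCS} gives $\Z(o,n\boldsymbol e_i)\le \Z(o,(n-2)\boldsymbol e_i)^{1/2}\,\mu(g\,\Theta g)^{1/2}$, and by translation invariance $\mu(g\,\Theta g)$ equals the same quantity as $\Z(o,(n-2)\boldsymbol e_i)$ (the separation between the two sources is again $n-2$ after the reflection), whence $\Z(o,n\boldsymbol e_i)\le\Z(o,(n-2)\boldsymbol e_i)$, i.e.\ $\G(o,n\boldsymbol e_i)\le\G(o,(n-2)\boldsymbol e_i)$. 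The \textbf{first inequality} $\G(o,x)\le\G(o,n\boldsymbol e_i)$ with $n=x\cdot\boldsymbol e_i$ is of the same type: reflect through the plane orthogonal to $\boldsymbol e_i$ at coordinate $n$—no wait, one reflects so as to move the non-$\boldsymbol e_i$ coordinates of $x$ toward $0$; more precisely, one iterates the Cauchy–Schwarz/reflection step coordinate by coordinate over the coordinates $j\ne i$ of $x$, each time reflecting through a plane that halves (in the chessboard sense) the displacement in direction $\boldsymbol e_j$, using that $\mu(g\,\Theta g)$ reduces to a two-point function with a source configuration that is more symmetric and ultimately equals $\G(o,n\boldsymbol e_i)$; the repeated application of \eqref{eq:RPCS} with the dominating factor gives the bound. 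Here one has to be careful that the parity of $x$ (it lies in $\T_L^o$, all coordinates odd) is compatible with the reflection planes through edges, which it is, since those planes sit at half-integer coordinates.

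The \textbf{main obstacle} I anticipate is the bookkeeping needed to write $\mathbbm{1}_{\Wcal_{o,x}}$, or rather the weight $\mu(\mathbbm 1_{\Wcal_{o,x}})$, genuinely in the factorized form $\sum_\alpha f_\alpha\,\Theta g_\alpha$ required by reflection positivity: the open-path constraint couples the two halves of the torus across the reflection plane (a path must cross $R$), so one must expand over the link/colour/pairing data on the edges of $\Ecal_L^R$ intersecting $R$, as in the standard proof that the two-point function is a sum of $\mu(f\Theta g)$ terms, and check that the long-range potential $V$—which also couples the halves through $V_R$—can be handled simultaneously using the separability decomposition \eqref{eq: interaction} exactly as in the proof of Theorem~\ref{theo:RP1}. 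Concretely, after the Taylor expansion of $e^{-2V_R}$ used there, each term is a genuine $\mu_{0}(\tilde f\,\Theta\tilde f)$-type object for the $v=0$ measure, and one must verify that the source-at-$o$ and source-at-$x$ decorations survive this expansion in a way that keeps $f\in\Acal^+$. Once this factorization is in place, the inequalities follow by the routine Cauchy–Schwarz step combined with translation invariance of $\mu$; I would also note that the ghost-pairing convention in the definition \eqref{eq:Wcalxy}–\eqref{eq: definition_partition_function} is precisely what makes the weights of the reflected "diagonal" configurations $\Wcal_{x,x}$ match the weights of $\Wcal_{o,o}$ after translation, so that the factor $\mu(g\Theta g)^{1/2}$ really cancels against the desired right-hand side rather than leaving an extra constant.
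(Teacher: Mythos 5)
The paper's own proof is a one-liner: it rewrites $\G(o,x)$ as
\[
\G(o,x)=\frac{\mu\bigl(\prod_{z\in\T_L} h_z\bigr)}{\mu(\tilde\Wcal_{\T_L})},
\]
with each $h_z$ having domain $\{z\}$ (the ghost pairings are introduced precisely so that the constraint defining $\Wcal_{o,x}$ is a genuine product of vertex-local indicators), and then directly invokes the black-box result \cite[Theorem~2.1]{L-T-quantum}, which asserts site-monotonicity for any such quantity built from a reflection-positive measure. Your proposal tries to re-derive that general theorem from scratch via reflection through edges and Cauchy--Schwarz. That is exactly the philosophy underlying \cite{L-T-quantum}, and you correctly identify the key structural facts (the vertex-local factorization of $\mathbbm{1}_{\Wcal_{o,x}}$, the role of ghost pairings, and that the long-range part $V_R$ can be absorbed via the separability expansion as in Theorem~\ref{theo:RP1}). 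So the approach is aligned in spirit, but the paper takes the shortcut of citing the general theorem, whereas you would need to reprove it.

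The place where your sketch breaks down is the second inequality. Take the plane at $u=(n-2)/2\in\Z+\tfrac12$, so that $o\in\Tcal_L^-$, $n\boldsymbol e_i\in\Tcal_L^+$. Writing $\mathbbm{1}_{\Wcal_{o,n\boldsymbol e_i}}=f\,\Theta g$ with $f,g\in\Acal^+$ carrying the sources at $n\boldsymbol e_i$ and at $\Theta o=(n-2)\boldsymbol e_i$ respectively, Cauchy--Schwarz \eqref{eq:RPCS} gives
\[
\Z(o,n\boldsymbol e_i)^2\ \le\ \mu(f\Theta f)\,\mu(g\Theta g)\ =\ \Z\bigl(n\boldsymbol e_i,\Theta(n\boldsymbol e_i)\bigr)\,\Z\bigl(\Theta o,o\bigr)
=\Z\bigl(o,(n+2)\boldsymbol e_i\bigr)\,\Z\bigl(o,(n-2)\boldsymbol e_i\bigr),
\]
since $\Theta(n\boldsymbol e_i)=-2\boldsymbol e_i$ here. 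You instead assert that \emph{both} factors equal $\Z(o,(n-2)\boldsymbol e_i)$ (``the separation between the two sources is again $n-2$ after the reflection''), which is false: one of the two sides of the reflection produces separation $n+2$, not $n-2$. A single reflection therefore yields only log-convexity along the axis, not monotonicity. To upgrade this to $\Z(o,n\boldsymbol e_i)\le\Z(o,(n-2)\boldsymbol e_i)$ one needs additional input --- for instance combining the log-convexity with the torus symmetry $\Z(o,n\boldsymbol e_i)=\Z(o,(L-n)\boldsymbol e_i)$, or the full transfer-operator argument carried out in \cite{L-T-quantum}. The first inequality (moving an off-axis $x$ to $n\boldsymbol e_i$ with $n=x\cdot\boldsymbol e_i$) is likewise only gestured at; it requires an iterated reflection argument over the coordinates $j\neq i$ which your sketch does not actually construct. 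In short: right framework, but the single Cauchy--Schwarz step you perform does not deliver the claimed conclusion, and the genuine proof of the monotonicity is non-trivial --- which is why the paper cites \cite[Theorem~2.1]{L-T-quantum} rather than reproducing it.
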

\begin{proof}
Note that, by definition, for any pair of distinct vertices $x, y \in \mathbb{T}_L$, 
the two-point function can be expressed as 
\begin{equation}\label{eq:twopoint}
\mathbb{G}(x,y) = 
\frac{
\mu  \big (  \mathbbm{1}_{\{ g_x=2 \}}  
 \mathbbm{1}_{\{ u^1_x = u^2_x = 1, u_x=2 \}} 
 \mathbbm{1}_{\{ g_y=2 \}}     \mathbbm{1}_{\{ u^1_y=u^2_y = 2, u_y = 2 \}}      \prod_{z \neq x,y} \mathbbm{1}_{\{ u_z = g_z=0 \}}     \big ) }{\mu(\tilde{ \mathcal{W}}_{\mathbb{T}_L }  )  },
\end{equation}
 namely as the expectation with respect to a reflection positive measure of a product of 
 functions, with the domain of each function corresponding to a vertex of the torus. 
 It follows then from the general theorem \cite[Theorem 2.1]{L-T-quantum} that
 a function satisfying such a property satisfies (\ref{eq:monotonicity}). 
\end{proof}
\begin{remark}
\label{rem:ghosts}
The introduction of the ghost pairings 
 is particularly useful to express the two point function as the average of a product of functions,  with the domain of each function corresponding to a vertex of the torus. 
 This allows us to apply \cite[Theorem 2.1]{L-T-quantum} and to deduce the monotonicity property.  
\end{remark}

\begin{figure}
  \centering
\begin{subfigure}{.4\textwidth}
    \includegraphics[width=\textwidth]{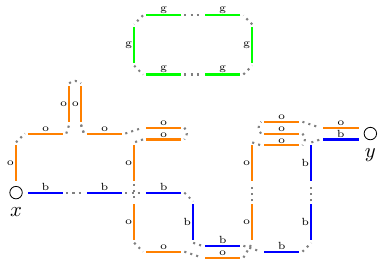}
    \caption{}
\end{subfigure}
\hspace{4em}
\begin{subfigure}{.4\textwidth}
    \includegraphics[width=\textwidth]{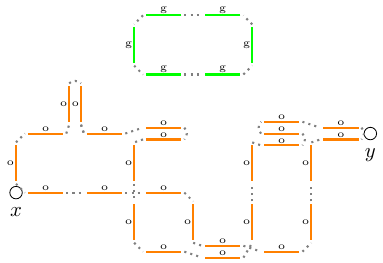}
    \caption{}
\end{subfigure}
\caption{(a) A configuration $w \in  \Wcal_{x,y}$ such that $u_x^1=u_x^2=u_y^1=u_y^2=1$. There exist precisely two open paths connecting $x$ and $y$, one of colour $1:=$ o and one of colour $2:=$ b. There also exists a closed path of colour $3:=$ g. (b) The configuration $F_1(w) \in \tilde{\Wcal}$. The two open paths in $w$ have been merged to a closed path containing $x$ and $y$.}
\label{fig:twoopenpaths}
\end{figure}

\subsection{Reformulation of the two-point function} \label{sec:relationtwopointfct}
In this section we provide a reformulation of the two-point function. 
In Section \ref{sec:lowerboundG(0,e1)} this reformulation will be used to derive a lower bound for $\G(o,\boldsymbol{e}_1)$ and in Section \ref{sec:twopointfctloops} it will be used to derive the relation between the two-point function and the expected number of closed paths touching two vertices.  Recall from \eqref{eq:randompathtilde} that $\tilde{\Wcal}$ denotes the set of configurations of the RPM with only closed paths being present.
Recall also that $\zeta(w)$ denotes the ensemble of cycles of $w \in \tilde{\Wcal}$
and was defined in   \eqref{eq:ensemblecycles}. 
We let $\zeta^1(w) \subset \zeta(w)$ be the set of $1$-cycles of $\zeta(w)$ and for any $\chi \in \zeta(w)$ and $x \in \T_L$, we denote by $n_x^{(\chi)}$ the number of visits of the cycle $\chi$ at $x$, namely
$$
n_x^{(\chi)} := \frac{1}{2} \, \sum_{y \sim x} \sum_{p=1}^{m_{\{x,y\}}} \mathbbm{1}_{\big\{(\{x,y\},p) \in \chi\big\}}.
$$
In other words $n_x^{(\chi)}$ is the number of links contained in $\chi$ touching $x$ divided by two. 
The next lemma states that the two-point function  $\G(x,y)$ can be expressed as the expectation of a function that involves the number of visits of $1$-cycles at $x$ and $y$. 
Recall the definition of $\mathbb{P}$ and of its expectation $\mathbb{E}$, which were provided in \eqref{eq:RPMprobabilitymeasure}.
\begin{lemma} \label{lemma:relationtwopointfct}
For $x,y \in \T_L$ with $x \neq y$ we have that 
\begin{equation}
\label{eq: twopointfunction2}
\begin{aligned}
\G(x,y) = 2 \, \lambda^2 \, \E\Big(\sum\limits_{\chi \in \zeta^1(w)} n_x^{(\chi)} n_y^{(\chi)} \, \frac{U(n_x+1)}{U(n_x)} \, \frac{U(n_y+1)}{U(n_y)} \, e^{-\tilde{v}_{x,y}(w)} \Big),
\end{aligned}
\end{equation}
where 
\begin{equation}
\tilde{v}_{x,y}(w):=2 \, \big(v_L(o,o)+v_L(x,y)+\sum_{u \in \T_L} (v_L(u,x)+v_L(u,y))n_u(w)\big).
\end{equation}
\end{lemma}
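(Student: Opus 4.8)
The plan is to start from the definition of the directed partition function $\Z_{\T_L}(x,y) = \mu(\Wcal_{x,y})$ appearing in \eqref{eq: twopointfunction} and to construct an explicit, measure-preserving (up to explicit weights) map from configurations in $\Wcal_{x,y}$ — which have exactly two open paths joining $x$ and $y$, one of colour $1$ and one of colour $2$, together with $g_x=g_y=2$ ghost pairings — to configurations in $\tilde{\Wcal}$ that contain a distinguished $1$-cycle through both $x$ and $y$. Concretely, given $w \in \Wcal_{x,y}$, I would recolour the colour-$2$ open path to colour $1$ and then pair, at $x$, the two unpaired $1$-links (the end of the old $1$-path and the end of the recoloured path) and likewise at $y$; this merges the two open paths into a single closed $1$-cycle $\chi$ visiting $x$ and $y$. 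The ghost pairings $g_x=g_y=2$ were put in place precisely so that $n_x$ and $n_y$ are unchanged by the removal of the ghosts and the creation of the two genuine pairings, so the weight $U(n_z)$ and the interaction $e^{-V}$ are affected in a controlled way. This is exactly the merging operation illustrated in Figure \ref{fig:twoopenpaths}.

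The key bookkeeping steps, in order, are: (1) write $\mu(\Wcal_{x,y})$ as a sum over $\tilde w \in \tilde\Wcal$ of the number of preimages under the merging map, weighted by the ratio $\mu(w)/\mu(\tilde w)$; (2) count preimages — given a target $\tilde w$ with a chosen $1$-cycle $\chi$ through $x$ and $y$, the number of ways to have produced it equals the number of ways of splitting $\chi$ at $x$ and at $y$ into two open arcs and designating one of them to receive colour $2$, which produces the factor $n_x^{(\chi)} n_y^{(\chi)}$ (number of link-slots of $\chi$ at $x$, resp. at $y$) together with the combinatorial factor $2$ recorded in \eqref{eq: twopointfunction2} and the $2^{\mathbbm{1}\{u_x^1=2\}+\cdots}$ correction hidden in the $x=y$ case but irrelevant here since $x\neq y$; (3) compute the weight ratio: removing the two ghost pairings at each of $x,y$ and adding two genuine pairings leaves $n_x,n_y$ unchanged, so $e^{-V}$ is unchanged, while the link-factor $\prod_e \lambda^{m_e}/m_e!$ changes because no links are added or removed (only recoloured and repaired), so the only surviving $\lambda$-dependence is the explicit $\lambda^2$ from \eqref{eq: twopointfunction}; the $U$-factors, however, do change if we instead think of the cycle as ``not yet present'': this is where $\frac{U(n_x+1)}{U(n_x)}\frac{U(n_y+1)}{U(n_y)}$ enters, accounting for the fact that in the target configuration $n_x$ and $n_y$ are one larger than in the configuration obtained by deleting the cycle's pairings at $x,y$; (4) absorb the residual interaction discrepancy — since the two-point function fixes two unpaired endpoints whose contribution to $V$ via $n_x,n_y$ must be reconciled with the merged configuration — into the factor $e^{-\tilde v_{x,y}(w)}$, where $\tilde v_{x,y}$ collects precisely the terms $v_L(o,o)$ (the self-interaction normalisation), $v_L(x,y)$ (the cross term), and $\sum_{u}(v_L(u,x)+v_L(u,y))n_u$ (the terms linear in the local times at the two special sites), each appearing with a factor $2$ from the symmetry $v_L(\cdot,\cdot)=v_L(\cdot,\cdot)$ in \eqref{eq:defV}.

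Once the identity $\Z_{\T_L}(x,y) = 2\,\Z_{\T_L}^\ell \cdot \E\big(\sum_{\chi\in\zeta^1(w)} n_x^{(\chi)} n_y^{(\chi)} \frac{U(n_x+1)}{U(n_x)}\frac{U(n_y+1)}{U(n_y)} e^{-\tilde v_{x,y}(w)}\big)$ is established, multiplying both sides by $\lambda^2/\Z_{\T_L}^\ell$ and invoking the definition \eqref{eq: twopointfunction} of $\G(x,y)$ yields \eqref{eq: twopointfunction2} immediately. The main obstacle I anticipate is step (3)–(4): keeping the $\mu$-weights exactly aligned under the recolour-and-merge map requires being very careful about what counts as a ``pairing'' versus a ``ghost pairing'' at $x$ and $y$ and about whether the merged cycle's two pairings at each of $x,y$ are being created or were already implicitly present — it is easy to be off by factors of $U(n_x)/U(n_x+1)$ or by powers of $2$ coming from the unordered nature of pairings and from the time-reversal/root ambiguity of the cycle $\chi$. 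The cleanest way to handle this is to not work with a single map but to do the computation configuration-by-configuration: fix $\tilde w \in \tilde\Wcal$ and a $1$-cycle $\chi \in \zeta^1(\tilde w)$, enumerate exactly which $w \in \Wcal_{x,y}$ map to the pair $(\tilde w, \chi)$, and verify the weight ratio for each, summing at the end; the sum over $\chi$ then reproduces $\sum_{\chi\in\zeta^1(w)}$ on the right-hand side and the per-configuration weight ratios produce the $U$-ratios and the $e^{-\tilde v_{x,y}}$ factor.
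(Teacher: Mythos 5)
Your overall strategy is the same as the paper's: you define the merge map $F_1$ (recolour the colour-$2$ open path to colour $1$, pair the two formerly unpaired $1$-links at each of $x$ and $y$, remove ghost pairings), count preimages of a target $\tilde w$ by enumerating splitting points of each $1$-cycle $\chi$ through both $x$ and $y$ (giving $2\, n_x^{(\chi)} n_y^{(\chi)}$), and keep track of the weight ratio. This is correct in outline.

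However, your step (3) contains a concrete accounting error that, if followed through, would \emph{not} produce the $U$-ratios and $e^{-\tilde v_{x,y}}$ factor. You write that ``removing the two ghost pairings at each of $x,y$ and adding two genuine pairings leaves $n_x,n_y$ unchanged, so $e^{-V}$ is unchanged.'' Both claims are false. The merge creates exactly \emph{one} new (colour-$1$) pairing at $x$ and one at $y$, not two: there is one unpaired $1$-endpoint and one unpaired $2$-endpoint at each vertex, and after recolouring these two endpoints are paired to each other. Since $n_z = \sum_i n_z^i + g_z$, and the merge removes $g_z = 2$ ghost pairings while adding $1$ real pairing, one gets $n_z(w') = n_z(\tilde w) + 1$ for $z \in \{x, y\}$, with $n_z$ unchanged at all other vertices. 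This shift by one is precisely what produces the factors
$$
\frac{U(n_x+1)}{U(n_x)}\,\frac{U(n_y+1)}{U(n_y)}
$$
and, by expanding $V(w') = \sum_{u,z} v_L(u,z) n_u(w') n_z(w')$ and isolating the terms affected by the $+1$ shift at $x$ and $y$, the identity
$$
V(w') = V(\tilde w) + 2\Big( v_L(o,o) + v_L(x,y) + \sum_{u \in \T_L}\big(v_L(u,x)+v_L(u,y)\big)n_u(\tilde w)\Big) = V(\tilde w) + \tilde v_{x,y}(\tilde w),
$$
where $v_L(o,o)$ comes from the two diagonal terms $v_L(x,x) = v_L(y,y) = v_L(o,o)$ (by translation invariance). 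In your write-up you then contradict the ``$n_x,n_y$ unchanged'' claim by inventing an intermediate configuration (``the configuration obtained by deleting the cycle's pairings at $x,y$'') to recover the $U$-ratio; this is not needed and obscures the actual mechanism. The ghost pairings were introduced exactly so that $n_x(w') = n_x(\tilde w)+1$: you should compute that shift directly rather than claim it vanishes and then reintroduce it ad hoc. Once the weight ratio $\mu(w') = \mu(\tilde w)\,\frac{U(n_x(\tilde w)+1)}{U(n_x(\tilde w))}\frac{U(n_y(\tilde w)+1)}{U(n_y(\tilde w))}\,e^{-\tilde v_{x,y}(\tilde w)}$ is in hand, the rest of your argument (preimage count times weight ratio, divide by $\Z^\ell$, multiply by $\lambda^2$) goes through and matches the paper exactly.
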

In \eqref{eq: twopointfunction2}, we use the convention that $\frac{0}{0}=1$. Note that, since $U$ is good,  it is  the case that $U(n)=0$ implies that $U(n+1)=0$. 
\begin{proof}
Consider $x,y \in \T_L$, $x \neq y$. Recall from \eqref{eq:randompathtilde} that $\tilde{\Wcal}$ denotes the set of configurations $w \in \Wcal$ in which there exist no unpaired links nor ghost pairings. 
We introduce a map $F_1: \Wcal_{x,y} \to \tilde{\Wcal}$ which acts by changing the colour of all links that are contained in the open path of colour $2$ to colour $1$, by then pairing at $x$ and $y$ the two $1$-links that have an unpaired endpoint and by removing all ghost pairings in the configuration, see also Figure \ref{fig:twoopenpaths}. 
For any $w \in \tilde{\Wcal}$, $F_1^{-1}(w)$ then corresponds to the set of configurations that are obtained from $w$ by adding two ghost pairings at $x$ and $y$, by choosing an arbitrary $1$-cycle $\chi \in \zeta^1(w)$, 
by selecting two arbitrary pairs of links belonging to such a cycle
which are paired  at $x$ and $y$ respectively and unpairing them (thus obtaining two open paths with end-points $x$ and $y$ both),
by changing the colour of the links belonging to one of these two open paths to $2$.
From these considerations we deduce that for any $w \in \tilde{\Wcal}$ we have that
\begin{equation} \label{eq:preimage}
\big| \{w^\prime \in \Wcal_{x,y}: F_1(w^\prime)=w\}\big| = 2 \sum\limits_{\chi \in \zeta^1(w)} n_x^{(\chi)} n_y^{(\chi)}.
\end{equation}

Further, for any $w \in \tilde{\Wcal}$, for any $w^\prime \in \Wcal_{x,y}$ such that $F_1(w^\prime)=w$ and for any $z \in \T_L$,
$$
n_z(w^\prime)=n_z(w)+\mathbbm{1}_{\{z \in \{x,y\}\}}
$$
since at $x$ and $y$ precisely two ghost pairings are removed, and precisely one pairing (of colour $1$) is added. 
In particular,
$$
V(w^\prime)=\sum_{u,z \in \T_L} v(u,z) (n_u(w)+\mathbbm{1}_{\{u \in \{x,y\}\}}) \, (n_z(w)+\mathbbm{1}_{\{z \in \{x,y\}\}}) = V(w)+\tilde{v}_{x,y}(w)
$$
implying that
\begin{equation} \label{eq:weight}
\mu(w^\prime)= \mu(w) \, \frac{U(n_x(w)+1)}{U(n_x(w))}\, \frac{U(n_y(w)+1)}{U(n_y(w))} \, e^{-\tilde{v}_{x,y}(w)}.
\end{equation}
From \eqref{eq:preimage} and \eqref{eq:weight} we obtain that,
\begin{equation} \label{eq:eq1}
\begin{aligned}
\Z(x,y) & = \sum_{w \in \tilde{\Wcal}} \mu\big(\Wcal_{x,y} \cap \{F_1(w^\prime)=w\}\big) \\
& = 2\, \sum_{w \in \tilde{\Wcal}} \mu(w) \sum\limits_{\chi \in \zeta^1(w)} n_x^{(\chi)} n_y^{(\chi)}\, \frac{U(n_x(w)+1)}{U(n_x(w))} \, \frac{U(n_y(w)+1)}{U(n_y(w))} \, e^{-\tilde{v}_{x,y}(w)}.
\end{aligned}
\end{equation}
Multiplying both sides of \eqref{eq:eq1} with $\frac{\lambda^2}{\Z^\ell}$ gives \eqref{eq: twopointfunction2} and the proof is concluded.
\end{proof}

The next lemma provides a further reformulation of the two-point function evaluated at two neighbour vertices,  this value plays an important role in our method.  
\begin{lemma}
It holds that 
\begin{equation} \label{eq:twopointneighbour}
\G(o,\boldsymbol{e}_1) = \E\big(m_{\{o,\boldsymbol{e}_1\}}^{(1)} m^{(2)}_{\{o,\boldsymbol{e}_1\}}\big).
\end{equation}
\end{lemma}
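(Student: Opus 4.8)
The plan is to prove \eqref{eq:twopointneighbour} by an explicit bijective argument, in the same spirit as the map $F_1$ used in the proof of Lemma~\ref{lemma:relationtwopointfct}, but tailored to the case of neighbouring vertices: instead of recolouring and merging the two open paths, I keep their colours and close each of them up with one extra link on the shared edge $e:=\{o,\boldsymbol e_1\}$. First I would observe that, writing out \eqref{eq: definition_partition_function}, \eqref{eq: twopointfunction} and \eqref{eq:RPMprobabilitymeasure}, the identity \eqref{eq:twopointneighbour} is equivalent to
\begin{equation*}
\mu^\ell\big(m_e^{(1)} m_e^{(2)}\big)=\lambda^2\,\mu\big(\Wcal_{o,\boldsymbol e_1}\big),
\end{equation*}
since dividing both sides by $\Z^\ell$ recovers, respectively, $\E\big(m_{\{o,\boldsymbol e_1\}}^{(1)}m_{\{o,\boldsymbol e_1\}}^{(2)}\big)$ and $\G(o,\boldsymbol e_1)=\lambda^2\,\Z(o,\boldsymbol e_1)/\Z^\ell$ (recall $o\neq\boldsymbol e_1$, so $\Z(o,\boldsymbol e_1)=\mu(\Wcal_{o,\boldsymbol e_1})$). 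To prove this identity I would introduce the map $\Psi$ that sends a triple $(w,p,q)$, with $w\in\tilde{\Wcal}$, $p$ a $1$-link on $e$ and $q$ a $2$-link on $e$, to the configuration obtained by \emph{cutting} $p$ and $q$: delete these two links together with the two pairings each of them belongs to (at $o$ and at $\boldsymbol e_1$), and add two ghost pairings at $o$ and two at $\boldsymbol e_1$. Since $c_e(p)=1\neq 2=c_e(q)$, the links $p$ and $q$ cannot be paired to each other, so at each of $o$ and $\boldsymbol e_1$ the cut leaves exactly one unpaired $1$-link endpoint (the former partner of $p$) and exactly one unpaired $2$-link endpoint (the former partner of $q$), and nothing changes at other vertices; hence $\Psi(w,p,q)\in\Wcal_{o,\boldsymbol e_1}$ by \eqref{eq:Wcalxy}.

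Next I would analyse the fibres of $\Psi$. Fix $w'\in\Wcal_{o,\boldsymbol e_1}$ and set $k:=m_e(w')$. As in the proof of Lemma~\ref{lemma:relationtwopointfct}, $w'$ contains exactly one open path of colour $1$ and one of colour $2$, each with one endpoint at $o$ and one at $\boldsymbol e_1$. A preimage $(w,p,q)\in\Psi^{-1}(w')$ is reconstructed by inserting onto $e$ a new $1$-link $p$ and a new $2$-link $q$ and pairing $p$ (resp.\ $q$) at $o$ and at $\boldsymbol e_1$ to the unique unpaired $1$-link (resp.\ $2$-link) endpoint of $w'$ there, which turns each open path into a closed cycle; the pairings of $p$ and $q$ are thereby forced, and all other data of $w$ coincide with those of $w'$. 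The only remaining freedom is the choice of the two positions occupied by $p$ and $q$ among the $k+2$ links lying on $e$ in $w$, and distinct choices produce distinct triples, so $|\Psi^{-1}(w')|=(k+1)(k+2)$.

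It then remains to compare weights. For $(w,p,q)\in\Psi^{-1}(w')$ one has $m_{e'}(w)=m_{e'}(w')$ for every $e'\neq e$ and $m_e(w)=m_e(w')+2=k+2$, while $n_z(w)=n_z(w')$ for every $z\in\T_L$: indeed at $o$ and at $\boldsymbol e_1$ the two ghost pairings of $w'$ have been replaced by the two genuine pairings of $w$ involving $p$ and $q$, and $n_z$ counts ghost and non-ghost pairings alike. Hence $\prod_x U(n_x)$ and $e^{-V}$ are unaffected, and by \eqref{eq:RPMmeasure}, $\mu(w)=\frac{\lambda^2}{(k+1)(k+2)}\,\mu(w')$. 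Summing over all triples,
\begin{equation*}
\mu^\ell\big(m_e^{(1)}m_e^{(2)}\big)=\sum_{w\in\tilde{\Wcal}}\mu(w)\,m_e^{(1)}(w)\,m_e^{(2)}(w)=\sum_{(w,p,q)}\mu(w)=\sum_{w'\in\Wcal_{o,\boldsymbol e_1}}(k+1)(k+2)\,\frac{\lambda^2}{(k+1)(k+2)}\,\mu(w')=\lambda^2\,\mu\big(\Wcal_{o,\boldsymbol e_1}\big),
\end{equation*}
using in the second equality that $m_e^{(1)}(w)m_e^{(2)}(w)$ counts precisely the pairs $(p,q)$. This yields the reduced identity and hence \eqref{eq:twopointneighbour}.

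The step I expect to require the most care is the fibre computation together with the matching weight bookkeeping: one must verify that cutting pins down all of $w$ except the positions of the two inserted links (this uses that $w'$ has exactly one unpaired $i$-link endpoint at $o$ and one at $\boldsymbol e_1$ for $i\in\{1,2\}$, which forces the pairings of $p$ and $q$), and that the resulting combinatorial factor $(k+1)(k+2)$ cancels exactly against the factor $\lambda^2/((k+1)(k+2))$ produced by the $1/m_e!$ normalisation in \eqref{eq:RPMmeasure}. Everything else is routine checking of the definitions of $\Wcal_{o,\boldsymbol e_1}$ and of $\tilde{\Wcal}$.
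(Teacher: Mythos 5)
Your argument is correct. It is built on the same basic idea as the paper's proof — relating $\Wcal_{o,\boldsymbol e_1}$ and $\tilde\Wcal$ via a cut/insert map on the edge $e=\{o,\boldsymbol e_1\}$, with the cancellation of $\lambda^2$ against the factorial from $1/m_e!$ — but the combinatorial bookkeeping is genuinely different. The paper's map $F_2\colon\Wcal_{o,\boldsymbol e_1}\to\tilde\Wcal$ inserts the new $1$-link and $2$-link at two \emph{fixed} positions (the top two), so that the fibre is the indicator $\mathbbm 1_{\{c_e(m_e)=1,\,c_e(m_e-1)=2\}}$, and the identity is then recovered by conditioning on the per-colour link counts $(m_e^{(i)})_i$ and evaluating the conditional probability of this colour event as a ratio of multinomial coefficients, which collapses to $\tfrac{(m_e-2)!}{m_e!}\,m_e^{(1)}m_e^{(2)}$. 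You instead parametrise by the \emph{triple} $(w,p,q)$ with $p,q$ arbitrary $1$- and $2$-links on $e$, so that $\sum_w m_e^{(1)}m_e^{(2)}\mu(w)$ is literally the sum over triples, and you count the fibre of the cutting map over each $w'\in\Wcal_{o,\boldsymbol e_1}$ directly as $(k+1)(k+2)$; this makes the cancellation against $\mu(w)=\frac{\lambda^2}{(k+1)(k+2)}\mu(w')$ immediate and bypasses the conditional-probability/multinomial step entirely. Both routes are valid; yours is slightly more self-contained, while the paper's presentation keeps the map structurally parallel to $F_1$ of Lemma~\ref{lemma:relationtwopointfct} (a map from $\Wcal_{o,\boldsymbol e_1}$ into $\tilde\Wcal$, rather than from triples). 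The points you flagged as delicate — that the forced pairings and the preserved $n_z$ (ghosts traded for genuine pairings) pin down $w$ up to the two insertion positions — are exactly the right things to check, and your verification of them is sound.
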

\begin{proof}
To begin, we introduce a map $F_2: \Wcal_{o,\boldsymbol{e}_1} \to \tilde{\Wcal}$ which acts by inserting precisely one $1$-link and one $2$-link on the edge $\{o,\boldsymbol{e}_1\}$ such that the $1$-link has the highest and the $2$-link has the second highest position, by pairing at $o$ and $\boldsymbol{e}_1$ the inserted links with the link of same colour that has an unpaired endpoint and by removing all ghost pairings in the configuration. 
For any $w=(m,c,\pi) \in \tilde{\Wcal}$ we have that
\begin{equation} \label{eq:F42}
\big| \{w^\prime \in \Wcal_{o,\boldsymbol{e}_1}: F_2(w^\prime)=w\}\big| = \mathbbm{1}_{\big\{c_{\{o,\boldsymbol{e}_1\}}(m_{\{o,\boldsymbol{e}_1\}})=1, \, c_{\{o,\boldsymbol{e}_1\}}(m_{\{o,\boldsymbol{e}_1\}}-1)=2\big\}}(w),
\end{equation}
where we recall from Section \ref{sect:randompathmodel} that $c_e(p)$ denotes the colour of the $p$-th link on the edge $e$. 
For any $w \in \tilde{\Wcal}$ and for any $w^\prime \in \Wcal_{o,\boldsymbol{e}_1}$ such that $F_2(w^\prime)=w$, it holds that
\begin{equation} \label{eq:F43}
\mu(w^\prime)= \frac{1}{\lambda^2} \, \mu(w) \, \frac{m_{\{o,\boldsymbol{e}_1\}}(w)!}{(m_{\{o,\boldsymbol{e}_1\}}(w)-2)!}.
\end{equation}
We define $\Mcal_{L,N} \subset \N_0^{\E_L \times [N]}$ as the set of elements $\boldsymbol{m}=(m_e^1,\dots,m_e^N)_{e \in \E_L} \in \N_0^{\E_L \times [N]}$ such that $\sum_{y \sim x} m_{\{x,y\}}^i \in 2\N_0$ for all $x \in \T_L$ and $i \in [N]$. Recall from Section \ref{sect:randompathmodel} that $m_e^{(i)}(w)$ denotes the number of $i$-links on $e$ for any $e \in \E_L$, $i \in [N]$, and $w \in \tilde{\Wcal}$. Note that 
\begin{equation} \label{eq:disjointunion}
\tilde{\Wcal} = \bigcup_{\boldsymbol{\tilde{m}} \in \Mcal_{L,N}} \{w \in \tilde{\Wcal} : m_e^{(i)}(w)=\tilde{m}_e^i \, \forall i \in [N] \, \forall e \in \E_L\},
\end{equation}
where the union in \eqref{eq:disjointunion} is disjoint.
From \eqref{eq:F42},  \eqref{eq:F43} and \eqref{eq:disjointunion}, we obtain that
\begin{equation} \label{eq:eq2}
\begin{aligned}
&\G(o,\boldsymbol{e}_1) = \frac{\lambda^2}{\Z^\ell } \, \sum_{w \in \tilde{\Wcal}} \mu\big(\Wcal_{o,\boldsymbol{e}_1} \cap \{F_2(w^\prime)=w\}\big) \\
& = \E\Big( \frac{m_{\{o,\boldsymbol{e}_1\}}!}{(m_{\{o,\boldsymbol{e}_1\}}-2)!} \mathbbm{1}_{\big\{c_{\{o,\boldsymbol{e}_1\}}(m_{\{o,\boldsymbol{e}_1\}})=1, \, c_{\{o,\boldsymbol{e}_1\}}(m_{\{o,\boldsymbol{e}_1\}}-1)=2\big\}}\Big) \\
& = \sum_{\tilde{\boldsymbol{m}} \in \Mcal_{L,N}} \E\Big( \frac{m_{\{o,\boldsymbol{e}_1\}}!}{(m_{\{o,\boldsymbol{e}_1\}}-2)!} \mathbbm{1}_{\{c_{\{o,\boldsymbol{e}_1\}}(m_{\{o,\boldsymbol{e}_1\}})=1, \, c_{\{o,\boldsymbol{e}_1\}}(m_{\{x,y\}}-1)=2\}} \mathbbm{1}_{\{m_e^{(i)}=\tilde{m}_e^i \, \forall e \in \E_L \forall i \in [N]\}}\Big) \\
& = \sum_{\tilde{\boldsymbol{m}} \in \Mcal_{L,N}} \frac{\tilde{m}_{\{o,\boldsymbol{e}_1\}}!}{(\tilde{m}_{\{o,\boldsymbol{e}_1\}}-2)!} \, \P\big(m_e^{(i)}=\tilde{m}_e^i \, \forall e \in \E_L \forall i \in [N]\big) \\
& \qquad \qquad \qquad \qquad \times \P\big( c_{\{o,\boldsymbol{e}_1\}}(m_{\{o,\boldsymbol{e}_1\}})=1, \, c_{\{o,\boldsymbol{e}_1\}}(m_{\{o,\boldsymbol{e}_1\}}-1)=2 \, | \, m_e^{(i)}=\tilde{m}_e^i \, \forall e \in \E_L \forall i \in [N] \big),
\end{aligned}
\end{equation}
where $\tilde{m}_{\{o,\boldsymbol{e}_1\}}:= \sum_{i=1}^N \tilde{m}_{\{o,\boldsymbol{e}_1\}}^i$. 
For any $\tilde{\boldsymbol{m}} \in \Mcal_{L,N}$, we have that 
\begin{equation} \label{eq:conditionalprob}
\begin{aligned}
& \P\big( c_{\{o,\boldsymbol{e}_1\}}(m_{\{o,\boldsymbol{e}_1\}})=1, \, c_{\{o,\boldsymbol{e}_1\}}(m_{\{o,\boldsymbol{e}_1\}}-1)=2 \, | \, m_e^{(i)}=\tilde{m}_e^i \, \forall e \in \E_L \forall i \in [N]\big) \\
& = \frac{\binom{\tilde{m}_{\{o,\boldsymbol{e}_1\}}-2}{\tilde{m}^1_{\{o,\boldsymbol{e}_1\}}-1,\tilde{m}^2_{\{o,\boldsymbol{e}_1\}}-1,\tilde{m}^3_{\{o,\boldsymbol{e}_1\}},\dots,\tilde{m}^N_{\{o,\boldsymbol{e}_1\}}}}{\binom{\tilde{m}_{\{o,\boldsymbol{e}_1\}}}{\tilde{m}^1_{\{o,\boldsymbol{e}_1\}},\tilde{m}^2_{\{o,\boldsymbol{e}_1\}},\tilde{m}^3_{\{o,\boldsymbol{e}_1\}},\dots,\tilde{m}^N_{\{o,\boldsymbol{e}_1\}}}} = \frac{(\tilde{m}_{\{o,\boldsymbol{e}_1\}}-2)!}{\tilde{m}_{\{o,\boldsymbol{e}_1\}}!} \, \tilde{m}_{\{o,\boldsymbol{e}_1\}}^1 \tilde{m}_{\{o,\boldsymbol{e}_1\}}^2.
\end{aligned}
\end{equation}
For the first equation we used that $\mu(w)=\mu(w^\prime)$ for any $w,w^\prime \in \{w \in \tilde{\Wcal}: \, m_e^{(i)}(w)=\tilde{m}_e^i \, \forall i \in [N] \, \forall e \in \E_L\}$. The fraction between the two multinomial coefficients thus  corresponds to the different numbers of colourings. 
Plugging \eqref{eq:conditionalprob} into \eqref{eq:eq2} and using \eqref{eq:disjointunion}, we obtain \eqref{eq:twopointneighbour} and the proof is concluded.  
\end{proof}

\subsection{Lower bound for $\G(o,\boldsymbol{e}_1)$} \label{sec:lowerboundG(0,e1)}
In this section we show that the quantity
$\mathbb{E}(  m^{ (1)}_{ \{ o, \boldsymbol{e}_1 \}} m^{ (2)}_{ \{o, \boldsymbol{e}_2 \}})$,
which was proved to be equal to the term 
 $\G(o,\boldsymbol{e}_1)$
of the two-point function,  gets arbitrarily large as the range of the weight function, $R$, and $\lambda$ are large.
In particular, we show that $\mathbb{E}(  m^{ (1)}_{ \{ o, \boldsymbol{e}_1 \}} m^{ (2)}_{ \{o, \boldsymbol{e}_2 \}})$ gets larger than   $\mathbb{E}(  m^{ (1)}_{ \{o, \boldsymbol{e}_1\} })$
uniformly in $L$  as $R$ and $\lambda$ are large. 
Our estimate is presented in  Proposition \ref{proposition: uniformpositivity} below. 
We first state two preparatory lemmas. 
Recall that $n_x$ denotes the local time at $x$.
The next lemma states that the local time approaches the range of the weight function as $\lambda$ goes to infinity uniformly in the size of the box.

\begin{lemma} \label{lemma:pairingcardinalityset} 
For any $k <R$, where $R$ is the range of $U$, it holds that,
\begin{equation}
\label{eq: vertexcardinality}
\lim_{\lambda \to \infty} \limsup_{\substack{L \to \infty \\ L \text{ even}}} \, \P\big(n_o \leq k \big) =0.
\end{equation}
\end{lemma}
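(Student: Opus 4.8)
The plan is to bound $\P(n_o\le k)$ by the probability that \emph{every} vertex has local time at most $k$, raised to the power $1/|\T_L|$, using the chessboard estimate, and then to show that this ``all-sites'' probability is exponentially small in $|\T_L|$ with a rate that diverges as $\lambda\to\infty$. Since the resulting bound on $\P(n_o\le k)$ will be uniform in $L$, both limits in \eqref{eq: vertexcardinality} will follow at once.

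\emph{Chessboard reduction.} The indicator $\mathbbm{1}_{\{n_o\le k\}}$ depends only on $\pi_o$, $g_o$ and the links on edges incident to $o$, hence has support $\{o\}$. Applying the chessboard estimate in the form of Remark \ref{remark:chessboard} (Proposition \ref{prop:chessboardabstract} with $\mu_{\Tcal_L}$ replaced by $\E_{\T_L}$) to the family $f_o:=\mathbbm{1}_{\{n_o\le k\}}$ and $f_t:=1$ for $t\ne o$, together with translation invariance (which identifies the reflected copy $f_o^{[s]}$ with $\mathbbm{1}_{\{n_s\le k\}}$), yields
$$
\P(n_o\le k)\;\le\;\P\big(n_s\le k\ \forall s\in\T_L\big)^{1/|\T_L|}.
$$
It therefore suffices to prove that $\P(n_s\le k\ \forall s)\le (C/\lambda)^{|\T_L|}$ for all even $L$ and all $\lambda\ge 1$, with $C<\infty$ independent of $L$ and $\lambda$.

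\emph{Partition function comparison.} I will write this probability as $\mu^\ell(\Acal)/\mathbb{Z}^\ell$, where $\Acal:=\{w\in\tilde{\Wcal}_{\T_L}: n_s(w)\le k\ \forall s\}$. For $w\in\Acal$, since all links touching a vertex are paired, $\sum_{y\sim x}m_{\{x,y\}}(w)=2n_x(w)\le 2k$; hence $m_e(w)\le 2k$ on every edge and $\sum_e m_e(w)=\sum_x n_x(w)\le k|\T_L|$. Bounding, for $\lambda\ge 1$, the link weight by $\lambda^{\sum_e m_e}\le \lambda^{k|\T_L|}$, the number of colourings by $N^{k|\T_L|}$, the number of pairings by $((2k-1)!!)^{|\T_L|}$, the factor $e^{-V(w)}$ by $1$ (valid since $v$ is tempered, cf.\ the proof of Lemma \ref{lemma: finitepartitionfunction}), the $U$-factors by $(\max_{n\le k}U(n))^{|\T_L|}$, and the number of admissible link configurations by $(2k+1)^{d|\T_L|}$, gives $\mu^\ell(\Acal)\le (C_0\lambda^k)^{|\T_L|}$ for a constant $C_0=C_0(k,d,N,U)$. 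For the denominator I will exhibit a single configuration $w^\ast\in\tilde{\Wcal}_{\T_L}$ with $n_x(w^\ast)=k+1$ for every $x$: since $L$ is even, the $2d$-regular torus decomposes into $2d$ explicit perfect matchings, and assigning $b_e$ bigons to edge $e$, where $b_e\in\{q,q+1\}$ is chosen (using $r$ of these matchings, with $k+1=2dq+r$ and $0\le r<2d$) so that $\sum_{e\ni x}b_e=k+1$, produces such a $w^\ast$. As $k<R$ we have $U(k+1)>0$; moreover $\sum_e m_e(w^\ast)=(k+1)|\T_L|$ and $V(w^\ast)=(k+1)^2|\T_L|\sum_{z}v_L(o,z)$ with $\sum_z v_L(o,z)=\sum_{w\in\Z^d}v(w)\in[0,\infty)$ finite (temperedness), so that $\mathbb{Z}^\ell\ge\mu^\ell(w^\ast)\ge (c_0\lambda^{k+1})^{|\T_L|}$ with $c_0=c_0(k,d,N,U,v)>0$. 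Dividing, $\P(n_s\le k\ \forall s)\le ((C_0/c_0)\lambda^{-1})^{|\T_L|}$, whence $\P(n_o\le k)\le C_0/(c_0\lambda)$; letting $L\to\infty$ and then $\lambda\to\infty$ concludes the proof.

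\emph{On the difficulty.} This is a standard chessboard bootstrap, with no genuinely hard step. The two points requiring care are to check that the chessboard estimate of Remark \ref{remark:chessboard} is applicable to the merely non-negative good weight function $U$, and to keep track of the powers of $\lambda$ so that the exponent $k$ coming from the constrained configurations is strictly smaller than the exponent $k+1$ of the reference configuration $w^\ast$ — this gap of one unit, available precisely because $k<R$, is exactly what forces the probability to $0$ as $\lambda\to\infty$.
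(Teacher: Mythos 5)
Your proof is correct and follows essentially the same route as the paper's: chessboard estimate to reduce $\P(n_o\le k)$ to a ratio of a constrained (local-time $\le k$) partition function over the full partition function, an upper bound of order $\lambda^{k|\T_L|}$ on the constrained one, and a lower bound of order $\lambda^{k'|\T_L|}$ on $\Z^\ell$ by exhibiting a single configuration with uniform local time $k' > k$ such that $U(k')>0$. The paper takes an arbitrary $k'>k$ with $U(k')>0$ and realizes the reference configuration using a single perfect matching of the even torus (placing $2k'$ paired links on one edge per vertex), whereas you specialize to $k'=k+1$ and build the reference configuration via the full $2d$-matching edge-decomposition — both are fine, and your choice $k'=k+1$ is justified because the good condition implies $U(n)=0\Rightarrow U(n+1)=0$ for $n\ge1$, so $k<R$ forces $U(k+1)>0$ (the paper notes this fact in the proof of Lemma \ref{lemma:relationtwopointfct}, but it's worth stating explicitly here since your argument depends on it).
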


\begin{proof}
Consider $k <R$ and take $k^\prime >k$ such that $U(k^\prime)>0$. Applying the chessboard estimate, Proposition \ref{prop:chessboardabstract} and Remark \ref{remark:chessboard}, we obtain that
\begin{equation} \label{eq:upperbound1}
\begin{aligned}
\P\big(n_o \leq k \big) & \leq \P\big(\forall\, x\in \T_L ,\, n_x \leq k \big)^{\frac{1}{|\T_L|}} 
 \leq \Bigg(\frac{\mu^\ell\big(\forall\, x\in \T_L ,\, n_x \leq k \big)}{\mu^\ell\big(\forall\, x\in \T_L ,\, n_x = k^\prime \big)}\Bigg)^{\frac{1}{|\T_L|}}.
\end{aligned}
\end{equation}
Using similar calculations as in \eqref{eq:finitestep1}, we obtain for the numerator that
\begin{equation} \label{eq:upperbound2}
\begin{aligned}
\mu^\ell\big(\forall\, x\in \T_L ,\, n_x \leq k\big) 
& \leq \sum\limits_{m=(m_e)_{e \in \E_L} \in \N_0^{\E_L}}
\prod_{e\in \E_L}\frac{(cN\lambda)^{m_e}}{m_e!} \mathbbm{1}_{\big\{\sum\limits_{y \sim x} m_{\{x,y\}} \leq 2k \, \forall x \in \T_L \big\}}  \\
& \leq (cN\lambda)^{k|\T_L|} \,(2k+1)^{d |\T_L|}.
\end{aligned}
\end{equation}
We used that there exist no more than $(2k+1)^{d |\T_L|}$ link configurations $m=(m_e)_{e \in \E_L} \in \N_0^{\E_L}$ that satisfy  $\sum_{y \sim x} m_{\{x,y\}} \leq 2k$ for any $x \in \T_L$. 

For the denominator we choose as lower bound the weight of a single configuration and obtain that
\begin{equation}
\label{eq: lowerbound_nominator}
\mu^\ell(\forall\, x\in \T_L ,\, n_x =k^\prime) \geq \frac{\lambda^{k^\prime |\T_L|}}{(2k^\prime)!^{|\T_L|/2}} \, U(k^\prime)^{|\T_L|}. 
\end{equation}
The right hand-side of \eqref{eq: lowerbound_nominator} corresponds to the weight of a configuration where the $2k^\prime$ links touching a vertex are on precisely one edge each. 
From \eqref{eq:upperbound1}, \eqref{eq:upperbound2} and \eqref{eq: lowerbound_nominator} we deduce that,
$$
\P\big(n_o \leq k \big) \leq \lambda^{k-k^\prime} c(d,k,k^\prime,N,U,v),
$$
where the constant $c(d,k,k^\prime,N,U,v)<\infty$ does not depend on $\lambda$ and $L$. This proves \eqref{eq: vertexcardinality}.
\end{proof}

The next lemma states that, as long as the number of links on $\{o, \boldsymbol{e}_1\}$ is sufficiently large compared to the local time at the origin, the probability that $\{o, \boldsymbol{e}_1\}$ is crossed by at least two distinct closed paths is also large.
This is a necessary step for proving that $\mathbb{E}(  m^{ (1)}_{ \{o, \boldsymbol{e}_1 \} } m^{ (2)}_{ \{ o, \boldsymbol{e}_1 \} })$ is large.

\begin{lemma} \label{lemma: sameloop}
For any $k \in \N_{\geq 2}$, $0 < \varepsilon <2$ and any weight function $U$ with range $R \geq \frac{k}{\varepsilon}$ it holds that,
\begin{align}
\P\big(\text{all links on } \{o,\boldsymbol{e}_1\} \text{ belong to the same closed path} \, | \, m_{\{o,\boldsymbol{e}_1\}} \geq n_o \varepsilon \geq k \big) \leq e^{-\varepsilon (\frac{1}{4}-\frac{1}{2k})}.
\end{align}
\end{lemma}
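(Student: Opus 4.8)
The plan is to condition on the configuration outside the vertex $o$, to use that the pairing at $o$ is then conditionally uniform, and to reduce the event to a short computation about the cycle structure of a union of two matchings. The key starting observation is that for any $w=(m,c,\pi)\in\tilde{\Wcal}_{\T_L}$ one has $n_x(w)=\tfrac12\sum_{y\sim x}m_{\{x,y\}}$ for every $x$, so the weight of $w$ in \eqref{eq:RPMmeasure} is a function of the link configuration $m$ alone. Consequently, conditionally on $m$, on the colouring $c$, and on the pairings $(\pi_y)_{y\in\T_L\setminus\{o\}}$, the pairing $\pi_o$ is uniform among all colour-respecting perfect matchings of the links touching $o$, and this distribution factorizes over the $N$ colours (the conditional pairing independence alluded to in the introduction). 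Since the conditioning event $\{m_{\{o,\boldsymbol e_1\}}\geq n_o\varepsilon\geq k\}$ is a disjoint union over such ``environments'' $(m,c,(\pi_y)_{y\neq o})$ — a nonempty union exactly because $R\geq k/\varepsilon$ — it suffices to prove the bound conditionally on each fixed environment, whose parameters then satisfy $m:=m_{\{o,\boldsymbol e_1\}}\geq\varepsilon n_o\geq k$, and to average afterwards.

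Fixing an environment, I would introduce the \emph{strand matching} $\sigma$ of the $2n_o$ links touching $o$: two such links are $\sigma$-paired iff, following the configuration away from $o$ using only $(\pi_y)_{y\neq o}$, one reaches the other. Colour is preserved along strands, so $\sigma$ respects colours and is a fixed-point-free perfect matching, and the closed paths of $w$ visiting $o$ are in bijection with the cycles of the $2$-regular multigraph $\sigma\cup\pi_o$, each of which is monochromatic. Every link on $e:=\{o,\boldsymbol e_1\}$ touches $o$, hence lies in exactly one such cycle; thus the event is empty unless all $m$ links on $e$ share a common colour, which we may assume is colour $1$, and in that case it is contained in the event that these $m$ links lie in a single cycle of $\sigma^{(1)}\cup\pi_o^{(1)}$. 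Writing $\hat n:=n_o^1$, the $m$ special links lie among the $2\hat n$ colour-$1$ links at $o$, so $\hat n\geq m/2$, while $\hat n\leq n_o$ gives $m\geq\varepsilon n_o\geq\varepsilon\hat n$.

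For the core step I would fix one colour-$1$ link $s$ on $e$ and let $L$ be the number of $\pi_o^{(1)}$-edges of the cycle of $\sigma^{(1)}\cup\pi_o^{(1)}$ containing $s$; that cycle uses $2L$ colour-$1$ links at $o$, so if all $m$ special links lie on it then $L\geq\lceil m/2\rceil$. Revealing the uniform matching $\pi_o^{(1)}$ step by step along this cycle — at the $j$-th step exposing the $\pi_o^{(1)}$-partner of $\sigma^{(1)}(v_{j-1})$, uniform among the $2\hat n-2j+1$ links not yet matched — gives $\P(L=j\mid L\geq j)=\frac{1}{2\hat n-2j+1}$, hence
\[
\P\big(L\geq j\big)=\prod_{i=1}^{j-1}\Big(1-\frac{1}{2\hat n-2i+1}\Big)\leq\exp\Big(-\sum_{i=1}^{j-1}\frac{1}{2\hat n-2i+1}\Big)\leq\exp\Big(-\frac{j-1}{2\hat n}\Big).
\]
Taking $j=\lceil m/2\rceil$ and using $\lceil m/2\rceil-1\geq(m-2)/2$, $\hat n\leq m/\varepsilon$ and $m\geq k$,
\[
\frac{\lceil m/2\rceil-1}{2\hat n}\ \geq\ \frac{m-2}{4\hat n}\ \geq\ \frac{(m-2)\varepsilon}{4m}\ =\ \frac{\varepsilon}{4}\Big(1-\frac{2}{m}\Big)\ \geq\ \frac{\varepsilon}{4}\Big(1-\frac{2}{k}\Big)\ =\ \varepsilon\Big(\tfrac14-\tfrac1{2k}\Big),
\]
so the conditional probability of the event, given the environment, is at most $\P(L\geq\lceil m/2\rceil)\leq e^{-\varepsilon(\frac14-\frac1{2k})}$ (an equality $1=1$ in the degenerate case $m=k=2$). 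Averaging over environments concludes the proof.

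The two combinatorial reductions are the part I expect to require the most care: one must verify that the weight of a configuration in $\tilde{\Wcal}$ genuinely depends only on $m$ (which is why working with unpaired-link-free, ghost-free configurations is essential for the conditional uniformity of $\pi_o$), and one must check carefully that $\sigma$ is a fixed-point-free perfect matching and that closed paths through $o$ correspond bijectively to the monochromatic cycles of $\sigma\cup\pi_o$; once this is set up, the remaining probabilistic estimate is a routine sequential-revelation computation.
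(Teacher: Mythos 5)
Your proof is correct and follows essentially the same route as the paper's: condition on the link configuration, colouring and pairings outside the origin (so that $\pi_o$ becomes a uniform colour-respecting perfect matching of the links at $o$), observe that links on $\{o,\boldsymbol e_1\}$ can lie in a common cycle only if they are monochromatic, and then use a sequential-revelation / forbidden-pairing count showing that the cycle through a fixed link on $\{o,\boldsymbol e_1\}$ rarely grows long enough to contain all $m$ links. Your bound $\P(L\geq j)=\prod_{i=1}^{j-1}\bigl(1-\tfrac{1}{2\hat n-2i+1}\bigr)$ is literally the paper's ratio $|P(\tilde m,\tilde c,\tilde\pi)|/(2\tilde n_o^1-1)!!$, and the arithmetic at the end is identical. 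The one stylistic improvement on your side is that by fixing the colouring as part of the environment you sidestep the paper's explicit colour-symmetry decomposition $\P(A\mid B)\leq\sum_i\P(A\cap B_i)/\sum_i\P(B_i)$, and the introduction of the strand matching $\sigma$ makes the bijection between closed paths through $o$ and cycles of $\sigma\cup\pi_o$ explicit; these are nice clarifications but not a different proof. One small caveat: the degenerate remark for $m=k=2$ is not needed and not quite accurate (the bound is simply the trivial $\leq 1$ there, not an equality), but this has no bearing on the argument.
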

\begin{proof}
Consider $k \in \N_{\geq 2}$ and $0<\varepsilon <2$. Using that for any configuration $w \in \tilde{\Wcal}$ all links on $\{o,\boldsymbol{e}_1\}$ belong to the same closed path only if they have the same colour, we can write 
\begin{equation}
\label{eq: linkssamecolour}
\begin{aligned}
& \P\big(\text{all links on } \{o,\boldsymbol{e}_1\} \text{ belong to the same closed path} \, 
\big | \, m_{\{o,\boldsymbol{e}_1\}} \geq n_o \varepsilon \geq k \big) \\
& = \frac{\mathbb{P}\big(\text{all links on } \{o,\boldsymbol{e}_1\} \text{ belong to the same closed path}, \, m_{\{o,\boldsymbol{e}_1\}} \geq n_o \varepsilon \geq k \big)}{\mathbb{P}\big(m_{\{o,\boldsymbol{e}_1\}} \geq n_o \varepsilon \geq k \big)} \\
& \leq \frac{\sum_{i=1}^N \mathbb{P}\big(\text{all links on } \{o,\boldsymbol{e}_1\} \text{ belong to the same closed path}, \, m_{\{o,\boldsymbol{e}_1\}}^{(i)} \geq n_o \varepsilon \geq k, \, m_{\{o,\boldsymbol{e}_1\}}^{(j)} =0 \, \forall j \neq i \big)}{\sum_{i=1}^N \mathbb{P}\big(m_{\{o,\boldsymbol{e}_1\}}^{(i)} \geq n_o \varepsilon \geq k, \, m_{\{o,\boldsymbol{e}_1\}}^{(j)} =0 \, \forall j \neq i \big)} \\
& = \frac{\mathbb{P}\big(\text{all links on } \{o,\boldsymbol{e}_1\} \text{ belong to the same closed path}, \, m_{\{o,\boldsymbol{e}_1\}}^{(1)} \geq n_o \varepsilon \geq k, \, m_{\{o,\boldsymbol{e}_1\}}^{(j)} =0 \, \forall j \neq 1 \big)}{\mathbb{P}\big(m_{\{o,\boldsymbol{e}_1\}}^{(1)} \geq n_o \varepsilon \geq k, \, m_{\{o,\boldsymbol{e}_1\}}^{(j)} =0 \, \forall j \neq 1 \big)}. 
\end{aligned}
\end{equation}
We now provide an upper bound for the numerator of the last term in \eqref{eq: linkssamecolour}. The idea is to condition on the link and colouring configuration on every edge and on the pairing configuration outside the origin. Given $m \in \Mcal$ and $c \in \Ccal(m)$, we denote by $\Pcal^o(m,c) \subset \Pcal(m,c)$ the set of pairing configurations $\pi=(\pi_x)_{x \in \T_L} \in \Pcal(m,c)$ which are such that each link is paired at both its endpoints, except for the $1$-links touching the origin, which are left unpaired at the origin. We denote by $\Acal_{L,\varepsilon,k}$ the set of triples $(m,c,\pi)$ with $m \in \Mcal$, $c \in \Ccal(m)$ and $\pi \in \Pcal^o(m,c)$, which are such that $m_{\{o,\boldsymbol{e}_1\}} \geq n_o \varepsilon \geq k$, all links on $\{o,\boldsymbol{e}_1\}$ have colour $1$ and $U\big(\frac{1}{2} \sum_{y \sim x} m{\{x,y\}}\big)>0$ for all $x \in \T_L$. 
\begin{figure}
  \centering
    \includegraphics[width=0.3\textwidth]{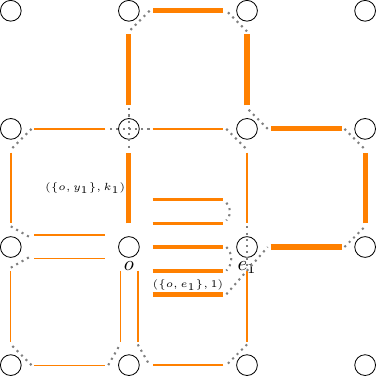}
      \caption{Part of a link and colouring configuration, in which only the $1$-links are depicted. All pairings outside of the origin are fixed. There exist precisely five open paths starting and ending at the origin. The links belonging to the five paths are drawn in different widths.}
\label{fig:Openpaths}
\end{figure}
Using the conditional probability formula, we have that
\begin{equation} \label{eq:conditioned}
\begin{aligned}
& \mathbb{P}(\text{all links on } \{o,\boldsymbol{e}_1\} \text{ belong to the same closed path}, \, m_{\{o,\boldsymbol{e}_1\}}^{(1)} \geq n_o \varepsilon \geq k, \, m_{\{o,\boldsymbol{e}_1\}}^{(j)} =0 \, \forall j \neq 1 \big) \\
& = \sum_{(\tilde{m}, \tilde{c},\tilde{\pi}) \in \Acal_{L,\varepsilon,k}} \mathbb{P}(\text{precisely one closed path on } \{o,\boldsymbol{e}_1\}\, \big | \, m_e(w)=\tilde{m}_e, c_e(w)=\tilde{c}_e \, \forall e \in \E_L, \pi_x(w) = \tilde{\pi}_x \, \forall x \neq o) \\
& \qquad \qquad \qquad \qquad \qquad  \times \mathbb{P}( m_e(w)=\tilde{m}_e, c_e(w)=\tilde{c}_e \, \forall e \in \E_L, \pi_x(w) = \tilde{\pi}_x  \, \forall x \neq o) \\
& = \sum_{(\tilde{m}, \tilde{c},\tilde{\pi}) \in \Acal_{L,\varepsilon,k}} \frac{|P(\tilde{m},\tilde{c},\tilde{\pi})|}{|\{\pi \in \Pcal(\tilde{m},\tilde{c}): \, \pi_x =\tilde{\pi}_x \, \forall x \neq o\}|} \,  \mathbb{P}( m_e(w)=\tilde{m}_e, c_e(w)=\tilde{c}_e \, \forall e \in \E_L, \pi_x(w) = \tilde{\pi}_x  \, \forall x \neq o),
\end{aligned}
\end{equation}
where $|P(\tilde{m},\tilde{c},\tilde{\pi})|$ denotes the number of pairing configurations $\pi \in \Pcal(\tilde{m},\tilde{c})$ which are such that $\pi_x=\tilde{\pi}_x$ for all $x \in \T_L \setminus \{o\}$ and such that all $1$-links on the edge $\{o,\boldsymbol{e}_1\}$ belong to the same closed path. In the last step, we used that $\mathbb{P}(w)=\mathbb{P}(w^\prime)$ for any two configurations $w, w^\prime \in \tilde{\Wcal}$ such that $m_e(w)=m_e(w^\prime)$ and $c_e(w)=c_e(w^\prime)$ for all $e \in \E_L$. 

We now derive an upper bound for $|P(\tilde{m},\tilde{c},\tilde{\pi})|$.  
The pairings of the $1$-links outside of the origin are fixed in such a way that we have a collection of open paths of colour $1$ which start and end at the origin, meaning that precisely two links of each such path have an unpaired endpoint at the origin, see also Figure \ref{fig:Openpaths}. The idea is to pair the paths step-by-step such that we obtain a closed path that contains all links on the edge $\{o,\boldsymbol{e}_1\}$. We begin with the path containing the first link on $\{o,\boldsymbol{e}_1\}$,  $(\{o,\boldsymbol{e}_1\},1)$,  and we denote by $(\{o,y_1\},k_1)$ with $y_1 \sim o$, and $ 1 \leq k_1 \leq \tilde{m}_{\{o,y_1\}}$ the other link of this path which is unpaired at the origin. 
At the origin, we then pair the link $(\{o,\boldsymbol{e}_1\},1)$ to a link $(\{o, y_2\}, k_2)$ which is not $(\{o,y_1\},k_1)$,  since we do not want to close the path. Hence, for the first link on the edge $\{o,\boldsymbol{e}_1\}$, we have $(2 \tilde{n}_o^1-2)$ pairing possibilities, where 
$\tilde{n}_o^1:= \frac{1}{2} \sum_{y \sim o} \sum_{p \in [\tilde{m}_{\{o,y\}}]} \mathbbm{1}_{\{\tilde{c}_{\{o,y\}}(p)=1\}}$ is the number of $1$-links touching $o$ divided by two.
We denote by $(\{o,y_3\},k_3)$ the link corresponding to the same path as $(\{o,y_2\},k_2)$. This link may neither be paired to $(\{o,y_1\}, k_1)$ at the origin implying that we have $(2 \tilde{n}_o^1 -4)$ pairing possibilities for the link $(\{o,y_2\}, k_2)$. We proceed in this manner until we have paired all links on the edge $\{o,\boldsymbol{e}_1\}$. More precisely, the link $(\{o,y_1\}, k_1)$ is first paired at the origin after all links on $\{o,\boldsymbol{e}_1\}$ have been explored. We need at least $\alpha:=\lceil (\tilde{m}_{\{o,\boldsymbol{e}_1\}}-2)/2 \rceil$ pairings at the origin until all 1-links on $\{o,\boldsymbol{e}_1\}$ belong to  a closed path. An upper bound for the number of pairing possibilities of links of colour $1$ at the origin is thus given by 
\begin{equation}
|P(\tilde{m},\tilde{c},\tilde{\pi})| \leq (2\tilde{n}_o^1-2) (2\tilde{n}_o^1-4) \cdots (2\tilde{n}_o^1 - 2\alpha) (2\tilde{n}_o^1 - 2\alpha-1)!!.
\end{equation}
We compare the upper bound for $|P(\tilde{m},\tilde{c},\tilde{\pi})|$ with the number $(2\tilde{n}_o^1-1)!!$ of pairing possibilities of the $1$-links  without the restriction that all links on the edge $\{o,\boldsymbol{e}_1\}$ must belong to the same closed path. We consider the case $\tilde{m}_{\{o,\boldsymbol{e}_1\}}$ even, which is the worst case. Using the estimate $1-x \leq e^{-x}$ for any $x \in \R$, we obtain that
\begin{equation}
\label{eq: comparisonbound}
\begin{aligned}
\frac{|P(\tilde{m},\tilde{c},\tilde{\pi})|}{(2\tilde{n}_o^1-1)!!} & =
\frac{(2\tilde{n}_o^1-2) (2\tilde{n}_o^1-4) \cdots (2\tilde{n}_o^1-\tilde{m}_{\{o,e_1\}}^1+2)}{(2\tilde{n}_o^1-1) (2\tilde{n}_o^1-3) \cdots (2\tilde{n}_o^1-\tilde{m}_{\{o,e_1\}}^1+3)} 
 \leq \exp\bigg(-\sum_{l=1}^{(\tilde{m}_{\{o,\boldsymbol{e}_1\}}-2)/2} \frac{1}{2\tilde{n}_o^1-(2l-1)}\bigg) \\
& \leq \exp\bigg(- \frac{1}{2\tilde{n}_o}\frac{\tilde{m}_{\{o,\boldsymbol{e}_1\}}-2}{2}\bigg)
 \leq \exp\Big(-\frac{\varepsilon}{4}+\frac{\varepsilon}{2k}\Big),
\end{aligned}
\end{equation}
where in the last step we used that $\tilde{m}_{\{o,\boldsymbol{e}_1\}} \geq \varepsilon n_0 \geq k$. 
The proof is concluded by plugging \eqref{eq: comparisonbound} in \eqref{eq:conditioned} and by using \eqref{eq: linkssamecolour}.
\end{proof}

The next proposition states that $\G(o,\boldsymbol{e}_1)$ gets arbitrarily large as long as the range of the weight function and the parameter $\lambda$ are sufficiently large. In particular, $\G(o,\boldsymbol{e}_1)$ gets much larger than the expected number of links
of a given colour on $\{o,\boldsymbol{e}_1\}$.
\begin{proposition} \label{proposition: uniformpositivity}
Let $d \in \N$, $N \in \N_{\geq 2}$ and $v:\Z^d \to \R$ be tempered and separable. For any $M,a \in (0,\infty)$, there exists $R \in (0,\infty)$ such that for any good weight function $U$ with range at least $R$, there exists $\lambda_0>0$ such that for any $\lambda \geq \lambda_0$ and any $L \in 2\N$, 
\begin{equation} \label{eq:lowerboundG(o,e1)}
\E\big(m_{\{o,\boldsymbol{e}_1\}}^{(1)} m_{\{o,\boldsymbol{e}_1\}}^{(2)}\big) -a \, \E\big(m_{\{o,\boldsymbol{e}_1\}}^{(2)}\big) \geq M.
\end{equation}
\end{proposition}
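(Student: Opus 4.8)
The strategy is to integrate out the colours so that the claim becomes a question about how the links on the edge $\{o,\boldsymbol{e}_1\}$ split among the cycles of a configuration, and then to feed in Lemma~\ref{lemma:pairingcardinalityset} together with a strengthening of Lemma~\ref{lemma: sameloop}. Since no factor in \eqref{eq:RPMmeasure} depends on the colouring and a colouring compatible with a given cycle ensemble is precisely a choice of one colour in $[N]$ per cycle, conditionally on $\zeta(w)$ the cycle colours are i.i.d.\ uniform on $[N]$. Writing $b_\chi(w)$ for the number of links of the cycle $\chi$ on $\{o,\boldsymbol{e}_1\}$, so that $m:=m_{\{o,\boldsymbol{e}_1\}}=\sum_{\chi\in\zeta(w)}b_\chi$, this gives $N^2\,\E(m_{\{o,\boldsymbol{e}_1\}}^{(1)}m_{\{o,\boldsymbol{e}_1\}}^{(2)})=\E\bigl(m^2-\sum_{\chi}b_\chi(w)^2\bigr)$ and $N\,\E(m_{\{o,\boldsymbol{e}_1\}}^{(2)})=\E(m)$ — the first identity because two distinct links on the edge carry the colours $1,2$ with probability $N^{-2}$ if they lie in distinct cycles and with probability $0$ otherwise (compare \eqref{eq:twopointneighbour}). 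Hence it suffices to show $\E\bigl(m^2-\sum_\chi b_\chi^2-aN\,m\bigr)\ge N^2M$.

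Put $b^\star(w):=\max_\chi b_\chi(w)$; since $\sum_\chi b_\chi^2\le b^\star m$, the integrand above is $\ge m(m-b^\star-aN)$, which is always $\ge-2aN\,n_o$ (as $m-b^\star\ge 0$ and $m\le 2n_o$). The point is thus to produce an event of probability bounded away from $0$, uniformly in $L$, on which $m$ is large \emph{and} the dominant cycle omits a positive fraction of the edge's links. For the first part: by the invariance of the torus under coordinate permutations and sign flips, $\{o,\boldsymbol{e}_1\}$ carries the most links among the $2d$ edges incident to $o$ with probability $\ge\frac1{2d}$, and there $m\ge\frac{n_o}{d}$; with Lemma~\ref{lemma:pairingcardinalityset} this makes $\P(m\ge T)\ge p_0>0$ for a threshold $T=T(\lambda,R)$ with $T\to\infty$. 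For the second part one refines the combinatorial estimate in the proof of Lemma~\ref{lemma: sameloop}: conditioning on the links, the colouring and the pairings away from $o$, the same telescoping product that produces the factor $e^{-\varepsilon(\frac14-\frac1{2k})}$ there bounds, in the link-by-link construction of the matching at $o$, the proportion of pairings at $o$ for which the cycle through the first link of the edge absorbs more than a fraction $1-\delta$ of the other links of the edge; this yields a fixed $\delta\in(0,1)$ and a constant $q_\delta<1$, uniform in $L$, such that conditionally on $\{m\ge\varepsilon n_o\ge k\}$ one has $\P(b^\star>(1-\delta)m)\le q_\delta$. As $m$ is measurable given the configuration off the pairings at $o$, the two parts combine into an event $G$ with $\P(G)\ge p_1>0$ on which $m\ge T$ and $m-b^\star\ge\delta m$, and moreover $\E(m\,\mathbbm 1_G)\ge c\,\E(m)$ for a fixed $c>0$.

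To conclude: on $G$ the integrand is $\ge m(\delta m-aN)\ge\tfrac\delta2 m^2$ once $T\ge 2aN/\delta$ (true for $\lambda$ large), and off $G$ it is $\ge-2aN\,n_o$. By Cauchy--Schwarz $\E(m^2\mathbbm 1_G)\ge(\E(m\mathbbm 1_G))^2\ge c^2(\E m)^2$, and using $\E(m)=\E(n_o)/d$,
\[
\E\Bigl(m^2-\sum_\chi b_\chi^2-aN\,m\Bigr)\ \ge\ \tfrac{\delta c^2}{2d^2}\,\E(n_o)^2-2aN\,\E(n_o)\ =\ \E(n_o)\Bigl(\tfrac{\delta c^2}{2d^2}\E(n_o)-2aN\Bigr),
\]
which exceeds $N^2M$ once $\E(n_o)$ is large: this holds for $\lambda$ large when $U$ has infinite range, and for $R$ large (and then $\lambda$ large) when $U$ has finite range, by Lemma~\ref{lemma:pairingcardinalityset}. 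The chessboard bound \eqref{eq:upperboundnumberpairings} is used along the way to control the error terms. Undoing the colour integration gives \eqref{eq:lowerboundG(o,e1)}.

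The main obstacle is the strengthening of Lemma~\ref{lemma: sameloop} used in the middle step: that lemma rules out only that \emph{all} links of $\{o,\boldsymbol{e}_1\}$ lie in one cycle, and a crude union bound over which links are missed by the dominant cycle loses an exponential factor. One must instead track the whole size-distribution of the cycles crossing the edge inside the link-by-link construction of the matching at $o$, and in particular show that the dominant cycle omits a uniformly positive fraction of the edge's links with probability bounded below uniformly in $L$.
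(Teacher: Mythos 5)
Your decolouring identity is correct and is a clean reformulation: since the weight \eqref{eq:RPMmeasure} does not depend on the colouring, conditionally on the cycle ensemble the cycle colours are i.i.d.\ uniform on $[N]$, and indeed $N^2\,\E\big(m^{(1)}_{\{o,\boldsymbol{e}_1\}}m^{(2)}_{\{o,\boldsymbol{e}_1\}}\big)=\E\big(m^2-\sum_{\chi}b_\chi^2\big)$ while $N\,\E\big(m^{(2)}_{\{o,\boldsymbol{e}_1\}}\big)=\E(m)$. However, as you yourself flag in your last paragraph, your route hinges on a strengthening of Lemma~\ref{lemma: sameloop} — that, conditionally on $m$ being large, the dominant cycle on $\{o,\boldsymbol{e}_1\}$ misses a uniformly positive \emph{fraction} of the links with probability bounded below, uniformly in $L$ — and you do not supply it. This gap is substantive, not cosmetic: Lemma~\ref{lemma: sameloop} only excludes the event that \emph{every} link lies in a single cycle. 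If the largest cycle carries $m-1$ links and one singleton cycle carries the remaining one, then $m^2-\sum_\chi b_\chi^2 = 2(m-1)$ is merely linear in $m$, and $2(m-1)-aNm<0$ as soon as $aN>2$; so the pointwise bound $\sum_\chi b_\chi^2\leq b^\star m$ cannot be closed against $aNm$ for general $a$ without the positive-fraction input.

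The paper sidesteps this obstacle precisely by \emph{not} integrating out the colours. It rewrites the left side as $\tfrac{1}{N-1}\E\big(\sum_{j\geq 2}m^{(j)}(m^{(1)}-a)\big)$ and restricts to the event $\{m^{(1)}\geq k,\ \sum_{j\geq2}m^{(j)}\geq1\}$, and the extra leverage is a pigeonhole over the $N$ colour classes: conditionally on $m\geq kN$ and on ``not all links of the same colour'', the colour carrying the most links automatically carries at least $m/N\geq k$ of them, the hypothesis supplies at least one link of some other colour, and by colour exchangeability the dominant colour is $1$ with conditional probability $\geq 1/N$. Thus the paper needs only Lemma~\ref{lemma: sameloop} as stated (at least two cycles crossing the edge with uniformly positive conditional probability), never that the second cycle be macroscopic. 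After decolouring this pigeonhole is unavailable, since the cycle sizes on the edge are not grouped into $\leq N$ classes — which is exactly why your version requires the harder lemma. If you want to keep your formulation, you would need to first regroup the cycles on the edge into $\leq N$ colour classes and pigeonhole the class sizes, which amounts to re-deriving the paper's step. Two further unproved ingredients in your sketch are the assertion $\E(m\mathbbm{1}_G)\geq c\,\E(m)$ and the tacit use of $m\leq 2n_o$ in the off-$G$ bound.
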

\begin{proof} Let $d \in \N$, $N \in \N_{\geq 2}$ and $v:\Z^d \to \R$ be tempered and separable and consider $M, a \in (0,\infty)$. 
In the following calculations, $U:\N_0 \to \R_0^+$ will be an arbitrary good weight function and we will later specify its range such that all the events on which we condition have a strictly positive probability.  Consider $k \in \N$ such that $k \geq a$. We have that  
\begin{equation} \label{eq:G(0,e1)}
\begin{aligned}
& \quad \E\big(m_{\{o,\boldsymbol{e}_1\}}^{(1)} m_{\{o,\boldsymbol{e}_1\}}^{(2)}\big) -a \, \E\big(m_{\{o,\boldsymbol{e}_1\}}^{(2)}\big) 
= \frac{1}{N-1} \, \E\bigg(\sum_{j=2}^N m_{\{o,\boldsymbol{e}_1\}}^{(j)} (m_{\{o,\boldsymbol{e}_1\}}^{(1)}-a)\bigg) \\
& \geq \frac{k-a}{N-1} \, \P\big(m_{\{o,\boldsymbol{e}_1\}}^{(1)} \geq k, \, \sum_{i=2}^N m_{\{o,\boldsymbol{e}_1\}}^{(i)} \geq 1 \big) \\
& \geq \frac{k-a}{N-1} \, \P\big(m_{\{o,\boldsymbol{e}_1\}}^{(1)} \geq k, \, \sum_{i=2}^N m_{\{o,\boldsymbol{e}_1\}}^{(i)} \geq 1 \, | \,  m_{\{o,\boldsymbol{e}_1\}} \geq \frac{n_o}{d} \geq kN, \text{ not all links on } \{o, \boldsymbol{e}_1\} \text{ of same colour }\big) \\
& \qquad \qquad \qquad \qquad \times \P(m_{\{o,\boldsymbol{e}_1\}} \geq \frac{n_o}{d} \geq kN, \text{ not all links on } \{o, \boldsymbol{e}_1\} \text{ of same colour }) \\
& \geq \frac{k-a}{N(N-1)} \, \P(m_{\{o,\boldsymbol{e}_1\}} \geq \frac{n_o}{d} \geq kN) \, \P(\text{not all links on } \{o, \boldsymbol{e}_1\} \text{ of same colour } | \, m_{\{o,\boldsymbol{e}_1\}} \geq \frac{n_o}{d} \geq kN).
\end{aligned}
\end{equation}
We provide a lower bound for the first probability appearing in \eqref{eq:G(0,e1)}. Using the conditional probability formula, we have that,
\begin{equation} \label{eq:firstfactor}
\begin{aligned}
\P\big(m_{\{o,\boldsymbol{e}_1\}} \geq \frac{n_o}{d}  \geq kN \big) 
& = \sum_{j=kdN}^\infty \P\big(m_{\{o,\boldsymbol{e}_1\}} \geq \frac{j}{d} \, | \, n_o=j \big) \, \P\big(n_o=j \big) \geq \frac{1}{2d} \, \P\big(n_o \geq kdN \big),
\end{aligned}
\end{equation}
where we used the rotational symmetry of the torus in the last step. 
For the second probability in \eqref{eq:G(0,e1)} we apply Lemma \ref{lemma: sameloop} with $\varepsilon=\frac{1}{d}$ and obtain that,
\begin{equation} \label{eq:secondfactor}
\begin{aligned}
& \P(\text{not all links on } \{o, \boldsymbol{e}_1\} \text{ of same colour } | \, m_{\{o,\boldsymbol{e}_1\}} \geq \frac{n_o}{d} \geq kN) \\
& \geq \frac{N-1}{N} \, \P\big(\text{at least two closed paths at } \{o,\boldsymbol{e}_1\} \, | \, m_{\{o,\boldsymbol{e}_1\}} \geq \frac{n_o}{d} \geq kN \big) \\
& \geq \frac{N-1}{N} \big(1- e^{-\frac{1}{d}(\frac{1}{4}-\frac{1}{2kN})} \big) \geq \frac{1}{8d} \frac{N-1}{N},
\end{aligned}
\end{equation}
where in the last step we used that $e^{-x} \leq 1-\frac{1}{2}x$ for any $x \in (0,\frac{1}{3})$. Plugging the lower bounds \eqref{eq:firstfactor} and \eqref{eq:secondfactor} in  \eqref{eq:G(0,e1)} and applying Lemma \ref{lemma:pairingcardinalityset} we obtain that for any weight function $U$ with range $R \geq kdN$, where $k:=16d^2N^2M+a+1$, there exists $\lambda_0=\lambda_0(d,N,U,v, M,a)<\infty$, such that for any $\lambda \geq \lambda_0$ and for any $L \in 2\N$, 
$$
\begin{aligned}
\E\big(m_{\{o,\boldsymbol{e}_1\}}^{(1)} m_{\{o,\boldsymbol{e}_1\}}^{(2)}\big) -a \, \E\big(m_{\{o,\boldsymbol{e}_1\}}^{(2)}\big)  \geq \frac{k-a}{16d^2 N^2} \, \P\big(n_o \geq kd \big) \geq M.
\end{aligned}
$$
This concludes the proof.
\end{proof}

\subsection{Two-point function and expected number of closed paths connecting two vertices} \label{sec:twopointfctloops}
In this section we derive the important relation between the two-point function $\G(x,y)$ defined in \eqref{eq: twopointfunction} and the expected number of closed paths connecting $x$ and $y$ using  reformulation \eqref{eq: twopointfunction2} of the two-point function. 
Recall from the definition in \eqref{eq:tildeNcal} that we denote by $\tilde{\Ncal}_{x,y}: \tilde{\Wcal} \to \R$ the function which counts the number of closed paths (cycles) visiting $x$ and $y$. 

\begin{proposition} \label{lemma:expectednumberloopsrelation}
There exists $c > 0$ such that for all $L \in 2\N$ and for all $x,y \in \T_L$ with $x \neq y$, 
\begin{equation} \label{eq: relationnumberloops}
\E\big(\tilde{\Ncal}_{x,y}\big) \geq 
c \, \G(x,y)^4.
\end{equation}
\end{proposition}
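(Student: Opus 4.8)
The plan is to derive the bound entirely from the reformulation \eqref{eq: twopointfunction2} of the two-point function, by extracting from it two facts: an inequality of the form $\G(x,y)^2 \le C_1\,\E(\tilde{\Ncal}_{x,y})$, and a uniform upper bound $\G(x,y)\le C_2$, where $C_1,C_2<\infty$ depend on the model parameters but not on $L$, $x$ or $y$. Granting these, $\G(x,y)^4 = \G(x,y)^2\cdot\G(x,y)^2 \le C_2^2\,\G(x,y)^2 \le C_1C_2^2\,\E(\tilde{\Ncal}_{x,y})$, which is the claim with $c=(C_1C_2^2)^{-1}$. Throughout write $W_z := U(n_z+1)/U(n_z)$, with the convention $0/0:=1$.

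\textbf{Reduction via Cauchy--Schwarz.} First I would bound, for any $w\in\tilde{\Wcal}$,
\[
\sum_{\chi\in\zeta^1(w)} n_x^{(\chi)}n_y^{(\chi)} \;\le\; n_x(w)\,n_y(w)\,\mathbbm{1}_{\{\tilde{\Ncal}_{x,y}(w)\ge 1\}},
\]
because the summand vanishes unless $\chi$ contains both $x$ and $y$ (forcing $\tilde{\Ncal}_{x,y}(w)\ge1$), and because $\sum_{\chi\in\zeta^1}n_x^{(\chi)}n_y^{(\chi)} \le \big(\sum_{\chi\in\zeta^1}n_x^{(\chi)}\big)\big(\sum_{\chi\in\zeta^1}n_y^{(\chi)}\big) = n_x^1\,n_y^1 \le n_x n_y$. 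Substituting into \eqref{eq: twopointfunction2}, applying Cauchy--Schwarz under $\P$, and using Markov's inequality $\P(\tilde{\Ncal}_{x,y}\ge1)\le\E(\tilde{\Ncal}_{x,y})$, one obtains
\[
\G(x,y)^2 \;\le\; 4\lambda^4\,\E\!\big(n_x^2 n_y^2 W_x^2 W_y^2\, e^{-2\tilde{v}_{x,y}}\big)\,\E(\tilde{\Ncal}_{x,y}).
\]
Keeping instead only $\sum_{\chi}n_x^{(\chi)}n_y^{(\chi)}\le n_x n_y$ and applying Cauchy--Schwarz gives $\G(x,y) \le 2\lambda^2\,\E\!\big(n_x^2 n_y^2 W_x^2 W_y^2 e^{-2\tilde{v}_{x,y}}\big)^{1/2}$. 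Hence both required facts follow once $\E\!\big(n_x^2 n_y^2 W_x^2 W_y^2 e^{-2\tilde{v}_{x,y}}\big)$ is bounded by a constant, uniformly in $L$, $x$ and $y$.

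\textbf{The uniform weight bound.} Goodness of $U$ (the inequality \eqref{eq:rapidlydecayingdef}, with $\overline U(n)=U(n)e^{-\overline v n^2}$) gives $n_z W_z \le M e^{\overline v(2n_z+1)}$, hence $n_z^2 W_z^2 \le M^2 e^{2\overline v}e^{4\overline v n_z}$ (trivially when $n_z=0$). Using $|v_L(\cdot,\cdot)|\le\sum_{w\in\Z^d}|v(w)|=:\|v\|_1$ and $\sum_{u}|v_L(u,z)|\mathbbm{1}_{\{v_L(u,z)<0\}}\le\|v\|_1$ (finite since $v$ is tempered), one checks that $e^{-2\tilde{v}_{x,y}(w)} \le e^{8\|v\|_1}\prod_{u\in\T_L} e^{c_u n_u(w)}$ with $c_u\ge0$, $\sum_u c_u \le 8\|v\|_1$. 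Collecting factors, $n_x^2 n_y^2 W_x^2 W_y^2 e^{-2\tilde{v}_{x,y}} \le C_1\, e^{\sum_u d_u n_u}$ with $d_u\ge0$ and $D := \sum_u d_u$ bounded by a constant depending only on $\|v\|_1$ and $\overline v$. The delicate point is that this is a product over all $|\T_L|=L^d$ vertices, so Lemma \ref{lemma:Chessboardapplicationno} cannot be applied factor by factor; instead, assuming $D>0$ (the case $D=0$ is trivial), Jensen's inequality for the convex map $t\mapsto e^{Dt}$, with $\sum_u d_u/D=1$, yields
\[
e^{\sum_u d_u n_u} \;=\; e^{D\sum_u (d_u/D)\,n_u} \;\le\; \sum_{u\in\T_L}\frac{d_u}{D}\,e^{D n_u},
\]
so that, by translation invariance of $\P_{\T_L}$ and the single-vertex case of Lemma \ref{lemma:Chessboardapplicationno}, $\E\!\big(e^{\sum_u d_u n_u}\big) \le \E\!\big(e^{D n_o}\big) \le e^{c\lambda N d\, e^{D}}<\infty$, uniformly in $L$. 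This provides the uniform constant and completes the proof.

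\textbf{Main obstacle.} The crux is exactly this last estimate: the factor $e^{-2\tilde{v}_{x,y}(w)}$ encodes the interaction of the two pairings inserted at $x$ and $y$ with \emph{every} loop in the configuration, and since the potential may be attractive and long-ranged this is a genuinely global object which the chessboard estimate does not bound directly. The resolution is to convert, via Jensen, the weighted sum of local times into a single-vertex exponential moment of the local time, which Lemma \ref{lemma:Chessboardapplicationno} controls uniformly in the volume.
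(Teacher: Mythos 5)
Your proposal is correct and yields the claimed estimate, but the route is genuinely different from the paper's, most notably in the way the exponential-in-local-times factor $\E\big(e^{-2\tilde v_{x,y}}\big)$ is brought under uniform control. The paper starts from the same reformulation \eqref{eq: twopointfunction2} but packages the indicator as $\sum_\chi n_x^{(\chi)}n_y^{(\chi)}\le\tilde{\Ncal}_{x,y}^{1/4}n_xn_y$ and then applies Cauchy--Schwarz twice, separating $\E(\tilde{\Ncal}_{x,y})^{1/4}$, $\E\big(e^{8\bar v(n_x+n_y)}\big)^{1/4}$, and $\E\big(e^{-2\tilde v_{x,y}}\big)^{1/2}$; the last factor is then controlled by constructing a map that adds two ghost pairings at each of $x,y$ (yielding the identity \eqref{eq:identitytwopointfcts}), applying the chessboard estimate to homogenize it, and finally following the bookkeeping of Lemma \ref{lemma: finitepartitionfunction}. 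You instead use the sharper bound $\sum_\chi n_x^{(\chi)}n_y^{(\chi)}\le n_xn_y\mathbbm{1}_{\{\tilde{\Ncal}_{x,y}\ge1\}}$, a single Cauchy--Schwarz plus Markov, which gives the stronger intermediate inequality $\G(x,y)^2\le C_1\,\E(\tilde{\Ncal}_{x,y})$, and you control the remaining expectation $\E\big(n_x^2n_y^2W_x^2W_y^2e^{-2\tilde v_{x,y}}\big)$ by expanding $e^{-2\tilde v_{x,y}}$ into a product $\prod_u e^{c_un_u}$ with $c_u\ge0$ summable and then applying Jensen's inequality (weighted AM--GM) to reduce the global product to a single-site moment $\E(e^{Dn_o})$, controlled by Lemma \ref{lemma:Chessboardapplicationno}. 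This Jensen reduction neatly circumvents the volume blow-up one would get by applying Lemma \ref{lemma:Chessboardapplicationno} directly to all of $\T_L$ (the sum $\sum_{u}e^{a_u}$ in \eqref{eq:upperboundnumberpairings} is of order $L^d$ even when most $a_u=0$), and it is more elementary than the paper's ghost-pairing insertion. Two small points: the constant $D=\sum_ud_u$ is generally not an integer, so you need to replace it with $\lceil D\rceil$ before invoking Lemma \ref{lemma:Chessboardapplicationno} (harmless since $n_o\ge0$ and the lemma's proof in fact handles any non-negative real exponent); and it is worth noting explicitly that the uniform bound $\G(x,y)\le C_2$ you derive here is also established independently in the paper at \eqref{eq:twopointbounded} via site monotonicity, so both routes are available.
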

\begin{proof} Consider $x,y \in \T_L$ with $x \neq y$. 
By the reformulation \eqref{eq: twopointfunction2} of the two-point function, we have that
\begin{equation} \label{eq:twopointandloops}
\begin{aligned}
\G(x,y)  
& = \frac{2 \, \lambda^2}{N} \, \E\Big(\sum\limits_{\chi \in \zeta(w)} n_x^{(\chi)} n_y^{(\chi)}\, \frac{U(n_x+1)}{U(n_x)} \, \frac{U(n_y+1)}{U(n_y)} \, e^{-\tilde{v}_{x,y}}\Big)  \leq \frac{2\lambda^2 M^2}{N} \, \E\Big(\tilde{\Ncal}_{x,y} ^{\frac{1}{4}} \, e^{2 \, \bar{v}\, (n_x+n_y)} \,  e^{-\tilde{v}_{x,y}}\Big),
\end{aligned}
\end{equation}
where we used the upper bound $\sum_{\chi \in \zeta(w)}n_x^{(\chi )}n_y^{(\chi )} \leq \tilde{\Ncal}_{x,y}^{\frac{1}{4}} \, n_x \, n_y$ for any $w \in \tilde{\Wcal}$ and then applied the definition  in \eqref{eq:rapidlydecayingdef} of a good weight function $U$. Using now twice the Cauchy-Schwarz inequality, we obtain from \eqref{eq:twopointandloops} that 
$$
\begin{aligned}
\G(x,y) & \leq \frac{2\lambda^2 M^2}{N} \, \E\big(\tilde{\Ncal}_{x,y}\big)^{\frac{1}{4}} \, \E\big(e^{8 \bar{v}(n_x+n_y)}\big)^{\frac{1}{4}} \, \E\big(e^{-2\tilde{v}_{x,y}(w)}\big)^{\frac{1}{2}}.
\end{aligned}
$$
For the proof of \eqref{eq: relationnumberloops} it only remains to show that the two expectations $\E\big(e^{8 \, \bar{v} \, (n_x+n_y)}\big)$ and $\E(e^{-2\tilde{v}_{x,y}(w)})$ are finite uniformly in $L \in 2\N$. 
The existence of a finite $c$ such that for any $L \in 2 \mathbb{N}$,
 $$
 \E\big(e^{8 \bar{v}(n_x+n_y)}\big) \leq c,
 $$
follows from Lemma \ref{lemma:Chessboardapplicationno} and  Proposition \ref{prop:chessboardabstract}. We now derive a uniform upper bound for $\E(e^{-2\tilde{v}_{x,y}(w)})$. For any $x,y \in \T_L$, we have that
\begin{equation}
\label{eq:identitytwopointfcts}
\mu^\ell(e^{-2\tilde{v}_{x,y}(w)}) = e^{4\big(v_L(o,o)+v_L(x,y)\big)} \, \mu\bigg(\frac{U(n_x-2)}{U(n_x)} \, \frac{U(n_y-2)}{U(n_y)} \, \mathbbm{1}_{\{g_x=2=g_y\}} \mathbbm{1}_{\{g_z=0 \, \forall z \in \T_L \setminus \{x,y\} \}} \mathbbm{1}_{\{u_z=0\, \forall z \in \T_L\}}\bigg),
\end{equation}
where we use the convention $\frac{0}{0}=1$. The identity \eqref{eq:identitytwopointfcts} can be derived by defining a map on $\tilde{\Wcal}$ that adds precisely two ghost pairings at $x$ and at $y$.  
From \eqref{eq:identitytwopointfcts}, Proposition \ref{prop:chessboardabstract} and Remark \ref{remark:chessboard} we obtain that
\begin{equation} \label{eq:finitestep3}
\E(e^{-2\tilde{v}_{x,y}(w)}) \leq e^{4 \alpha} \, \bigg(\frac{1}{\Z^\ell} \, \mu\Big( \prod_{x \in \T_L} \frac{U(n_x-2)}{U(n_x)} \, \mathbbm{1}_{\{g_x=2, u_x=0\}} \Big)\bigg)^{\frac{2}{L^d}},
\end{equation}
where $\alpha:= \sum_{x \in \Z^d} |v(x)| < \infty$ since $v$ is tempered. To prove that the term on the right-hand side of \eqref{eq:finitestep3} is finite, we use that $\Z^\ell \geq 1$ and we  follow the same steps as in the proof of Lemma \ref{lemma: finitepartitionfunction}. The only difference here is that in the interaction term $V(w)$ at each vertex precisely two ghost pairings are present (in addition to the pairings of the links). However, as can be seen in \eqref{eq:finitestep1}, the presence of such two ghost pairings at each vertex decreases the weight of the whole expression. 
From these considerations and from \eqref{eq:finitestep3} and \eqref{eq:finitestep1} we thus deduce that there exists $c \in (0,\infty)$ such that for any $L \in 2\N$ it holds that, 
$
\E(e^{-2\tilde{v}_{x,y}(w)}) \leq c.
$ 
This concludes the proof of the proposition.
\end{proof}

\section{Derivation of the Key Inequality}
\label{sect:derivationofthekeyinequality}
The main goal of this section is to derive Theorem \ref{theo: keyinequality} below, which states a Key Inequality for the two-point function defined in \eqref{eq: twopointfunction}. The derivation of such a Key Inequality involves a central quantity, defined in \eqref{eq:Zh} below, and the chessboard estimate, which is given in Section \ref{sect:Chessboardscheme}. 

Throughout the section,  we fix parameters $d>2$, $N \in \N_{\geq 2}$, $\lambda \in \R^+$, $L \in 2\N$ and functions $v:\Z^d \rightarrow \R$ and $U:\N_0 \rightarrow \R_0^+$ such that $v$ is tempered and separable and such that $U$ is good and has range $R \geq 2$. Recall from Section \ref{sect:reflectionpositivityandchessboardestimate} that $(\T_L,\E_L)$ and $(\Tcal_L,\Ecal_L)$ refer to the original and to the extended torus. We write the sub-script $\Tcal_L$ or $\T_L$ when considering the RPM in the extended or in the original torus and we omit all the remaining sub-scripts for lighter notation.  

We now introduce the \textit{central quantity} $\mathscr{Z}( \boldsymbol{h})$ for a real-valued vector $\boldsymbol{h}=(h_x)_{x \in \Tcal_L}$. 
In Section \ref{sec:polynomialexpansion} we will expand $\mathscr{Z}(\varphi \boldsymbol{h})$, with $\varphi \in \R$, as a polynomial in $\varphi$. When considering its limit $\varphi \to 0$, we will see its relation to the partition functions defined right after Definition \ref{def:measure} and in \eqref{eq: definition_partition_function}. 
\paragraph{Central quantity.} 
We denote by $\Wcal_{\Tcal_L}^{vert} \subset \Wcal_{\Tcal_L}$ the set of configurations $w \in \Wcal_{\Tcal_L}$ such that the following properties hold at the same time:
\begin{enumerate}[(i)]
   \setlength{\itemsep}{0pt}
    \setlength{\topsep}{0pt}
\item The number of $1$-links on $\{x,y\}$ that are unpaired at $x$ is equal to the number of $2$-links on $\{x,y\}$ that are unpaired at $x$ for any $\{x,y\} \in \Ecal_L$, 
\item There exists no link of colour $i \geq 3$ that is unpaired, 
\item There exists no ghost pairing,
\item At virtual vertices all links are unpaired. 
\end{enumerate}
Condition (iv) implies that any closed path of $w$ lies entirely in the original torus. Recall from Section \ref{sect:randompathmodel} that $u_x(w)$ denotes the number of links touching $x$ that are unpaired at $x$ for any $x \in \mathcal{T}_L$. Given $w \in \Wcal_{\Tcal_L}$, we denote by $\alpha_x(w)$ the number of links on the edge $\{x,x+\boldsymbol{e}_{d+1}\}$ that are unpaired at $x \in \T_L$.

\begin{figure}
  \centering
\begin{subfigure}{.4\textwidth}
    \includegraphics[width=\textwidth]{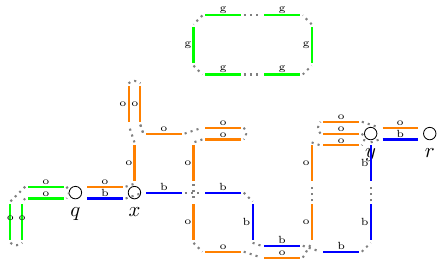}
    \caption{}
\end{subfigure}
\hspace{4em}
\begin{subfigure}{.4\textwidth}
    \includegraphics[width=\textwidth]{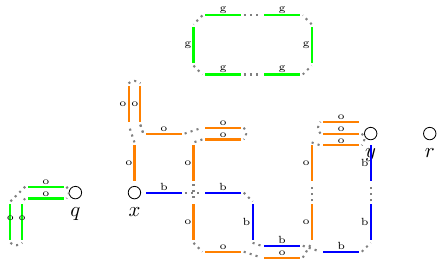}
    \caption{}
\end{subfigure}
\caption{(a) A configuration $w \in  \Acal((x,q),(y,r)) \subset \Wcal_{\Tcal_L}^{vert}$
 (we assume for graphical reasons that all the links lie on the original torus).
There exist precisely two open paths connecting $x$ and $y$, one of colour $1:=$ o and one of colour $2:=$ b. There also exist two closed paths of colour $3:=$ g. (b) The configuration $F_5(w) \in \Wcal_{x,y}$. The extremal links on $\{x,q\}$ and $\{y,r\}$ have been removed.}
\label{fig:twoopenpaths2}
\end{figure}
\begin{definition}[Central quantity]\label{def:Zh}
For any vector of real numbers $\boldsymbol{h} = (h_x)_{x \in \mathcal{T}_L}$, we define
\begin{equation}\label{eq:Zh}
\mathscr{Z}( \boldsymbol{h} ) : = \mu_{\Tcal_L}\Big( \prod_{x \in \mathcal{T}_L} 
h_x^{\frac{u_x}{2}} \prod_{x \in \T_L} \Big(\frac{1}{2}\Big)^{\frac{\alpha_x}{2}} \mathbbm{1}_{\Wcal_{\Tcal_L}^{\text{vert}}}
\Big  ).
\end{equation}
\end{definition}
In other words,   in  \eqref{eq:Zh}  a multiplicative factor $h_x$ is assigned to each pair of unpaired links of different colour at $x$,
while  a multiplicative factor $\frac{1}{\sqrt{2}}$ 
is assigned
to each link on an edge touching the virtual torus which is unpaired at both its end-points.

\subsection{Polynomial expansion} \label{sec:polynomialexpansion}

For any vector of real numbers $\boldsymbol{h}=(h_x)_{x \in \Tcal_L}$, we define the discrete Laplacian of $\boldsymbol{h}$ on the extended torus,
$$
(\triangle^*h)_x := \sum_{\substack{y \in \Tcal_L: \\ \{x,y\} \in \Ecal_L}} h_y.
$$

\begin{proposition}[Polynomial expansion]
\label{prop: central quantity expansion}
For any vector of real numbers $\boldsymbol{h}=(h_x)_{x \in \Tcal_L}$ and any $\varphi \in \R$, we have that,
\begin{equation}
\mathscr{Z}(\varphi \boldsymbol{h})= \Z_{\T_L}^\ell + \varphi^2 Z^{(2)}(\boldsymbol{h})+o(\varphi^2),
\end{equation}
in the limit $\varphi \to 0$, where 
\begin{equation}
\label{eq: termorderphi2}
\begin{aligned}
Z^{(2)}(\boldsymbol{h}) & := \lambda^2 \, \Z_{\T_L}^\ell \, \Big(\sum_{\{x,y\} \in \E_L} h_x h_y + \frac{1}{2} \sum_{x \in \T_L} h_x h_{x+\boldsymbol{e}_{d+1}}\Big) +2 \, \lambda^2 \mu_{\T_L}^\ell\big(m_{\{o,\boldsymbol{e}_1\}}^{(1)}\big)\sum_{\{x,y\} \in \E_L} h_x h_y  \\
& \qquad \qquad  + \frac{\lambda^4}{2} \sum_{x,y \in \T_L} \Z_{\T_L}(x,y) (\triangle^*h)_x(\triangle^*h)_y - \frac{\lambda^4}{4} \sum_{x \in \T_L} \Z_{\T_L}(x,x) \sum_{\substack{q \in \Tcal_L: \\ \{x,q\} \in \Ecal_L}} h_q^2.
\end{aligned}
\end{equation}
\end{proposition}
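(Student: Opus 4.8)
The plan is to expand the central quantity $\mathscr{Z}(\varphi\boldsymbol{h})$ directly in powers of $\varphi$ by classifying the configurations $w\in\Wcal^{\text{vert}}_{\Tcal_L}$ according to their total number of unpaired links, $\sum_{x\in\Tcal_L}u_x(w)$. Observe that in \eqref{eq:Zh} a configuration $w$ contributes with weight $\prod_{x}(\varphi h_x)^{u_x/2}\prod_{x\in\T_L}(1/2)^{\alpha_x/2}$, so that a configuration with $\sum_x u_x = 2k$ carries a factor $\varphi^k$. Hence the constant term (i.e.\ $k=0$) collects precisely the configurations with no unpaired links anywhere, which by definition of $\Wcal^{\text{vert}}_{\Tcal_L}$ (conditions (i)--(iv)) and of $\tilde{\Wcal}$ are exactly the configurations in $\tilde{\Wcal}_{\T_L}$ sitting inside the original torus; summing their $\mu$-weights yields $\Z_{\T_L}^\ell$. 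The first-order term, $k=1$, would require exactly two unpaired links; but condition (i) of $\Wcal^{\text{vert}}_{\Tcal_L}$ forces the number of unpaired $1$-links and unpaired $2$-links on each edge to be equal, so an odd configuration with a single unpaired pair at one vertex is impossible unless that pair sits on an edge, and parity of $u_x$ together with condition (i) rules out $\sum_x u_x = 2$ with the required colour balance — this gives the vanishing of the $O(\varphi)$ term. The substance is therefore the identification of the $O(\varphi^2)$ coefficient $Z^{(2)}(\boldsymbol{h})$.

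For the $\varphi^2$ term I would enumerate the configurations $w\in\Wcal^{\text{vert}}_{\Tcal_L}$ with $\sum_x u_x = 4$. Condition (ii) says all unpaired links have colour $1$ or $2$, and condition (i) forces them to come in $(1,2)$-pairs on edges; condition (iv) restricts virtual vertices to only carry unpaired links. The four unpaired endpoints can be distributed as follows. \textbf{(a)} Two unpaired $1$-links and two unpaired $2$-links on a single edge $\{x,y\}\in\E_L$ of the original torus, contributing $h_xh_y$; counting such configurations and their $\mu$-weights recovers, via a map that deletes these four links, the factor $\lambda^2\,\Z_{\T_L}^\ell\sum_{\{x,y\}\in\E_L}h_xh_y$ plus the additional $2\lambda^2\mu_{\T_L}^\ell(m^{(1)}_{\{o,\boldsymbol{e}_1\}})\sum_{\{x,y\}\in\E_L}h_xh_y$ coming from configurations in which the deleted $1$-links are replaced by an already-present pair (this is where the reformulation \eqref{eq:twopointneighbour}, $\G(o,\boldsymbol{e}_1)=\E(m^{(1)}m^{(2)})$, and translation invariance enter). \textbf{(b)} The same on a vertical edge $\{x,x+\boldsymbol{e}_{d+1}\}$, where the extra $(1/2)^{\alpha_x/2}$ factor from \eqref{eq:Zh} produces the $\tfrac12\sum_{x\in\T_L}h_xh_{x+\boldsymbol{e}_{d+1}}$ term. \textbf{(c)} One unpaired $(1,2)$-pair at a vertex $x$ and another at a vertex $y$ with $x\neq y$: summing over the edges incident to $x$ on which each unpaired link sits yields a factor $(\triangle^*h)_x(\triangle^*h)_y$, and the map $F_5$ indicated in Figure~\ref{fig:twoopenpaths2} (removing the two extremal links, one on an $x$-edge and one on a $y$-edge) identifies the weight with $\Z_{\T_L}(x,y)$; the symmetry factor $\tfrac12$ and the $\lambda^4$ come from re-inserting the four links. \textbf{(d)} Two unpaired $(1,2)$-pairs at the \emph{same} vertex $x$: this corresponds to $\Z_{\T_L}(x,x)$, with the combinatorics of choosing the (possibly coinciding) incident edges $\{x,q\}$ producing $\sum_{q:\{x,q\}\in\Ecal_L}h_q^2$ and the sign/coefficient $-\tfrac{\lambda^4}{4}$ arising from the diagonal correction in the expansion of $\big(\sum_{q}h_q(\cdots)\big)^2$ versus $(\triangle^*h)_x^2$.

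Concretely I would organize this as: first write $\mathscr{Z}(\varphi\boldsymbol{h})=\sum_{k\ge0}\varphi^k A_k(\boldsymbol{h})$ where $A_k$ is the sum over configurations with $\sum_x u_x=2k$; prove $A_0=\Z^\ell_{\T_L}$ and $A_1=0$ by the parity/colour-balance argument above; then compute $A_2$ by the four-case enumeration, in each case defining an explicit weight-preserving map to $\tilde{\Wcal}_{\T_L}$ (for cases (a),(b)), to $\Wcal_{x,y}$ (case (c)), or to $\Wcal_{x,x}$ (case (d)), and tracking the combinatorial prefactors ($\lambda$ from each inserted link via $\lambda^{m_e}/m_e!$, binomial choices of colourings, pairing multiplicities). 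Finiteness of the error, $A_k=o(\varphi^{2-k})\cdot\varphi^k = o(\varphi^2)$ for $k\ge 3$, i.e.\ $\sum_{k\ge3}\varphi^k A_k = o(\varphi^2)$, follows because $\sum_k A_k \le \mathscr{Z}(\boldsymbol{h})<\infty$ by Lemma~\ref{lemma: finitepartitionfunction} applied on the extended torus (the $h_x^{u_x/2}$ weights are bounded and the measure has finite mass). The main obstacle I expect is case (c)/(d): getting the correct coefficient and sign requires care in how a single unpaired link at $x$ can sit on several different incident edges (hence the $\triangle^*$), in the overcounting when the two unpaired pairs at distinct vertices are interchanged (the $\tfrac12$), and in separating the genuinely "diagonal" contribution $x=y$ with its own combinatorics — essentially one must verify that $\tfrac12\big(\sum_{x,y}\Z(x,y)(\triangle^*h)_x(\triangle^*h)_y\big)$ minus the spurious diagonal term $\tfrac14\sum_x\Z(x,x)\sum_q h_q^2$ is exactly what the link-insertion bookkeeping produces, with no leftover cross terms between the edge-type contributions (a),(b) and the vertex-type contributions (c),(d).
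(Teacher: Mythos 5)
Your proposal is correct and essentially matches the paper's proof: you organize $\mathscr{Z}(\varphi\boldsymbol{h})$ as a polynomial in $\varphi$ by classifying configurations in $\Wcal^{\text{vert}}_{\Tcal_L}$ according to $\sum_{x}u_x$, dispose of odd orders by the colour-balance/parity argument (condition (i) forces $\sum_x u_x^1=\sum_x u_x^2$, each even, so $\sum_x u_x\in 4\N_0$), and identify the coefficient of $\varphi^2$ by enumerating configurations with four unpaired endpoints and relating each class to $\Z^\ell_{\T_L}$, $\mu^\ell_{\T_L}(m^{(1)}_{\{o,\boldsymbol{e}_1\}})$ or $\Z_{\T_L}(x,y)$ via weight-preserving maps. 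The one place your grouping is slightly imprecise is that the paper classifies by the \emph{number of extremal links} (two, three, or four, yielding the sets $\Acal(\{x,y\})$, $\tilde\Acal^{i}(\{x,y\})$ and $\Acal((x,q),(y,r))$): the three-extremal-link case — the source of your ``already-present pair'' contribution in (a), which can occur only on edges of $\E_L$ since a link cannot be paired at a virtual vertex — requires a map ($F_4$ in the paper) that not only deletes two links but also re-pairs at one endpoint, and you would need to track that carefully rather than using a single ``delete-the-four-links'' map.
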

Reflection positivity will allow us to derive an upper bound on the term \eqref{eq: termorderphi2} for a particular choice of $\boldsymbol{h}$, which will then lead to the Key Inequality \eqref{eq: Key Inequality}. The remainder of the current subsection is devoted to the proof of Proposition \ref{prop: central quantity expansion}, Section \ref{sect:proofinequailty} can be read independently from what follows below in the current subsection. 

From now on we will call a link \textit{extremal} if at least one of its endpoints is unpaired.

The term $Z^{(2)}(\boldsymbol{h})$ of order two, defined in \eqref{eq: termorderphi2}, is the contribution to $\mathscr{Z}(\boldsymbol{h})$ of all configurations $w \in \Wcal_{\Tcal_L}^{vert}$ which are such that $\sum_{x \in \Tcal_L} u_x(w) =4$. By definition of $\Wcal_{\Tcal_L}^{vert}$ a total number of four unpaired endpoints of links can exist in three different situations depending on the number of extremal links, see also Figure \ref{fig:extremallinks}. The first one is that there exist precisely two extremal links which are unpaired at both its endpoints. Such two extremal links have to be on the same edge. The second situation is that there exist precisely three extremal links, one of them is unpaired at both its endpoints and the other two are unpaired at precisely one of its endpoints. Also such three extremal links have to be on the same edge. The third and last situation is that there exist precisely four extremal links, which are all unpaired at precisely one of its endpoints. Such four extremal links are either all on the same edge or there exist two edges with precisely two extremal links each.


\begin{figure}
\centering
\begin{subfigure}{.25\textwidth}
  \centering
  \includegraphics[width=\textwidth]{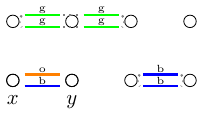}
  \caption{}
\end{subfigure}
\hspace{3em}
\begin{subfigure}{.25\textwidth}
  \centering
  \includegraphics[width=\textwidth]{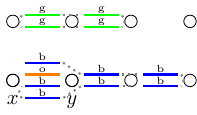}
  \caption{}
\end{subfigure}
\hspace{3em}
\begin{subfigure}{.25\textwidth}
  \centering
  \includegraphics[width=\textwidth]{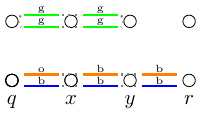}
  \caption{}
\end{subfigure}
\caption{(a) A configuration $w \in \Acal(\{x,y\})$. (b) A configuration $w \in \tilde{\Acal}^1(\{x,y\})$. (c) A configuration $w \in \Acal((x,q),(y,r))$. The colours are defined by $1:=b$, $2:=o$ and $3:=g$. In all three configurations there exist precisely four unpaired endpoints of links.} 
    \label{fig:extremallinks}
\end{figure}

\paragraph{Subsets of $\Wcal_{\Tcal_L}^{vert}$.} 
We now define several subsets of $\Wcal_{\Tcal_L}^{vert} \subset \Wcal_{\Tcal_L}$ according to the considerations above. A glance at Figure \ref{fig:extremallinks} might be helpful. Recall that $\{x,y\} \in \Ecal_L$ represents an undirected edge. In the following definition, with slight abuse of notation, we represent by $(x,y) \in \Ecal_L$ an edge directed from $x$ to $y$. 
\begin{enumerate}[(i)]
\item For any $\{x,y\} \in \Ecal_L$, we let $\Acal(\{x,y\})$ be the set of realisations $w \in \Wcal_{\Tcal_L}^{vert}$ with precisely two extremal links, such extremal links are on the edge $\{x,y\}$, one of them having colour $1$ and one of them having colour $2$ and both extremal links are unpaired at both its endpoints. It follows from this definition that in any $w \in \Acal(\{x,y\})$, there exist only closed paths except for one open path of colour $1$ and one open path of colour $2$ which both have length $1$.
\item For any $\{x,y\} \in \E_L$, we let $\tilde{\Acal}^1(\{x,y\})$ be the set of realisations $w \in \Wcal_{\Tcal_L}^{vert}$ with precisely three extremal links, such extremal links are all on the edge $\{x,y\}$, two of them being $1$-links and one being a $2$-link. One of the extremal $1$-links is paired at $x$ and unpaired at $y$ and the other one is paired at $y$ and unpaired at $x$. The extremal $2$-link is unpaired at both its endpoints. Any configuration $w \in \tilde{\Acal}^1(\{x,y\})$ is such that there exist only closed paths except for one open path of colour $2$ which has length $1$ and one open path of colour $1$ which has length $>1$. Let $\tilde{\Acal}^2(\{x,y\})$ be defined as $\tilde{\Acal}^1(\{x,y\})$, but with the properties of the extremal $1$- and $2$-links on $\{x,y\}$ interchanged.
\item For any pair of (directed) distinct edges $(x,q)$, $(y,r) \in \Ecal_L$, let $\Acal((x,q),(y,r))$ be the set of realisations $w \in \Wcal_{\Tcal_L}^{vert}$ with precisely four extremal links, two of them on $\{x,q\}$ and two of them on $\{y,r\}$. One of the extremal links on $\{x,q\}$ and $\{y,r\}$ is a $1$-link and the other one is a $2$-link. The extremal links on $\{x,q\}$ are both paired at $x$ and unpaired at $q$ and the extremal links on $\{y,r\}$ are both paired at $y$ and unpaired at $r$. It follows from this definition that any configuration $w \in \Acal((x,q),(y,r))$ is such that there exist only closed paths except for two open paths, one of them having colour $1$ and one of them having colour $2$, which both have length $>1$. Further, we let $\Acal((x,q),(x,q))$ be the set of realisations $w \in \Wcal_{\Tcal_L}^{vert}$ with precisely four extremal links, such extremal links are all on $\{x,q\}$, two of them being $1$-links and two being $2$-links and all extremal links are paired at $x$, but unpaired at $q$. 
\end{enumerate}

\begin{lemma} \label{lemma: polynomial expansion}
For any $(x,q),(y,r), \{u,b\} \in \Ecal_L$, any $\{z,w\} \in \E_L$ and any $i \in \{1,2\}$, we have that
\begin{align}
\label{eq: polynomialexpansion1}
\mu_{\Tcal_L}\big(\Acal(\{u,b\})\big)  & = \lambda^2 \, \Z^\ell_{\T_L}, \\
\label{eq: polynomialexpansion3}
\mu_{\Tcal_L}\big(\tilde{\Acal}^i(\{z,w\})\big) 
& = \lambda^2 \mu_{\T_L}^\ell(m_{\{z,w\}}^{(i)}),\\
\label{eq: polynomialexpansion2}
\mu_{\Tcal_L}\big(\Acal((x,q),(y,r))\big) 
& = \begin{cases} \lambda^4 \, \Z_{\T_L}(x,y) & \text{if } (x,q) \neq (y,r), \\
 \frac{\lambda^4}{4} \, \Z_{\T_L}(x,x) & \text{if } (x,q) = (y,r). \end{cases} 
\end{align}
\end{lemma}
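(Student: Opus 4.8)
\textbf{Proof strategy for Lemma \ref{lemma: polynomial expansion}.}

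The plan is to establish each of the three identities by constructing an explicit weight-preserving map from the relevant subset of $\Wcal_{\Tcal_L}^{vert}$ into a set whose $\mu_{\T_L}^\ell$-measure we already control, in the same spirit as the maps $F_1$ and $F_2$ used in Section \ref{sect:two-pointfunctionanditsproperties}. In each case the map removes the two or four extremal links on the designated edge(s) and closes up the configuration; the combinatorial and weight bookkeeping then produces the stated factors of $\lambda$ and the partition functions.

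For \eqref{eq: polynomialexpansion1}: given $w \in \Acal(\{u,b\})$, there is exactly one $1$-link and one $2$-link on $\{u,b\}$, each unpaired at both endpoints, and the rest of $w$ consists only of closed paths. Define $G_1 : \Acal(\{u,b\}) \to \tilde{\Wcal}_{\T_L}$ by deleting these two extremal links (and noting that, since condition (iv) forces all closed paths into the original torus, the image lies in $\tilde{\Wcal}_{\T_L}$). This map is a bijection onto $\tilde{\Wcal}_{\T_L}$: conversely, from any $w' \in \tilde{\Wcal}_{\T_L}$ one inserts a $1$-link and a $2$-link on $\{u,b\}$, each unpaired at both endpoints, in a unique way up to their labels — and since the two links have different colours there is exactly one way to place them as a set. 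The local times $n_z$ are unchanged by $G_1$ (the inserted links are unpaired, so they do not create pairings), hence the $U$-factors and the interaction $V$ are unchanged; the only change in weight is the factor $\lambda^2/(2!)$ from the two new links on the edge $\{u,b\}$, but multiplied by the $2$ ways of labelling them $1,2$ versus $2,1$ — wait, more carefully: the colouring is part of the configuration, so there are $m_{\{u,b\}}! $ arrangements absorbed into equivalence, and one checks the net factor is exactly $\lambda^2$. This gives $\mu_{\Tcal_L}(\Acal(\{u,b\})) = \lambda^2 \, \Z^\ell_{\T_L}$.

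For \eqref{eq: polynomialexpansion3}: given $w \in \tilde{\Acal}^i(\{z,w\})$ with (say) $i=1$, there are two extremal $1$-links on $\{z,w\}$, one unpaired at $z$ and paired at $w$, the other unpaired at $w$ and paired at $z$, plus one extremal $2$-link unpaired at both ends; the $1$-open-path has length $>1$ and there is one $2$-open-path of length $1$. Define the map that deletes the extremal $2$-link and \emph{also} deletes the two extremal $1$-links, re-pairing at $z$ and $w$ the two $1$-links that thereby become unpaired so as to close the long open path. As in the proof of \eqref{eq:twopointneighbour}, the preimage of a configuration $w' \in \tilde{\Wcal}_{\T_L}$ is governed by the number of ways to reopen: one must select an ordered adjacent pair of $1$-links on $\{z,w\}$ inside some closed path, giving a factor proportional to $m_{\{z,w\}}^{(1)}$, together with the $\lambda$-weight $\lambda^2$ coming from the two $1$-links and the one $2$-link removed, offset by the edge factorials. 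Tracking these carefully yields $\mu_{\Tcal_L}(\tilde{\Acal}^i(\{z,w\})) = \lambda^2 \, \mu_{\T_L}^\ell(m_{\{z,w\}}^{(i)})$, and the case $i=2$ is identical by symmetry.

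For \eqref{eq: polynomialexpansion2}: given $w \in \Acal((x,q),(y,r))$ with $(x,q) \neq (y,r)$, there are two extremal links (one of each colour $1,2$) on $\{x,q\}$, both paired at $x$ and unpaired at $q$, and likewise on $\{y,r\}$. Define $F_5 : \Acal((x,q),(y,r)) \to \Wcal_{x,y}$ (as suggested by Figure \ref{fig:twoopenpaths2}) by deleting the two extremal links on $\{x,q\}$ and the two on $\{y,r\}$; this leaves precisely one $1$-link and one $2$-link unpaired at $x$ and one of each unpaired at $y$, i.e.\ an element of $\Wcal_{x,y}$ (after moving to the original torus, where all links now live). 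The weight changes by $\lambda^4$ from the four removed links, with the edge-factorial and labelling factors cancelling since the pairs on each edge have distinct colours; the local times $n_z$ are again unchanged because the removed links were only paired at one endpoint each — wait, they \emph{are} paired at $x$ (resp.\ $y$), so removing them lowers $n_x$ and $n_y$; but $\Z_{\T_L}(x,y) = \mu_{\T_L}(\Wcal_{x,y})$ is defined with $g_x = g_y = 2$ ghost pairings precisely to absorb this, so one checks the ghost-pairing bookkeeping matches. Hence $\mu_{\Tcal_L}(\Acal((x,q),(y,r))) = \lambda^4 \, \Z_{\T_L}(x,y)$. In the diagonal case $(x,q)=(y,r)$ all four extremal links (two $1$-links, two $2$-links) sit on the single edge $\{x,q\}$, all paired at $x$; removing them produces a configuration in $\Wcal_{x,x}$, and the extra factor $\tfrac14$ arises because the four links can be arranged on the edge in $\binom{4}{2,2}=6$ coloured ways of which the weighted count, compared to the $2^{\mathbbm{1}\{u_x^1=2\}+\mathbbm{1}\{u_x^2=2\}}$ normalisation in the definition of $\Z_{\T_L}(x,x)$, contributes $\lambda^4/4$.

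\textbf{Main obstacle.} The delicate point throughout is the exact bookkeeping of the multiplicative constants: the interplay between (a) the $1/m_e!$ edge weights in $\mu$, (b) the freedom to relabel the newly inserted/removed links, (c) the colour-assignment multiplicities, and (d) the ghost-pairing factors $g_x, g_y$ built into the definitions of $\Z_{\T_L}(x,y)$ and $\Z_{\T_L}(x,x)$. I expect the diagonal case $(x,q)=(y,r)$ of \eqref{eq: polynomialexpansion2}, with its factor $\tfrac14$ and the $2^{\mathbbm{1}\{u_x^1=2\}+\mathbbm{1}\{u_x^2=2\}}$ weighting, to require the most care; the off-diagonal identities are straightforward once the correct map and preimage count are written down.
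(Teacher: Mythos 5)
Your high-level strategy — remove the extremal links on the designated edges and track the resulting weight and preimage multiplicities, in the style of $F_1$ and $F_2$ — is the correct one and matches the paper's approach. However, the execution has two genuine problems.

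First, for \eqref{eq: polynomialexpansion3} your map is incorrect as described. You propose deleting \emph{all three} extremal links (two $1$-links and one $2$-link) and then ``re-pairing at $z$ and $w$ the two $1$-links that thereby become unpaired.'' But the two $1$-links that become unpaired after deletion live at different vertices ($z$ and $w$ respectively); a pairing $\pi_x$ is by definition a partition of links \emph{touching $x$}, so there is no operation that pairs a link at $z$ to a link at $w$. Also, deleting three links on $\{z,w\}$ would produce a $\lambda^3$ prefactor rather than the required $\lambda^2$, which is inconsistent with your own statement of the target identity. The correct map (the paper's $F_4$) deletes only the extremal $2$-link and the extremal $1$-link unpaired at $z$, then pairs the surviving extremal $1$-link at $w$ to the link that just lost its partner — so only two links are removed and only one re-pairing occurs, all at a single vertex.

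Second, the claim in \eqref{eq: polynomialexpansion1} that $G_1$ is a bijection onto $\tilde{\Wcal}_{\T_L}$ is false: a configuration $w'$ with $m_{\{u,b\}}(w')$ links on the edge has $2\binom{m_{\{u,b\}}(w')+2}{2}$ preimages, because the two inserted links occupy any two of $m_{\{u,b\}}(w')+2$ positions and the two colours can be placed in $2$ ways. The identity only comes out after multiplying by the weight ratio $\lambda^2\, m_{\{u,b\}}(w')!/(m_{\{u,b\}}(w')+2)!$, at which point the dependence on $m_{\{u,b\}}(w')$ cancels. You acknowledge something of this mid-sentence but never carry out the cancellation, and the same ``one checks\dots'' deferral occurs throughout, including at the decisive factor $\binom{4}{2}/4!=\tfrac14$ in the diagonal case of \eqref{eq: polynomialexpansion2}. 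As it stands the proposal is a plan rather than a proof: the map in the middle case needs to be corrected, and the preimage-count and weight-ratio computations need to be written out in each case.
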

\begin{proof}
We begin with the proof of \eqref{eq: polynomialexpansion1}. Fix an arbitrary undirected edge $\{u,b\} \in \Ecal_L$. Recall the definition of the set $\tilde{\Wcal}_{\Tcal_L}^o$ in \eqref{eq:extendedtorusclosedpaths}. We define a map $F_3: \Acal(\{u,b\})  \to \tilde{\Wcal}_{\Tcal_L}^o$, which acts by removing the two extremal links on the edge $\{u,b\}$. For any $w \in \tilde{\Wcal}_{\Tcal_L}^o$, $F_3^{-1}(w)$ corresponds to the set of configurations which are obtained from $w$ by inserting a $1$-link and a $2$-link on $\{u,b\}$ and by leaving the links unpaired at both its endpoints. 
We thus have that
\begin{equation} \label{eq: firstcase1}
\big| F_3^{-1}(w)\big| = 2 \, \binom{m_{\{u,b\}}(w)+2}{2},
\end{equation}
which corresponds to the number of ways a $1$-link and a $2$-link can be inserted on the edge $\{u,b\}$ among the ones already present in $w$. 

For any $\tilde{w} \in \tilde{\Wcal}_{\Tcal_L}^o$, for any $w \in \Acal(\{u,b\})$ such that $F_3(w)=\tilde{w}$, we have that
\begin{equation} \label{eq: firstcase2}
\mu_{\Tcal_L}(w)= \lambda^2 \mu_{\Tcal_L}(\tilde{w}) \frac{m_{\{u,b\}}(\tilde{w})!}{(m_{\{u,b\}}(\tilde{w})+2)!}.
\end{equation}
From \eqref{eq: firstcase1} and \eqref{eq: firstcase2}, we deduce that 
$$
\begin{aligned}
\mu_{\Tcal_L}\big(\Acal(\{u,b\})\big) 
& = \sum_{\tilde{w} \in \tilde{\Wcal}_{\Tcal_L}^o} \mu_{\Tcal_L}\big(\Acal(\{u,b\})\cap \{F_3(w)=\tilde{w}\}\big)  = \lambda^2 \sum_{\tilde{w} \in \tilde{\Wcal}_{\Tcal_L}^o} \mu_{\Tcal_L}(\tilde{w})  = \lambda^2 \, \Z_{\T_L}^\ell.
\end{aligned}
$$
This concludes the proof of \eqref{eq: polynomialexpansion1}.

We continue with the proof of \eqref{eq: polynomialexpansion3}. Fix an arbitrary edge $\{x, y\} \in \E_L$. We define a map $F_4: \tilde{\Acal}^1(\{x,y\}) \to \tilde{\Wcal}_{\Tcal_L}^o$, which acts by removing the extremal $2$-link and the extremal $1$-link on $\{x,y\}$ that is unpaired at $x$ and by pairing at $y$ the remaining extremal $1$-link on $\{x,y\}$ with the $1$-link that has received an unpaired endpoint. For any $w \in \tilde{\Wcal}_{\Tcal_L}^o$, $F_4^{-1}(w)$ corresponds to the set of configurations which are obtained from $w$ 
\begin{enumerate}[(i)]
   \setlength{\itemsep}{0pt}
    \setlength{\topsep}{0pt}
\item by inserting one $2$-link on $\{x,y\}$ and by leaving the inserted link unpaired at both its endpoints,
\item by choosing one $1$-link, $(\{x,y\},p)$ with $p \in [m_{\{x,y\}}]$, on $\{x,y\}$ and by removing its pairing at $y$,
\item by inserting one $1$-link on $\{x,y\}$, by leaving the inserted link unpaired at $x$ and by pairing the inserted link at $y$ to the link to which the link $(\{x,y\},p)$ has been paired before the removal of its pairing at $y$.
\end{enumerate}
For any $w \in \tilde{\Wcal}_{\Tcal_L}^o$, we thus have that
\begin{equation} \label{eq:cardinalityploy2}
|F_4^{-1}(w)|= \binom{m_{\{x,y\}}(w)+2}{2} \, 2 \, m_{\{x,y\}}^{(1)}(w),
\end{equation}
where the binomial coefficient correspond to the number of ways two links can be inserted on $\{x,y\}$ among the ones already present in $w$. The factor $2$ accounts for the number of possibilities of choosing which of the inserted links has colour $1$ and which one has colour $2$. The factor $m_{\{x,y\}}^{(1)}$ corresponds to the number of possibilities of choosing a $1$-link on $\{x,y\}$ whose pairing gets removed at $y$. For any $w \in \tilde{\Wcal}_{\Tcal_L}^o$ and any $\tilde{w} \in \Acal^1(\{x,y\})$ such that $F_4(\tilde{w})=w$, we have that
\begin{equation} \label{eq:measurepoly2}
\mu_{\Tcal_L}(\tilde{w})= \lambda^2 \mu_{\Tcal_L}(w) \frac{m_{\{x,y\}}(w)!}{(m_{\{x,y\}}(w)+2)!}.
\end{equation}
Combining \eqref{eq:cardinalityploy2} and \eqref{eq:measurepoly2} we thus obtain that
$$
\mu_{\Tcal_L}\big(\Acal^1(\{x,y\})\big)= \sum_{w \in \tilde{\Wcal}_{\Tcal_L}^o} \mu_{\Tcal_L}\big(\Acal^1(\{x,y\} \cap \{F_4(\tilde{w})=w\}\big) = \lambda^2 \mu_{\T_L}^\ell\big(m_{\{x,y\}}^{(1)}\big).
$$
This concludes the proof of \eqref{eq: polynomialexpansion3}.

We continue with the proof of \eqref{eq: polynomialexpansion2}. Consider $x,y \in \T_L$ (possibly $x=y$) and $q,r \in \Tcal_L$ such that $q \sim x$ and $r \sim y$. We denote by $\Wcal_{x,y}^o$ the extension of $\Wcal_{x,y}$, defined in \eqref{eq:Wcalxy} and \eqref{eq:Wcalxx}, to the extended torus by replacing $\T_L$ by $\Tcal_L$ and by adding the condition that $n_z=0$ for any $z \in \Tcal_L \setminus \T_L$. 
We define a map $F_5: \Acal((x,q),(y,r)) \to \Wcal_{x,y}^o$, which acts by removing all extremal links in the configuration, by leaving unpaired the links to which these removed links were paired and by adding $2$ ghost pairings at $x$ and $y$ in case $x \neq y$ and by adding $4$ ghost pairings at $x$ in case $x=y$, see also Figure \ref{fig:twoopenpaths2}. For any $w \in \Wcal_{x,y}^o$, $F_5^{-1}(w)$ corresponds to the set of configurations which are obtained from $w$ 
\begin{enumerate}[(i)]
   \setlength{\itemsep}{0pt}
    \setlength{\topsep}{0pt}
\item by removing all ghost pairings in the configuration,
\item by inserting $\frac{g_x}{2}$ $1$-links and $\frac{g_x}{2}$ $2$-links on $\{x,q\}$ and by inserting $\frac{g_y}{2}$ $1$-links and $\frac{g_y}{2}$ $2$-links on $\{y,r\}$, where $g_x=g_y=2$ in case that $x \neq y$ and $g_x=4$ in case $x=y$,
\item by leaving the inserted links unpaired at $q$ and $r$ and by pairing them at $x$ and $y$ to the links of same colour that have an unpaired endpoint in some arbitrary manner.
\end{enumerate} 
For any $w \in \Wcal_{x,y}^o$, we thus have that,
\begin{equation}
\label{eq:cardinalities_set_four_links}
\begin{aligned}
\big|F_5^{-1}(w)\big|  = \begin{cases}
4\,  \binom{m_{\{x,q\}}(w)+2}{2} \, \binom{m_{\{y,r\}}(w)+2}{2} & \text{if } x \neq y \text{ and } \{x,q\} \neq \{y,r\}, \\
4 \, \binom{m_{\{x,q\}}(w)+4}{4} \, \binom{4}{2} & \text{if } (x,q) = (r,y), \\
 2^{\mathbbm{1}_{\{u_x^1=2\}}+\mathbbm{1}_{\{u_x^2=2\}}} \, 4 \, \binom{m_{\{x,q\}}(w)+2}{2} \, \binom{m_{\{x,r\}}(w)+2}{2} & \text{if } x=y \text{ and } q \neq r, \\
 2^{\mathbbm{1}_{\{u_x^1=2\}}+\mathbbm{1}_{\{u_x^2=2\}}} \, \binom{m_{\{x,q\}}(w)+4}{4} \binom{4}{2}& \text{if } (x,q) = (y,r).
\end{cases} 
\end{aligned}
\end{equation}
We now explain \eqref{eq:cardinalities_set_four_links}. In all four cases, the binomial coefficients involving the number of links on the edges correspond to the number of ways the extremal links can be inserted on $\{x,q\}$ and $\{y,r\}$ among the ones already present in $w$. If $\{x,q\} \neq \{y,r\}$ we then have precisely two possibilities on each edge to colour the inserted links such that on each edge one of the inserted links has colour $1$ and the other one has colour $2$ giving us a factor of $4$. If $\{x,q\}=\{y,r\}$ we have $\binom{4}{2}$ possibilities of colouring the inserted links such that two of them have colour $1$ and two of them have colour $2$.  The remaining factors in \eqref{eq:cardinalities_set_four_links} correspond to the number of pairing possibilities of the inserted links. If $x \neq y$ and $\{x,q\} \neq \{y,r\}$, we have precisely one possibility of pairing the inserted links at $x$ and $y$ with the extremal links of same colour. If $(x,q)=(r,y)$ we have two possibilities of choosing which of the inserted $1$-links is unpaired at $q$ and two possibilities of choosing which of the inserted $2$-links is unpaired at $q$. After having chosen these links, there exists precisely one pairing possibility, thus giving us a factor of $4$. 
If $x=y$, we have to distinguish between three situations. If $u_x(w)=0$, then the inserted $1$-links and $2$-links have to be paired with each other at $x$. This implies that there exists precisely one possibility of pairing the inserted links. If $u_x^1=2$ and $u_x^2=0$, then the inserted $2$-links are paired with each other at $x$, but the inserted $1$-links are not paired with each other at $x$. We thus have precisely one possibility of pairing the inserted $2$-links and two possibilities of pairing the inserted $1$-links with the extremal $1$-links at $x$. The same considerations hold true in the cases $u_x^1=0$ and $u_x^2=1$, and $u_x^1=2=u_x^2$. 

For any $\tilde{w} \in \Wcal_{x,y}^o$, for any $w \in \Acal((x,q),(y,r))$ such that $F_5(w)=\tilde{w} $, we have that $n_z(\tilde{w} )=n_z(w)$ for all $z \in \Tcal_L$ due to the insertion of the ghost pairings at $x$ and $y$. Together with the considerations above, we deduce that, 
\begin{equation} \label{eq: weightfourlinks}
\mu_{\Tcal_L}(w) 
= \begin{cases} \lambda^4 \mu_{\Tcal_L}(\tilde{w} ) \frac{m_{\{x,q\}}(\tilde{w} )!}{(m_{\{x,q\}}(\tilde{w} )+2)!} \frac{m_{\{y,r\}}(\tilde{w} )!}{(m_{\{y,r\}}(\tilde{w} )+2)!} & \text{if } \{x,q\} \neq \{y,r\},  \\
\lambda^4 \mu_{\Tcal_L}(\tilde{w} ) \frac{m_{\{x,q\}}(\tilde{w} )!}{(m_{\{x,q\}}(\tilde{w} )+4)!}  & \text{if } \{x,q\} = \{y,r\}.
\end{cases}
\end{equation}
Recalling the definition of the partition function in \eqref{eq: definition_partition_function} and using \eqref{eq:cardinalities_set_four_links} and \eqref{eq: weightfourlinks}, we obtain for any pair of directed edges $(x,q)$, $(y,r) \in \Ecal_L$ that
$$
\begin{aligned}
\mu_{\Tcal_L}\big(\Acal((x,q),(y,r))\big) 
& = \sum_{\tilde{w} \in \Wcal_{x,y}^o} \mu_{\Tcal_L}\big(\Acal((x,q),(y,r)) \cap \{F_5(w)=\tilde{w} \} \big) = \begin{cases} \lambda^4 \, \Z_{\T_L}(x,y) & \text{if } (x,q) \neq (y,r), \\
 \frac{\lambda^4}{4} \, \Z_{\T_L}(x,y) & \text{if } (x,q) = (y,r). 
\end{cases}
\end{aligned}
$$
This concludes the proof of the lemma.
\end{proof}

We now have all ingredients to prove Proposition \ref{prop: central quantity expansion}.

\begin{proof}[Proof of Proposition \ref{prop: central quantity expansion}]
Fix a vector of real numbers $\boldsymbol{h}=(h_x)_{x \in \Tcal_L}$ and $\varphi \in \R$. For any $i \in \N_0$, let
$$
\Ccal^{(i)}(\boldsymbol{h}) := \mu_{\Tcal_L}\Big(\mathbbm{1}_{\{U=i\}} \prod_{z \in \Tcal_L} h_z^{\frac{u_z}{2}} \prod_{x \in \T_L } \Big(\frac{1}{2}\Big)^{\frac{\alpha_x}{2}} \mathbbm{1}_{\Wcal_{\Tcal_L}^{vert}}\Big),
$$
where $U(w):= \frac{1}{2}\sum_{z \in \Tcal_L}u_z(w)$ is the total number of unpaired endpoints of links in $w \in \Wcal_{\Tcal_L}^{vert}$ divided by two. Using that $
\Ccal^{(0)}(\boldsymbol{h}) = \mu_{\Tcal_L}(\tilde{\Wcal}_{\Tcal_L}^o)=\Z_{\T_L}^\ell
$ and that $\Ccal^{(i)}(\boldsymbol{h})=0$ for any $i \in 2\N_0+1$, we have that $$
\mathscr{Z}(\varphi \boldsymbol{h}) = \Z_{\T_L}^\ell + \varphi^2 \, \Ccal^{(2)}(\boldsymbol{h}) + o(\varphi^2).$$ 
Further,
\begin{equation} \label{eq:secondorderterm}
\begin{aligned}
\Ccal^{(2)}(\boldsymbol{h}) & = \sum_{\{x,y\} \in \E_L} \mu_{\Tcal_L}\Big(\Acal(\{x,y\})\Big) h_x h_y +\frac{1}{2} \sum_{x \in \T_L} \mu_{\Tcal_L}\Big(\Acal(\{x,x+\boldsymbol{e}_{d+1}\})\Big) h_x h_{x+\boldsymbol{e}_{d+1}} \\
&  \qquad + 2 \sum_{\{x,y\} \in \E_L} \mu_{\Tcal_L}\Big(\tilde{\Acal}^1(\{x,y\}\Big) h_xh_y + \sum_{\{(x,q),(y,r)\} \subset \Ecal_L} \mu_{\Tcal_L}\Big(\Acal((x,q),(y,r))\Big) h_q h_r.
\end{aligned}
\end{equation}
Note that the fourth sum in the right hand-side of \eqref{eq:secondorderterm} is over all \textit{unordered} pairs of (not necessarily distinct) \textit{directed} edges. Applying Lemma \ref{lemma: polynomial expansion} this sum can be rewritten as follows,
$$
\begin{aligned}
& \sum_{\{(x,q),(y,r)\} \subset \Ecal_L} \mu_{\Tcal_L}\Big(\Acal((x,q),(y,r))\Big) \, h_q h_r \\
& =\frac{1}{2} \sum_{\substack{x,y \in \T_L: \\ x \neq y}} \sum_{\substack{q,r \in \Tcal_L: \\ \{x,q\},\{y,r\} \in \Ecal_L}} \mu_{\Tcal_L}\Big(A((x,q),(y,r))\Big) \, h_q h_r + \frac{1}{2} \sum_{x \in \T_L} \sum_{\substack{q,r \in \Tcal_L: \\ \{x,q\},\{y,r\} \in \Ecal_L, q \neq r}} \mu_{\Tcal_L}\Big(A((x,q),(x,r))\Big) \,  h_q h_r\\
& \qquad \qquad \qquad \qquad + \sum_{x \in \T_L} \sum_{\substack{q \in \Tcal_L: \\ \{x,q\} \in \Ecal_L}}  \mu_{\Tcal_L}\Big(A((x,q),(x,q))\Big) \, h_q^2 \\
& = \frac{\lambda^4}{2} \sum_{x,y \in \T_L} \Z_{\T_L}(x,y) (\triangle^*h)_x \, (\triangle^*h)_y - \frac{\lambda^4}{4} \sum_{x \in \T_L} \Z_{\T_L}(x,x) \sum_{\substack{q \in \Tcal_L: \\ \{x,q\} \in \Ecal_L}} h_q^2.
\end{aligned}
$$

Applying Lemma \ref{lemma: polynomial expansion} also to the first three terms of \eqref{eq:secondorderterm}, we obtain that $\Ccal^{(2)}(\boldsymbol{h})=Z^{(2)}(\boldsymbol{h})$, where $Z^{(2)}(\boldsymbol{h})$ is defined in \eqref{eq:secondorderterm}. This concludes the proof of the proposition.
\end{proof}

\subsection{Key Inequality}
\label{sect:proofinequailty}
Before stating the Key Inequality we state a proposition which is a direct application of Proposition 
\ref{prop:chessboardabstract}.

\begin{proposition}\label{prop:chessboardscheme}
Let $\boldsymbol{h} = (h_z)_{z \in \mathcal{T}_L}$ be a real-valued vector
such that $|h_z| \leq 1$ for every $z \in \mathcal{T}_L$. For any $x \in \T_L$ define the new real-valued vector
$\boldsymbol{h}^{x} = ( h_z^{x})_{z \in \mathcal{T}_L}$ by 
$$
\forall z \in \mathcal{T}_L \quad \quad h_z^{x} :
= 
\begin{cases}
h_x  &\mbox{ if $z \in \T_L$}, \\
h_{x+\boldsymbol{e}_{d+1}}  & \mbox{ if $z \in \T_L^{(2)}$}.
\end{cases}
$$
We have that
\begin{equation}\label{eq:weakGD}
\mathscr{Z}( \boldsymbol{h} )
\leq 
\Big ( \, \, \, \prod_{x \in \T_L}   \mathscr{Z}
\big  ( \boldsymbol{h}^x  \big ) \, \, \, \Big )^{\frac{1}{|\T_L|}}.
\end{equation}
\end{proposition}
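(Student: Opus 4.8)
The plan is to recognize $\mathscr{Z}(\boldsymbol{h})$ as the measure $\mu_{\Tcal_L}$ of a product of local functions, one attached to each vertex $x\in\T_L$ (together with its virtual copy $x+\boldsymbol{e}_{d+1}$), and then apply the abstract chessboard estimate, Proposition \ref{prop:chessboardabstract}, verbatim. First I would rewrite the integrand in \eqref{eq:Zh}. Observe that the indicator $\mathbbm{1}_{\Wcal_{\Tcal_L}^{\text{vert}}}$ factorizes: conditions (i)--(iv) defining $\Wcal_{\Tcal_L}^{vert}$ are all local constraints that can be checked vertex by vertex. Concretely, for $x\in\T_L$ one can check at $x$ (and at $x+\boldsymbol{e}_{d+1}$) that the number of unpaired $1$-links equals the number of unpaired $2$-links on every incident edge, that there are no unpaired links of colour $\geq 3$, no ghost pairings, and that at the virtual vertex all links are unpaired; writing $g_x(w)$ for the product of the corresponding indicators evaluated ``at $x$'', one has $\mathbbm{1}_{\Wcal_{\Tcal_L}^{vert}}(w)=\prod_{x\in\T_L}g_x(w)$. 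Likewise the weight $\prod_{x\in\Tcal_L}h_x^{u_x/2}\prod_{x\in\T_L}(1/2)^{\alpha_x/2}$ splits: the factor $h_x^{u_x(w)/2}$ and $h_{x+\boldsymbol{e}_{d+1}}^{u_{x+\boldsymbol{e}_{d+1}}(w)/2}$ and $(1/2)^{\alpha_x(w)/2}$ all depend only on the configuration restricted to a neighbourhood of $\{x,x+\boldsymbol{e}_{d+1}\}$.

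The subtle point is that $u_x(w)$ counts unpaired \emph{endpoints} of links touching $x$, and a link on an edge $\{x,y\}$ is shared between $x$ and $y$, while the weight assigns $h_x^{u_x/2}$ at \emph{each} endpoint. So a single extremal link on $\{x,y\}$ unpaired at both ends contributes $h_x^{1/2}h_y^{1/2}$ — this is exactly the ``$\sqrt{h_x}$ per half-link'' bookkeeping that makes $\mathscr{Z}$ a sum over local functions with support at single vertices in the sense required by the chessboard estimate. Thus I would define, for each $x\in\T_L$, the function $F_x:\Wcal_{\Tcal_L}\to\R$ with support $\{x\}$ (in the sense of Section \ref{sect:Chessboardscheme}, i.e.\ domain $\{x,x+\boldsymbol{e}_{d+1}\}$) given by
$$
F_x(w):=h_x^{u_x(w)/2}\,h_{x+\boldsymbol{e}_{d+1}}^{u_{x+\boldsymbol{e}_{d+1}}(w)/2}\,\Big(\tfrac12\Big)^{\alpha_x(w)/2}\,g_x(w),
$$
so that $\mathscr{Z}(\boldsymbol{h})=\mu_{\Tcal_L}\big(\prod_{x\in\T_L}F_x\big)$. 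Here the only place one must be careful is that $F_x$ as written is not literally of the form $f^{[x]}$ for a single reference function $f$ with support at $o$, because the vector $\boldsymbol{h}$ is not translation invariant; but one checks directly that $F_x=(F_x^{\mathrm{ref}})^{[x]}$ where $F_x^{\mathrm{ref}}$ is the function supported at $o$ obtained by transporting $\boldsymbol{h}$ back along the path from $o$ to $x$. Since the reflections $\Theta_i$ act on configurations while simply permuting the coordinates of $\boldsymbol{h}$, and since the hypothesis $|h_z|\leq 1$ guarantees $F_x^{\mathrm{ref}}$ is bounded (the exponents $u_x/2,\alpha_x/2$ are non-negative integers or half-integers but $u_x$ is always even on $\Wcal_{\Tcal_L}^{vert}$, and similarly $\alpha_x$), Proposition \ref{prop:chessboardabstract} applies.

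Applying \eqref{eq:chessboardabstract} then yields
$$
\mathscr{Z}(\boldsymbol{h})=\mu_{\Tcal_L}\Big(\prod_{x\in\T_L}F_x\Big)\leq\Big(\prod_{x\in\T_L}\mu_{\Tcal_L}\Big(\prod_{s\in\T_L}F_x^{[s]}\Big)\Big)^{1/|\T_L|},
$$
and it remains only to identify $\mu_{\Tcal_L}\big(\prod_{s\in\T_L}F_x^{[s]}\big)=\mathscr{Z}(\boldsymbol{h}^x)$. This is immediate: reflecting $F_x$ to every site $s$ and multiplying produces exactly the integrand of $\mathscr{Z}$ with the weight vector in which every original vertex carries $h_x$ and every virtual vertex carries $h_{x+\boldsymbol{e}_{d+1}}$, i.e.\ the vector $\boldsymbol{h}^x$ of the statement, because $g_x$ reflected over all of $\T_L$ reconstitutes $\mathbbm{1}_{\Wcal_{\Tcal_L}^{vert}}$ and the half-link weights reassemble into $\prod(1/2)^{\alpha_\cdot/2}$. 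The main obstacle I anticipate is purely bookkeeping: verifying cleanly that the indicator $\mathbbm{1}_{\Wcal_{\Tcal_L}^{vert}}$ and the fractional-power weights genuinely factorize into single-vertex-supported pieces with the shared extremal links counted consistently at both endpoints — once that is set up correctly the inequality is a one-line invocation of Proposition \ref{prop:chessboardabstract}.
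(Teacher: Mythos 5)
Your proposal is correct and takes essentially the same approach as the paper: write $\mathscr{Z}(\boldsymbol{h})$ as $\mu_{\Tcal_L}\big(\prod_{x\in\T_L}f_{\boldsymbol{h},x}^{[x]}\big)$ for a family of functions $(f_{\boldsymbol{h},x})_{x\in\T_L}$ supported at $\{o\}$ (with the $\boldsymbol{h}$-dependence absorbed into the reference functions, exactly as you describe with $F_x^{\mathrm{ref}}$), and then invoke Proposition~\ref{prop:chessboardabstract}. The paper states this decomposition as a one-line remark; your write-up simply spells out the factorization of $\mathbbm{1}_{\Wcal_{\Tcal_L}^{vert}}$ and the half-integer exponent bookkeeping, which the paper leaves implicit.
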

The above proposition follows immediately from Proposition \ref{prop:chessboardabstract}
after observing that we can write, 
$$
\mathscr{Z}( \boldsymbol{h} ) = \mu_{\Tcal_L}\bigg(\prod_{x \in \T_L} f_{\boldsymbol{h},x}^{[x]}\bigg),
$$
for a sequence of real-valued functions $(f_{\boldsymbol{h},x})_{x \in \T_L}$ that all have support $\{o\}$ and where $f_{\boldsymbol{h},x}^{[x]}$ is defined as the reflection of $f_{\boldsymbol{h},x}$ to the site $x$, see the definition in Section \ref{sect:Chessboardscheme}.


We can now state the Key Inequality. For any vector of real numbers, $\boldsymbol{v}=(v_z)_{z \in \T_L}$, we define the \textit{discrete Laplacian} of $\boldsymbol{v}$ on the original torus,
$$
\forall x \in \T_L \qquad (\triangle v)_x := \sum_{\substack{y \in \T_L: \\ y \sim x}} (v_y-v_x).
$$

\begin{theorem}[Key Inequality]
\label{theo: keyinequality}
For any real-valued vector $\boldsymbol{v}=(v_x)_{x \in \T_L}$, we have that,
\begin{equation}
\label{eq: Key Inequality}
\sum_{x,y \in \T_L} \mathbb{G}_{\T_L}(x,y) (\triangle v)_x (\triangle v)_y \leq \big(1+2 \, \E\big(m_{\{o,\boldsymbol{e}_1\}}^{(1)}\big)\big) \,\sum_{\{x,y\} \in \E_L} (v_y-v_x)^2.
\end{equation}
\end{theorem}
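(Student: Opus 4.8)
The Key Inequality is a "Gaussian domination"-type bound, and I would derive it from the polynomial expansion of the central quantity $\mathscr{Z}(\boldsymbol{h})$ combined with the chessboard inequality \eqref{eq:weakGD}. The strategy is the classical one: apply \eqref{eq:weakGD} with a cleverly chosen vector $\boldsymbol{h}$ built from $\boldsymbol{v}$, rescale $\boldsymbol{h} \mapsto \varphi \boldsymbol{h}$, expand both sides to second order in $\varphi$ using Proposition \ref{prop: central quantity expansion}, and read off the inequality by matching the $\varphi^2$-coefficients. The zeroth order terms are both $\Z^\ell_{\T_L}$ and cancel; the first order term vanishes since $\Ccal^{(1)}=0$; so the inequality between the $\varphi^2$-terms is where the content lies.

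\textbf{Step 1: choice of $\boldsymbol{h}$ and the right-hand side of \eqref{eq:weakGD}.} Given $\boldsymbol{v}=(v_x)_{x\in\T_L}$, extend it to $\mathcal{T}_L$ by setting it equal on original and virtual copies, and take $h_z := \varphi v_z$ for small $\varphi$ (so that $|h_z|\le 1$ holds eventually). For such a "constant-in-the-$(d+1)$-direction" vector one has $(\triangle^* h)_x = (\triangle h)_x + h_{x+\boldsymbol{e}_{d+1}} - h_x = (\triangle h)_x$ for $x\in\T_L$ original, since the extra edge contributes $h_{x+\boldsymbol e_{d+1}}-h_x = 0$. For the single-site vectors $\boldsymbol{h}^x$ appearing on the right of \eqref{eq:weakGD}, the vector is constant on $\T_L$ (value $h_x$) and constant on $\T_L^{(2)}$ (value $h_{x+\boldsymbol e_{d+1}}$), hence $\triangle^* h^x \equiv 0$ at original vertices and the Laplacian terms in $Z^{(2)}(\boldsymbol h^x)$ drop out, leaving only the $\sum_{\{x,y\}\in\E_L} h_x h_y$, $\sum h_x h_{x+\boldsymbol e_{d+1}}$ and $\sum_x \sum_{q} (h^x_q)^2$ contributions — all of which, for our extension $h_{x+\boldsymbol e_{d+1}}=h_x$, reduce to a multiple of $\varphi^2 v_x^2$ times $\Z^\ell_{\T_L}$. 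Taking the geometric mean over $x$ and using $\log$-convexity (or just expanding $(\prod_x \Z^\ell(1+\varphi^2 a_x + o(\varphi^2)))^{1/|\T_L|} = \Z^\ell(1 + \varphi^2 \overline{a} + o(\varphi^2))$ with $\overline a = |\T_L|^{-1}\sum_x a_x$) converts the product into a sum; the key bookkeeping is that $|\T_L|^{-1}\sum_x (\text{nearest-neighbour/vertical terms involving } v_x^2)$ reassembles, via $\sum_{\{x,y\}\in\E_L}(v_y-v_x)^2 = 2d\sum_x v_x^2 - 2\sum_{\{x,y\}} v_xv_y$ and the per-site counting, into $\bigl(1+2\E(m^{(1)}_{\{o,\boldsymbol e_1\}})\bigr)\sum_{\{x,y\}\in\E_L}(v_y-v_x)^2$ up to the $\Z^\ell$ normalisation.

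\textbf{Step 2: the left-hand side.} Plug $\boldsymbol h = \varphi\boldsymbol v$ (extended) into $Z^{(2)}$ from \eqref{eq: termorderphi2}. Using $(\triangle^* h)_x = (\triangle v)_x$ for original $x$ (established above) and the parity/vanishing of the $h_q^2$ terms against the quadratic-form part, the dominant surviving piece is $\tfrac{\lambda^4}{2}\sum_{x,y\in\T_L}\Z_{\T_L}(x,y)(\triangle v)_x(\triangle v)_y$; dividing by $\Z^\ell_{\T_L}$ and recalling $\mathbb{G}_{\T_L}(x,y)=\lambda^2 \Z_{\T_L}(x,y)/\Z^\ell_{\T_L}$ from \eqref{eq: twopointfunction} turns this into $\tfrac{\lambda^2}{2}\sum_{x,y}\mathbb{G}_{\T_L}(x,y)(\triangle v)_x(\triangle v)_y$. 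The remaining lower-order terms in $Z^{(2)}(\varphi\boldsymbol v)$ (the $h_xh_y$ and $\sum_x\Z(x,x)h_q^2$ pieces) must be carefully matched against their counterparts on the right so that after cancellation only the stated inequality survives; here the identity $\sum_{\{x,y\}\in\E_L}(v_y-v_x)^2$ versus $\sum_{x,y\in\T_L}\mathbb{G}(x,y)(\triangle v)_x(\triangle v)_y$ together with $\mathbb{G}(x,x)$ appearing with the correct sign is what makes everything close. Dividing through by the common factor and taking $\varphi\to 0$ yields \eqref{eq: Key Inequality}.

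\textbf{Main obstacle.} The conceptual skeleton (chessboard $+$ polynomial expansion $+$ second-order matching) is standard, so the real difficulty is purely the combinatorial/algebraic bookkeeping in Step 1–2: correctly tracking the factors $\tfrac12$ attached to vertical edges, the $2\E(m^{(1)})$ correction term coming from the $\tilde{\Acal}^i$ contributions in \eqref{eq: termorderphi2}, the distinction between ordered/unordered and directed/undirected edge sums, and ensuring that the geometric-mean bound is applied to quantities that are genuinely $\Z^\ell(1+O(\varphi^2))$ with nonnegative $\varphi^2$-coefficients so the expansion is legitimate. I would be especially careful that the $-\tfrac{\lambda^4}{4}\sum_x\Z(x,x)\sum_q h_q^2$ term on the left and the $\sum_x\sum_q (h^x_q)^2$ term on the right combine with the right signs — this is the step most prone to an off-by-a-constant error, and getting it right is exactly what pins down the constant $1+2\E(m^{(1)}_{\{o,\boldsymbol e_1\}})$ rather than something larger.
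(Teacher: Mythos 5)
Your overall strategy (chessboard estimate applied to $\mathscr{Z}(\varphi\boldsymbol h)$, then matching second-order coefficients via the polynomial expansion) is the same as the paper's. But there is a genuine gap in Step 1: you extend $\boldsymbol v$ to the extended torus by the \emph{constant} extension $h_{x+\boldsymbol e_{d+1}}=v_x$, whereas the paper sets $h^{\boldsymbol v}_{x+\boldsymbol e_{d+1}}=-2d\,v_x$ (see \eqref{eq:hasfunctionofv}). This is not a bookkeeping detail; it is the key idea. Note that the paper's $\triangle^*$ is \emph{not} a difference Laplacian but simply the sum of neighbour values,
\[
(\triangle^*h)_x=\sum_{\substack{y\in\Tcal_L:\\\{x,y\}\in\Ecal_L}}h_y,
\]
so your identity $(\triangle^*h)_x=(\triangle h)_x+h_{x+\boldsymbol e_{d+1}}-h_x$ is false. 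With the paper's choice, for $x\in\T_L$,
\[
(\triangle^*h^{\boldsymbol v})_x=\sum_{\substack{y\in\T_L:\\y\sim x}}v_y+h^{\boldsymbol v}_{x+\boldsymbol e_{d+1}}=\sum_{\substack{y\in\T_L:\\y\sim x}}v_y-2d\,v_x=(\triangle v)_x,
\]
which is exactly why the extended torus is introduced: the virtual vertex supplies the missing $-2d\,v_x$ term so that the quadratic form $\sum_{x,y}\Z_{\T_L}(x,y)(\triangle^*h)_x(\triangle^*h)_y$ becomes $\sum_{x,y}\Z_{\T_L}(x,y)(\triangle v)_x(\triangle v)_y$, producing the left side of \eqref{eq: Key Inequality}. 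With your constant extension, $(\triangle^*h)_x=\sum_{y\sim x}v_y+v_x\neq(\triangle v)_x$, so the two-point-function term does not reduce to $\sum_{x,y}\G(x,y)(\triangle v)_x(\triangle v)_y$ and the proof does not close. The same mis-extension also breaks the identity you need for the remaining contributions: with the $-2d$ factor one has $\sum_{\{x,y\}\in\E_L}h^{\boldsymbol v}_xh^{\boldsymbol v}_y+\tfrac12\sum_{x\in\T_L}h^{\boldsymbol v}_xh^{\boldsymbol v}_{x+\boldsymbol e_{d+1}}=-\tfrac12\sum_{\{x,y\}\in\E_L}(v_y-v_x)^2$, whereas your extension yields $\sum_{\{x,y\}}v_xv_y+\tfrac12\sum_x v_x^2$, which is not of that form. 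So the conceptual skeleton is right, but the one genuinely non-trivial ingredient---the choice of $\boldsymbol h$ on the virtual layer---is missing, and without it the argument fails.
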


\begin{proof}
Fix a real-valued vector $\boldsymbol{v} = (v_x)_{x \in \T_L}$, and let 
$\boldsymbol{h}^{\boldsymbol{v}} = ( h_x^{\boldsymbol{v}}   )_{x \in \mathcal{T}_L}$
be obtained from $\boldsymbol{v}$ as follows:
\begin{equation}\label{eq:hasfunctionofv}
\forall x \in \mathcal{T}_L  \quad \quad
h^{\boldsymbol{v}} _x : = 
\begin{cases}
v_x & \mbox{ if $x \in \T_L$,} \\
-2\, d \, v_{x- \boldsymbol{e}_{d+1}} & \mbox{ if $x \in \T_L^{(2)}$.} 
\end{cases}
\end{equation}
We will use the following identities:
\begin{enumerate}[(i)] 
\item $\sum\limits_{  \{x,y\} \in \E_L  }
h^{\boldsymbol{v}}_x h^{\boldsymbol{v}}_y 
+ 
\frac{1}{2}  \sum\limits_{ x \in \T_L }
h^{\boldsymbol{v}}_x h^{\boldsymbol{v}}_{x+ \boldsymbol{e}_{d+1}  } 
=
- \frac{1}{2} \sum\limits_{  \{ x,y \} \in \E_L  } 
\big (  
v_x - v_y
\big )^2$, 
\item  $(\triangle^* h^{\boldsymbol{v}})_x =  ( \triangle \boldsymbol{v} )_{x}$,
\item $\sum_{x \in \T_L} \sum_{\substack{q \in \Tcal_L : \\ \{x,q\} \in \Ecal_L}} (h^{\boldsymbol{v}}_q)^2 
=2d \, (1+2d) \, \sum_{x \in \T_L} v_x^2.$
\end{enumerate}
Recall the definition of $Z^{(2)}(\boldsymbol{h})$ in \eqref{eq: termorderphi2} for a vector $\boldsymbol{h}=(h_x)_{x \in \Tcal_L} \in \R^{\Tcal_L}$.
From (i), (ii), (iii) we deduce that,
\begin{equation}\label{eq:beforehomogenized}
\begin{aligned}
Z^{(2)}(\boldsymbol{h}^{\boldsymbol{v}})
& = - \frac{\lambda^2}{2} \Z_{\T_L}^\ell\sum\limits_{ \{x,y\} \in  \E_L} (v_y-v_x)^2 + 2\lambda^2 \mu_{\T_L}^\ell\big(m_{\{o,\boldsymbol{e}_1\}}^{(1)}\big) \sum\limits_{ \{x,y\} \in  \E_L} v_x v_y +
 \frac{\lambda^4}{2} \sum\limits_{x, y \in \T_L} \Z_{\T_L}(x,y) 
(\triangle \boldsymbol{v})_x \,
(\triangle \boldsymbol{v})_y \\
& \qquad \qquad \qquad - \frac{\lambda^4}{2} \Z_{\T_L}(o,o) \, d \, (1+2d) \sum_{x \in \T_L} v_x^2.
\end{aligned}
\end{equation}
Moreover, recall that, as defined in Section \ref{sect:Chessboardscheme},
for any original vertex $x \in \T_L$, 
$( \boldsymbol{h}^{\boldsymbol{v}})^x$ is defined as the vector 
which is obtained from $\boldsymbol{h}^{\boldsymbol{v}}$
by copying the value $h^{\boldsymbol{v} }_x = v_x$ at any original vertex and the value
$h^{\boldsymbol{v} }_{ x+ \boldsymbol{e}_{d+1}} = -2d v_x$
at any virtual vertex and deduce from this and from (\ref{eq:beforehomogenized})
that, 
\begin{equation}\label{eq:homogenized}
\forall \boldsymbol{v} = (v_z)_{z \in \T_L}  \quad 
\forall x \in \T_L, \quad  \quad Z^{(2)} \big (( \boldsymbol{h}^{\boldsymbol{v}})^{x} \big )
= \Big(2 \, \lambda^2 \, \mu_{\T_L}^\ell\big(m_{\{o,\boldsymbol{e}_1\}}^{(1)}\big) \, |\E_L| - \frac{\lambda^4}{2} \Z_{\T_L}(o,o) \, d \, (1+2d) \, |\T_L|\Big) \, v_x^2.
\end{equation}
Using Propositions \ref{prop: central quantity expansion} and \ref{prop:chessboardscheme}, a Taylor expansion and \eqref{eq:homogenized}, we have that, in the limit as $\varphi \rightarrow 0$,
\begin{align*}
\mathscr{Z}(\varphi \boldsymbol{h}^{\boldsymbol{v}}) & = \Z_{\T_L}^{\ell}
+ \varphi^2 Z^{(2)}(\boldsymbol{h}^{\boldsymbol{v}}) + o(\varphi^2) \leq \Big ( \prod_{x \in \T_L}
\mathscr{Z} \big ( (\varphi \boldsymbol{h}^{\boldsymbol{v}})^x \big )
 \Big )^{\frac{1}{|\T_L|}} \\
& = \Z_{\T_L}^{\ell} + \varphi^2 \Big(2 \, \lambda^2 \, \mu_{\T_L}^\ell\big(m_{\{o,\boldsymbol{e}_1\}}^{(1)}\big) \, d - \frac{\lambda^4}{2} \Z_{\T_L}(o,o) \, d \, (1+2d)\Big) \,  \sum_{x \in \T_L} v_x^2 + o(\varphi^2).
\end{align*}
Thus we proved that, in the limit as $\varphi \rightarrow 0$, 
$$
 \Z_{\T_L}^{\ell}
+ \varphi^2 Z^{(2)}(\boldsymbol{h}^{\boldsymbol{v}}) + o(\varphi^2) \leq   \Z_{\T_L}^{\ell} + \varphi^2 \, \Big(2 \, \lambda^2 \, \mu_{\T_L}^\ell\big(m_{\{o,\boldsymbol{e}_1\}}^{(1)}\big) \, d - \frac{\lambda^4}{2} \Z_{\T_L}(o,o) \, d \, (1+2d)\Big) \,  \sum_{x \in \T_L} v_x^2 + o(\varphi^2),
$$
where $\boldsymbol{h}^{\boldsymbol{v}}$ was defined in (\ref{eq:hasfunctionofv}) as a function of $\boldsymbol{v}$,
and this can only hold true if 
\begin{equation}\label{eq:2lessthan0}
Z^{(2)}(\boldsymbol{h}^{ \boldsymbol{v}})  \leq \Big(2 \, \lambda^2 \, \mu_{\T_L}^\ell\big(m_{\{o,\boldsymbol{e}_1\}}^{(1)}\big) \, d - \frac{\lambda^4}{2} \Z_{\T_L}(o,o) \, d \, (1+2d)\Big) \,  \sum_{x \in \T_L} v_x^2 .
\end{equation}
By replacing  (\ref{eq:beforehomogenized}) in the left-hand side of 
(\ref{eq:2lessthan0}), dividing the whole expression by $\frac{ \lambda^2}{2} \Z_{\T_L}^\ell$ and plugging in 
(\ref{eq: twopointfunction}), we deduce that, for any finite strictly positive $\lambda$,
$$
\begin{aligned}
\sum\limits_{x,y \in \T_L} \mathbb{G}_{\T_L}(x,y) (\triangle v )_x \, 
(\triangle v)_{y} &   
\leq  \sum\limits_{\{x,y\} \in \E_L} \big (v_y - v_x \big )^2+ 4 \, \E_{\T_L}\big(m_{\{o,\boldsymbol{e}_1\}}^{(1)}\big) \, \big(d \,  \sum_{x \in \T_L} v_x^2-\sum\limits_{\{x,y\} \in \E_L} v_xv_y\big) \\ 
& = \big(1+2 \, \E_{\T_L}\big(m_{\{o,\boldsymbol{e}_1\}}^{(1)}\big)\big) \, \sum\limits_{\{x,y\} \in \E_L} \big (v_y - v_x \big )^2.
\end{aligned}
$$
This concludes the proof of the theorem.
\end{proof}

\begin{remark}
The bound (\ref{eq:weakGD}) is reminiscent of the Gaussian Domination bound, which appears  in the framework of spin systems with continuous symmetry    \cite{Frohlich}, where a quantity which is analogous to our quantity $\mathscr{Z}( \boldsymbol{h} )$ is defined.  In that framework Gaussian Domination is referred
to the inequality $\mathscr{Z}( \boldsymbol{h} )  \leq \mathscr{Z}( \boldsymbol{0})$.
Our case differs from \cite{Frohlich}  since, as one can deduce from our Polynomial expansion,  Proposition \ref{prop: central quantity expansion}, 
no  Gaussian Domination can hold.
\end{remark}

\section{Upper bound on the Fourier sum}
\label{sect:upperboundonthefouriersum}
In this section we provide a uniform lower bound for the Ces\`{a}ro sum of the two-point function (recall the  definition in \eqref{eq: twopointfunction}). The result is presented in Theorem \ref{theo: Infrared bound} below. It is derived using Fourier transforms and the Key Inequality for the two-point function stated in Theorem \ref{theo: keyinequality}.  The lower bound depends on the numerical value of $\G(o,\boldsymbol{e}_1)$ and on the expected number of $1$-links on $\{o,\boldsymbol{e}_1\}$. For a comparison between these two quantities we refer the reader to Proposition \ref{proposition: uniformpositivity}. In the whole section $d>2$, $N \in \N_{>1}$, $\lambda \in \R^+$, $L \in 2\N$, $U: \N_0 \to \R_0^+$ and $v: \Z^d \to \R$ are fixed and satisfy the assumptions of Theorem \ref{theo: keyinequality}. We omit all sub-scripts here.

For $x \in \Z^d$, we denote by $N_x:= \sum_{n=0}^\infty \mathbbm{1}_{\{S_n=x\}}$ the number of visits at $x$ of a simple random walk $S=(S_n)_{n \in \N_0}$ in $\Z^d$ starting at the origin. We denote its probability measure and its expectation by $P^{\text{r.w.}}$ and $E^{\text{r.w.}}$. 

\begin{theorem}
\label{theo: Infrared bound}
We have that
\begin{equation}
\frac{1}{|\T_L|} \sum_{x \in \T_L} \G(o,x) \geq \frac{\G(o,\boldsymbol{e}_1)+\G(o,o)}{2} -\big( 1+ 2\,\E(m_{\{o,\boldsymbol{e}_1\}}^{(1)})\big) \, C_L(d),
\end{equation}
where $(C_L(d))_{L \in \N}$ is a sequence of non-negative real numbers, which is defined in \eqref{eq: definitionsequence} below, whose limit $L \to \infty$ exists and satisfies 
\begin{equation} \label{eq: limitsequence}
 \lim_{L \to \infty} C_L(d) = \frac{2 E^{\text{r.w.}}(N_o)-1}{4d}.
\end{equation}
\end{theorem}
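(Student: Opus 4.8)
The plan is to use the Key Inequality, Theorem \ref{theo: keyinequality}, with a cleverly chosen test vector $\boldsymbol{v}$, namely the one that diagonalises the discrete Laplacian on the torus. Concretely, I would pass to Fourier space: for $k \in \T_L^*$ (the dual torus) let $\boldsymbol{v}^{(k)}=(v_x^{(k)})_{x\in\T_L}$ with $v_x^{(k)}=e^{i k\cdot x}$, so that $(\triangle v^{(k)})_x = -\hat{\lambda}(k)\, v_x^{(k)}$ where $\hat\lambda(k):=2\sum_{j=1}^d(1-\cos k_j)$ is the Fourier symbol of $-\triangle$. Plugging $\boldsymbol{v}^{(k)}$ (or rather its real and imaginary parts, or working directly with the complex bilinear form after checking that the Key Inequality extends to complex $\boldsymbol{v}$ by the polarisation already implicit in reflection positivity) into \eqref{eq: Key Inequality}, the left-hand side becomes $\hat{\lambda}(k)^2\,|\T_L|\,\hat{\mathbb{G}}(k)$, where $\hat{\mathbb{G}}(k):=\sum_{x\in\T_L}\mathbb{G}(o,x)e^{-ik\cdot x}$, using translation invariance of $\mathbb{G}$ on the torus; the right-hand side becomes $\bigl(1+2\,\E(m^{(1)}_{\{o,\boldsymbol{e}_1\}})\bigr)\cdot\hat\lambda(k)\,|\T_L|$ after the standard computation $\sum_{\{x,y\}\in\E_L}|v_y^{(k)}-v_x^{(k)}|^2 = |\T_L|\,\hat\lambda(k)$. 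Cancelling $\hat\lambda(k)|\T_L|$ for $k\neq o$ yields the infrared bound
\begin{equation}\label{eq:IRpointwise}
\hat{\mathbb{G}}(k) \le \frac{1+2\,\E(m^{(1)}_{\{o,\boldsymbol{e}_1\}})}{\hat\lambda(k)}, \qquad k \in \T_L^*\setminus\{o\}.
\end{equation}

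Next I would invert the Fourier transform to extract information about the Cesàro sum $\frac{1}{|\T_L|}\sum_{x\in\T_L}\mathbb{G}(o,x)=\frac{1}{|\T_L|}\hat{\mathbb{G}}(o)$. The identity $\mathbb{G}(o,o)+\mathbb{G}(o,\boldsymbol{e}_1) \cdot (\text{something})$ should emerge from evaluating $\frac{1}{|\T_L|}\sum_{k\in\T_L^*}\hat{\mathbb{G}}(k)\,g(k)$ for a suitable kernel $g$; more precisely, I expect to use the Plancherel-type identity
\[
\frac{\mathbb{G}(o,o)+\mathbb{G}(o,\boldsymbol{e}_1)}{2} = \frac{1}{|\T_L|}\sum_{k\in\T_L^*}\hat{\mathbb{G}}(k)\,\frac{1+\tfrac1d\sum_{j=1}^d\cos k_j}{2} = \frac{1}{|\T_L|}\sum_{k}\hat{\mathbb{G}}(k)\Bigl(1-\frac{\hat\lambda(k)}{4d}\Bigr),
\]
since $\tfrac1d\sum_j\cos k_j = 1-\tfrac{\hat\lambda(k)}{2d}$. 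Splitting off the $k=o$ term (which contributes exactly $\frac{1}{|\T_L|}\hat{\mathbb{G}}(o)$, the Cesàro sum we want) and bounding the remaining $k\neq o$ terms from above via \eqref{eq:IRpointwise}, one gets
\[
\frac{\mathbb{G}(o,o)+\mathbb{G}(o,\boldsymbol{e}_1)}{2} \le \frac{1}{|\T_L|}\sum_{x\in\T_L}\mathbb{G}(o,x) + \bigl(1+2\,\E(m^{(1)}_{\{o,\boldsymbol{e}_1\}})\bigr)\cdot\frac{1}{|\T_L|}\sum_{k\in\T_L^*\setminus\{o\}}\frac{1}{\hat\lambda(k)}\Bigl(1-\frac{\hat\lambda(k)}{4d}\Bigr),
\]
and rearranging gives exactly the claimed inequality with
\begin{equation}\label{eq: definitionsequence}
C_L(d) := \frac{1}{|\T_L|}\sum_{k\in\T_L^*\setminus\{o\}}\Bigl(\frac{1}{\hat\lambda(k)}-\frac{1}{4d}\Bigr).
\end{equation}
I should double-check the sign: each summand $\frac{1}{\hat\lambda(k)}-\frac{1}{4d}$ need not be individually non-negative, but the full sum $C_L(d)$ is non-negative, which can be seen from the random-walk representation below; alternatively one re-groups so that only the positive infrared contribution is kept.

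Finally, to identify the limit \eqref{eq: limitsequence} I would use the standard connection between $\sum_{k\neq o}\hat\lambda(k)^{-1}$ on the torus and the Green's function / expected number of returns of simple random walk. The Riemann-sum limit gives $\frac{1}{|\T_L|}\sum_{k\neq o}\frac{1}{\hat\lambda(k)} \to \int_{[-\pi,\pi]^d}\frac{dk}{(2\pi)^d\,\hat\lambda(k)}$, and this integral equals $\frac{1}{2d}E^{\text{r.w.}}(N_o)$ — one way to see it is to note $E^{\text{r.w.}}(N_o)=\sum_{n\ge0}P^{\text{r.w.}}(S_n=o)=\sum_{n\ge0}\int\frac{dk}{(2\pi)^d}\bigl(\tfrac1d\sum_j\cos k_j\bigr)^n = \int\frac{dk}{(2\pi)^d}\frac{1}{1-\tfrac1d\sum_j\cos k_j}= \int\frac{dk}{(2\pi)^d}\frac{2d}{\hat\lambda(k)}$, the geometric series being summable for $d\ge3$ (this is precisely where $d>2$ enters, ensuring transience so $E^{\text{r.w.}}(N_o)<\infty$). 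Subtracting the constant $\frac{1}{4d}$ term, whose Cesàro average is $\frac{|\T_L^*\setminus\{o\}|}{4d|\T_L|}\to\frac{1}{4d}$, yields $\lim_L C_L(d) = \frac{E^{\text{r.w.}}(N_o)}{2d}-\frac{1}{4d} = \frac{2E^{\text{r.w.}}(N_o)-1}{4d}$, as claimed.

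The main obstacle I anticipate is making the Fourier manipulations with the Key Inequality fully rigorous when $\boldsymbol{v}$ is complex: Theorem \ref{theo: keyinequality} as stated is for real $\boldsymbol{v}$, so I would either prove it separately for the real and imaginary parts $\mathrm{Re}\,v^{(k)}$, $\mathrm{Im}\,v^{(k)}$ and add the two resulting inequalities (using that $\mathbb{G}(x,y)=\mathbb{G}(y,x)$ and the $\cos$/$\sin$ addition formulas to reassemble $\hat\lambda(k)^2\hat{\mathbb{G}}(k)$), or remark that the bilinear form is Hermitian and the extension to complex test functions is automatic. A second, more bookkeeping-type obstacle is controlling the convergence $C_L(d)\to\frac{2E^{\text{r.w.}}(N_o)-1}{4d}$ uniformly enough to be useful downstream, and checking that the $k$-values with $\hat\lambda(k)$ small (near $k=o$) do not cause the Riemann sum to diverge — but in $d\ge3$ the singularity $\hat\lambda(k)^{-1}\sim|k|^{-2}$ is integrable, so this is routine. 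Everything else is the standard Fröhlich–Simon–Spencer infrared-bound computation adapted to the present $\mathbb{G}$.
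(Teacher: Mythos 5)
Your proposal is correct and follows essentially the same route as the paper: derive the pointwise infrared bound $\hat{\G}(k) \le c/\hat\lambda(k)$ from the Key Inequality, then split a Plancherel-type identity for $\tfrac12\bigl(\G(o,o)+\G(o,\boldsymbol{e}_1)\bigr)$ at the $k=o$ mode. The only (cosmetic) differences are that the paper avoids your complex-extension step by taking the real test vector $v_x=\cos(k_1/2)\cos(k\cdot x)$, and works with the kernel $\cos^2(k_1/2)$ rather than your rotationally symmetric $1-\hat\lambda(k)/(4d)$, which makes the summands in $C_L(d)$ manifestly non-negative; your $C_L(d)$ coincides with the paper's once one symmetrizes over the $d$ coordinate directions and uses $\frac1d\sum_j\cos^2(k_j/2)=1-\hat\lambda(k)/(4d)$.
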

To begin, we define the Fourier \textit{dual torus},
$$
\T_L^* := \big  \{\frac{2 \pi}{L} (n_1, \dots, n_d) \in \R^d : n_i \in (-\frac{L}{2}, \frac{L}{2}] \cap \mathbb{Z}  \, \, \forall i \in [d] \big\}.
$$
We denote the elements of $\T_L^*$ by $k=(k_1,\dots,k_d)$ and we keep using the notation $o$ for $(0,\dots,0) \in \T_L$ or $(0,\dots,0) \in \T_L^*$. 
Given a function $f \in l^2(\T_L)$, we define its Fourier transform,
\begin{equation}
\label{eq: fouriertransform}
\forall k \in \T_L^*, \quad \hat{f}(k):= \sum_{x \in \T_L} e^{-ik\cdot x}f(x).
\end{equation}
It follows from this definition that ,
\begin{equation}
\label{eq: inversefouriertransform}
\forall x \in \T_L, \quad f(x)=\frac{1}{|\T_L|} \sum_{k \in \T_L^*}e^{ik \cdot x}\hat{f}(k).
\end{equation}
We use the notation $\G(x)=\G(o,x)$ and we denote by $\hat{\G}(k)$ the Fourier transform of $\G(x)$. 
We start from the following equality which is an immediate consequence of \eqref{eq: fouriertransform} and of \eqref{eq: inversefouriertransform}.
\begin{lemma}
We have that,
\begin{equation}
\label{eq: startingpoint}
\frac{2}{|\T_L|} \sum_{x \in \T_L} \G(x) = \G(o)+\G(\boldsymbol{e}_1)-\frac{2}{|\T_L|} \sum_{k \in \T_L^* \setminus \{o\}} \cos^2\big(\frac{k_1}{2}\big) \,  \hat{\G}(k).
\end{equation}
\end{lemma}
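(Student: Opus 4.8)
The plan is to obtain \eqref{eq: startingpoint} by expanding $\G(o)+\G(\boldsymbol e_1)$ through the inversion formula \eqref{eq: inversefouriertransform} and then isolating the zero mode of the resulting Fourier sum. First I would apply \eqref{eq: inversefouriertransform} with $x=o$ and with $x=\boldsymbol e_1$, which gives
\begin{equation*}
\G(o)+\G(\boldsymbol e_1)=\frac{1}{|\T_L|}\sum_{k\in\T_L^*}\bigl(1+e^{ik_1}\bigr)\hat\G(k).
\end{equation*}

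Next I would exploit the symmetries of the two-point function. Since the measure $\mu_{\T_L}$ is invariant under torus translations, and since $\G(x,y)=\G(y,x)$ (immediate from the symmetry of the sets $\Wcal_{x,y}$ in $x$ and $y$), one has $\G(x)=\G(o,x)=\G(o,-x)$ for every $x\in\T_L$; consequently $\hat\G$ is real and even, i.e.\ $\hat\G(-k)=\hat\G(k)$ for all $k\in\T_L^*$. Performing the change of variables $k\mapsto -k$ in the sum then shows $\sum_{k}e^{ik_1}\hat\G(k)=\sum_{k}\cos(k_1)\hat\G(k)$, so that, using the elementary identity $1+\cos k_1=2\cos^2(k_1/2)$,
\begin{equation*}
\G(o)+\G(\boldsymbol e_1)=\frac{2}{|\T_L|}\sum_{k\in\T_L^*}\cos^2\Bigl(\tfrac{k_1}{2}\Bigr)\hat\G(k).
\end{equation*}

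Finally I would separate the contribution of the zero mode $k=o$. Because $\cos^2(0)=1$ and, by \eqref{eq: fouriertransform}, $\hat\G(o)=\sum_{x\in\T_L}\G(x)$, this term equals $\tfrac{2}{|\T_L|}\sum_{x\in\T_L}\G(x)$; moving the remaining sum over $k\in\T_L^*\setminus\{o\}$ to the other side yields precisely \eqref{eq: startingpoint}.

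I do not expect a genuine obstacle here: the statement is a short manipulation of the (inverse) discrete Fourier transform. The only step that deserves an explicit word is the reality/evenness of $\hat\G$, which, as noted above, follows from the translation invariance of $\mu_{\T_L}$ together with the symmetry $\G(x,y)=\G(y,x)$.
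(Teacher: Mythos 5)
Your proof is correct and follows essentially the same route as the paper: expand $\G(o)$ and $\G(\boldsymbol{e}_1)$ via the inversion formula, isolate the zero mode (which is the Ces\`aro sum), and use the reality of $\hat\G$ to pass from $1+e^{ik_1}$ to $2\cos^2(k_1/2)$. The one small difference is cosmetic: the paper simply observes that both the left-hand side and $\hat\G(k)$ are real and discards the imaginary part, whereas you justify this by establishing evenness of $\hat\G$ from the symmetry $\G(x,y)=\G(y,x)$ and translation invariance, and then substitute $k\mapsto -k$; these are equivalent, and your version is if anything a bit more explicit about why $\hat\G$ is real.
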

\begin{proof}
To begin note that it follows from \eqref{eq: inversefouriertransform} that,
\begin{equation} \label{eq: fouriertransforme1}
\begin{aligned}
\G(\boldsymbol{e}_1) & = \frac{1}{|\T_L|} \sum_{k \in \T_L^*} e^{i k \cdot \boldsymbol{e}_1} \hat{\G}(k) 
& = \frac{1}{|\T_L|} \hat{\G}(o)+\frac{1}{|\T_L|} \sum_{k \in \T_L^* \setminus \{o\}} e^{i k \cdot \boldsymbol{e}_1} \hat{\G}(k)
\end{aligned}
\end{equation}
and that
\begin{equation} \label{eq: fouriertransformo}
\begin{aligned}
\G(o) & = \frac{1}{|\T_L|} \sum_{k \in \T_L^*} \hat{\G}(k) 
& = \frac{1}{|\T_L|} \hat{\G}(o)+\frac{1}{|\T_L|} \sum_{k \in \T_L^* \setminus \{o\}} \hat{\G}(k).
\end{aligned}
\end{equation}
By definition \eqref{eq: fouriertransform}, it holds that $\frac{1}{|\T_L|} \hat{\G}(o)=\frac{1}{|\T_L|} \sum_{x \in \T_L} \G(x)$.
Thus, by summing \eqref{eq: fouriertransforme1} and \eqref{eq: fouriertransformo}, we obtain that
\begin{equation}
\label{eq: zerofouriermode}
\begin{aligned}
\frac{2}{|\T_L|} \sum_{x \in \T_L} \G(x) & = \G(o)+\G(\boldsymbol{e}_1)- \frac{1}{|\T_L|} \sum_{k \in \T_L^* \setminus \{o\}}\big(1+e^{ik\cdot \boldsymbol{e}_1}\big)\, \hat{\G}(k).
\end{aligned}
\end{equation}
Since the term in the left-hand side of \eqref{eq: zerofouriermode} is real and since $\hat{\G}(k)$ is real, it follows that 
$$
\frac{1}{|\T_L|} \sum_{k \in \T_L^* \setminus \{o\}}\big(1+e^{ik\cdot e_1}\big)\, \hat{\G}(k) = \frac{2}{|\T_L|} \sum_{k \in \T_L^* \setminus \{o\}}\cos^2\big(\frac{k_1}{2}\big)\, \hat{\G}(k),
$$
where we used the equality $1+\cos(k_1)=2 \cos^2(\frac{k_1}{2})$. This concludes the proof.
\end{proof}
The goal is to bound away from zero uniformly in $L$ the quantity on the left-hand side of \eqref{eq: startingpoint}. 
The next proposition is an application of Theorem \ref{theo: keyinequality}.

\begin{proposition}[High frequency upper bound]
\label{prop: infrared bound}
We have that
\begin{equation}
\label{eq: infraredbound}
 \forall k \in \T_L^* \setminus \{o\} \quad \quad  \cos^2\big(\frac{k_1}{2}\big)\,\hat{\G}(k) \leq  \frac{\cos^2(\frac{k_1}{2})}{\varepsilon(k)} \, \big(1+2 \, \E(m_{\{o,\boldsymbol{e}_1\}}^{(1)})\big),
\end{equation}
where for any $k \in \T_L^*$,
$$
 \varepsilon(k) := 
2 \sum\limits_{j=1}^{d} \big ( \,  1 - \cos (k_j) \,  \big ).
$$
\end{proposition}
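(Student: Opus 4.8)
The plan is to feed the Key Inequality, Theorem~\ref{theo: keyinequality}, two real-valued test vectors coming from a single Fourier mode and then add the two resulting inequalities. Fix $k \in \T_L^* \setminus \{o\}$ and set $v^{(1)}_x := \cos(k\cdot x)$ and $v^{(2)}_x := \sin(k\cdot x)$ for $x \in \T_L$. A direct computation with the torus Laplacian shows that $\triangle$ acts on $x \mapsto e^{ik\cdot x}$ by multiplication by $\sum_{j=1}^d(e^{ik_j}+e^{-ik_j}-2) = -\varepsilon(k)$, so by taking real and imaginary parts,
\begin{equation*}
(\triangle v^{(1)})_x = -\varepsilon(k)\, v^{(1)}_x, \qquad (\triangle v^{(2)})_x = -\varepsilon(k)\, v^{(2)}_x \qquad \text{for all } x \in \T_L.
\end{equation*}

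Next I would apply Theorem~\ref{theo: keyinequality} with $\boldsymbol{v}=\boldsymbol{v}^{(1)}$ and with $\boldsymbol{v}=\boldsymbol{v}^{(2)}$ and add. Using the identities above, the left-hand side becomes $\varepsilon(k)^2 \sum_{x,y \in \T_L}\G_{\T_L}(x,y)\cos\!\big(k\cdot(x-y)\big)$, where I used $\cos(k\cdot x)\cos(k\cdot y)+\sin(k\cdot x)\sin(k\cdot y)=\cos(k\cdot(x-y))$. By translation invariance of the random path measure on the torus, $\G_{\T_L}(x,y)$ depends only on $x-y$; writing $\G(z):=\G_{\T_L}(o,z)$ and noting that $\G(z)=\G(-z)$ (which follows from the $x\leftrightarrow y$ symmetry of the two-point function together with translation invariance, and which also makes $\hat{\G}(k)$ real), the double sum collapses to $|\T_L|\,\hat{\G}(k)$. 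For the right-hand side, for an edge $\{x,y\}\in\E_L$ with $y=x+\boldsymbol{e}_j$ one has $(\cos(k\cdot y)-\cos(k\cdot x))^2+(\sin(k\cdot y)-\sin(k\cdot x))^2 = 2\big(1-\cos k_j\big)$; since there are exactly $|\T_L|$ edges in each of the $d$ coordinate directions, the summed right-hand side equals $\big(1+2\,\E(m_{\{o,\boldsymbol{e}_1\}}^{(1)})\big)\,|\T_L|\,\varepsilon(k)$.

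Combining the two sides gives $\varepsilon(k)^2\,|\T_L|\,\hat{\G}(k) \le \big(1+2\,\E(m_{\{o,\boldsymbol{e}_1\}}^{(1)})\big)\,|\T_L|\,\varepsilon(k)$. Since $k\neq o$ forces $\varepsilon(k)>0$, I may divide by $|\T_L|\,\varepsilon(k)$ to get $\hat{\G}(k)\le \varepsilon(k)^{-1}\big(1+2\,\E(m_{\{o,\boldsymbol{e}_1\}}^{(1)})\big)$, and multiplying through by $\cos^2(k_1/2)\ge 0$ yields exactly \eqref{eq: infraredbound}. I do not expect a genuine obstacle here: the argument is a routine Fourier manipulation of the Key Inequality, and the only points deserving a sentence of care are the reduction from complex Fourier modes to the real test vectors $v^{(1)},v^{(2)}$, the reality of $\hat{\G}(k)$, and the count of torus edges per coordinate direction.
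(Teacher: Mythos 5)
Your proof is correct and takes essentially the same route as the paper: plug a single Fourier mode into the Key Inequality and divide by $\varepsilon(k)$. The only cosmetic difference is that you use the two real test vectors $\cos(k\cdot x)$ and $\sin(k\cdot x)$ and add to make the cross terms cancel automatically, whereas the paper plugs in the single vector $v_x=\cos(\tfrac{k_1}{2})\cos(k\cdot x)$ and verifies directly that $(\triangle v)_x=-\varepsilon(k)v_x$, $\sum_{\{x,y\}}(v_y-v_x)^2=\varepsilon(k)\sum_x v_x^2$, and $\sum_{x,y}v_xv_y\,\G(x,y)=\hat{\G}(k)\sum_x v_x^2$, the cross term dropping out in the same way.
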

\begin{proof}
To begin, we  fix an arbitrary $k \in \T^*_L \setminus \{o\}$ and define the vector $\boldsymbol{v} = (v_x)_{x \in \T_L} \in \R^{\T_L}$ by,
$$
\forall x \in \T_L \quad \quad v_x  := \cos(\frac{k_1}{2})\cos( k \cdot x).
$$
We note that under this choice the following facts hold true,
\vspace{-0.3cm}
\begin{enumerate}[(i)]
   \setlength{\itemsep}{0pt}
    \setlength{\topsep}{0pt}
\item For any $x \in \T_L$,
$({\triangle v})_x  = - \varepsilon(k) \, \, v_x,$
\item $\sum_{\{x,y\} \in \E_L   } (v_y - v_x)^2 =  \varepsilon(k) \, \,  \sum_{x \in \T_L} v_x^2,$
\item 
$
\sum_{x, y \in \T_L} \, v_x  \, v_y  \,\mathbb{G}(x,y) = \hat{\mathbb{G}}(k) \, \sum_{x \in \T_L}  v_x^2.
$
\end{enumerate}
These computations are classical and a proof can be found for example in the appendix of \cite{T}. 
We now apply  (i) 
to the left-hand side  of \eqref{eq: Key Inequality} and (ii)  to the right-hand side of 
\eqref{eq: Key Inequality},  thus obtaining that 
$$
\varepsilon^2(k) \,   \sum\limits_{x,y \in \T_L} \, v_x \, v_y  \, \mathbb{G}(x,y)\,  \leq \, \varepsilon(k)\, \big(1+2 \, \E(m_{\{o,\boldsymbol{e}_1\}}^{(1)})\big) \,
 \sum\limits_{x \in \T_L} \,  v_x^2.
$$
Applying (iii) to the left-hand side of the previous inequality and dividing everything by $\varepsilon^2(k) \sum_{x \in \T_L} \cos^2(k \cdot x)$ gives \eqref{eq: infraredbound} and concludes the proof.
\end{proof}
We now have all ingredients for proving Theorem \ref{theo: Infrared bound}.
\begin{proof}[Proof of Theorem \ref{theo: Infrared bound}]
To begin, we deduce from \eqref{eq: startingpoint} and \eqref{eq: infraredbound} that
$$
\begin{aligned}
\frac{1}{|\T_L|} \sum_{x \in \T_L} \G(x) 
& \geq   \frac{ \G(o)+\G(\boldsymbol{e}_1)}{2} -\big( 1+ 2 \, \E(m_{\{o,\boldsymbol{e}_1\}}^{(1)})\big) \, \frac{1}{|\T_L|}  \sum_{k \in \T_L^* \setminus \{o\}} \frac{\cos^2(\frac{k_1}{2})}{\varepsilon(k)}.
\end{aligned}
$$
We set
\begin{equation} \label{eq: definitionsequence}
C_L(d):= \frac{1}{|\T_L|}  \sum_{k \in \T_L^* \setminus \{o\}} \frac{\cos^2(\frac{k_1}{2})}{\varepsilon(k)}.
\end{equation}
Using the fact that the sum in \eqref{eq: definitionsequence} is Riemann and using the identity $\cos^2(\frac{k_1}{2})=\frac{1}{2}(1+\cos(k_1))$, we obtain
\begin{equation} \label{eq:laststepderivation}
\lim_{L \to \infty} C_L(d)=\frac{E^{\text{r.w.}}(N_{\boldsymbol{e}_1})+E^{\text{r.w.}}(N_o)}{4d}=\frac{2E^{\text{r.w.}}(N_o)-1}{4d},
\end{equation}
where in the last step we used that $E^{\text{r.w.}}(N_{\boldsymbol{e}_1})=E^{\text{r.w.}}(N_o)-1$. For a detailed derivation of \eqref{eq:laststepderivation} see also \cite[Section 5]{T}.
This proves \eqref{eq: limitsequence} and concludes the proof.
\end{proof}

\section{Proof of Theorem \ref{theo:maintheorem}}
\label{sect:proofoftheorem1.1}
In this section we present the proof of Theorem \ref{theo:maintheorem}. The idea of the proof is to first use  our results from Sections \ref{sect:reflectionpositivityandchessboardestimate} to \ref{sect:upperboundonthefouriersum} to derive statements that are similar to the ones in Theorem \ref{theo:maintheorem}, but in the RPM. The main results for the RWLS will then follow from the equivalence theorem, Theorem \ref{theo: equivalence}.
Recall our convention on the constant $c$, which is always positive and finite and may differ from line to line. 

\begin{proof}[Proof of Theorem \ref{theo:maintheorem}] Let $d, N \in \N$ such that $d \geq 3$ and $N \geq 2$ and suppose that $v: \Z^d \to \R$ is tempered and separable. Recall from Section \ref{sect:upperboundonthefouriersum} that we denote by $E^{r.w.}(N_o)$ the expected number of visits at the origin of a simple random in $\Z^d$ starting at the origin. We let $M>\frac{2 E^{\text{r.w.}}(N_o)-1}{4d}$ and we set $\tilde{M}:=M - \frac{2 E^{\text{r.w.}}(N_o)-1}{4d}>0$. Let $U:\N_0 \to \R_0^+$ be good with  large enough range $R$ and let $\lambda_0 \in (0,\infty)$ be such that for any $\lambda \geq \lambda_0$, it holds that
\begin{equation} \label{eq:uniformpositivecesaro}
\begin{aligned}
\liminf_{\substack{L \to \infty \\ L \in 2\N}} \frac{1}{|\T_L|} \sum_{x \in \T_L} \G(o,x) & \geq \tilde{M}>0.
\end{aligned}
\end{equation}
The existence of such a weight function $U$ and parameter $\lambda_0$ follows from Theorem \ref{theo: Infrared bound} and from the lower bound \eqref{eq:lowerboundG(o,e1)} for the term $\G(o,\boldsymbol{e}_1)$. We now fix $\lambda > \lambda_0$.
By site-monotonicity, Proposition \ref{prop:monotonicity},  and by Lemma \ref{lemma:Chessboardapplicationno} there exists $c >0$ such that for any $L \in 2\N$ and $x,y \in \T_L$, we have that 
\begin{equation} \label{eq:twopointbounded}
\G(x,y) \leq \G(o,\boldsymbol{e}_1) \leq \E(n_o^2) \leq c.
\end{equation} 
Recall the definition of $\tilde{\Ncal}_{x,y}: \tilde{\Wcal} \to \N_0$ in \eqref{eq:tildeNcal}.
From the monotonicity result \cite[Theorem 2.2]{L-T-quantum} and from \eqref{eq:twopointbounded}
we
 deduce that there exist $m \in (0, 1)$ and $c>0$ such that, for any $L \in 2 \mathbb{N}$ and any $x,y \in \mathbb{T}_L$  satisfying $|x_i-y_i| \in 2\N+1$ for all $i \in [d]$ and such that 
  $|x-y| \leq m L $,   we have that $\G(x,y) \geq c$, from which we deduce 
using  \eqref{eq: relationnumberloops} 
  that there exists $c^\prime \in (0, \infty)$ such that 
  for each such pair of sites $x, y \in \mathbb{T}_L$,
 \begin{equation}
 \label{eq: positiveG2}
\E(\tilde{\Ncal}_{x,y}) >c \, \G(x,y)^4 \geq c^\prime.
 \end{equation}
The Cauchy-Schwarz inequality now implies that 
\begin{equation} \label{eq:positiveprobkappa}
\P(\tilde{\Ncal}_{x,y} >0) \geq \frac{\E(\tilde{\Ncal}_{x,y})^2}{\E(\tilde{\Ncal}_{x,y}^2)}>c,
\end{equation}
where in the last step we used \eqref{eq: positiveG2} and that $\E(\tilde{\Ncal}_{x,y}^2) \leq \E(n_x^2n_y^2) < c$ by Lemma \ref{lemma:Chessboardapplicationno}. Recall from Lemma \ref{lemma:example} that $\Ncal_{x,y}:\Omega \to \N_0$, defined in \eqref{eq:Ncal}, is such that $\Ncal_{x,y} \sim \tilde{\Ncal}_{x,y}$. From \eqref{eq: positiveG2}, \eqref{eq:positiveprobkappa}
 and from Theorem \ref{theo: equivalence} we can thus deduce that
 \begin{equation} \label{eq:expgreaterthanconstant}
 E(\Ncal_{x,y})>c,
 \end{equation}
and
\begin{equation} 
\mathcal{P} \big ( \exists n \in \{1,  \ldots, |\omega| \}  \, : \, x, y \in \Gamma^{(n)}  \big   ) = \Pcal(\Ncal_{x,y}>0) >c,
 \end{equation}
where we recall that the expectation in \eqref{eq:expgreaterthanconstant} refers to the expectation in the RWLS. This proves \eqref{eq:twopointBEC2}. It remains to prove \eqref{eq:twopointBEC1}. Let $x \in \T_L$. For any $\omega \in \Omega$, let $K_x(\omega)$ be the total length of r-o-loops in $\omega$ visiting $x$, namely, for any $\omega=(\ell_1,\dots,\ell_{|\omega|}) \in \Omega$, we let
$$
K_x(\omega):= \sum_{j=1}^{|\omega|} \mathbbm{1}_{\{x \in \ell_j\}}(\omega),
$$
be the local time at $x$ for the RWLS.

For any $\omega=(\ell_1,\dots,\ell_{|\omega|})  \in \Omega$, 
recall from (\ref{eq:firstloopx}) that  $\Gamma_x(\omega)$  denotes the first loop of $\omega$ that contains $x$.
Then, for any $k \in \N$, we have that
\begin{equation} \label{eq:Gamma1}
\begin{aligned}
\sum_{y \in \T_L} E(\Ncal_{x,y} \mathbbm{1}_{\{n_x \leq k\}}) & \leq  E(K_x \mathbbm{1}_{\{n_x \leq k\}})   \leq \sum_{n=1}^k E\big(K_x \mathbbm{1}_{\{\text{precisely } n \text{ loops contain } x \}}\big) 
 \leq E\big(|\Gamma_x|\big) \, \frac{k(k+1)}{2}.
\end{aligned}
\end{equation}
Further, there exists $c \in \R^+$ such that for any $y \in \T_L$ such that $|x-y| \leq mL$, it holds that
\begin{equation} \label{eq:verify}
\begin{aligned}
E(\Ncal_{x,y} \mathbbm{1}_{\{n_x \leq k\}}) & = E(\Ncal_{x,y}) - E(\Ncal_{x,y} \mathbbm{1}_{\{n_x > k\}}) > c - E(\Ncal_{x,y} \mathbbm{1}_{\{n_x > k\}}),
\end{aligned}
\end{equation}
where we used \eqref{eq:expgreaterthanconstant} in the last step. The second term in \eqref{eq:verify} converges to $0$ as $k \to \infty$, which can be seen by applying the equivalence theorem, Theorem \ref{theo: equivalence}, the Cauchy-Schwarz inequality, the Markov inequality and Lemma \ref{lemma:Chessboardapplicationno}. Indeed,
\begin{equation} \label{eq:limit}
\begin{aligned}
E(\Ncal_{x,y} \mathbbm{1}_{\{n_x > k\}})& =\E(\tilde{\Ncal}_{x,y} \mathbbm{1}_{\{n_x > k\}})  \leq \E(\tilde{\Ncal}_{x,y}^2)^{\frac{1}{2}} \, \P( n_x >k)^{\frac{1}{2}}  \leq \bigg(\E(n_x^2) \,\frac{\E( n_x)}{k}\bigg)^{\frac{1}{2}} =: c(k),
\end{aligned}
\end{equation}
where $\lim_{k \to \infty} c(k) =0$. 
From \eqref{eq:Gamma1}, \eqref{eq:verify} and \eqref{eq:limit} we thus deduce that for any $k \in \N$, 
$$
E(|\Gamma_x|) > \frac{2(c-c(k))}{k(k+1)} \, \tilde{m} \, L^d
$$
for some $\tilde{m} \in (0,\infty)$. 
We now choose $k_0 \in \N$ large enough such that $c(k_0) < c$ and the proof of the theorem is concluded.
\end{proof}
\appendix

\section{Appendix}
\label{sect:appendix}

\subsection{The interacting Bose gas and random loops}
\label{sect:extensionsBEC}
We introduce the partition function of the random loop model corresponding to the Bose gas in $\mathbb{T}_L$ with interaction potential $v_L: \mathbb{T}_L \times \mathbb{T}_L \rightarrow \mathbb{R}$.
Given $n \in \mathbb{N}$, the number of particles, and $\beta \in [0, \infty)$, the inverse temperature,  we define the partition function,
\begin{equation}
\label{eq:partitionbose}
Z^{bose}_{n, \beta} := \frac{1}{n!} \,   \sum\limits_{ x = (x_1, \ldots, x_n) \in {( \mathbb{T}_L)}^n   }
\sum\limits_{ \pi \in S_n  }
 \, \prod_{i=1}^{n} 
  \int dW^\beta_{x_i, x_{ \pi(i)} } (\omega^i(t)) \, \, \exp \Big (    \sum\limits_{  1 \leq    i ,  j \leq n }\int_{ 0}^\beta v_L\big ( \omega^i(t),  \omega^j (t) \big )  dt \Big )
\end{equation}
where
$S_n$ is the group of permutations of  $n$ integers,
 $d  W^\beta_{x,y}$ is the measure of a continuous-time simple random walk in $\mathbb{T}_L$ of time-length $\beta$  which starts at $x$ and  ends at $y$,
and is defined in the measure space of continuous-time C\`adl\`ag trajectories
$\omega : [0, \beta] \rightarrow \mathbb{T}_L$ which start from $x$ and end at $y$,
the normalisation is such that 
$
\int dW^\beta_{x,y} 
$
equals the probability that a continuous time simple random walk in $\mathbb{T}_L$ of time $\beta$ starting from $x$ ends at $y$,
$v_L$ is the interaction potential which was defined in the introduction.
This  reformulation of the Bose gas can be derived from the  quantum analytic formulation
through the Feyman-Kac formula
 \cite{Ginibre}, see also 
\cite{UeltschiRelation2}
for the analogous derivation in continuous space. 
Since permutations naturally induce cycles, (\ref{eq:partitionbose}) can be viewed
as the partition function of a random loop model.
The grand canonical partition function  of the interacting Bose gas is then defined as,
\begin{equation}
Z^{bose,  g}_{\mu, \beta}  = \sum\limits_{n=0}^{\infty} e^{\mu n} Z^{bose}_{n, \beta},
\end{equation}
where $\mu \in \mathbb{R}$ is the chemical potential, an external parameter.
We formally obtain the RWLS considered in this paper
(in the special case $N= 2$ and weight function $U(n) = 1$ for any $n \in \mathbb{N}_0$)
if we replace the continuous time simple random walk of length $\beta$ by a single-step simple random walk trajectory.  More formally, 
replace $d {W}^\beta_{x,y}$  in (\ref{eq:partitionbose}) by $d \hat{W}^\beta_{x,y}$,
where  $d \hat{W}^\beta_{x,y}$
 is defined as the measure which assigns weight $\frac{1}{2d}$ to the  trajectory such that,
$$
\forall t \in [0, \beta] \quad 
\omega(t) =  
\begin{cases}
 x \quad  \mbox{ if  $t \in [0,  \beta)$, } \\
 y \quad  \mbox{ if $t = \beta$}, \\
\end{cases}
$$
and weight zero to all the other trajectories.
In other words,  the particle spends time $\beta$  at the vertex $x$ and then jumps to a nearest neighbour, $y$, with uniform probability.
We now make this connection formal by setting  for simplicity $\beta=1$. By replacing 
$d W^1_{x,y}$
by $d\hat{W}^1_{x,y}$ in (\ref{eq:partitionbose}),  we thus obtain 
\begin{align*}
\hat{Z}^{bose}_{n} & := 
\frac{1}{n!} \,   \sum\limits_{ x = (x_1, \ldots, x_n) \in { \mathbb{T}_L}^n   }
\sum\limits_{ \pi \in S_n  }
 \, \prod_{i=1}^{n} 
  \int d \hat{W}^\beta_{x_i, x_{ \pi(i)} } (\omega^i) \, \, \exp \big (    \sum\limits_{  1 \leq    i ,  j \leq n }\int_{ 0}^1 v_L( \omega^i(t),  \omega^j (t) )  dt \big ) \\
&
= \frac{1}{n!} \,   \sum\limits_{ x = (x_1, \ldots, x_n) \in { \mathbb{T}_L}^n   }
\sum\limits_{ \pi \in S_n  }
(\frac{1}{2d})^n  \, \mathbbm{1}_{ \{ |x_{\pi(i)} - x_i|_1 = 1 \forall i \in [n]   \} }
\exp \big (\sum\limits_{  1 \leq  i , j \leq n  }   v_L ( x_i , x_j  )  \big )   \\ 
& = 
\sum\limits_{k=1}^{n}  \frac{1}{k!}  \sum\limits_{  (\ell_1, \ldots, \ell_k) \in \mathcal{L}^k   }
  \mathbbm{1}_{ \big  \{  \sum\limits_{i=1}^{k} | \ell_i| = n    \big \}  } \prod_{  i=1 }^{k}
 \frac{{{(2d)}}^{-|\ell_i|}}{|\ell_i|}  
 \exp \big ( \mathcal{V}(\ell_1, \ldots, \ell_k) \big ),
\end{align*}
where $\mathcal{V}$ was defined in (\ref{eq:definitioninteraction}).
For the last identity, for any pair $(x, \pi)$ appearing in the sum
 we summed over all the loop configurations $(\ell_1, \ldots, \ell_k) \in \mathcal{L}^k$ with the same loop structure as in 
$(x, \pi)$ and divided by the number of such configurations. 
After simplifying the resulting factorials and summing over all configurations $(x, \pi)$
we obtain the last expression in the previous display.
Finally,  by considering the grand canonical ensemble and setting 
$\lambda =\frac{1}{2d}  e^{ \mu}$  we obtain,
\begin{align}
\label{eq:bosegasref}
\hat{Z}^{bose,  g}_{\mu}  &:  = \sum\limits_{n=0}^{\infty}  e^{\mu n} \hat{Z}^{bose}_{n} \\ & = 
\sum\limits_{k=0}^{\infty} \frac{1}{k!}   \sum\limits_{  (\ell_1, \ldots, \ell_k) \in \mathcal{L}^k   } 
 \prod_{  i=1 }^{n}
 \frac{ \lambda^{|\ell_i|} }{|\ell_i|}
\exp \big (   \mathcal{V}( \ell_1, \ldots, \ell_k   )  \big ) \\
& =  \mathcal{Z}_{L, U, v, N, \lambda},
\end{align}
where $  \mathcal{Z}_{L, U, v, N, \lambda}$ was defined in the introduction, here $N=2$, $U(n) = 1$ for any $n \in \mathbb{N}_0$
and $v : \mathbb{Z}^d  \rightarrow \mathbb{R}$ is related to $v_L$ by (\ref{eq:periodicpotential}).

\subsubsection{Extensions}
\label{sect:extensions}
Our proof technique would allow us to prove the occurrence of BEC for random loop models which are even closer to the Bose gas, (\ref{eq:partitionbose}), than (\ref{eq:bosegasref}).
Indeed, the  only  limitation of our technique is that the distance between two consecutive particles in the same loop is at most one.  
This is a necessary condition for reflection positivity, which is employed in a crucial way.
For example, one may consider the following model,
\begin{multline}
\label{eq:partitionbosemodified}
{\tilde{Z}}^{bose}_{n, \beta} := \frac{1}{n!} \,   \sum\limits_{ x = (x_1, \ldots, x_n) \in {( \mathbb{T}_L)}^n   }
\sum\limits_{ \pi \in S_n  }
\mathbbm{1}_{\big \{  
  \forall i \in [n ] \, \,  | x_{\pi(i)} - x_i |_1 \leq 1   \big \} } \\
\times  \, \prod_{i=1}^{n} 
  \int dW^\beta_{x_i, x_{ \pi(i)} } (\omega^i) \, \, \exp \Big (    \sum\limits_{  1 \leq    i \leq  j \leq n }\int_{ 0}^\beta v_L \big ( \omega^i(t),  \omega^j(t) \big   )  dt \Big ),
\end{multline}
which is a minor modification of the Bose gas, (\ref{eq:partitionbose}). The modification consists  in the introduction of the indicator
that the distance between $x_i$ and $x_{\pi(i)}$
is at most one for each particle $i$.
Our proof method would allow us to prove the occurrence
BEC for this model in the grand canonical ensemble,  
namely the occurrence of macroscopic loops
 for any $\beta \in [0, \infty)$ as long as the  chemical potential $\mu$. 
 Now if $\beta$ is `sufficiently' small, then
   (\ref{eq:partitionbosemodified})  is extremely `close' to the interacting Bose gas,  (\ref{eq:partitionbose}).
 Indeed, since $\beta$ is small,   it rarely happens in   (\ref{eq:partitionbose}) that $x_{\pi(i)}$ (corresponding to the position of the last step of a continuous time simple random walk of time $\beta$ starting from $x_i$) is at a distance greater than $1$ from $x_i$.
 Being the two models so close, the hope is that in the future it may be possible to deduce BEC for (\ref{eq:partitionbose})
 by a comparison with    (\ref{eq:partitionbosemodified}).

\subsection{Implications for the Spin O(N) model}
\label{sect:spinresults}
Our result has new implications on the Spin O(N) model,
which is `equivalent' to our random loop model 
when the weight function is chosen as in (\ref{eq:weightfunctionspin})
and the potential $v$ is zero. 

To begin, we define the Spin $O(N)$ model precisely. 
Let $\mathcal{G} = (\mathcal{V}, \mathcal{E})$ be a finite undirected graph.
Fix an integer $N \in \mathbb{N}_{>0}$.
We denote by  ${\varphi} \in {(\mathbb{S}^{N-1})}^{\mathcal{V}}$
the spin configurations,
where $\mathbb{S}^{N-1}\subset\R^N$ is the unit sphere of dimension $N-1$. For example, $\mathbb{S}^{0} = \{-1,1\}$ and 
$\mathbb{S}^1 \subset \mathbb{R}^2$ is the unit circle. We will often write spin configurations as $\varphi=(\varphi_x)_{x\in\Vcal}$ where $\varphi_x = (\varphi_x^1, \ldots, \varphi_x^N) \in \mathbb{R}^N$ is the value of $\varphi$ at the vertex $x\in\Vcal$.
The hamiltonian of the spin $O(N)$ model is defined as 
\begin{equation}\label{eq:hamiltonian}
H_{\Gcal,N}(\varphi) = - \sum\limits_{ \{x,y\} \in \mathcal{E}}  \,    \varphi_x \cdot \varphi_y,
\end{equation}
where $\varphi_x \cdot \varphi_y$ denotes the usual inner product of two $N$-component vectors. For a parameter $\beta\geq 0$ known as the \emph{inverse temperature} the partition function at inverse temperature $\beta$ is given by 
\begin{equation}\label{eq:partition function}
Z_{\Gcal,N, \beta}^{spin}  = 
\Big  (  \,  \prod_{x \in \mathcal{V}} 
\int_{\mathbb{S}^{N-1}} d \varphi_x    \,   \Big  ) 
e^{- \beta H_{\Gcal,N}(\varphi)},
\end{equation}
where $d \varphi_x$ denotes the uniform 
probability measure on $\mathbb{S}^{N-1}$, that is,
$\int_{\mathbb{S}^{N-1}} d \varphi_x = 1$.
We introduce an expectation operator $\langle \cdot \rangle_{\mathcal{G}, N, \beta}$ on functions 
$(\mathbb{S}^{N-1})^{\mathcal{V}} \rightarrow \mathbb{R}$, that assigns the value
\begin{equation}\label{eq:deffunctional}
{\langle f \rangle}_{ \Gcal, N, \beta} = 
\frac{1}{Z_{ \Gcal,N, \beta}^{spin}} \,  
\Big  (  \,  \prod_{x \in \mathcal{V}} 
\int_{\mathbb{S}^{N-1}} d \varphi_x    \, \Big ) 
f \big ( \, (\varphi_x)_{x \in \mathcal{V}} \,  \big ) \, e^{- \beta H_{\Gcal,N}(\varphi)}.
\end{equation}
We write ${\langle f \rangle}_{ L, N, \beta}$
when $\Gcal$ is the $d$-dimensional torus of side length $L$.

When translated back into the language of spins, our general result about macroscopic loops implies that the correlation functions of the form
$\langle \varphi_x^1 \varphi_x^2 \varphi_y^1 \varphi_y^2 \rangle _{L, N, \beta}$
are uniformly positive as long as $N \geq 2$, $d \geq 3$ and $\beta$ is large enough.
The transition from a regime of exponential decay of such correlation functions
(when $\beta$ is small) to a regime of uniform positivity of such correlation functions
(when $\beta$ is large) in dimensions $d \geq 3$ can be seen
as an alternative characterisation of the phase transition in the Spin O(N) model with $N \geq 2$.
As far as we know,   only in the special case $N=2$ 
such a point-wise positivity result can be deduced from the point-wise 
positivity of the correlations of the form $\langle \varphi_x^1  \varphi_y^1 \rangle _{L, N, \beta}$,
which was proved in  \cite{Frohlich}
 (see  \cite{Be-U} for a derivation, which uses Pfister's theorem).
Hence, our result on the Spin O(N) model, which is stated in the next theorem, is new when   $N>2$.
\begin{theorem}\label{theo:spintheorem}
Suppose that $d \geq 3$,  $N \geq 2$,  and $\beta$ is large enough,  then
there exists $c_1, c_2>0$ such that for any $L \in 2 \mathbb{N}$
and for any $x, y \in \mathbb{T}_L$ such that $\| x - y \|_\infty \leq c_2 \, L$,
\begin{equation}\label{eq:spinresult}
\langle \varphi_x^{1} \varphi_x^{2} \varphi_y^{1} \varphi_y^{2} \rangle_{L, N, \beta} \geq c_1.
\end{equation}
\end{theorem}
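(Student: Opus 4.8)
The plan is to derive Theorem \ref{theo:spintheorem} from Theorem \ref{theo:maintheorem} via the BFS representation of the Spin $O(N)$ model. Recall from the discussion after \eqref{eq:weightfunctionspin} that the RWLS with $v\equiv 0$ and weight function $U(n) = \Gamma(\tfrac{N}{2})/(2^n\Gamma(\tfrac{N}{2}+n))$ is precisely the BFS-random loop representation of the Spin $O(N)$ model at inverse temperature $\lambda = \beta$. The first thing to check is that this choice of $v$ and $U$ satisfies the hypotheses of Theorem \ref{theo:maintheorem}: $v\equiv 0$ is trivially tempered (with $\bar v = 0$) and separable (one takes $k=1$ and $\nu$ the zero measure, so that \eqref{eq: interaction} and \eqref{eq: conjugate} hold vacuously); and $U$ as in \eqref{eq:weightfunctionspin} is good, since $n\,\overline U(n+1) = n\, U(n+1) = n\,\tfrac{1}{2(\tfrac N2+n)}U(n) \le \tfrac12 U(n) = \tfrac12 \overline U(n)$, so \eqref{eq:rapidlydecayingdef} holds with $M = \tfrac12$. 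The one genuine constraint is the \emph{range}: Theorem \ref{theo:maintheorem} requires $R$ to be a large enough integer depending on $d$ and $N$. But the weight function \eqref{eq:weightfunctionspin} has infinite range since $U(n) > 0$ for all $n$, so this constraint is satisfied for every $d,N$. Hence Theorem \ref{theo:maintheorem} applies: for $\beta$ large enough there exist $c_2, c_3 \in (0,1)$ such that for $L\in 2\mathbb N$ and $x \in \mathbb T_L^o$ with $d_L(o,x) \le c_2 L$, the probability that some loop visits both $o$ and $x$ is at least $c_3$, and by translation invariance the same holds for any pair $x,y$ with all coordinate-differences odd and $d_L(x,y) \le c_2 L$.

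The next step is to translate the loop statement \eqref{eq:twopointBEC2} back into a spin correlation statement. The standard BFS dictionary (see \cite{BFS1, BFS2, Be-U, L-T-first}) expresses $\langle \varphi_x^i \varphi_y^i\rangle_{L,N,\beta}$, and more generally products like $\langle \varphi_x^1\varphi_x^2\varphi_y^1\varphi_y^2\rangle_{L,N,\beta}$, as ratios of RWLS/RPM partition functions with prescribed sources. Concretely, I would use the RPM formulation, which is equivalent to the RWLS by Theorem \ref{theo: equivalence}: the quantity $\langle \varphi_x^1\varphi_x^2\varphi_y^1\varphi_y^2\rangle_{L,N,\beta}$ corresponds (up to the normalising factor $\lambda^2 = \beta^2$ and combinatorial constants already accounted for in the definitions) to $\mathbb G_{\mathbb T_L, U, 0, N, \beta}(x,y)/\beta^2$ times a positive constant — indeed this is exactly why the two-point function \eqref{eq: twopointfunction} was defined with two open paths of \emph{different} colours: colour $1$ carries the $\varphi^1$-source and colour $2$ carries the $\varphi^2$-source. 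So the task reduces to showing $\mathbb G(x,y) \ge c$ uniformly in $L$ for $x,y$ at distance $\le c_2 L$ with odd coordinate-differences. But this is precisely what is established inside the proof of Theorem \ref{theo:maintheorem}: combining \eqref{eq:uniformpositivecesaro} (uniform positivity of the Cesàro sum of $\mathbb G$, which holds once $\beta > \lambda_0$), the a priori bound \eqref{eq:twopointbounded}, and the site-monotonicity consequence \cite[Theorem 2.2]{L-T-quantum}, one obtains exactly the existence of $m\in(0,1)$ and $c>0$ with $\mathbb G(x,y) \ge c$ for all such $x,y$. Taking $c_2 := m$ and reading off the constant $c_1$ from $c$ and the BFS normalisation completes the argument.

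In more detail on the BFS identification: one writes $\varphi_x = (\varphi_x^1,\dots,\varphi_x^N)$ and expands $e^{\beta \varphi_x\cdot\varphi_y}$ on each edge as $\sum_{m_{\{x,y\}}\ge 0}\frac{\beta^{m_{\{x,y\}}}}{m_{\{x,y\}}!}(\varphi_x\cdot\varphi_y)^{m_{\{x,y\}}}$, further expanding each factor $\varphi_x\cdot\varphi_y = \sum_{i=1}^N \varphi_x^i\varphi_y^i$ over colours; the link configuration $m$ and colouring $c$ emerge, the angular integration $\int_{\mathbb S^{N-1}} d\varphi_x\,\prod_i (\varphi_x^i)^{2n_x^i+\text{(source powers)}}$ produces exactly the weight $U(n_x)$ of \eqref{eq:weightfunctionspin} together with the pairing combinatorics, and insertion of the observable $\varphi_x^1\varphi_x^2\varphi_y^1\varphi_y^2$ forces one unpaired $1$-link and one unpaired $2$-link at each of $x$ and $y$ — which is the source structure encoding $\mathcal W_{x,y}$ in \eqref{eq:Wcalxy}. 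This is a well-documented computation; I would cite \cite{BFS1, BFS2} and \cite{Be-U, L-T-first} rather than reproduce it, noting only the bookkeeping of positive constants so that \eqref{eq:spinresult} follows with an explicit $c_1$.

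The main obstacle, such as it is, is not analytic but a matter of care in the dictionary: one must verify that the correlation $\langle \varphi_x^1\varphi_x^2\varphi_y^1\varphi_y^2\rangle$ — a \emph{four}-point function — corresponds to the \emph{two}-colour two-point function $\mathbb G(x,y)$ and not to some product or more complicated object, and that the constant relating them is strictly positive and uniform in $L$ (the factors $\Gamma(\tfrac N2)/\Gamma(\tfrac N2+n)$ and the $\beta^2$ normalisation must be tracked). Once this identification is pinned down, everything else is immediate from results already proved: temperedness and separability of $v\equiv 0$, goodness and infinite range of $U$, hence applicability of Theorem \ref{theo:maintheorem}, and the uniform lower bound on $\mathbb G$ extracted from its proof together with Theorem \ref{theo: equivalence}. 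I would therefore structure the proof as: (1) state the BFS identification with a precise reference and the positive constant; (2) check the hypotheses of Theorem \ref{theo:maintheorem} for $(v,U) = (0,\text{\eqref{eq:weightfunctionspin}})$; (3) invoke the uniform lower bound $\mathbb G(x,y)\ge c$ with $c_2 = m$ from the proof of Theorem \ref{theo:maintheorem}; (4) conclude \eqref{eq:spinresult} with $c_1$ the product of $c$ and the BFS constant.
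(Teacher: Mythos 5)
Your proposal is correct and follows essentially the same route as the paper: identify $\langle\varphi_x^1\varphi_x^2\varphi_y^1\varphi_y^2\rangle_{L,N,\beta}$ with the RPM two-point function via the BFS expansion (the paper cites \cite[Prop.~2.3]{L-T-first} for the exact identity $\G(x,y)=\beta^2\langle\varphi_x^1\varphi_x^2\varphi_y^1\varphi_y^2\rangle$, where you were only willing to assert proportionality), and then invoke the uniform lower bound \eqref{eq: positiveG2} on $\G(x,y)$ obtained inside the proof of Theorem \ref{theo:maintheorem}. Your additional verification that $(v,U)=(0,\eqref{eq:weightfunctionspin})$ satisfies the hypotheses of Theorem \ref{theo:maintheorem} — temperedness and separability of the zero potential, goodness of $U$ with $M=\tfrac12$, infinite range — is accurate and is left implicit in the paper.
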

\begin{proof}
Using the same expansion as in 
 \cite[Proposition 2.3]{L-T-first}
   we deduce that,  under the same choice of the weight function,  $U(n) = \frac{\Gamma(\frac{N}{2})}{\Gamma( \frac{N}{2} + n    )}$ for any $n \in \mathbb{N}_0$,
   and in the absence of long range interactions,  i.e, $v(x) = 0$ for any $x \in \mathbb{Z}^d$, 
$$
\G_{  \mathbb{T}_L, U, v, N, \beta   } (x,y) = \beta^2  \langle    \varphi_x^{1} \varphi_x^{2} \varphi_y^{1} \varphi_y^{2}  \rangle_{L, N, \beta}.
$$
From
(\ref{eq: positiveG2}) 
we then deduce the desired result.
\end{proof}

\subsection{Examples of tempered and separable potentials}
\label{sect:appendixpotentials}
In this section we provide examples of potentials $v:\Z^d \to \R$ that are tempered and separable.

\begin{lemma} \label{lemma:examplesseparable}
Let $v^1, v^2: \Z^d \to \R$ be defined by 
$$
v^1(x)=\alpha_1 \mathbbm{1}_{\{x=0\}} - \beta_1 e^{-\iota |x|_1} \mathbbm{1}_{\{x \neq 0\}}, \qquad \qquad v^2(x)= \alpha_2 \mathbbm{1}_{\{x=0\}}  -\beta_2 |x|_1^{-s} \mathbbm{1}_{\{x \neq o\}},
$$ 
for some constants $\alpha_1, \alpha_2, \beta_1,\beta_2, \iota, s >0$ such that $\alpha_1 > \beta_1 \sum_{y \in \Z^d, y \neq o}e^{-\iota|y|_1}$,  $\alpha_2 > \beta_2 \sum_{y \in \Z^d, y \neq 0} |y|_1^{-s}$, and $s>d$. Then $v^1$ and $v^2$ are  tempered and separable.
\end{lemma}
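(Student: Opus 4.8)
The plan is to verify the two conditions separately: \emph{temperedness} in the sense of \eqref{eq:tempered}, and \emph{separability} in the sense of Definition~\ref{def: separable}. Temperedness is the routine part. For $v^1$ we have $v^1(0)=\alpha_1\ge 0$, the sum $\sum_{x\neq o}v^1(x)\mathbbm{1}_{\{v^1(x)<0\}}=-\beta_1\sum_{y\neq o}e^{-\iota|y|_1}$, so $\overline{v^1}=\alpha_1-\beta_1\sum_{y\neq o}e^{-\iota|y|_1}\ge 0$ precisely under the hypothesis on $\alpha_1$; summability $\sum_x|v^1(x)|=\alpha_1+\beta_1\sum_{y\neq o}e^{-\iota|y|_1}<\infty$ is immediate since $\iota>0$. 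For $v^2$ the identical computation works, using that $\sum_{y\neq o}|y|_1^{-s}<\infty$ when $s>d$ (the number of lattice points at $\ell_1$-distance $r$ grows polynomially of degree $d-1$, so the series behaves like $\sum_r r^{d-1-s}$). So temperedness reduces to these two convergence facts plus the stated inequalities on $\alpha_1,\alpha_2$.

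The substantive part is separability. Fix a coordinate direction $i\in[d]$; I must exhibit a finite measure space $(T,\mathcal T,\nu)$ and bounded measurable $\alpha,\tilde\alpha:T\times\mathbb T_L\to\mathbb C^k$ so that $-v_L(x,y)=\int_T d\nu(t)\,\alpha(t,x)\cdot\tilde\alpha(t,y)$ for all $x,y\in\mathbb T_L$ with $x_i>0>y_i$, together with the conjugation relation \eqref{eq: conjugate}. The key observation is that on the region $x_i>0>y_i$ the only contributions to the periodized potential $v_L(x,y)=\sum_{z\in\mathbb Z^d}v(y+Lz-x)$ come from the negative (attractive) part, since $v(0)$ contributes only when $y+Lz=x$, which cannot happen here. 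So $-v_L(x,y)=\beta\sum_{z}f(|y+Lz-x|_1)$ with $f(r)=e^{-\iota r}$ or $f(r)=r^{-s}$ (for $r>0$). The strategy is to write $|y+Lz-x|_1$ as a sum over coordinates and to diagonalize this sum by integral representations. For the exponential case, use $e^{-\iota|w|_1}=\prod_{j=1}^d e^{-\iota|w_j|}$ and then represent each factor $e^{-\iota|w_j|}$ via a Fourier/Laplace-type integral that separates the dependence on the positive and negative parts; concretely $e^{-\iota|a-b|}$ with $a>0>b$ equals $e^{-\iota a}e^{\iota b}$, which already factorizes, so only the $i$-th coordinate needs care while the other coordinates and the periodization sum over $z$ get absorbed into the measure $\nu$. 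For the polynomial case one uses the standard subordination identity $r^{-s}=\frac{1}{\Gamma(s)}\int_0^\infty t^{s-1}e^{-tr}\,dt$ to reduce to the exponential case, at the cost of an extra integration variable folded into $(T,\mathcal T,\nu)$; one then checks the resulting $\nu$ is finite (this is where $s>d$ again enters, to control the $t\to 0$ behavior after summing the geometric-type series in $z$).

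More precisely, I would set $T$ to be a product of a countable set (indexing $z\in\mathbb Z^d$, or really just the $i$-th component $z_i$ together with the transverse shifts), the finite sphere $\mathbb T_L$ in the transverse coordinates, and — in the polynomial case — the half-line $(0,\infty)$ with measure $t^{s-1}e^{-ct}dt$ for suitable $c$. The functions $\alpha(t,x)$ will depend on $x$ only through $x_i$ (an exponential $e^{-\iota x_i}$ or $e^{-tx_i}$) and through the transverse coordinates via a translation-matching indicator, while $\tilde\alpha(t,y)$ depends on $y_i$ symmetrically; boundedness holds because $x_i>0$ forces these exponentials to be $\le 1$. The conjugation relation \eqref{eq: conjugate} is then satisfied because the building blocks are real, or are manifestly exchanged under $x_i\mapsto -x_i$ by construction; I would choose the representation so that $\tilde\alpha(t,x)=\overline{\alpha(t,x_1,\dots,-x_i,\dots,x_d)}$ holds on the nose.

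The main obstacle I anticipate is \emph{bookkeeping the periodization}: the sum $\sum_{z\in\mathbb Z^d}$ in the definition of $v_L$ must be incorporated into the measure $\nu$ while keeping $\nu$ finite and keeping the factorization between $\{x_i>0\}$ and $\{y_i<0\}$ intact on the torus (where "positive" and "negative" coordinates are only defined through the representatives in $(-L/2,L/2]$). Concretely, $|y_i+Lz_i-x_i|$ is $x_i-y_i-Lz_i$ or $y_i+Lz_i-x_i$ depending on the sign of $z_i$ and the magnitudes, so the clean product factorization $e^{-\iota a}e^{\iota b}$ only holds term-by-term in $z$ after splitting the sum according to these sign regions; each region contributes a separable kernel, and one sums (integrates over the discrete variable $z$) inside $\nu$. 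Verifying that the total mass $\nu(T)<\infty$ then amounts to summing a geometric series in $|z_i|$ (exponential case) or applying $s>d$ after Fubini (polynomial case). Once this is organized, the transverse directions $j\neq i$ and the conjugation symmetry are straightforward, so I expect the write-up to be mostly a careful but unenlightening computation, with the one genuinely delicate point being the finiteness of $\nu$ in the polynomial case near $t=0$.
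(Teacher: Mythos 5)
Your plan matches the paper's proof essentially step for step: temperedness by direct computation; separability of $v^1$ by factoring $e^{-\iota|\cdot|_1}$ over coordinates, exploiting the sign constraint $x_i>0>y_i$ in the $i$-th coordinate (you split the $z_i$-sum by sign regions and fold the index into $T$, while the paper simply sums the geometric series into a two-term closed form, which is why its $\alpha$ takes values in $\C^2$), and a Fourier-series representation of the transverse factors with non-negative coefficients; and separability of $v^2$ via the subordination $r^{-s}=\Gamma(s)^{-1}\int_0^\infty \mu^{s-1}e^{-\mu r}\,d\mu$ to reduce to the exponential case. One small improvement of your plan over the paper's terse treatment of $v^2$ is that you explicitly flag finiteness of $\nu$ near $\mu=0$ as the remaining verification, which is indeed where $s>d$ re-enters, whereas the paper just states the identity and stops.
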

\begin{proof}
It is easy to see that $v^1$ and $v^2$ are tempered. We now prove separability of $v^1$. Let $i \in [d]$ and let $x,y \in \mathbb{T}_L$ with $y_i < 0 < x_i$. We have that
$$
-v^1_L(x,y) 
= \beta_1 \prod_{j=1}^d \sum_{z \in \mathbb{Z}} e^{-\iota |x_j-y_j -Lz|}.
$$
We can write the factor involving the $i$-th coordinate by
\begin{equation} \label{eq: first coordinate}
\begin{aligned}
\sum_{z \in \mathbb{Z}} e^{-\iota |x_i-y_i-Lz|} 
& = \frac{1}{e^{L \iota}-1} e^{\iota|x_i|}e^{\iota|y_i|} + \frac{e^{L \iota}}{e^{L \iota}-1} e^{-\iota|x_i|}e^{-\iota|y_i|} .
\end{aligned}
\end{equation}
For $j \in [d] \setminus \{i\}$,  we use complex Fourier transforms (see e.g. \cite{Tolstov}) and obtain that 
\begin{equation} \label{eq: remaining coordinates}
\sum_{z \in \mathbb{Z}} e^{-\iota |x_j-y_j -Lz|} = \sum_{z \in \mathbb{Z}} e^{-\iota \, |(|x_j-y_j| \mod L) -Lz|} 
=\sum_{n = - \infty}^{\infty} \hat{f}(n) e^{i \frac{2 \pi n}{L}x_j} e^{-i \frac{2 \pi n}{L}y_j},
\end{equation}
where $\hat{f}$ is the Fourier transform of $f: \mathbb{R} \to [0,\infty)$, $f(z)=e^{-\iota L|z|}$ given by $\hat{f}(\xi)=\frac{2 \iota L}{\iota^2 L^2 + 4 \pi^2 \xi^2} \geq 0$.
Combining \eqref{eq: first coordinate} and \eqref{eq: remaining coordinates} we can write 
$$
\begin{aligned}
-v^1_L(x,y)  
 = \int_{\mathbb{R}^{d-1}} \alpha(t,x) \cdot \tilde{\alpha}(t,y) d \nu(t),
\end{aligned}
$$
where $\alpha: \mathbb{R}^{d-1} \times \mathbb{T}_L \to \mathbb{C}^2$ is defined by 
$
\alpha(t,x)= (\alpha_j(t,x))_{j \in \{1,2\}}$
with 
$$
\alpha_1(t,x) = c_1 e^{\iota |x_i|} e^{i \frac{2 \pi }{L}(x_1, \dots, x_{i-1}, x_{i+1}, \dots, x_d) \cdot \lfloor t \rfloor}; \qquad \alpha_2(t,x) = c_1 e^{-\iota |x_i|} e^{i \frac{2 \pi}{L} (x_1, \dots, x_{i-1}, x_{i+1}, \dots, x_d) \cdot \lfloor t \rfloor}
$$
for some $c_1=c_1(\beta,\iota,L) \in \R^+$. The function $\tilde{\alpha}: \mathbb{R}^{d-1} \times \mathbb{T}_L \to \mathbb{C}^2$ is defined as in \eqref{eq: conjugate}. 
Here we use the notation $\lfloor t \rfloor =(\lfloor t_1 \rfloor, \dots, \lfloor t_{d-1} \rfloor) \in \mathbb{Z}^{d-1}$ for a vector $t=(t_1, \dots, t_{d-1}) \in \mathbb{R}^{d-1}$.
The finite, non-negative measure $d \nu(t)$ is given by
$$
d \nu(t) = \hat{f}(\lfloor t_1 \rfloor ) \cdots \hat{f}(\lfloor t_{d-1} \rfloor) dt,
$$
where $dt$ denotes the Lebesgue measure on $T=\mathbb{R}^{d-1}$. 
Hence, $v^1_L$ yields a representation of the form \eqref{eq: interaction}.

Separability of $v^2$ can be deduced from separability of $v^1$ since for $x, y \in \mathbb{T}_L$ it holds that
$$
\begin{aligned}
v^2_L(x,y) & = -\beta_2 \sum_{z \in \mathbb{Z}^d} \frac{1}{| x-y -Lz|_1^s} = -\beta_2 \sum_{z \in \mathbb{Z}^d} \frac{1}{\Gamma(s)} \int_0^\infty \mu^{s-1} e^{- \mu | x-y -Lz|_1} d\mu. 
\end{aligned}
$$
\end{proof}

\section*{Acknowledgements} The authors thank the German Research Foundation (project number 444084038, priority program SPP2265) for financial support. AQ additionally
thanks the German Research Foundation through IRTG 2544 and the German Academic Exchange Service (grant number 57556281) for financial support.
The authors also thank the anonymous referee for carefully reviewing the paper.




\frenchspacing

\end{document}